\providecommand\@dotsep{5}
\def\listtodoname{List of Todos}
\def\listoftodos{\@starttoc{tdo}\listtodoname}
\numberwithin{equation}{section}
\newcommand{\R}{\mathbb{R}}
\newcommand{\N}{\mathcal{N}}
\newcommand{\C}{\mathcal{C}}
\newcommand{\B}{\mathcal{B}}
\newcommand{\I}{\mathcal{I}}
\newcommand{\J}{\mathcal{J}}
\newcommand{\A}{\mathcal{A}}
\newcommand{\h}{\mathcal{W}_{\varepsilon}}
\newcommand{\p}{p^{*}_{s}}
\newcommand{\X}{\mathbb{X}}
\DeclareMathOperator{\supp}{supp}
\DeclareMathOperator{\e}{\varepsilon}
\newtheorem{proposition}{Proposition}[section]
\newtheorem{lemma}{Lemma}[section]
\newtheorem{theorem}{Theorem}[section]
\newtheorem{remark}{Remark}[section]
\title[Nonlinear Schr\"odinger equations with fractional $p$-Laplacian]{Multiplicity and concentration results for some nonlinear Schr\"odinger equations with the fractional $p$-Laplacian}
\author[V. Ambrosio]{Vincenzo Ambrosio}
\address{Vincenzo Ambrosio\hfill\break\indent 
Department of Mathematics  \hfill\break\indent
EPFL SB CAMA \hfill\break\indent
Station 8 CH-1015 Lausanne, Switzerland}
\email{vincenzo.ambrosio2@unina.it}
\author[T. Isernia]{Teresa Isernia}
\address{Teresa Isernia\hfill\break\indent
Dipartimento di Ingegneria Industriale e Scienze Matematiche \hfill\break\indent
Universit\`a Politecnica delle Marche\hfill\break\indent
Via Brecce Bianche, 12\hfill\break\indent
60131 Ancona (Italy)}
\email{teresa.isernia@unina.it}
\subjclass{Primary: 47G20, 35R11; Secondary: 35A15, 58E05}
\keywords{Fractional Schr\"odinger equation, fractional $p$-Laplacian operator, Nehari manifold, Ljusternik-Schnirelmann theory, critical growth.}
\date{}
\begin{document}

\maketitle

\begin{abstract}
We consider a class of parametric Schr\"odinger equations driven by the fractional $p$-Laplacian operator  and involving continuous positive potentials and nonlinearities with subcritical or critical growth. By using variational methods and Ljusternik-Schnirelmann theory, we study the existence, multiplicity and concentration of positive solutions for small values of the parameter.
\end{abstract}

\section{Introduction}
\noindent
In the first part of this paper we focus our attention on the existence, multiplicity and concentration of positive solutions for the following fractional $p$-Laplacian type problem
\begin{equation}\label{P}
\left\{
\begin{array}{ll}
\e^{sp}(-\Delta)_{p}^{s} u + V(x)|u|^{p-2}u = f(u) &\mbox{ in } \R^{N} \\
u\in W^{s, p}(\R^{N}) \\
u(x)>0 &\mbox{ for all } x\in \R^{N},
\end{array}
\right.
\end{equation}
where $\e>0$ is a parameter, $s\in (0, 1)$, $1<p<\infty$, $N>s p$, $W^{s, p}(\R^{N})$ is defined as the set of the functions $u:\R^{N}\rightarrow \R$ belonging to $L^{p}(\R^{N})$ such that
$$
[u]^{p}_{W^{s, p}(\R^{N})}=\iint_{\R^{2N}} \frac{|u(x)-u(y)|^{p}}{|x-y|^{N+sp}} dx dy<\infty.
$$
Here $(-\Delta)_{p}^{s}$ is the fractional $p$-Laplacian operator which may be defined, up to a normalization constant, by setting
\begin{equation*}
(-\Delta)_{p}^{s}u(x)= 2\lim_{r\rightarrow 0} \int_{\R^{N}\setminus \B_{r}(x)} \frac{|u(x)- u(y)|^{p-2}(u(x)- u(y))}{|x-y|^{N+sp}} dy \quad (x\in \R^{N})
\end{equation*}
for any $u\in \C^{\infty}_{c}(\R^{N})$; see \cite{DPV, MRS} for more details and applications.

Now we introduce the assumptions on the potential $V$ and the nonlinearity $f$.
We require that $V:\R^{N}\rightarrow \R$ is a continuous function satisfying the following condition introduced by Rabinowitz \cite{Rab}:
\begin{equation*}\tag{$V$}
V_{\infty}= \liminf_{|x|\rightarrow \infty} V(x)>V_{0}= \inf_{x\in \R^{N}} V(x)>0,
\end{equation*}
and we consider both cases $V_{\infty}<\infty$ and $V_{\infty}=\infty$.

Concerning the nonlinearity $f: \R\rightarrow \R$ we suppose that
\begin{compactenum}
\item [$(f_{1})$] $f\in \C^{0}(\R, \R)$ and $f(t)=0$ for all $t<0$;
\item [$(f_{2})$] $\displaystyle{\lim_{|t|\rightarrow 0} \frac{|f(t)|}{|t|^{p-1}}=0}$;
\item [$(f_{3})$] there exists $q\in (p, \p)$, with $\p=\frac{Np}{N-sp}$, such that $\displaystyle{\lim_{|t|\rightarrow \infty} \frac{|f(t)|}{|t|^{q-1}}=0}$;
\item [$(f_{4})$] there exists $\vartheta >p$ such that
\begin{equation*}
0<\vartheta F(t)= \vartheta \int_{0}^{t} f(\tau)\, d\tau \leq tf(t) \quad \mbox{ for all } t>0;
\end{equation*}
\item [$(f_{5})$] the map $\displaystyle{t\mapsto \frac{f(t)}{t^{p-1}}}$ is increasing in $(0, +\infty)$.
\end{compactenum}
Since we deal with the multiplicity of solutions to \eqref{P}, we recall that if $Y$ is a given closed set of a topological space $X$, we denote by $cat_{Y}(Y)$ the Ljusternik-Schnirelmann category of $Y$ in $X$, that is the least number of closed and contractible sets in $X$ which cover $Y$; see \cite{MW} for more details.

Let us denote by
\begin{equation*}
M=\{x\in \R^{N} : V(x)=V_{0}\} \quad \mbox{ and } \quad M_{\delta}= \{x\in \R^{N} : dist(x, M)\leq \delta\}, \mbox{ for } \delta>0.
\end{equation*}
Our first main result can be stated as follows:
\begin{theorem}\label{thmAI}
Let $N>sp$, and suppose that $V$ satisfies $(V)$ and $f$ verifies $(f_{1})$-$(f_{5})$. Then, for any $\delta>0$ there exists $\e_{\delta}>0$ such that problem \eqref{P} has at least $cat_{M_{\delta}}(M)$ positive solutions, for any $0<\e<\e_{\delta}$.
Moreover, if $u_{\e}$ denotes one of these solutions and $x_{\e}\in \R^{N}$ its global maximum, then
$$
\lim_{\e\rightarrow 0} V(x_{\e})=V_{0}.
$$
\end{theorem}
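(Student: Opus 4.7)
The proof follows the by-now classical penalization scheme of del Pino--Felmer combined with Ljusternik--Schnirelmann category theory, adapted here to the fractional $p$-Laplacian setting.

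My plan is to start by introducing an auxiliary \emph{penalized} problem. Fix $k>\vartheta/(\vartheta-p)$ and $a>0$ such that $f(a)/a^{p-1}=V_0/k$, and define
\[
\tilde f(t)=
\begin{cases}
f(t),& t\le a,\\
V_0 t^{p-1}/k,& t>a,
\end{cases}
\]
and $g(x,t)=\chi_\Lambda(x) f(t)+(1-\chi_\Lambda(x))\tilde f(t)$ for a suitable bounded open neighborhood $\Lambda$ of $M$ with $\overline{\Lambda}\subset\{V<V_\infty\}$. The associated functional
\[
J_\varepsilon(u)=\frac{1}{p}\|u\|_{\varepsilon}^{p}-\int_{\R^{N}}G(\varepsilon x,u)\,dx
\]
on $W^{s,p}(\R^{N})$ (with the natural $\varepsilon$-scaled norm) has a Mountain Pass geometry, a well-defined Nehari manifold $\N_\varepsilon$ and, because of the subcritical behaviour of $\tilde f$ at infinity, satisfies the $(PS)_c$ condition on $\N_\varepsilon$ for every $c$ below a threshold tied to the autonomous limit level $c_{V_0}$ of
\[
(-\Delta)_p^{s}u+V_0|u|^{p-2}u=f(u)\quad\text{in }\R^{N}.
\]

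Next I would carry out the categorical machinery. Let $w\in W^{s,p}(\R^{N})$ be a positive ground state of the autonomous limit problem; multiply it by a smooth cut-off $\eta$ supported in $B_\delta$ and, for each $y\in M$, define
\[
\Phi_\varepsilon(y)(x)=t_\varepsilon(y)\,\eta(|\varepsilon x-y|)\,w\!\left(\frac{\varepsilon x-y}{\varepsilon}\right),
\]
where $t_\varepsilon(y)>0$ is the unique number placing $\Phi_\varepsilon(y)$ on $\N_\varepsilon$. A scaling argument combined with the compactness of $M$ shows $J_\varepsilon(\Phi_\varepsilon(y))\to c_{V_0}$ uniformly in $y\in M$ as $\varepsilon\to 0$. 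On the other hand, a barycenter map
\[
\beta_\varepsilon(u)=\frac{\int_{\R^{N}}\chi(\varepsilon x)|u(x)|^{p}\,dx}{\int_{\R^{N}}|u(x)|^{p}\,dx}
\]
(where $\chi(x)=x$ for $|x|\le\rho$ and $\chi(x)=\rho x/|x|$ otherwise, with $M\subset B_\rho$) is built so that, using a concentration argument on minimizing sequences at level close to $c_{V_0}$, any $u\in\N_\varepsilon$ with $J_\varepsilon(u)$ close to $c_{V_0}$ satisfies $\beta_\varepsilon(u)\in M_\delta$. In particular $\beta_\varepsilon\circ\Phi_\varepsilon$ is homotopic to the inclusion $M\hookrightarrow M_\delta$, which by a standard LS-category argument (as in Benci--Cerami) produces at least $cat_{M_\delta}(M)$ critical points of $J_\varepsilon$ on $\N_\varepsilon$ for all small $\varepsilon$.

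The main obstacle, and the step I would devote the most care to, is showing that these critical points $u_\varepsilon$ of the \emph{penalized} problem are in fact solutions of the \emph{original} problem \eqref{P}, i.e.\ that $u_\varepsilon(x)\le a$ for $\varepsilon x\notin\Lambda$. This requires a uniform $L^\infty$ estimate via a Moser/De~Giorgi iteration adapted to the fractional $p$-Laplacian, followed by a decay estimate showing that if $u_\varepsilon$ has its concentration point (after rescaling) inside $M$, then $u_\varepsilon$ becomes exponentially (or at least polynomially) small far from that point. Here the nonlocal character of $(-\Delta)_p^s$ makes the tail estimates delicate; one typically bounds $u_\varepsilon(\cdot+\varepsilon^{-1} y_\varepsilon)$ by a suitable comparison function built from the kernel of the operator. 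Once this is achieved, positivity follows from $(f_1)$ and a strong maximum principle, while the concentration statement $V(x_\varepsilon)\to V_0$ is extracted as follows: picking $y_\varepsilon\in\R^N$ so that $u_\varepsilon(\cdot+y_\varepsilon/\varepsilon)$ converges, up to subsequence, to a ground state of the limit problem with potential $V_0$, one shows $\varepsilon y_\varepsilon\to x_0\in M$; the decay estimates then force the global maxima $x_\varepsilon$ to satisfy $\varepsilon x_\varepsilon-y_\varepsilon\to 0$, hence $\varepsilon x_\varepsilon\to x_0\in M$, and continuity of $V$ yields $V(x_\varepsilon)\to V_0$.
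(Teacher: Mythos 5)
Your proposal is a viable route, but it is genuinely different from the one the paper takes. You build the del Pino--Felmer penalization $g(x,t)=\chi_\Lambda f+(1-\chi_\Lambda)\tilde f$ and then must verify a posteriori that the critical points of the penalized functional satisfy $u_\e\leq a$ outside $\Lambda/\e$, which forces you to prove uniform $L^\infty$ bounds and decay \emph{before} you can even claim to have solutions of \eqref{P}. The paper never penalizes: it works directly with $\I_\e$ and exploits the global Rabinowitz condition $(V)$, $V_\infty>V_0$, to prove (Proposition \ref{prop2.1}, via Lemma \ref{lem2.3}) that Palais--Smale sequences on $\N_\e$ converge at every level $c<c_{V_\infty}$ (and at every level when $V_\infty=\infty$), while the relevant levels stay near $c_{V_0}<c_{V_\infty}$. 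The categorical part (the maps $\Phi_\e$, $\beta_\e$, the set $\widetilde{\N}_\e$, and the Benci--Cerami homotopy argument) is essentially identical in both schemes, and the Moser iteration and polynomial decay appear in the paper too --- but only \emph{after} the solutions are obtained, to locate their maxima and prove $V(x_\e)\to V_0$, not as a prerequisite for existence. Since $(V)$ is a global hypothesis anyway, the paper's direct comparison with the problem at infinity is the more economical choice here; your penalization would pay off only under purely local assumptions on $V$, which are not what Theorem \ref{thmAI} assumes.

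There is one genuine gap: you apply Ljusternik--Schnirelmann theory to $J_\e$ restricted to $\N_\e$ as if $\N_\e$ were a $C^{1}$ manifold. Under $(f_1)$--$(f_5)$ the nonlinearity $f$ is only continuous, so $u\mapsto\langle J'_\e(u),u\rangle$ is merely continuous and $\N_\e$ need not be a differentiable manifold; the deformation lemma underlying Theorem \ref{LSt} is not available on it. This is precisely the obstruction the paper addresses by the Szulkin--Weth reduction: the homeomorphism $m_\e:\mathbb{S}_\e\to\N_\e$ of Lemma \ref{SW2} transports the problem to the $C^1$ sphere $\mathbb{S}_\e$, where $\Psi_\e=\I_\e\circ m_\e$ is $C^1$ (Lemma \ref{SW3}) and the category argument is run on $\tilde{\mathbb{S}}_\e$ rather than on $\widetilde{\N}_\e$. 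Your sketch needs the same device (or some substitute for the deformation lemma on a non-smooth constraint); without it the claim ``produces at least $cat_{M_\delta}(M)$ critical points of $J_\e$ on $\N_\e$'' does not follow. The remaining steps you outline (uniform $L^\infty$ bound by Moser iteration, polynomial tail comparison, maximum principle for positivity, and the identification $\e x_\e\to x_0\in M$) match the paper's Lemmas \ref{lemMoser}, \ref{UBlemAF} and Remark \ref{Rem3} and are sound.
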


\noindent
Due to the variational nature of problem \eqref{P}, we look for critical points of the functional
\begin{equation*}
\I_{\e}(u)=\frac{1}{p} \iint_{\R^{2N}} \frac{|u(x)-u(y)|^{p}}{|x-y|^{N+sp}} dxdy + \frac{1}{p} \int_{\R^{N}} V(\e x) |u|^{p} dx - \int_{\R^{N}} F(u) \, dx
\end{equation*}
defined on a suitable subspace of $W^{s, p}(\R^{N})$.
Since $f$ is only continuous, we can not apply the Nehari manifold arguments developed in \cite{AF} in which the authors considered the corresponding local problem to \eqref{P} under the assumptions $f\in \mathcal{C}^{1}$ and
\begin{compactenum}[$(f_6)$]
\item there exist $C>0$ and $\sigma\in (p, p^{*}_{s})$ such that
$$
f'(t)t^{2}-(p-1)f(t)t\geq Ct^{\sigma} \quad \mbox{ for all } t\geq 0.
$$
\end{compactenum}
To overcome this difficulty, we use some variants of critical point theorems due to Szulkin and Weth \cite{SW}. As usual, the presence of the fractional $p$-Laplacian operator makes our analysis more delicate and intriguing.
In order to obtain multiple critical points, we employ a technique introduced by Benci and Cerami \cite{BC}, which consists in making precise comparisons between the category of some sublevel sets of $\I_{\e}$ and the category of the set $M$.
Then, after proving that the levels of compactness are strongly related to the behavior of the potential $V(x)$ at infinity (see Proposition \ref{prop2.1}), we can apply Ljusternik-Schnirelmann theory to deduce a multiplicity result for problem \eqref{P}.
Finally, we study the concentration of positive solutions $u_{\e}$ of \eqref{P}. More precisely, we first adapt the Moser iteration technique \cite{Moser} in the fractional setting (see Lemma \ref{lemMoser} in Section $3$) in order to obtain $L^{\infty}$-estimates (independent of $\e$) of $u_{\e}$'s. Then, taking into account the H\"older estimates recently established in \cite{IMS} for the fractional $p$-Laplacian, we can also deduce $C^{0, \alpha}$-estimates of $u_{\e}$ uniformly in $\e$. These informations allow us to infer that $u_{\e}(x)$ decay at zero as $|x|\rightarrow \infty$ uniformly in $\e$. Moreover, we prove that our solutions have a polynomial decay; see Remark \ref{Rem3}.
\vspace{0.1cm}

In the second part of this work we consider the following fractional problem involving the critical Sobolev exponent:
\begin{equation}\label{Pc}
\left\{
\begin{array}{ll}
\e^{sp}(-\Delta)_{p}^{s} u + V(x)|u|^{p-2}u = f(u) + |u|^{\p-2} u &\mbox{ in } \R^{N} \\
u\in W^{s, p}(\R^{N}) \\
u(x)>0 &\mbox{ for all } x\in \R^{N},
\end{array}
\right.
\end{equation}
with $s\in (0, 1)$, $1<p<\infty$ and $N\geq s p^{2}$.
In order to deal with the critical growth of the nonlinearity we assume that $f$ verifies $(f_1)$, $(f_2)$, $(f_3)$, $(f_5)$, hypothesis $(f_4)$ with $\vartheta\in (p, q)$, and the following technical condition:
\begin{compactenum}
\item[$(f'_{6})$] there exist $q_{1}\in (p, \p)$ and  $\lambda>0$ such that $f(t)\geq \lambda t^{q_{1}-1}$ for any $t>0$.
\end{compactenum}
Then we are able to obtain our second result:
\begin{theorem}\label{thmAI2}
Let $N\geq sp^{2}$, and suppose that $V$ satisfies $(V)$ and $f$ satisfies $(f_{1})$-$(f_5)$ and $(f'_{6})$. Then, for any $\delta>0$ there exists $\e_{\delta}>0$ such that problem \eqref{Pc} has at least $cat_{M_{\delta}}(M)$ positive solutions, for any $0<\e<\e_{\delta}$. Moreover, if $u_{\e}$ denotes one of these solutions and $x_{\e}\in \R^{N}$ its global maximum, then
$$
\lim_{\e\rightarrow 0} V(x_{\e})=V_{0}.
$$
\end{theorem}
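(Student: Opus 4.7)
The plan is to follow the same overall strategy as for Theorem \ref{thmAI}, replacing the subcritical functional by
$$
\mathcal{J}_{\e}(u)=\frac{1}{p}\iint_{\R^{2N}}\frac{|u(x)-u(y)|^{p}}{|x-y|^{N+sp}}\,dxdy+\frac{1}{p}\int_{\R^{N}}V(\e x)|u|^{p}\,dx-\int_{\R^{N}}F(u)\,dx-\frac{1}{\p}\int_{\R^{N}}|u|^{\p}\,dx
$$
on the obvious subspace of $W^{s,p}(\R^{N})$. Since $f$ is only continuous, I would again work on the Nehari manifold $\mathcal{N}_{\e}$ associated with $\mathcal{J}_{\e}$ and invoke the Szulkin--Weth critical point machinery used for \eqref{P}: hypotheses $(f_{1})$--$(f_{5})$ (with $\vartheta\in(p,\p)$) together with the extra critical power still give a well-defined minimax level $c_{\e}$ on $\mathcal{N}_{\e}$ and a homeomorphism between $\mathcal{N}_{\e}$ and the unit sphere of the working space, exactly as in the subcritical case.

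The main difficulty is the loss of compactness produced by the critical term $|u|^{\p-2}u$. I expect to prove a \emph{local} Palais--Smale condition for $\mathcal{J}_{\e}$, namely that every $(PS)_{c}$ sequence is relatively compact provided
$$
c<c^{*}:=\frac{s}{N}S_{*}^{\frac{N}{sp}},
$$
where $S_{*}$ denotes the best constant in the fractional Sobolev embedding $\mathcal{D}^{s,p}(\R^{N})\hookrightarrow L^{\p}(\R^{N})$. This should follow by a concentration--compactness analysis \`a la Lions in the fractional $p$-Laplacian framework, showing that any bounded $(PS)_{c}$ sequence either strongly converges or loses mass in quanta of size at least $c^{*}$.

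The hard part is then to push the minimax level strictly below this threshold, and this is precisely the role of $(f'_{6})$. Using truncated Talenti-type extremals $U_{\rho,\eta}$ for $S_{*}$ concentrated near a point of $M$, one plugs the associated path (or fibration map) into the Nehari characterization of $c_{\e}$ and estimates
$$
\max_{t\geq 0}\mathcal{J}_{\e}(tU_{\rho,\eta})<c^{*}
$$
for $\e$ small. The lower bound $f(t)\geq \lambda t^{q_{1}-1}$ with $q_{1}\in(p,\p)$ produces a negative contribution $-C\lambda\,\|U_{\rho,\eta}\|_{q_{1}}^{q_{1}}$ that dominates the positive error terms coming from truncating the bubble; the restriction $N\geq sp^{2}$ is exactly what guarantees the integrability required so that this lower order term beats the truncation error and forces $c_{\e}<c^{*}$.

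Once a local $(PS)$ has been secured below $c^{*}$, I would conclude along the same lines as for Theorem \ref{thmAI}: compare an appropriate sublevel set of $\mathcal{J}_{\e}|_{\mathcal{N}_{\e}}$ to the set $M$ via a Benci--Cerami barycentre construction, and apply Ljusternik--Schnirelmann theory to obtain at least $cat_{M_{\delta}}(M)$ critical points of $\mathcal{J}_{\e}$. The concentration statement then follows by the Moser iteration procedure of Lemma \ref{lemMoser} and the H\"older estimates from \cite{IMS}, yielding uniform decay of $u_{\e}$ and forcing the maximum points $x_{\e}$ into $M_{\delta}$, so that $V(x_{\e})\to V_{0}$ as $\e\to 0$.
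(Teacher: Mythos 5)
Your proposal follows essentially the same route as the paper: the truncated‑extremal energy estimate using $(f'_{6})$ and $N\geq sp^{2}$ to force $c_{V_{0}}<\frac{s}{N}S_{*}^{\frac{N}{sp}}$, a local Palais--Smale condition below that threshold (the paper obtains it via Brezis--Lieb splitting and comparison with the autonomous problem at infinity rather than a full Lions-type profile decomposition, but the mechanism is the same), the Benci--Cerami barycenter plus Ljusternik--Schnirelmann argument on the Nehari manifold via the Szulkin--Weth reduction, and Moser iteration for the concentration statement. This matches the paper's proof in all essentials.
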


\noindent
We note that Theorem \ref{thmAI2} improves and extends, in the fractional setting, Theorem $1.1$ in \cite{F} in which the author assumed $f\in \C^{1}$. The approach developed in this case follows the arguments used to analyze  the subcritical case. Anyway, this new problem presents an extra difficulty, due to the fact that the level of non-compactness is affected by the critical growth of the nonlinearity.
To overcome this hitch, we adapt some calculations performed in \cite{MPSY} and using the optimal asymptotic behavior of $p$-minimizers established in \cite{BMS} we are able to prove that the functional associated to \eqref{Pc} satisfies the Palais-Smale condition at every level
$$
c<\frac{s}{N} S_{*}^{\frac{N}{sp}},
$$
where
$$
S_{*}=\inf_{u\in \mathcal{D}^{s, p}(\R^{N})} \frac{[u]^{p}_{W^{s, p}(\R^{N})}}{|u|^{p}_{L^{\p}(\R^{N})}},
$$
and
$$
\mathcal{D}^{s, p}(\R^{N}) =\left\{u\in L^{\p}(\R^{N}): [u]_{W^{s, p}(\R^{N})}<\infty\right\}.
$$
Let us point out that the restriction $N\geq sp^{2}$ is crucial to apply Lemma 2.7 in \cite{MPSY} to estimate the $L^{p}$-norm of $p$-minimizers; see Lemma \ref{MPSYlem} and Lemma \ref{lemest2} in Section \ref{Sect4}.

\noindent
When $p=2$, equations \eqref{P} and \eqref{Pc} become fractional Schr\"odinger equations of the type
\begin{equation}\label{epsilon}
\e^{2s}(-\Delta)^{s}u+V(x)u=f(x, u) \quad \mbox{ in } \R^{N},
\end{equation}
which has been widely investigated in the last decade: see \cite{AM, A2, A3, AMF, AI, DMV, FQT, FS, IT,  Secchi, SZY} and references therein.
The study of \eqref{epsilon} is strongly motivated by the seeking of standing waves solutions for the time-dependent fractional Schr\"odinger equation
\begin{equation}\label{NFSE}
\imath \e \frac{\partial \psi}{\partial t}=(-\Delta)^{s}\psi+(V(x)+E) \psi-f(x,\psi) \quad \mbox{ for } (x, t)\in \R^{N}\times \R,
\end{equation}
namely solutions of the form $\psi(x, t)=u(x) e^{-\frac{\imath E t}{\e}}$, where $E$ is a constant. Equation \eqref{NFSE}
is a fundamental equation of the fractional Quantum Mechanics in the study of particles on stochastic fields modelled by L\'evy processes; see \cite{Laskin1, Laskin2} for more physical background.

In recent years there has been a surge of interest around nonlocal and fractional problems involving the fractional $p$-Laplacian operator, and several existence and regularity results have been established by many authors. For instance,  Franzina and Palatucci \cite{FrP}  discussed some basic properties of the eigenfunctions of a class of nonlocal operators whose model is $(-\Delta)^{s}_{p}$ (see also \cite{LL}).
Mosconi et al. \cite{MPSY} used an abstract linking theorem based on the cohomological index to obtain nontrivial solutions to the Brezis-Nirenberg problem for the fractional $p$-Laplacian operator. Di Castro et al. \cite{DCKP} established interior H\"older regularity results for fractional $p$-minimizers (see also \cite{IMS}). In \cite{A1} the first author obtained the existence of infinitely many solutions for a superlinear fractional $p$-Laplacian equation with sign-changing potential.
Fiscella and Pucci \cite{FiP} investigated the existence and asymptotic behavior of nontrivial solutions for some classes of stationary Kirchhoff  problems driven by fractional integro-differential operators and involving a Hardy potential and different critical nonlinearities. Belchior et al. \cite{BBMP} studied the existence of ground state solutions for a fractional Choquard equation with the fractional $p$-Laplacian and involving subcritical nonlinearities.\\
However, for what concerns the existence and multiplicity results for problems \eqref{P} and \eqref{Pc} with $p\neq 2$, the literature seems to be rather incomplete.

The goal of this paper is to consider the question related to the existence and multiplicity of positive solutions for fractional Schr\"odinger equations with the fractional $p$-Laplacian when $\e\rightarrow 0$. More precisely, we aim to extend the results obtained in \cite{FS} and \cite{SZY} in which the authors dealt with equation \eqref{epsilon} and involving nonlinearities with subcritical and critical growth respectively.
Unfortunately, the operator $(-\Delta)^{s}_{p}$ is not linear when $p\neq 2$, so more technical difficulties arise in the study of our problems. For instance, we can not make use of the $s$-harmonic extension by Caffarelli and Silvestre \cite{CS} commonly exploited in the recent literature to apply well-known variational techniques in the study of local degenerate elliptic  problems. Moreover, the arguments used in the study of \eqref{epsilon} (see for example \cite{AA, AM, A2, A4, FS, SZY}) seem not to be trivially adaptable in our context.
Indeed, some appropriate technical lemmas (see Lemma \ref{Psi}, Lemma \ref{lemVince}, and Lemma \ref{lemC}) will be needed to overcome the non-Hilbertian structure of the fractional Sobolev spaces $W^{s, p}(\R^{N})$ when $p\neq 2$.

We would like to emphasize that, to our knowledge, this is the first time that the Ljusternik-Schnirelmann theory is applied to get multiple solutions for fractional Schr\"odinger equations in $\R^{N}$ driven by the fractional $p$-Laplacian operator with $p\neq 2$, and involving nonlinearities with subcritical and critical growth.
\smallskip

\noindent
The paper is organized as follows: in Section \ref{Sect2} we collect some facts about the involved fractional Sobolev spaces  and we provide some useful technical lemmas. In Section \ref{Sect3} we study the existence, multiplicity and concentration of solutions to \eqref{P} proving some convenient properties for the autonomous problem associated to \eqref{P}. In Section \ref{Sect4} we consider critical problem \eqref{Pc} and the corresponding autonomous critical one.

\section{Preliminaries}\label{Sect2}

\noindent
In this preliminary section we recall some facts about the fractional Sobolev spaces and we prove some technical lemmas which we will use later.

Let $1\leq r\leq \infty$ and $A\subset \R^{N}$. We denote by $|u|_{L^{r}(A)}$ the $L^{r}(A)$-norm of a function $u:\R^{N}\rightarrow \R$ belonging to $L^{r}(A)$.
We define $\mathcal{D}^{s, p}(\R^{N})$ as the closure of $\C^{\infty}_{c}(\R^{N})$ with respect to
$$
[u]_{W^{s, p}(\R^{N})}^{p}= \iint_{\R^{2N}} \frac{|u(x)-u(y)|^{p}}{|x-y|^{N+sp}}dxdy.
$$
Let us indicate by $W^{s, p}(\R^{N})$ the set of functions $u\in L^{p}(\R^{N})$ such that $[u]_{W^{s, p}(\R^{N})}<\infty$, endowed with the natural norm
\begin{equation*}
\|u\|_{W^{s, p}(\R^{N})}^{p}= [u]_{W^{s, p}(\R^{N})}^{p}+ |u|_{L^{p}(\R^{N})}^{p}.
\end{equation*}

We begin recalling the following embeddings of the fractional Sobolev spaces into Lebesgue spaces.
\begin{theorem}\cite{DPV}\label{Sembedding}
Let $s\in (0,1)$ and $N>sp$. Then there exists a sharp constant $S_{*}>0$
such that for any $u\in \mathcal{D}^{s, p}(\R^{N})$
\begin{equation*}
|u|^{p}_{L^{p^{*}_{s}}(\R^{N})} \leq S_{*}^{-1} [u]^{p}_{W^{s, p}(\R^{N})}.
\end{equation*}
Moreover $W^{s, p}(\R^{N})$ is continuously embedded in $L^{q}(\R^{N})$ for any $q\in [p, p^{*}_{s}]$ and compactly in $L^{q}_{loc}(\R^{N})$ for any $q\in [1, p^{*}_{s})$.
\end{theorem}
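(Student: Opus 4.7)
The statement collects three standard facts about the spaces $W^{s,p}(\R^{N})$ and $\mathcal{D}^{s,p}(\R^{N})$; the cleanest route is simply to reference the corresponding results in DPV (Theorems 6.5, 6.7 and 7.1 of the \emph{Hitchhiker's guide}). For a self-contained argument I would proceed in three steps corresponding to the three claims: the sharp critical Sobolev inequality, the continuous subcritical embedding, and the local compact embedding.

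\textbf{Step 1 (sharp critical embedding).} By density of $\C^{\infty}_{c}(\R^{N})$ in $\mathcal{D}^{s,p}(\R^{N})$, it suffices to prove the inequality for $u\in \C^{\infty}_{c}(\R^{N})$; one then defines $S_{*}$ as the sharp constant, i.e.\ the infimum of the Rayleigh-type quotient $[u]_{W^{s,p}}^{p}/|u|_{L^{\p}}^{p}$. The argument in DPV relies on a dyadic level-set decomposition: set $A_{k}=\{x : 2^{k}<|u(x)|\leq 2^{k+1}\}$, bound
$$|u|_{L^{\p}(\R^{N})}^{\p} \leq C\sum_{k\in\mathbb{Z}} 2^{k\,\p}\, |A_{k}|,$$
and estimate each $|A_{k}|$ by integrating the Gagliardo kernel over $A_{k}\times\{|u|\leq 2^{k-1}\}$ via a Chebyshev-type inequality. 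Summing the resulting geometric series gives the critical Sobolev inequality. (Attainment of $S_{*}$ is not needed here.)

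\textbf{Step 2 (subcritical continuous embedding).} The inclusion $W^{s,p}(\R^{N})\hookrightarrow L^{p}(\R^{N})$ is built into the norm. Combining this with Step~1 and the H\"older interpolation
$$|u|_{L^{q}(\R^{N})} \leq |u|_{L^{p}(\R^{N})}^{\theta}\, |u|_{L^{\p}(\R^{N})}^{1-\theta},$$
for the unique $\theta\in[0,1]$ determined by $q\in[p,\p]$, immediately produces the continuous embedding into $L^{q}(\R^{N})$ for every such $q$.

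\textbf{Step 3 (local compactness and main obstacle).} Fix a bounded open $\Omega\subset\R^{N}$ and a sequence $\{u_{n}\}$ bounded in $W^{s,p}(\R^{N})$. I would apply the Riesz-Fr\'echet-Kolmogorov criterion in $L^{p}(\Omega)$: uniform $L^{p}$-boundedness is immediate, and the required equicontinuity of translations
$$\sup_{n}\, \|u_{n}(\cdot+h)-u_{n}\|_{L^{p}(\Omega)}\;\longrightarrow\; 0 \qquad (|h|\to 0)$$
follows from the Besov-type modulus-of-continuity bound $\|u(\cdot+h)-u\|_{L^{p}(\R^{N})}\leq C\,|h|^{s}\,\|u\|_{W^{s,p}(\R^{N})}$, a consequence of the embedding $W^{s,p}=B^{s}_{p,p}\hookrightarrow B^{s}_{p,\infty}$ (which can alternatively be extracted directly from the Gagliardo seminorm). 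This gives compactness in $L^{p}_{loc}(\R^{N})$; the full range $q\in[1,\p)$ then follows by interpolating the $L^{p}$-convergence against the uniform $L^{\p}$-bound from Step~1 when $q>p$, and by H\"older on the bounded set $\Omega$ when $q<p$. The main obstacle is Step~1 in the case $p\neq 2$: the absence of a Plancherel identity for the Gagliardo seminorm rules out the short Fourier-analytic proof available in the Hilbertian case and forces reliance on the more delicate dyadic/level-set argument, into which the sharp constant $S_{*}$ is implicitly absorbed.
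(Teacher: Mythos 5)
The paper offers no proof of this statement at all: it is recalled verbatim from \cite{DPV}, and your proposal correctly takes the same route (citing the relevant results of the Hitchhiker's guide), with an accurate sketch of the standard arguments behind each claim — the level-set proof of the critical inequality, $L^{p}$--$L^{p^{*}_{s}}$ interpolation for the subcritical range, and a Riesz--Fr\'echet--Kolmogorov/Besov modulus-of-continuity argument for local compactness. Nothing further is required.
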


\noindent
Proceeding as in \cite{FQT, Secchi} we can prove the following compactness-Lions type result.
\begin{lemma}\label{Lions}
Let $N>sp$ and $r\in [p, \p)$. If $\{u_{n}\}$ is a bounded sequence in $W^{s, p}(\R^{N})$ and if
\begin{equation}\label{ter4}
\lim_{n\rightarrow \infty} \sup_{y\in \R^{N}} \int_{\B_{R}(y)} |u_{n}|^{r} dx=0,
\end{equation}
where $R>0$, then $u_{n}\rightarrow 0$ in $L^{t}(\R^{N})$ for all $t\in (p, \p)$.
\end{lemma}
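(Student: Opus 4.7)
The plan is to adapt the standard Lions-type concentration-compactness argument to the fractional Sobolev setting, along the lines of \cite{FQT, Secchi} for the case $p=2$: a covering argument combined with H\"older interpolation on balls and the fractional Sobolev embedding of Theorem \ref{Sembedding}.

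First I would reduce the hypothesis to the case $r=p$. Since $r\in[p,\p)$ and each ball $\B_R(y)$ has finite Lebesgue measure, H\"older's inequality gives
\[
\int_{\B_R(y)} |u_n|^p\,dx \leq |\B_R|^{1-p/r}\left(\int_{\B_R(y)}|u_n|^r\,dx\right)^{p/r}
\]
uniformly in $y\in\R^N$, so \eqref{ter4} is inherited with $p$ in place of $r$, and I may assume $\sup_y \int_{\B_R(y)}|u_n|^p\,dx\to 0$.

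Next I would cover $\R^N$ by a countable family of balls $\{\B_R(y_i)\}$ with bounded overlap, i.e. $\sum_i \chi_{\B_R(y_i)}\leq N_0$ for some $N_0=N_0(N)$. For $t\in(p,\p)$, H\"older interpolation on each $\B_i:=\B_R(y_i)$ yields
\[
|u_n|^t_{L^t(\B_i)} \leq \bigl(|u_n|^p_{L^p(\B_i)}\bigr)^{\alpha}\bigl(|u_n|^{\p}_{L^{\p}(\B_i)}\bigr)^{1-\alpha}, \qquad \alpha=\frac{\p-t}{\p-p}\in(0,1),
\]
and the fractional Sobolev embedding together with the global $W^{s,p}$ bound gives $|u_n|_{L^{\p}(\B_i)}\leq C$ uniformly in $n,i$.

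The final step is summing these bounds over $i$ and exploiting the finite overlap: $\sum_i |u_n|^p_{L^p(\B_i)}\leq N_0\,|u_n|^p_{L^p(\R^N)}$ and, for the non-local Gagliardo seminorm, $\sum_i [u_n]^p_{W^{s,p}(\B_i)}\leq N_0\,[u_n]^p_{W^{s,p}(\R^N)}$ (since any pair $(x,y)$ with $x,y\in\B_i$ lies in at most $N_0$ of the $\B_i$). The main obstacle is the bookkeeping at this step: the two factors in the interpolation carry non-integer exponents, and one must extract a positive power of $\sup_j|u_n|^p_{L^p(\B_j)}$ — which vanishes by the reduced hypothesis — outside the sum, while controlling the residual sum by the global $W^{s,p}$ norm of $u_n$ (here the local Sobolev estimate is used in its sharper form $|u_n|^\p_{L^\p(\B_i)}\leq C\|u_n\|^\p_{W^{s,p}(\B_i)}$, whose right-hand side is itself controllably small when $|u_n|_{L^p(\B_i)}$ is small). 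The restriction $t\in(p,\p)$, ensuring $\alpha\in(0,1)$, is exactly what makes this balancing possible.
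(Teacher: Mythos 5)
Your proposal follows essentially the same route as the paper's proof: H\"older interpolation on balls of radius $R$ between the low exponent and $L^{\p}$, the fractional Sobolev embedding, and a bounded-overlap covering of $\R^{N}$, with the vanishing local mass extracted as a supremum and a final interpolation in $t$. The only inaccuracy is the closing parenthetical: $\|u_{n}\|_{W^{s,p}(\B_{i})}$ is not itself small when $|u_{n}|_{L^{p}(\B_{i})}$ is — what one actually uses is that $\sum_{i}\|u_{n}\|^{p}_{W^{s,p}(\B_{i})}$ is globally bounded by the finite overlap, while the smallness comes solely from the extracted supremum of the local $L^{p}$ (or $L^{r}$) integrals.
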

\begin{proof}
Let $\tau \in (r, \p)$. Then, H\"older and Sobolev inequality yield, for every $n\in \mathbb{N}$, that
\begin{align*}
|u_{n}|_{L^{\tau}(\B_{R}(y))} &\leq |u_{n}|_{L^{r}(\B_{R}(y))}^{1-\alpha} |u_{n}|_{L^{\p}(\B_{R}(y))}^{\alpha} \\
&\leq C |u_{n}|_{L^{r}(\B_{R}(y))}^{1- \alpha} \|u_{n}\|_{W^{s, p}(\R^{N})}^{\alpha}
\end{align*}
where $\alpha= \frac{\tau-r}{\p-r}\frac{\p}{\tau}$.
Now, covering $\R^{N}$ by balls of radius $R$, in such a way that each point of $\R^{N}$ is contained in at most $N+1$ balls, we find
\begin{align*}
|u_{n}|_{L^{\tau}(\R^{N})}^{\tau} \leq C (N+1) \sup_{y\in \R^{N}} \left(\int_{\B_{R}(y)} |u_{n}|^{r} dx \right)^{(1-\alpha)\tau} \|u_{n}\|^{\alpha \tau}_{W^{s, p}(\R^{N})}.
\end{align*}
Exploiting \eqref{ter4} and the boundedness of $\{u_{n}\}$ in $W^{s, p}(\R^{N})$, we obtain that $u_{n}\rightarrow 0$ in $L^{\tau}(\R^{N})$. By using an interpolation argument, we get the thesis.
\end{proof}

\noindent
The lemma below provides a way to manipulate smooth truncations for the fractional $p$-Laplacian.
Let us note that this result can be seen as a generalization of the second statement of Lemma $5$ in \cite{PP} to the case of the space $W^{s, p}(\R^{N})$ with $p\neq 2$.
\begin{lemma}\label{Psi}
Let $u\in W^{s, p}(\R^{N})$ and $\phi\in \mathcal{C}^{\infty}_{c}(\R^{N})$ such that $0\leq \phi\leq 1$, $\phi=1$ in $\B_{1}(0)$ and $\phi=0$ in $\B_{2}^{c}(0)$. Set $\phi_{r}(x)=\phi(\frac{x}{r})$.  Then
$$
\lim_{r\rightarrow \infty} [u \phi_{r}-u]_{W^{s, p}(\R^{N})}=0 \quad \mbox{ and } \quad \lim_{r\rightarrow \infty} |u\phi_{r}-u|_{L^{p}(\R^{N})}=0.
$$
\end{lemma}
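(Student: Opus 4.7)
The plan is to split the problem into the two convergences separately, with the $L^p$ piece being routine and the seminorm piece requiring a standard algebraic decomposition.

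For the $L^p$ convergence, I would just observe that $u\phi_r - u = u(\phi_r-1)$ satisfies $|u(\phi_r-1)|\le |u|\in L^p(\R^N)$ and tends to $0$ pointwise (since for fixed $x$, $\phi_r(x)=\phi(x/r)=1$ once $r>|x|$); the Dominated Convergence Theorem then gives $|u\phi_r-u|_{L^p(\R^N)}\to 0$.

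For the Gagliardo seminorm, the key step is the pointwise algebraic identity
\[
(u\phi_r-u)(x)-(u\phi_r-u)(y)=(u(x)-u(y))(\phi_r(x)-1)+u(y)(\phi_r(x)-\phi_r(y)),
\]
which, after the elementary inequality $|a+b|^p\le 2^{p-1}(|a|^p+|b|^p)$, reduces the matter to showing $I_r,J_r\to 0$, where
\[
I_r=\iint_{\R^{2N}}\frac{|u(x)-u(y)|^p\,|\phi_r(x)-1|^p}{|x-y|^{N+sp}}\,dxdy,\qquad J_r=\iint_{\R^{2N}}\frac{|u(y)|^p\,|\phi_r(x)-\phi_r(y)|^p}{|x-y|^{N+sp}}\,dxdy.
\]
The term $I_r$ is treated again by Dominated Convergence: its integrand is dominated by the density of $[u]_{W^{s,p}(\R^N)}^p$, which is in $L^1(\R^{2N})$, and $|\phi_r(x)-1|^p\to 0$ pointwise.

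The main obstacle is $J_r$, because $\phi_r$ itself oscillates and one cannot use dominated convergence naively. I would apply Fubini and estimate the inner integral $\int_{\R^N}\frac{|\phi_r(x)-\phi_r(y)|^p}{|x-y|^{N+sp}}\,dx$ uniformly in $y$ by splitting into the regions $\{|x-y|\le r\}$ and $\{|x-y|>r\}$. On the near region, the Lipschitz bound $|\phi_r(x)-\phi_r(y)|\le \|\nabla\phi\|_\infty r^{-1}|x-y|$ gives
\[
\int_{|x-y|\le r}\frac{|\phi_r(x)-\phi_r(y)|^p}{|x-y|^{N+sp}}\,dx\le \frac{\|\nabla\phi\|_\infty^p}{r^p}\int_0^r \omega_{N-1}\rho^{p-sp-1}\,d\rho=\frac{C_1}{r^{sp}},
\]
while on the far region $|\phi_r(x)-\phi_r(y)|\le 2$ yields
\[
\int_{|x-y|>r}\frac{|\phi_r(x)-\phi_r(y)|^p}{|x-y|^{N+sp}}\,dx\le 2^p\omega_{N-1}\int_r^{\infty}\rho^{-sp-1}\,d\rho=\frac{C_2}{r^{sp}}.
\]
Both bounds are independent of $y$, so $J_r\le C r^{-sp}|u|_{L^p(\R^N)}^p\to 0$, which closes the argument. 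The delicate part is really the uniform-in-$y$ kernel estimate above; everything else is bookkeeping.
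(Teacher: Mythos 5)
Your proof is correct. The $L^{p}$ part and the splitting of the Gagliardo seminorm into the two terms $I_{r}$ and $J_{r}$ coincide (up to swapping the roles of $x$ and $y$, harmless by symmetry of the kernel) with the paper's $B_{r}$ and $A_{r}$, and the treatment of $I_{r}$ by dominated convergence is the same. Where you genuinely diverge is in the handling of the hard term $J_{r}$. The paper decomposes $\R^{2N}$ into three regions according to whether $x$ and $y$ lie in $\B_{2r}(0)$, introduces an auxiliary radius $Kr$, estimates the far-field piece over $\R^{N}\setminus\B_{Kr}(0)$ via H\"older and the Sobolev embedding to produce an error $CK^{-N}$, and then takes an iterated limit $\limsup_{r}$ followed by $K\to\infty$. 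You instead prove the uniform kernel bound $\sup_{y\in\R^{N}}\int_{\R^{N}}|\phi_{r}(x)-\phi_{r}(y)|^{p}|x-y|^{-N-sp}\,dx\leq Cr^{-sp}$ by splitting only the $x$-integral at $|x-y|=r$ (mean value theorem near the diagonal, where $p-sp>0$ makes the singularity integrable; trivial bound and kernel decay far from it), and then conclude by Tonelli that $J_{r}\leq Cr^{-sp}|u|^{p}_{L^{p}(\R^{N})}\to 0$. This uses the same two elementary inputs as the paper but organizes them so that no spatial decomposition of $\R^{2N}$, no auxiliary parameter $K$, and no Sobolev inequality are needed; it is shorter, self-contained, and gives the clean quantitative rate $O(r^{-sp})$ for the whole term, whereas the paper's route only yields the qualitative vanishing after the double limit.
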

\begin{proof}
Since $\phi_{r}u\rightarrow u$ a.e. in $\R^{N}$ as $r\rightarrow \infty$, $0\leq \phi\leq 1$ and $u\in L^{p}(\R^{N})$, it follows by the Dominated Convergence Theorem that $\lim_{r\rightarrow \infty} |u\phi_{r}-u|_{L^{p}(\R^{N})}=0$.
In what follows, we show the first relation of limit.
Let us note that
\begin{align*}
&[u\phi_{r}-u]^{p}_{W^{s, p}(\R^{N})}\\
&\leq 2^{p-1} \!\left[\iint_{\R^{2N}} \!\! |u(x)|^{p}\frac{|\phi_{r}(x)-\phi_{r}(y)|^{p}}{|x-y|^{N+sp}}dx dy+\!\iint_{\R^{2N}} \!\!\frac{|\phi_{r}(x)-1|^{p}|u(x)-u(y)|^{p}}{|x-y|^{N+sp}}dx dy\right] \\
&=:2^{p-1}[A_{r}+B_{r}].
\end{align*}
Taking into account that $|\phi_{r}(x)-1|\leq 2$, $|\phi_{r}(x)-1|\rightarrow 0$ a.e. in $\R^{N}$ and $u\in W^{s, p}(\R^{N})$, the Dominated Convergence Theorem yields
$$
B_{r}\rightarrow 0 \quad \mbox{ as } r\rightarrow \infty.
$$
Now, we aim to show that
$$
A_{r}\rightarrow 0 \quad \mbox{ as } r\rightarrow \infty.
$$
Firstly, we observe that since $\R^{2N}$ can be written as
\begin{align*}
\R^{2N}&=((\R^{N} \setminus \B_{2r}(0))\times (\R^{N} \setminus \B_{2r}(0)))\cup (\B_{2r}(0)\times \R^{N}) \cup ((\R^{N} \setminus \B_{2r}(0))\times \B_{2r}(0))\\
&=: X^{1}_{r}\cup X^{2}_{r} \cup X^{3}_{r},
\end{align*}
we get
\begin{align}\label{Pa1}
&\iint_{\R^{2N}} |u(x)|^{p} \frac{|\phi_{r}(x)-\phi_{r}(y)|^{p}}{|x-y|^{N+sp}} \, dx dy \nonumber\\
&=\iint_{X^{1}_{r}} |u(x)|^{p} \frac{|\phi_{r}(x)-\phi_{r}(y)|^{p}}{|x-y|^{N+sp}} \, dx dy
+\iint_{X^{2}_{r}} |u(x)|^{p} \frac{|\phi_{r}(x)-\phi_{r}(y)|^{p}}{|x-y|^{N+sp}} \, dx dy \nonumber\\
&+ \iint_{X^{3}_{r}} |u(x)|^{p} \frac{|\phi_{r}(x)-\phi_{r}(y)|^{p}}{|x-y|^{N+sp}} \, dx dy.
\end{align}
We are going to estimate each integral in (\ref{Pa1}).
Recalling that $\phi=0$ in $\R^{N}\setminus \B_{2}(0)$, we have
\begin{align}\label{Pa2}
\iint_{X^{1}_{r}} |u(x)|^{p} \frac{|\phi_{r}(x)-\phi_{r}(y)|^{p}}{|x-y|^{N+sp}} \, dx dy=0.
\end{align}
By using $0\leq \phi\leq 1$, $|\nabla \phi|\leq 2$ and by applying the Mean Value Theorem, we can see that
\begin{align}\label{Pa3}
&\iint_{X^{2}_{r}} |u(x)|^{p} \frac{|\phi_{r}(x)-\phi_{r}(y)|^{p}}{|x-y|^{N+sp}} \, dx dy \nonumber\\
&=\int_{\B_{2r}(0)} \,dx \int_{\{y\in \R^{N}: |y-x|\leq r\}} |u(x)|^{p} \frac{|\phi_{r}(x)-\phi_{r}(y)|^{p}}{|x-y|^{N+sp}} \, dy \nonumber \\
&+\int_{\B_{2r}(0)} \, dx \int_{\{y\in \R^{N}: |y-x|> r\}} |u(x)|^{p} \frac{|\phi_{r}(x)-\phi_{r}(y)|^{p}}{|x-y|^{N+sp}} \, dy  \nonumber\\
&\leq C r^{-p} |\nabla \phi|_{L^{\infty}(\R^{N})}^{p} \int_{\B_{2r}(0)} \, dx \int_{\{y\in \R^{N}: |y-x|\leq r\}} \frac{|u(x)|^{p}}{|x-y|^{N+sp-p}} \, dy \nonumber \\
&+ C \int_{\B_{2r}(0)} \, dx \int_{\{y\in \R^{N}: |y-x|> r\}} \frac{|u(x)|^{p}}{|x-y|^{N+sp}} \, dy \nonumber\\
&\leq C r^{-sp} \int_{\B_{2r}(0)} |u(x)|^{p} \, dx+C r^{-sp} \int_{\B_{2r}(0)} |u(x)|^{p} \, dx \nonumber \\
&=Cr^{-sp} \int_{\B_{2r}(0)} |u(x)|^{p} \, dx.
\end{align}
Regarding the integral on $X^{3}_{r}$ we obtain
\begin{align}\label{Pa4}
&\iint_{X^{3}_{r}} |u(x)|^{p} \frac{|\phi_{r}(x)-\phi_{r}(y)|^{p}}{|x-y|^{N+sp}} \, dx dy \nonumber\\
&=\int_{\R^{N}\setminus \B_{2r}(0)} \, dx \int_{\{y\in \B_{2r}(0): |y-x|\leq r\}} |u(x)|^{p} \frac{|\phi_{r}(x)-\phi_{r}(y)|^{p}}{|x-y|^{N+sp}} \, dy \nonumber\\
&+\int_{\R^{N}\setminus \B_{2r}(0)} \,dx \int_{\{y\in \B_{2r}(0): |y-x|>r\}} |u(x)|^{p} \frac{|\phi_{r}(x)-\phi_{r}(y)|^{p}}{|x-y|^{N+sp}} \, dy=: C_{r}+ D_{r}.
\end{align}
At this point, by using the Mean Value Theorem and noting that if $(x, y) \in (\R^{N}\setminus \B_{2r}(0))\times \B_{2r}(0)$ and $|x-y|\leq r$ then $|x|\leq 3r$, we get
\begin{align}\label{Pa5}
C_{r}&\leq r^{-p} |\nabla \phi|_{L^{\infty}(\R^{N})}^{p} \int_{\B_{3r}(0)} \, dx \int_{\{y\in \B_{2r}(0): |y-x|\leq r\}} \frac{|u(x)|^{p}}{|x-y|^{N+sp-p}} \, dy \nonumber\\
&\leq C r^{-p}  \int_{\B_{3r}(0)} |u(x)|^{p} \, dx \int_{\{z\in \R^{N}: |z|\leq r\}} \frac{1}{|z|^{N+sp-p}} \, dz \nonumber\\
&=C r^{-sp} \int_{\B_{3r}(0)} |u(x)|^{p} \, dx.
\end{align}
Let us observe that for any $K>4$ it holds
$$
X_{r}^{3}=(\R^{N}\setminus \B_{2r}(0))\times \B_{2r}(0) \subset (\B_{K r}(0) \times \B_{2r}(0)) \cup ((\R^{N}\setminus \B_{Kr}(0))\times \B_{2r}(0)).
$$
Then we have
\begin{align}\label{Pa6}
&\int_{\B_{Kr}(0)} \, dx \int_{\{y\in \B_{2r}(0): |y-x|> r\}}  |u(x)|^{p} \frac{|\phi_{r}(x)-\phi_{r}(y)|^{p}}{|x-y|^{N+sp}} \, dy \nonumber\\
&\leq C \int_{\B_{Kr}(0)} \, dx \int_{\{y\in \B_{2r}(0): |y-x|> r\}} \frac{|u(x)|^{p}}{|x-y|^{N+sp}} \, dy \nonumber \\
&\leq C \int_{\B_{Kr}(0)} |u(x)|^{p} \, dx \int_{\{z\in \R^{N}: |z|> r\}} \frac{1}{|z|^{N+sp}} \, dz \nonumber\\
&= C r^{-sp} \int_{\B_{Kr}(0)} |u(x)|^{p} \, dx.
\end{align}
Let us note that if $(x, y)\in (\R^{N}\setminus \B_{Kr}(0))\times \B_{r}(0)$, then $|x-y|\geq |x|- |y|\geq \frac{|x|}{2}+ \frac{K}{2}r -2r >\frac{|x|}{2}$, and by using H\"older inequality we can see that
\begin{align}\label{Pa7}
&\int_{\R^{N}\setminus \B_{Kr}(0)} \, dx \int_{\{y\in \B_{2r}(0): |y-x|>r\}} |u(x)|^{p} \frac{|\phi_{r}(x)-\phi_{r}(y)|^{p}}{|x-y|^{N+sp}} \, dy \nonumber\\
&\leq  C \int_{\R^{N}\setminus \B_{Kr}(0)} \, dx \int_{\{y\in \B_{2r}(0): |y-x|>r \}} \frac{|u(x)|^{p}}{|x-y|^{N+sp}} \, dy \nonumber\\
&\leq C r^{N} \int_{\R^{N}\setminus \B_{Kr}(0)} \frac{|u(x)|^{p}}{|x|^{N+sp}} \, dx \nonumber\\
&\leq C r^{N} \left(\int_{\R^{N}\setminus \B_{Kr}(0)} |u(x)|^{p^{*}_{s}} \, dx\right)^{\frac{p}{p^{*}_{s}}} \left(\int_{\R^{N}\setminus \B_{Kr}(0)} |x|^{-(N+sp)\frac{p^{*}_{s}}{p^{*}_{s}-p}} \, dx\right)^{\frac{p^{*}_{s}-p}{p^{*}_{s}}} \nonumber\\
&\leq C K^{-N} \left(\int_{\R^{N}\setminus \B_{Kr}(0)} |u(x)|^{p^{*}_{s}} \, dx\right)^{\frac{p}{p^{*}_{s}}}.
\end{align}
Therefore, taking into account (\ref{Pa6}) and (\ref{Pa7}), we get
\begin{align}\label{Pa8}
D_{r}\leq C r^{-sp} \int_{\B_{Kr}(0)} |u(x)|^{p} \, dx+C K^{-N}.
\end{align}
Putting together (\ref{Pa1})-(\ref{Pa5}) and (\ref{Pa8}), we obtain
\begin{align*}
\iint_{\R^{2N}} |u(x)|^{p} \frac{|\phi_{r}(x)-\phi_{r}(y)|^{p}}{|x-y|^{N+sp}} \, dx dy \leq Cr^{-sp} \int_{\B_{Kr}(0)} |u(x)|^{p} \, dx+C K^{-N},
\end{align*}
from which we deduce
\begin{align*}
& \limsup_{r\rightarrow \infty} \iint_{\R^{2N}} |u(x)|^{p} \frac{|\phi_{r}(x)-\phi_{r}(y)|^{p}}{|x-y|^{N+sp}} \, dx dy \nonumber\\
&\quad \quad =\lim_{K\rightarrow \infty}\limsup_{r\rightarrow \infty} \iint_{\R^{2N}} |u(x)|^{p} \frac{|\phi_{r}(x)-\phi_{r}(y)|^{p}}{|x-y|^{N+sp}} \, dx dy =0.
\end{align*}

\end{proof}

\noindent
Now, fixed $\e>0$, we introduce the following fractional Sobolev space
\begin{equation*}
\h:= \left\{u\in W^{s, p}(\R^{N}) : \int_{\R^{N}} V(\e x) |u|^{p} dx <+\infty \right\}
\end{equation*}
endowed with the norm
\begin{equation*}
\|u\|_{\e}^{p} := [u]_{W^{s, p}(\R^{N})}^{p} + \int_{\R^{N}} V(\e x) |u|^{p} dx.
\end{equation*}

\noindent
In view of assumption $(V)$ and Theorem \ref{Sembedding}, it is easy to check that the following result holds.
\begin{lemma}\label{embedding}
The space $\h$ is continuously embedded in $W^{s, p}(\R^{N})$.
Therefore, $\h$ is continuously embedded in $L^{r}(\R^{N})$ for any $r\in [p, \p]$ and compactly embedded in $L^{r}_{loc}(\R^{N})$ for any $r\in [1, \p)$.
\end{lemma}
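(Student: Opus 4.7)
The plan is essentially to reduce the statement to the embedding theorem of Theorem \ref{Sembedding}, since the only nontrivial issue is establishing the first (continuous) embedding $\h \hookrightarrow W^{s,p}(\R^N)$; everything else is just composition.

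First I would prove the continuous embedding $\h \hookrightarrow W^{s,p}(\R^N)$. The Gagliardo seminorm term is identical in both norms, so only the $L^p$ part needs attention. Here the relevant piece of assumption $(V)$ is simply that $V(x) \geq V_0 > 0$ for every $x \in \R^N$. Consequently, for any $u \in \h$,
\begin{equation*}
|u|_{L^p(\R^N)}^p = \int_{\R^N} |u|^p\, dx \leq \frac{1}{V_0} \int_{\R^N} V(\e x) |u|^p\, dx \leq \frac{1}{V_0} \|u\|_\e^p,
\end{equation*}
and combining with $[u]_{W^{s,p}(\R^N)}^p \leq \|u\|_\e^p$ gives
\begin{equation*}
\|u\|_{W^{s,p}(\R^N)}^p \leq \Bigl(1+\tfrac{1}{V_0}\Bigr) \|u\|_\e^p,
\end{equation*}
which is the desired continuous embedding.

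For the remaining assertions I would just compose this with the embeddings supplied by Theorem \ref{Sembedding}: since $W^{s,p}(\R^N) \hookrightarrow L^r(\R^N)$ continuously for every $r \in [p, \p]$, and $W^{s,p}(\R^N) \hookrightarrow L^r_{\mathrm{loc}}(\R^N)$ compactly for every $r \in [1, \p)$, the same embeddings hold from $\h$ with constants depending on $V_0$. There is no real obstacle here — the lemma is essentially bookkeeping — and in particular the finer behavior of $V$ at infinity (i.e.\ the value of $V_\infty$) plays no role; only the positive lower bound $V_0$ matters.
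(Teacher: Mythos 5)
Your proof is correct and is exactly the argument the paper intends: the lower bound $V(\e x)\geq V_{0}>0$ from $(V)$ gives the continuous embedding $\h\hookrightarrow W^{s,p}(\R^{N})$, and the rest follows by composing with Theorem \ref{Sembedding} (the paper omits the proof, stating only that it follows from $(V)$ and that theorem). No issues.
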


\noindent
Moreover, when $V$ is coercive, we get the following compactness lemma.
\begin{lemma}\label{Cheng}
Let $V_{\infty}=\infty$. Then $\h$ is compactly embedded in $L^{r}(\R^{N})$ for any $r\in [p, p^{*}_{s})$.
\end{lemma}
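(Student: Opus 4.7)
The plan is to prove compactness by combining three standard ingredients: boundedness in $W^{s,p}(\R^N)$ together with weak sequential compactness, a uniform tail decay estimate coming from the coercivity of $V(\e\, \cdot\,)$, and the local compact embedding already provided by Lemma \ref{embedding}. Once $L^p(\R^N)$ convergence is established, the range $r \in (p, p^{*}_s)$ is handled by an interpolation with the Sobolev embedding into $L^{p^{*}_s}(\R^N)$.

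More precisely, let $\{u_n\} \subset \h$ be bounded, say $\|u_n\|_\e \leq C$. By Lemma \ref{embedding}, $\{u_n\}$ is bounded in $W^{s,p}(\R^N)$, so up to a subsequence $u_n \rightharpoonup u$ weakly in $W^{s,p}(\R^N)$ and $u_n \to u$ a.e.\ in $\R^N$. Since $\e > 0$ is fixed and $V_\infty = \infty$, we have $V(\e x) \to \infty$ as $|x| \to \infty$, so for every $\eta > 0$ there exists $R = R(\eta) > 0$ with $V(\e x) \geq 1/\eta$ for all $|x| \geq R$. This yields the uniform tail bound
\begin{equation*}
\int_{\R^N \setminus \B_R(0)} |u_n|^p\, dx \leq \eta \int_{\R^N \setminus \B_R(0)} V(\e x)|u_n|^p\, dx \leq \eta \|u_n\|_\e^p \leq C^p \eta.
\end{equation*}
By Fatou's lemma, $\int_{\R^N} V(\e x)|u|^p\, dx \leq \liminf_n \|u_n\|_\e^p \leq C^p$, so $u \in \h$ and the same estimate holds for $u$ in place of $u_n$.

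Next, by Lemma \ref{embedding}, $\h$ embeds compactly into $L^p_{\mathrm{loc}}(\R^N)$, hence along a further subsequence $u_n \to u$ strongly in $L^p(\B_R(0))$. Splitting the integral and using the tail bound on both $u_n$ and $u$ together with the elementary inequality $|a-b|^p \leq 2^{p-1}(|a|^p + |b|^p)$, we get
\begin{equation*}
|u_n - u|_{L^p(\R^N)}^p \leq |u_n - u|_{L^p(\B_R(0))}^p + 2^{p}\, C^p \eta.
\end{equation*}
Letting $n \to \infty$ and then $\eta \to 0$ gives $u_n \to u$ in $L^p(\R^N)$.

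Finally, for $r \in (p, p^{*}_s)$ we write $\frac{1}{r} = \frac{1-\theta}{p} + \frac{\theta}{p^{*}_s}$ for some $\theta \in (0,1)$, and apply H\"older's inequality together with Theorem \ref{Sembedding}:
\begin{equation*}
|u_n - u|_{L^r(\R^N)} \leq |u_n - u|_{L^p(\R^N)}^{1-\theta}\, |u_n - u|_{L^{p^{*}_s}(\R^N)}^{\theta} \leq C'\, |u_n - u|_{L^p(\R^N)}^{1-\theta},
\end{equation*}
where the second factor is bounded because $\{u_n\}$ and $u$ lie in $W^{s,p}(\R^N)$. The right-hand side tends to zero by the previous step, concluding the proof. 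There is no real obstacle here; the only point requiring a little care is that the tail estimate must be transferred from $\{u_n\}$ to the weak limit $u$ (done via Fatou), so that the diagonal splitting argument is symmetric.
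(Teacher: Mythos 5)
Your proof is correct and follows essentially the same route as the paper: coercivity of $V$ gives the uniform tail bound, local compactness gives convergence on $\B_{R}(0)$, and interpolation with the Sobolev embedding handles $r\in(p,p^{*}_{s})$. The only cosmetic difference is that the paper normalizes to a sequence with $u_{n}\rightharpoonup 0$, which lets it skip your Fatou step for transferring the tail estimate to the weak limit.
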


\begin{proof}
We argue as in \cite{Zh}. Let $r=p$. By using Lemma \ref{embedding} we know that $\h\subset L^{p}(\R^{N})$. Let $\{u_{n}\}$ be a sequence such that $u_{n}\rightharpoonup 0$ in $\h$. Then, $u_{n}\rightharpoonup 0$
 in $W^{s, p}(\R^{N})$. \\
Let us define
\begin{equation}\label{ter1}
M:= \sup_{n\in \mathbb{N}} \|u_{n}\|_{\e} <\infty.
\end{equation}
Since $V$ is coercive, for any $\eta>0$ there exists $R=R_{\eta}>0$ such that
\begin{equation}\label{ter2}
\frac{1}{V(\e x)}<\eta, \quad \mbox{ for any } |x|>R.
\end{equation}
Since $u_{n}\rightarrow 0$ in $L^{p}(\B_{R}(0))$, there exists $n_{0}>0$ such that
\begin{equation}\label{ter3}
\int_{\B_{R}(0)} |u_{n}|^{p} dx \leq \eta \quad \mbox{ for any } n\geq n_{0}.
\end{equation}
Hence, for any $n\geq n_{0}$, by using \eqref{ter1}-\eqref{ter3}, we have
\begin{align*}
\int_{\R^{N}} |u_{n}|^{p} dx &=\int_{\B_{R}(0)} |u_{n}|^{p} dx + \int_{\B_{R}^{c}(0)} |u_{n}|^{p} dx \\
&< \eta +\eta \int_{\B_{R}^{c}(0)} V(\e x) |u_{n}|^{p} dx \leq \eta(1+ M^{p}).
\end{align*}
Therefore, $u_{n}\rightarrow 0$ in $L^{p}(\R^{N})$. \\
For $r>p$, using the conclusion of $r=p$, interpolation inequality and Theorem \ref{Sembedding}, we can see that
\begin{equation*}
|u_{n}|_{L^{r}(\R^{N})} \leq C [u_{n}]^{\alpha}_{W^{s,p}(\R^{N})} |u_{n}|_{L^{p}(\R^{N})}^{1-\alpha},
\end{equation*}
where $\frac{1}{r}=\frac{\alpha}{p}+\frac{1-\alpha}{\p}$, which yields the conclusion as required.
\end{proof}

\noindent
The next two results are technical lemmas which will be very useful in this work; their proofs are obtained following the arguments developed by Brezis and Lieb \cite{BL}.
\begin{lemma}\label{lemPSY}
If $\{u_{n}\}$ is a bounded sequence in $\h$, then
\begin{align*}
\|u_{n}- u\|_{\e}^{p} = \|u_{n}\|_{\e}^{p} - \|u\|_{\e}^{p} + o_{n}(1).
\end{align*}
\end{lemma}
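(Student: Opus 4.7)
The plan is to recognize this as a Brezis--Lieb type identity and reduce it to two applications of the classical Brezis--Lieb lemma in $L^p$ spaces, once for the Gagliardo seminorm and once for the weighted $L^p$ term. Since the norm $\|\cdot\|_{\varepsilon}$ splits as $\|u\|_\varepsilon^p = [u]^p_{W^{s,p}(\R^N)} + \int_{\R^N} V(\varepsilon x)|u|^p\,dx$, it suffices to establish the decomposition separately for each piece and sum.

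First, by boundedness of $\{u_n\}$ in $\h$ and Lemma \ref{embedding}, I may extract a subsequence (still denoted $\{u_n\}$) such that $u_n \rightharpoonup u$ in $\h$ and $u_n \to u$ a.e. in $\R^N$ (via compact embedding into $L^p_{loc}(\R^N)$ and a diagonal extraction). For the seminorm part, I introduce the auxiliary functions
\begin{equation*}
U_n(x,y) := \frac{u_n(x)-u_n(y)}{|x-y|^{(N+sp)/p}}, \qquad U(x,y) := \frac{u(x)-u(y)}{|x-y|^{(N+sp)/p}},
\end{equation*}
which lie in $L^p(\R^{2N})$ with $\|U_n\|_{L^p(\R^{2N})}^p = [u_n]^p_{W^{s,p}(\R^N)}$ uniformly bounded. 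The pointwise a.e. convergence $u_n \to u$ on $\R^N$ immediately implies $U_n \to U$ a.e. on $\R^{2N}$ (off a product of null sets). Applying the standard Brezis--Lieb lemma in $L^p(\R^{2N})$ to $\{U_n\}$ gives
\begin{equation*}
\|U_n - U\|_{L^p(\R^{2N})}^p = \|U_n\|_{L^p(\R^{2N})}^p - \|U\|_{L^p(\R^{2N})}^p + o_n(1),
\end{equation*}
which is exactly $[u_n-u]^p_{W^{s,p}(\R^N)} = [u_n]^p_{W^{s,p}(\R^N)} - [u]^p_{W^{s,p}(\R^N)} + o_n(1)$.

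For the potential term, set $v_n(x) := V(\varepsilon x)^{1/p} u_n(x)$ and $v(x) := V(\varepsilon x)^{1/p} u(x)$. Since $\int_{\R^N} V(\varepsilon x)|u_n|^p \, dx = \|v_n\|_{L^p(\R^N)}^p$ is bounded and $v_n \to v$ a.e. in $\R^N$, a second application of Brezis--Lieb in $L^p(\R^N)$ yields
\begin{equation*}
\int_{\R^N} V(\varepsilon x)|u_n - u|^p\, dx = \int_{\R^N} V(\varepsilon x)|u_n|^p\, dx - \int_{\R^N} V(\varepsilon x)|u|^p\, dx + o_n(1).
\end{equation*}
Summing the two identities produces the claimed decomposition.

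The only genuine subtlety is justifying the a.e.\ convergence of $U_n$ in $\R^{2N}$; everything else is a clean invocation of Brezis--Lieb. This is not really an obstacle, since if $u_n \to u$ outside a null set $N_0 \subset \R^N$, then the exceptional set $(N_0 \times \R^N) \cup (\R^N \times N_0)$ has zero $2N$-dimensional Lebesgue measure, and on its complement $U_n \to U$ pointwise by continuity of the map $(a,b) \mapsto (a-b)/|x-y|^{(N+sp)/p}$.
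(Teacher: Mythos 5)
Your proof is correct and follows essentially the same route as the paper: two applications of the classical Brezis--Lieb lemma, one in $L^{p}(\R^{2N})$ for the Gagliardo seminorm via the difference quotients $U_{n}$, and one in $L^{p}(\R^{N})$ for the weighted potential term. Your explicit justification of the a.e.\ convergence of $U_{n}$ on $\R^{2N}$ is a detail the paper leaves implicit, but the argument is the same.
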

\begin{proof}
From the Brezis-Lieb Lemma \cite{BL} we know that if $r\in (1, \infty)$ and $\{g_{n}\}\subset L^{r}(\R^{k})$ is a bounded sequence such that $g_{n}\rightarrow g$ a.e. in $\R^{k}$, then we have
\begin{align}\label{ggg}
|g_{n}-g|_{L^{r}(\R^{k})}^{r}= |g_{n}|_{L^{r}(\R^{k})}^{r} - |g|_{L^{r}(\R^{k})}^{r} +o_{n}(1).
\end{align}
Therefore
\begin{align*}
\int_{\R^{N}} V(\e x) |u_{n}-u|^{p}= \int_{\R^{N}} V(\e x) |u_{n}|^{p}- \int_{\R^{N}} V(\e x) |u|^{p}+ o_{n}(1),
\end{align*}
and taking
\begin{equation*}
g_{n}=\frac{(u_{n}-u)(x)-(u_{n}-u)(y)}{|x-y|^{\frac{N+sp}{p}}}, \quad  g= \frac{u(x)-u(y)}{|x-y|^{\frac{N+sp}{p}}}, \quad r=p \, \mbox{ and } \, k=2N
\end{equation*}
in \eqref{ggg}, we can see that
\begin{align*}
[u_{n}-u]^{p}_{W^{s, p}(\R^{N})}= [u_{n}]^{p}_{W^{s, p}(\R^{N})}- [u]^{p}_{W^{s, p}(\R^{N})} + o_{n}(1).
\end{align*}
\end{proof}

\begin{lemma}\label{lemVince}
Let $w\in \mathcal{D}^{s, p}(\R^{N})$ and $\{z_{n}\}\subset \mathcal{D}^{s, p}(\R^{N})$ be a sequence such that $z_{n}\rightarrow 0$ a.e. and $[z_{n}]_{W^{s, p}(\R^{N})}\leq C$ for any $n\in \mathbb{N}$. Then we have
\begin{align*}
\iint_{\R^{2N}} |\A(z_{n} + w) - \A(z_{n}) - \A(w)|^{p'} dx= o_{n}(1),
\end{align*}
where $\A(u):=\frac{|u(x)- u(y)|^{p-2}(u(x)- u(y))}{|x-y|^{\frac{N+sp}{p'}}}$ and $p'= \frac{p}{p-1}$ is the conjugate exponent of $p$.
\end{lemma}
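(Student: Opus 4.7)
This is a Brezis--Lieb type statement for the nonlinear map $j(t):=|t|^{p-2}t$ evaluated on Gagliardo difference quotients; the strategy is the classical one of \cite{BL}: a pointwise comparison inequality combined with a dominated convergence truncation.

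\emph{Step 1: Reformulation.} For $u\in\mathcal{D}^{s,p}(\R^{N})$ put
$$\widetilde u(x,y):=\frac{u(x)-u(y)}{|x-y|^{(N+sp)/p}},$$
so that $[u]^{p}_{W^{s,p}(\R^{N})}=\|\widetilde u\|^{p}_{L^{p}(\R^{2N})}$, and since $(N+sp)(p-1)/p=(N+sp)/p'$ one checks $\A(u)=j(\widetilde u)$. Setting $U_{n}:=\widetilde{z_{n}}$, $W:=\widetilde w$, linearity gives $\widetilde{z_{n}+w}=U_{n}+W$ and the integral to estimate becomes
$$I_{n}:=\iint_{\R^{2N}}\bigl|j(U_{n}+W)-j(U_{n})-j(W)\bigr|^{p'}\,dx\,dy.$$
The hypotheses translate to $\|U_{n}\|_{L^{p}(\R^{2N})}\le C$, $W\in L^{p}(\R^{2N})$, and (since the complement of a full-measure product set in $\R^{2N}$ is null) $U_{n}\to 0$ a.e.\ on $\R^{2N}$.

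\emph{Step 2: Pointwise inequality.} I claim that for every $\eta>0$ there is $C_{\eta}>0$ with
$$\bigl|j(a+b)-j(a)-j(b)\bigr|^{p'}\le \eta\,|a|^{p}+C_{\eta}\,|b|^{p}\qquad\forall\,a,b\in\R.$$
Using the positive $(p-1)$-homogeneity $j(\lambda t)=\lambda^{p-1}j(t)$ for $\lambda>0$, the function $\phi(a,b):=j(a+b)-j(a)-j(b)$ scales as $\phi(\lambda a,\lambda b)=\lambda^{p-1}\phi(a,b)$; choosing $\lambda=1/|b|$ and using the oddness of $j$ to absorb the sign of $b$ reduces the claim to
$$|\psi(t)|^{p'}\le \eta\,|t|^{p}+C_{\eta}\qquad\forall\,t\in\R,$$
where $\psi(t):=|t+1|^{p-2}(t+1)-|t|^{p-2}t-1$. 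A direct Taylor expansion shows $\psi(t)=o(|t|^{p-1})$ as $|t|\to\infty$, hence $|\psi(t)|^{p'}|t|^{-p}\to 0$, and $\psi$ is continuous and bounded on compacts, from which the reduced inequality follows at once.

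\emph{Step 3: Truncation and dominated convergence.} Define
$$h_{n,\eta}:=\Bigl(\bigl|j(U_{n}+W)-j(U_{n})-j(W)\bigr|^{p'}-\eta\,|U_{n}|^{p}\Bigr)_{+}.$$
By Step 2, $0\le h_{n,\eta}\le C_{\eta}|W|^{p}\in L^{1}(\R^{2N})$. Since $U_{n}\to 0$ a.e.\ and $j$ is continuous, $h_{n,\eta}\to 0$ a.e., so Lebesgue's dominated convergence theorem yields $\iint h_{n,\eta}\,dx\,dy\to 0$. Therefore
$$I_{n}\le \iint_{\R^{2N}} h_{n,\eta}\,dx\,dy+\eta\iint_{\R^{2N}}|U_{n}|^{p}\,dx\,dy\le \iint_{\R^{2N}} h_{n,\eta}\,dx\,dy+\eta\,C^{p},$$
whence $\limsup_{n}I_{n}\le \eta\,C^{p}$ for every $\eta>0$; letting $\eta\to 0^{+}$ finishes the proof. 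The main obstacle is the scalar inequality of Step 2; once it is in place, the remaining argument is the standard Brezis--Lieb mechanism and the fractional character of the problem enters only through the change of variables in Step 1.
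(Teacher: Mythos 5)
Your proof is correct, and it follows the same Brezis--Lieb mechanism as the paper: a pointwise inequality of the form $|\A(z_{n}+w)-\A(z_{n})-\A(w)|^{p'}\leq \eta\,|\widetilde{z_{n}}|^{p}+C_{\eta}|\widetilde{w}|^{p}$, followed by the truncation $h_{n,\eta}=(\cdots-\eta|\widetilde{z_{n}}|^{p})_{+}$, dominated convergence, and the arbitrariness of $\eta$. Where you genuinely diverge is in how the scalar inequality is obtained. The paper splits into two cases: for $p\geq 2$ it proves $||a+b|^{p-2}(a+b)-|a|^{p-2}a|\leq \varepsilon|a|^{p-1}+C_{\varepsilon}|b|^{p-1}$ via the Mean Value Theorem and Young's inequality, and for $1<p<2$ it invokes Lemma 3.1 of Mercuri--Willem, which gives a Lipschitz-type bound $||c+d|^{p-2}(c+d)-|c|^{p-2}c|\leq C|d|^{p-1}$. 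Your argument treats all $p\in(1,\infty)$ at once: the $(p-1)$-homogeneity and oddness of $j(t)=|t|^{p-2}t$ reduce the two-variable inequality to the one-variable statement $|\psi(t)|^{p'}\leq\eta|t|^{p}+C_{\eta}$ for $\psi(t)=j(t+1)-j(t)-1$, which follows from continuity of $\psi$ on compacts and $\psi(t)=O(|t|^{p-2})+O(1)=o(|t|^{p-1})$ at infinity. This is cleaner and self-contained (no external citation, no case distinction), and it is the sharper form of the inequality that keeps the $-j(b)$ term inside the absolute value rather than bounding it separately; the paper's version is marginally more elementary in that it never appeals to the scaling structure. One cosmetic remark: the left-hand side of the displayed integral in the statement (and in your $I_{n}$) is a double integral over $\R^{2N}$ in both variables $x$ and $y$, which you handle correctly even though the statement writes only $dx$.
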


\begin{proof}
Firstly we consider the case $p\geq 2$. We resemble some ideas in Lemma $3$ in \cite{Alv}. By using the Mean Value Theorem, Young inequality and $p\geq 2$, we can see that for fixed $\e>0$ there exists $C_{\e}>0$ such that
\begin{equation}\label{abp-2}
||a+b|^{p-2}(a+b)-|a|^{p-2}a|\leq \e |a|^{p-1}+C_{\e}|b|^{p-1} \quad \mbox{ for all } a, b\in \R.
\end{equation}
Taking
$$
a=\frac{z_{n}(x)-z_{n}(y)}{|x-y|^{\frac{N+sp}{p}}} \quad \mbox{ and } \quad b=\frac{w(x)-w(y)}{|x-y|^{\frac{N+sp}{p}}}
$$
in \eqref{abp-2}, we obtain
\begin{align*}
&\Bigl|\frac{|(z_{n}(x)+w(x))- (z_{n}(y)+w(y))|^{p-2}((z_{n}(x)+w(x))- (z_{n}(y)+w(y)))}{|x-y|^{\frac{N+sp}{p'}}} \\
&\qquad- \frac{|z_{n}(x)- z_{n}(y)|^{p-2}(z_{n}(x)- z_{n}(y))}{|x-y|^{\frac{N+sp}{p'}}} \Bigr| \\
&\leq \e \frac{|z_{n}(x)-z_{n}(y)|^{p-1}}{|x-y|^{\frac{N+sp}{p'}}}+ C_{\e}  \frac{|w(x)-w(y)|^{p-1}}{|x-y|^{\frac{N+sp}{p'}}}.
\end{align*}
Let us define the function $H_{\e, n}:\R^{2N}\rightarrow \R_{+}$ by setting
\begin{align*}
H_{\e, n} (x, y):= \max \left\{ |\A(z_{n} + w) - \A(z_{n}) - \A(w)|- \e \frac{|z_{n}(x)-z_{n}(y)|^{p-1}}{|x-y|^{\frac{N+sp}{p'}}}, 0 \right\}.
\end{align*}
We can see that $H_{\e, n} \rightarrow 0$ a.e. in $\R^{2N}$ as $n\rightarrow \infty$, and
\begin{align*}
0\leq H_{\e, n}(x, y) \leq C_{1} \frac{|w(x)-w(y)|^{p-1}}{|x-y|^{\frac{N+sp}{p'}}}\in L^{p'}(\R^{2N}).
\end{align*}
By using the Dominated Convergence Theorem, we have
\begin{align*}
\int_{\R^{2N}} |H_{\e, n} |^{p'} dxdy \rightarrow 0 \quad \mbox{ as } n\rightarrow \infty.
\end{align*}
From the definition of $H_{\e, n}$, we deduce that
\begin{align*}
|\A(z_{n} + w) - \A(z_{n}) - \A(w)|\leq \e \frac{|z_{n}(x)-z_{n}(y)|^{p-1}}{|x-y|^{\frac{N+sp}{p'}}} + H_{\e, n},
\end{align*}
so we obtain
\begin{align*}
|\A(z_{n} + w) -\A(z_{n}) - \A(w)|^{p'}\leq C_{2}\left[\e^{p'} \frac{|z_{n}(x)-z_{n}(y)|^{p}}{|x-y|^{N+sp}} + (H_{\e, n})^{p'}\right].
\end{align*}
Therefore
\begin{align*}
\limsup_{n\rightarrow \infty} &\iint_{\R^{2N}} |\A(z_{n} + w) - \A(z_{n}) - \A(w)|^{p'} dxdy\\
& \leq C_{2}\e^{p'}\limsup_{n\rightarrow \infty} [z_{n}]^{p}_{W^{s, p}(\R^{N})} \\
&\leq C_{3}\e^{p'},
\end{align*}
and by the arbitrariness of $\e$ we get the thesis.

Now we deal with the case $1<p<2$. By using Lemma $3.1$ in \cite{MeW}, we know that
$$
\sup_{c\in \R^{N}, d\neq 0} \left|\frac{|c+d|^{p-2}(c+d)-|c|^{p-2}c}{|d|^{p-1}}\right|<\infty,
$$
so, by setting
$$
c=\frac{z_{n}(x)-z_{n}(y)}{|x-y|^{\frac{N+sp}{p}}} \quad \mbox{ and } \quad d=\frac{w(x)-w(y)}{|x-y|^{\frac{N+sp}{p}}},
$$
we can conclude the proof in view of the Dominated Convergence Theorem.
\end{proof}

\begin{lemma}\label{splitting}
Let $\{u_{n}\}$ be a sequence such that $u_{n}\rightharpoonup u$ in $\h$, and $v_{n}:=u_{n}-u$.
Then we have
\begin{equation}\label{A}
\int_{\R^{N}} (F(v_{n})-F(u_{n})+F(u))\, dx=o_{n}(1)
\end{equation}
and
\begin{equation}\label{B}
\sup_{\|w\|_{\e}\leq 1} \int_{\R^{N}} |(f(v_{n})-f(u_{n})+f(u)) w|\, dx=o_{n}(1).
\end{equation}
\end{lemma}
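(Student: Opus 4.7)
First, since $u_n \rightharpoonup u$ in $\h$ the sequence is bounded in $\h$, hence in $W^{s,p}(\R^{N})$ by Lemma \ref{embedding} and in $L^{r}(\R^{N})$ for every $r \in [p, p^{*}_{s}]$ by Theorem \ref{Sembedding}; a diagonal extraction based on the compact local embedding supplied by Lemma \ref{embedding} lets me assume $u_{n} \to u$ and $v_{n} \to 0$ a.e.\ in $\R^{N}$. From $(f_{2})$--$(f_{3})$, for every $\eta > 0$ there is $C_{\eta} > 0$ with
$$|f(t)| \leq \eta|t|^{p-1} + C_{\eta}|t|^{q-1}, \qquad |F(t)| \leq \tfrac{\eta}{p}|t|^{p} + \tfrac{C_{\eta}}{q}|t|^{q}, \quad t \in \R.$$
The strategy for both (A) and (B) is a Brezis--Lieb style truncation: I will obtain a pointwise bound whose $v_{n}$-part carries a small coefficient $\eta$ and whose $u$-part is an $L^{1}$ (respectively $L^{p'}+L^{q'}$) function of $u$ alone, then peel off the $\eta$-small piece and apply Dominated Convergence to the remainder.

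For (A), I would start from
$$F(u_{n}) - F(v_{n}) - F(u) = \int_{0}^{1} \bigl[f(v_{n} + \tau u) - f(\tau u)\bigr]\, u \, d\tau,$$
and combine the growth bound with Young's inequality $ab \leq \delta a^{p} + C_{\delta} b^{p}$ (resp.\ $a^{q-1}b \leq \delta a^{q} + C_{\delta} b^{q}$) with parameter $\delta$ calibrated to $\eta/C_{\eta}$, arriving at
$$|F(u_{n}) - F(v_{n}) - F(u)| \leq \eta \tilde C\bigl(|v_{n}|^{p} + |v_{n}|^{q}\bigr) + D_{\eta}\bigl(|u|^{p} + |u|^{q}\bigr),$$
with $\tilde C$ independent of $\eta$ and $D_{\eta}$ depending only on $\eta$. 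Setting
$$G_{n}^{\eta} := \bigl(|F(u_{n}) - F(v_{n}) - F(u)| - \eta \tilde C(|v_{n}|^{p} + |v_{n}|^{q})\bigr)^{+},$$
one has $G_{n}^{\eta} \to 0$ a.e.\ and $G_{n}^{\eta} \leq D_{\eta}(|u|^{p} + |u|^{q}) \in L^{1}(\R^{N})$ independently of $n$, hence $\int G_{n}^{\eta}\, dx = o_{n}(1)$ by Dominated Convergence. Then
$$\int_{\R^{N}}|F(u_{n}) - F(v_{n}) - F(u)|\, dx \leq o_{n}(1) + \eta \tilde C\bigl(|v_{n}|_{L^{p}}^{p} + |v_{n}|_{L^{q}}^{q}\bigr),$$
and letting $n \to \infty$ followed by $\eta \to 0$ establishes \eqref{A}.

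For (B), the analogous pointwise estimate, derived in the same way starting from $|f(v_{n}+u)-f(v_{n})|+|f(u)|$ and Young,
$$|f(v_{n}) - f(u_{n}) + f(u)| \leq \eta \tilde C\bigl(|v_{n}|^{p-1} + |v_{n}|^{q-1}\bigr) + D_{\eta}\bigl(|u|^{p-1} + |u|^{q-1}\bigr)$$
produces the truncation $K_{n}^{\eta} := \bigl(|f(v_{n}) - f(u_{n}) + f(u)| - \eta \tilde C(|v_{n}|^{p-1} + |v_{n}|^{q-1})\bigr)^{+}$, which tends to $0$ a.e.\ and is dominated by $D_{\eta}(|u|^{p-1} + |u|^{q-1})$. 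Splitting $K_{n}^{\eta} = k_{n}^{(1)} + k_{n}^{(2)}$ so that $k_{n}^{(1)} \leq D_{\eta}|u|^{p-1} \in L^{p'}(\R^{N})$ and $k_{n}^{(2)} \leq D_{\eta}|u|^{q-1} \in L^{q'}(\R^{N})$, Dominated Convergence in each space gives $|k_{n}^{(1)}|_{L^{p'}} + |k_{n}^{(2)}|_{L^{q'}} = o_{n}(1)$. Coupling H\"older with the continuous embeddings $\h \hookrightarrow L^{p}(\R^{N}), L^{q}(\R^{N})$ of Lemma \ref{embedding}, I would obtain, uniformly in $w$ with $\|w\|_{\e} \leq 1$,
$$\int_{\R^{N}}|f(v_{n}) - f(u_{n}) + f(u)||w|\,dx \leq C\bigl(|k_{n}^{(1)}|_{L^{p'}} + |k_{n}^{(2)}|_{L^{q'}}\bigr) + C\eta\bigl(|v_{n}|_{L^{p}}^{p-1} + |v_{n}|_{L^{q}}^{q-1}\bigr),$$
after which $n \to \infty$ and then $\eta \to 0$ deliver \eqref{B}. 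The main obstacle is producing the pointwise estimates in the correct form: the small constant $\eta$ has to stand in front of \emph{both} the $|v_{n}|^{p-1}$ and $|v_{n}|^{q-1}$ (respectively $|v_{n}|^{p}$ and $|v_{n}|^{q}$) terms rather than just the lower-order one; this is achieved by choosing the Young-inequality parameter as $\delta = \eta/C_{\eta}$ so that the large constant $C_{\eta}$ never multiplies a quantity that is merely bounded (and not vanishing) in $n$.
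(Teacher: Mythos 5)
Your treatment of \eqref{A} is correct and is essentially the paper's argument: the identity $F(u_{n})-F(v_{n})-F(u)=\int_{0}^{1}[f(v_{n}+\tau u)-f(\tau u)]\,u\,d\tau$ supplies a factor of $u$ in every term, Young's inequality then moves the large constant onto the fixed function $u$, and the truncation $G_{n}^{\eta}$ plus Dominated Convergence finishes exactly as in the paper.

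For \eqref{B} there is a genuine gap, and it sits exactly where you locate ``the main obstacle.'' The pointwise bound
$|f(v_{n})-f(u_{n})+f(u)|\leq \eta\tilde C\bigl(|v_{n}|^{p-1}+|v_{n}|^{q-1}\bigr)+D_{\eta}\bigl(|u|^{p-1}+|u|^{q-1}\bigr)$
cannot be ``derived in the same way'' as in the $F$-case, because the mechanism that worked there is unavailable: $f$ is only continuous, so $f(v_{n}+u)-f(v_{n})$ has no representation $\int_{0}^{1}f'(v_{n}+\tau u)\,u\,d\tau$, hence no product $a\cdot b$ with $b$ depending on $u$ alone to which Young's inequality could be applied. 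All you can extract from $(f_{2})$--$(f_{3})$ and the triangle inequality is $|f(v_{n}+u)-f(v_{n})-f(u)|\leq C\xi(|v_{n}|^{p-1}+|u|^{p-1})+CC_{\xi}(|v_{n}|^{q-1}+|u|^{q-1})$, and the offending coefficient $CC_{\xi}$ in front of $|v_{n}|^{q-1}$ --- a quantity that is bounded but not vanishing in $n$ --- blows up as $\xi\to0$; no choice of the Young parameter repairs this, since there is nothing to trade against. The remainder of your scheme (splitting $K_{n}^{\eta}$ into pieces dominated by $D_{\eta}|u|^{p-1}\in L^{p'}$ and $D_{\eta}|u|^{q-1}\in L^{q'}$, Dominated Convergence in each space, then H\"older against $w$) is sound, and the pointwise inequality itself is in fact true; but proving it requires a genuinely different argument --- a case analysis in $(|v_{n}|,|u|)$ using the uniform continuity of $f$ on compact sets together with the fact that $(f_{2})$ and $(f_{3})$ are honest limits (so that $|f(t)|\leq\eta|t|^{q-1}+M_{\eta}$ with $M_{\eta}$ absorbed for large arguments, and $|f|$ is $o(|t|^{p-1})$ for small ones). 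This difficulty is precisely why the paper does not argue pointwise at all for \eqref{B}: following Ding, it extracts a subsequence with uniformly small $L^{p}$- and $L^{q}$-tails, truncates $u$ into $\tilde u_{j}\to u$ in $W^{s,p}(\R^{N})$ via Lemma \ref{Psi}, and handles the mere continuity of $f$ through the uniform continuity of $g(t)=f(t)/|t|^{p-1}$ on the sets where $|v_{n_{j}}|\leq\tilde a$ and $|h_{j}|\leq\delta$. As written, your proof of \eqref{B} is incomplete at its central step.
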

\begin{proof}
We begin proving \eqref{A}. Let us note that
$$
F(v_{n})-F(u_{n})=\int_{0}^{1} \frac{d}{dt} F(u_{n}-tu)dt=-\int_{0}^{1} u f(u_{n}-tu)dt.
$$
In view of $(f_{2})$ and $(f_{3})$, for any $\delta>0$ there exists $C_{\delta}>0$ such that
\begin{align}\label{Ftiodio}
|f(t)|\leq \delta |t|^{p-1}+ C_{\delta}|t|^{\p-1}\quad \mbox{ for all } t\in \R.
\end{align}
By using \eqref{Ftiodio} with $\delta=1$ and $(a+b)^{r}\leq C_{r}(a^{r}+ b^{r})$ for any $a, b\geq 0$ and $r\geq 1$, we can see that
\begin{align}\label{odio}
|F(v_{n})-F(u_{n})|\leq C_{p} |u_{n}|^{p-1}|u|+C_{p} |u|^{p}+C_{1}C_{\p} |u_{n}|^{\p-1}|u|+C_{1}C_{\p} |u|^{\p}.
\end{align}
By applying the Young inequality $ab \leq \eta a^{r} + C_{\eta} b^{r'}$ with $\frac{1}{r}+\frac{1}{r'}=1$ and $\eta>0$ to the first and third term on the right hand side of \eqref{odio}, we can deduce that
$$
|F(v_{n})-F(u_{n})|\leq \eta (|u_{n}|^{p}+|u_{n}|^{\p})+C_{\eta} (|u|^{p}+|u|^{\p})
$$
which implies that
$$
|F(v_{n})-F(u_{n})+F(u)|\leq \eta (|u_{n}|^{p}+|u_{n}|^{\p})+C'_{\eta} (|u|^{p}+|u|^{\p}).
$$
Let
$$
G_{\eta, n}(x):=\max\left\{|F(v_{n})-F(u_{n})+F(u)|-\eta(|u_{n}|^{p}+|u_{n}|^{\p}), 0\right\}.
$$
Then $G_{\eta, n}\rightarrow 0$ a.e. in $\R^{N}$ as $n\rightarrow \infty$, and $0\leq G_{\eta, n}\leq C'_{\eta} (|u|^{p}+|u|^{\p})\in L^{1}(\R^{N})$. As a consequence of the Dominated Convergence Theorem, we get
$$
\int_{\R^{N}} G_{\eta, n}(x) \, dx\rightarrow 0 \quad \mbox{ as } n\rightarrow \infty.
$$
On the other hand, from the definition of $G_{\eta, n}$, it follows that
$$
|F(v_{n})-F(u_{n})+F(u)|\leq \eta(|u_{n}|^{p}+|u_{n}|^{\p})+G_{\eta, n}
$$
which together with the boundedness of $\{u_{n}\}$ in $L^{p}(\R^{N})\cap L^{\p}(\R^{N})$ yields
$$
\limsup_{n\rightarrow \infty} \int_{\R^{N}} |F(v_{n})-F(u_{n})+F(u)| \, dx\leq C\eta.
$$
The arbitrariness of $\eta$ ends the proof of \eqref{A}.

Now we prove \eqref{B}. Arguing as in Lemma 5.7 in \cite{Ding} we can find a subsequence $\{u_{n_{j}}\}$ such that for all $\eta>0$ there exists $r_{\eta}>0$ satisfying
\begin{equation}\label{Ding1}
\limsup_{j\rightarrow \infty} \int_{\B_{j}(0)\setminus \B_{r}(0)} |u_{n_{j}}|^{\tau} dx\leq \eta \quad \mbox{ and } \quad \int_{\R^{N}\setminus \B_{r}(0)} |u|^{\tau} dx\leq \eta
\end{equation}
for all $r\geq r_{\eta}$, where $\tau\in \{p, q\}$. Let $\phi: [0, \infty)\rightarrow [0,1]$ be a smooth function such that $\phi=1$ if $t\leq 1$ and $\phi=0$ if $t\geq 2$, and we  define $\tilde{u}_{j}(x)=\phi\left(\frac{2|x|}{j}\right)u(x)$. In view of Lemma \ref{Psi} we can see that
\begin{equation}\label{Ding2}
\tilde{u}_{j}\rightarrow u \quad \mbox{ in } W^{s,p}(\R^{N}).
\end{equation}
Set $h_{j}=u-\tilde{u}_{j}$. Then, for all $w\in \h$ it holds
\begin{align}\label{Ding3}
\int_{\R^{N}} &[f(u_{n_{j}})-f(v_{n_{j}})-f(u)]w \,dx= \nonumber \\
&=\int_{\R^{N}} [f(u_{n_{j}})-f(u_{n_{j}}-\tilde{u}_{j})-f(\tilde{u}_{j})]w \,dx \nonumber \\
&+\int_{\R^{N}} [f(v_{n_{j}}+h_{j})-f(v_{n_{j}})]w \,dx+\int_{\R^{N}} [f(\tilde{u}_{j})-f(u)]w \,dx \nonumber\\
&=I_{j}+II_{j}+III_{j}.
\end{align}
By using \eqref{Ding2} we obtain
\begin{equation}\label{Ding4}
\lim_{j\rightarrow \infty} \sup_{\|w\|_{\e}\leq 1} |III_{j}|=0.
\end{equation}
Now, we show that
\begin{equation}\label{Ding5}
\lim_{j\rightarrow \infty} \sup_{\|w\|_{\e}\leq 1}  |I_{j}|=0.
\end{equation}
Invoking Lemma \ref{embedding} and using $(f_2)$-$(f_3)$ we have, for all $r>0$,
\begin{equation}\label{Ding6}
\lim_{j\rightarrow \infty} \sup_{\|w\|_{\e}\leq 1}  \left|\int_{\B_{r}(0)}  [f(u_{n_{j}})-f(u_{n_{j}}-\tilde{u}_{j})-f(\tilde{u}_{j})]w \, dx\right|=0.
\end{equation}
On the other hand, from \eqref{Ding1} and the definition of $\tilde{u}_{j}$, it follows that
\begin{equation}\label{Dingnew}
\limsup_{j\rightarrow \infty} \int_{\B_{j}(0)\setminus \B_{r}(0)} |\tilde{u}_{j}|^{\tau} dx\leq \int_{\R^{N}\setminus \B_{r}(0)} |u|^{\tau} dx\leq \eta
\end{equation}
for all $r\geq r_{\eta}$. Then, by using $(f_2)$-$(f_3)$, H\"older inequality, Lemma \ref{embedding}, \eqref{Ding1} and \eqref{Dingnew} we get
\begin{align}\label{Ding7}
&\limsup_{j\rightarrow \infty} \left|\int_{\R^{N}}  [f(u_{n_{j}})-f(u_{n_{j}}-\tilde{u}_{j})-f(\tilde{u}_{j})]w \,dx \right|  \nonumber\\
&=\limsup_{j\rightarrow \infty} \left|\int_{\B_{j}(0)\setminus \B_{r}(0)}  [f(u_{n_{j}})-f(u_{n_{j}}-\tilde{u}_{j})-f(\tilde{u}_{j})]w \, dx\right| \nonumber \\
&\leq C\limsup_{j\rightarrow \infty} \left[|u_{n_{j}}|^{p-1}_{L^{p}(\B_{j}(0)\setminus \B_{r}(0))}+|\tilde{u}_{j}|^{p-1}_{L^{p}(\B_{j}(0)\setminus \B_{r}(0))}\right] \|w\|_{\e}  \nonumber\\
&+C\limsup_{j\rightarrow \infty} \left[|u_{n_{j}}|^{q-1}_{L^{q}(\B_{j}(0)\setminus \B_{r}(0))}+|\tilde{u}_{j}|^{q-1}_{L^{q}(\B_{j}(0)\setminus \B_{r}(0))}\right] \|w\|_{\e} \nonumber \\
&\leq C\eta^{\frac{p-1}{p}}+C\eta^{\frac{q-1}{q}}.
\end{align}
Putting together \eqref{Ding6} and \eqref{Ding7}, we can deduce that \eqref{Ding5} holds true.
Finally we verify that
\begin{equation}\label{Ding8}
\lim_{j\rightarrow \infty} \sup_{\|w\|_{\e}\leq 1}  |II_{j}|=0.
\end{equation}
Let us define $g(t)=\frac{f(t)}{|t|^{p-1}}$ if $t\neq 0$ and $g(0)=0$. In the light of $(f_1)$-$(f_3)$, we can see that $g\in \C^{0}(\R)$ and $|g(t)|\leq C(1+|t|^{q-p})$ for any $t\in \R$.
For all $a>0$ and $j\in \mathbb{N}$, we set $C_{j}^{a}=\{x\in \R^{N}: |v_{n_{j}}(x)|\leq a\}$ and $D_{j}^{a}=\R^{N}\setminus C_{j}^{a}$. Since $\{v_{n_{j}}\}$ is bounded in $L^{p}(\R^{N})$, we can see that $|D_{j}^{a}|\rightarrow 0$ as $a\rightarrow \infty$.
Then, in view of $(f_2)$-$(f_3)$, H\"older inequality  and the boundedness of $\{h_{j}\}$ we deduce that
\begin{align*}
\left|\int_{D_{j}^{a}} [f(v_{n_{j}}+h_{j})-f(v_{n_{j}})]w \,dx  \right|\leq C(|D_{j}^{a}|^{\frac{\p-p}{\p}}+|D_{j}^{a}|^{\frac{\p-q}{\p}})\|w\|_{\e},
\end{align*}
which implies that there exists $\tilde{a}=\tilde{a}_{\eta}>0$ satisfying
\begin{align}\label{Ding9}
\left|\int_{D_{j}^{\tilde{a}}} [f(v_{n_{j}}+h_{j})-f(v_{n_{j}})]w \,dx  \right|\leq \eta
\end{align}
uniformly in $\|w\|_{\e}\leq 1$. Since $g$ is uniformly continuous in $[-\tilde{a}, \tilde{a}]$, there exists $\delta>0$ such that
\begin{equation}\label{UCg}
|g(t+h)-g(t)|\leq \eta \quad \mbox{ for all } t\in [-\tilde{a}, \tilde{a}] \mbox{ and } |h|<\delta.
\end{equation}
Let $V_{j}^{\delta}=\{x\in \R^{N}: |h_{j}(x)|\leq \delta \}$ and $W_{j}^{\delta}=\R^{N}\setminus V_{j}^{\delta}$.
Noting that $|C_{j}^{\tilde{a}}\cap W_{j}^{\delta}|\leq |W_{j}^{\delta}|\rightarrow 0$ as $j\rightarrow \infty$, we can argue as before to infer that there exists $j_{0}\in \mathbb{N}$ such that
\begin{align}\label{Ding10}
\left|\int_{C_{j}^{\tilde{a}}\cap W_{j}^{\delta}} [f(v_{n_{j}}+h_{j})-f(v_{n_{j}})]w \, dx  \right|\leq \eta \quad \mbox{ for all } j\geq j_{0},
\end{align}
uniformly in $\|w\|_{\e}\leq 1$. Let us observe that
\begin{align*}
[f(v_{n_{j}}+h_{j})-f(v_{n_{j}})]w&=g(v_{n_{j}}+h_{j}) [|v_{n_{j}}+h_{j}|^{p-1}-|v_{n_{j}}|^{p-1}] w \\
& +[g(v_{n_{j}}+h_{j})-g(v_{n_{j}})] |v_{n_{j}}|^{p-1} w.
\end{align*}
In view of \eqref{Ding2}, we can find $j_{1}\in \mathbb{N}$ such that $j_{1}\geq j_{0}$ and
\begin{equation}\label{Ding22}
|h_{j}|_{L^{\tau}(\R^{N})}<\eta \quad \mbox{ for all } j\geq j_{1}.
\end{equation}
Taking into account \eqref{UCg}, \eqref{Ding22} and the boundedness of $\{v_{n_{j}}\}$ we can see that for all $j\geq j_{1}$
$$
\left|\int_{C_{j}^{\tilde{a}}\cap V_{j}^{\delta}} [g(v_{n_{j}}+h_{j})-g(v_{n_{j}})] |v_{n_{j}}|^{p-1} w \, dx\right|\leq C\eta, \quad \mbox{ uniformly in } \|w\|_{\e}\leq 1.
$$
Now, we recall the following inequalities for all $a, b\in \R$
\begin{align*}
||a|^{p-1}-|b|^{p-1}|\leq
\left\{
\begin{array}{ll}
|a-b|^{p-1} &\mbox{ if } p\in (1, 2] \\
(p-1)(|a|+|b|)^{p-2}|a-b|   &\mbox{ if } p> 2.
\end{array}
\right.
\end{align*}
Assume $p\in (1, 2]$. By using $|g(t)|\leq C(1+|t|^{q-p})$, H\"older inequality and \eqref{Ding22} we have
\begin{align*}
&\left|\int_{C_{j}^{\tilde{a}}\cap V_{j}^{\delta}} g(v_{n_{j}}+h_{j}) [|v_{n_{j}}+h_{j}|^{p-1}-|v_{n_{j}}|^{p-1}] w\, dx\right| \\
&\leq C\int_{\R^{N}} (1+|v_{n_{j}}+h_{j}|^{q-p})|h_{j}|^{p-1}|w| \,dx \\
&\leq C[|h_{j}|_{L^{p}(\R^{N})}^{p-1}|w|_{L^{p}(\R^{N})}+|v_{n_{j}}|_{L^{q}(\R^{N})}^{q-p}|h_{j}|^{p-1}_{L^{q}(\R^{N})}|w|_{L^{q}(\R^{N})}+|h_{j}|_{L^{q}(\R^{N})}^{q-1}|w|_{L^{q}(\R^{N})}]\\
&\leq C[\eta^{p-1}+\eta^{q-1}].
\end{align*}
When $p>2$, we can deduce that
\begin{align*}
&\left|\int_{C_{j}^{\tilde{a}}\cap V_{j}^{\delta}} g(v_{n_{j}}+h_{j}) [|v_{n_{j}}+h_{j}|^{p-1}-|v_{n_{j}}|^{p-1}] w \, dx \right| \\
&\leq C\int_{\R^{N}} (1+|v_{n_{j}}+h_{j}|^{q-p})(|v_{n_{j}}|^{p-2}|h_{j}|+|h_{j}|^{p-1})|w|\, dx\\
&\leq C[|v_{n_{j}}|_{L^{p}(\R^{N})}^{p-2}|h_{j}|_{L^{p}(\R^{N})}|w|_{L^{p}(\R^{N})}+|h_{j}|^{p-1}_{L^{p}(\R^{N})}|w|_{L^{p}(\R^{N})}\\
&+|v_{j}|_{L^{q}(\R^{N})}^{q-2}|h_{j}|_{L^{q}(\R^{N})}|w|_{L^{q}(\R^{N})}
+|v_{j}|_{L^{q}(\R^{N})}^{q-p}|h_{j}|_{L^{q}(\R^{N})}^{p-1}|w|_{L^{q}(\R^{N})}\\
&+|h_{j}|_{L^{q}(\R^{N})}^{q-p+1}|v_{j}|_{L^{q}(\R^{N})}^{p-2}|w|_{L^{q}(\R^{N})}+|h_{j}|^{q-1}_{L^{q}(\R^{N})}|w|_{L^{q}(\R^{N})} ]\\
&\leq C[\eta+\eta^{p-1}+\eta^{q-p+1}+\eta^{q-1}].
\end{align*}
From the above estimates, and using \eqref{Ding10} and $C_{j}^{\tilde{a}}=(C_{j}^{\tilde{a}}\cap V_{j}^{\delta})\cup (C_{j}^{\tilde{a}}\cap W_{j}^{\delta})$, we get
\begin{align*}
\left|\int_{C_{j}^{\tilde{a}}} [f(v_{n_{j}}+h_{j})-f(v_{n_{j}})] w \, dx\right|
\leq C\eta \quad \mbox{ for all } j\geq j_{1},
\end{align*}
uniformly in $\|w\|_{\e}\leq 1$, which together with \eqref{Ding9} yields \eqref{Ding8}.
\end{proof}

\section{Subcritical case}\label{Sect3}

\subsection{Functional setting in the subcritical case}\label{Sect3.1}
After a change of variable, we are led to consider the following problem
\begin{equation}\tag{$P_{\e}$}
\left\{
\begin{array}{ll}
(-\Delta)_{p}^{s} u +V(\e x) |u|^{p-2}u = f(u) &\mbox{ in } \R^{N}\\
u\in W^{s, p}(\R^{N}) \\
u(x)>0 &\mbox{ for all } x\in \R^{N}.
\end{array}
\right.
\end{equation}

\noindent
Weak solutions of ($P_{\e}$) can be obtained as critical points of the functional
\begin{equation*}
\I_{\e}(u)=\frac{1}{p} \|u\|_{\e}^{p} - \int_{\R^{N}} F(u) \, dx
\end{equation*}
which is well defined on $\h$.
It is standard to verify that $(f_2)$-$(f_3)$ yield that given $\xi >0$ there exists $C_{\xi}>0$ such that
\begin{align}
&|f(t)|\leq \xi |t|^{p-1} + C_{\xi} |t|^{q-1} \label{tv1}\\
&|F(t)| \leq \frac{\xi}{p} |t|^{p} + \frac{C_{\xi}}{q}|t|^{q} \label{tv2}.
\end{align}
On the other hand, hypothesis $(f_{5})$ implies that
\begin{equation}\label{increasing}
t\mapsto \frac{1}{p}f(t)t- F(t) \quad \mbox{ is increasing for any } t\geq 0.
\end{equation}
By using Lemma \ref{embedding}, it is easy to see that $\I_{\e}\in \C^{1}(\h, \R)$ and its differential $\I'_{\e}$ is given by
\begin{align*}
\langle \I'_{\e}(u), \varphi \rangle &= \iint_{\R^{2N}} \frac{|u(x)-u(y)|^{p-2}(u(x)- u(y))}{|x-y|^{N+sp}} (\varphi(x)- \varphi(y)) dxdy \\
&+  \int_{\R^{N}} V(\e x) |u|^{p-2} u\, \varphi dx - \int_{\R^{N}} f(u)\varphi \, dx
\end{align*}
for any $u, \varphi \in \h$. \\
Now, let us introduce the Nehari manifold associated to $\I_{\e}$, that is
\begin{equation*}
\N_{\e}= \left\{u\in \h\setminus \{0\} : \langle \I'_{\e}(u), u\rangle =0  \right\}.
\end{equation*}
Let us note that $\I_{\e}$ possesses a mountain pass geometry.
\begin{lemma}\label{lem2.1}
The functional $\I_{\e}$ satisfies the following conditions:
\begin{compactenum}[$(i)$]
\item there exist $\alpha, \rho >0$ such that $\I_{\e}(u)\geq \alpha$ with $\|u\|_{\e}=\rho$;
\item there exists $e\in \h$ with $\|e\|_{\e}>\rho$ such that $\I_{\e}(e)<0$.
\end{compactenum}
\end{lemma}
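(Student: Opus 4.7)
The plan is to verify both parts by a standard mountain pass calculation, adapted to the energy $\I_\e$ associated with the fractional $p$-Laplacian. Throughout we exploit that by $(V)$ the potential satisfies $V(\e x) \geq V_0 > 0$, which together with Lemma \ref{embedding} yields continuous embeddings $\h \hookrightarrow L^r(\R^N)$ for all $r \in [p, \p]$, in particular $|u|_{L^r(\R^N)} \leq C_r \|u\|_\e$.

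For part $(i)$, I would begin from the growth estimate \eqref{tv2}: for every $\xi > 0$ there is $C_\xi > 0$ with
\begin{equation*}
\int_{\R^N} F(u)\, dx \leq \frac{\xi}{p} |u|_{L^p(\R^N)}^p + \frac{C_\xi}{q} |u|_{L^q(\R^N)}^q \leq \frac{\xi C_p^p}{p}\|u\|_\e^p + \frac{C_\xi C_q^q}{q}\|u\|_\e^q.
\end{equation*}
Plugging this into $\I_\e$ gives
\begin{equation*}
\I_\e(u) \geq \left(\frac{1}{p} - \frac{\xi C_p^p}{p}\right) \|u\|_\e^p - \frac{C_\xi C_q^q}{q}\|u\|_\e^q.
\end{equation*}
Choosing $\xi$ small enough that the coefficient of $\|u\|_\e^p$ is strictly positive, and invoking $q > p$, the right-hand side is positive on a small sphere $\|u\|_\e = \rho$, so I obtain $\I_\e(u) \geq \alpha > 0$ there.

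For part $(ii)$, the strategy is to show that $\I_\e(tu) \to -\infty$ along some ray. The Ambrosetti-Rabinowitz condition $(f_4)$ gives, by differentiating $t \mapsto F(t)/t^\vartheta$ for $t > 0$, that $F(t)/t^\vartheta$ is nondecreasing on $(0, +\infty)$; hence fixing any $t_0 > 0$ with $F(t_0) > 0$ (such $t_0$ exists by $(f_4)$), we have $F(t) \geq C_1 t^\vartheta$ for all $t \geq t_0$, with $C_1 = F(t_0)/t_0^\vartheta > 0$. Now I would pick any nonnegative, nonzero $u \in \C_c^\infty(\R^N) \subset \h$ and consider
\begin{equation*}
\I_\e(tu) = \frac{t^p}{p}\|u\|_\e^p - \int_{\R^N} F(tu)\, dx \leq \frac{t^p}{p}\|u\|_\e^p - C_1 t^\vartheta \int_{\{u \geq t_0/t\}} u^\vartheta\, dx.
\end{equation*}
As $t \to \infty$, the set $\{u \geq t_0/t\}$ increases to $\{u > 0\}$, which has positive measure, so $\int_{\{u \geq t_0/t\}} u^\vartheta\, dx$ stays bounded below by a positive constant for $t$ large. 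Since $\vartheta > p$, it follows that $\I_\e(tu) \to -\infty$. Thus for $t$ large enough, $e := tu$ satisfies $\|e\|_\e > \rho$ and $\I_\e(e) < 0$.

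Neither step presents a genuine obstacle: both are routine consequences of the subcritical growth \eqref{tv1}--\eqref{tv2} and the Ambrosetti-Rabinowitz condition $(f_4)$. The only minor care needed is to ensure that the $L^p$- and $L^q$-estimates used in $(i)$ genuinely follow from control by $\|\cdot\|_\e$ and not just $\|\cdot\|_{W^{s,p}(\R^N)}$, which is handled by Lemma \ref{embedding}.
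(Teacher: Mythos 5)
Your proposal is correct and follows essentially the same route as the paper: part $(i)$ is the standard estimate from \eqref{tv2} combined with the embedding $\h\hookrightarrow L^{p}(\R^{N})\cap L^{q}(\R^{N})$ and a small choice of $\xi$, and part $(ii)$ uses the superlinear lower bound on $F$ coming from $(f_4)$ to force $\I_{\e}(tu)\rightarrow-\infty$ along a ray through a nonnegative $\C^{\infty}_{c}$ function. The only cosmetic difference is that the paper states the consequence of $(f_4)$ as $F(t)\geq C_{1}|t|^{\vartheta}-C_{2}$ globally, whereas you derive $F(t)\geq C_{1}t^{\vartheta}$ for $t\geq t_{0}$ via the monotonicity of $t\mapsto F(t)/t^{\vartheta}$; both are routine and lead to the same conclusion.
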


\begin{proof}
$(i)$ By using \eqref{tv2}, $V_{0}\leq V(\e x)$, and Lemma \ref{embedding} we get
\begin{align*}
\I_{\e}(u) &\geq \frac{1}{p} \|u\|_{\e}^{p}- \frac{\xi}{p V_{0}} \int_{\R^{N}} V(\e x) |u|^{p} dx - \frac{C_{\xi}}{q}\int_{\R^{N}} |u|^{q} dx\\
&\geq \left( \frac{1}{p}- \frac{\xi}{pV_{0}}\right) \|u\|_{\e}^{p} - \frac{C_{\xi}C_{q}}{q} \|u\|_{\e}^{q}.
\end{align*}
Choosing $\xi\in (0, V_{0})$, there exist $\alpha, \rho>0$ such that
\begin{equation*}
\I_{\e}(u)\geq \alpha >0 \quad \mbox{ with } \|u\|_{\e}= \rho.
\end{equation*}
$(ii)$ By $(f_4)$ we can infer
\begin{equation*}
F(t)\geq C_{1}|t|^{\vartheta} - C_{2} \quad \mbox{ for any } t\geq 0,
\end{equation*}
for some $C_{1}, C_{2}>0$. Taking $\varphi \in \C^{\infty}_{c}(\R^{N})\setminus \{0\}$ such that $\varphi\geq 0$ we have
\begin{equation*}
\I_{\e}(t\varphi)\leq \frac{t^{p}}{p} \|\varphi\|_{\e}^{p}- t^{\vartheta}C_{1} \int_{\supp \varphi} |\varphi|^{\vartheta} dx + C_{2}|\supp \varphi|\rightarrow -\infty \mbox{ as } t\rightarrow +\infty.
\end{equation*}
\end{proof}

\noindent
Since $f$ is only continuous, the next results are very important because they allow us to overcome the non-differentiability of $\N_{\e}$. We begin proving some properties for the functional $\I_{\e}$.
\begin{lemma}\label{SW1}
Under assumptions $(V)$ and $(f_1)$-$(f_5)$ we have for any $\e>0$:
\begin{compactenum}[$(i)$]
\item $\I'_{\e}$ maps bounded sets of $\h$ into bounded sets of $\h$.
\item $\I'_{\e}$ is weakly sequentially continuous in $\h$.
\item $\I_{\e}(t_{n}u_{n})\rightarrow -\infty$ as $t_{n}\rightarrow \infty$, where $u_{n}\in K$ and $K\subset \h\setminus\{0\}$ is a compact subset.
\end{compactenum}
\end{lemma}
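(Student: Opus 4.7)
The plan is to split $\langle\I'_\e(u),\varphi\rangle$ into its three summands and apply H\"older's inequality to each. For the Gagliardo term, I factor the integrand over $\R^{2N}$ as $|u(x)-u(y)|^{p-1}/|x-y|^{(N+sp)/p'}$ against $(\varphi(x)-\varphi(y))/|x-y|^{(N+sp)/p}$ and bound it by $[u]_{W^{s,p}(\R^N)}^{p-1}[\varphi]_{W^{s,p}(\R^N)}$. The potential term, factored with the weight $V(\e x)$ distributed as $V(\e x)^{1/p'}|u|^{p-1}\cdot V(\e x)^{1/p}\varphi$, is controlled by $(\int V(\e x)|u|^p)^{(p-1)/p}\|\varphi\|_\e$. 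The nonlinearity term is estimated by $C(\|u\|_\e^{p-1}+\|u\|_\e^{q-1})\|\varphi\|_\e$ via \eqref{tv1} and Lemma~\ref{embedding}. Summing yields a uniform bound on $\|\I'_\e(u)\|_{\h^{*}}$ in terms of $\|u\|_\e$.

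\textbf{Outline for (ii).} Given $u_n\rightharpoonup u$ in $\h$, the subsequence principle lets me work along a subsequence (not relabelled) with $u_n\to u$ a.e.\ in $\R^N$. Fix $\varphi\in\h$. For the Gagliardo piece I borrow the notation $\mathcal{A}$ from Lemma~\ref{lemVince}: $\mathcal{A}(u_n)\to\mathcal{A}(u)$ a.e.\ on $\R^{2N}$, and $\|\mathcal{A}(u_n)\|_{L^{p'}(\R^{2N})}^{p'}=[u_n]_{W^{s,p}(\R^N)}^{p}$ is bounded, so the classical fact that an a.e.\ convergent, $L^{p'}$-bounded sequence converges weakly in $L^{p'}$ produces $\mathcal{A}(u_n)\rightharpoonup\mathcal{A}(u)$ in $L^{p'}(\R^{2N})$; pairing with $(\varphi(x)-\varphi(y))/|x-y|^{(N+sp)/p}\in L^{p}(\R^{2N})$ gives the desired convergence. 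The same a.e./boundedness mechanism, now in the weighted $L^{p'}(\R^N, V(\e x)dx)$, disposes of the potential term. For the nonlinearity, $f(u_n)\varphi\to f(u)\varphi$ a.e.\ and \eqref{tv1} supplies the uniformly integrable majorant $(\xi|u_n|^{p-1}+C_\xi|u_n|^{q-1})|\varphi|$, since $\{u_n\}$ is bounded in $L^{p}\cap L^{q}$ and $\varphi\in L^{p}\cap L^{q}$ by Lemma~\ref{embedding}; Vitali's theorem then closes the argument. The main obstacle here is identifying the weak limit of $\mathcal{A}(u_n)$ in $L^{p'}(\R^{2N})$, which is where the nonlinear structure of $(-\Delta)_p^s$ bites.

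\textbf{Outline for (iii).} Compactness of $K\subset\h\setminus\{0\}$ gives $0<m\leq\|v\|_\e\leq M<\infty$ for every $v\in K$. For $(u_n)\subset K$ and $t_n\to\infty$, I again use the subsequence principle: from any subsequence I extract a further one with $u_{n_k}\to u_0$ in $\h$ and $u_{n_k}\to u_0$ a.e., where $u_0\in K$, hence $u_0\neq 0$. I then choose $\delta>0$ and a set $B$ of finite positive measure with $u_0\geq 2\delta$ on $B$; a.e.\ convergence forces $|\{u_{n_k}\geq\delta\}\cap B|\geq|B|/2$ for $k$ large. Integrating $(f_4)$ as in the proof of Lemma~\ref{lem2.1} produces $F(t)\geq C_1 t^\vartheta - C_2$ for all $t\geq 0$, with $\vartheta>p$, so that
\begin{equation*}
\I_\e(t_{n_k}u_{n_k})\leq\frac{t_{n_k}^{p}}{p}M^{p}-C_1\delta^{\vartheta}\frac{|B|}{2}t_{n_k}^{\vartheta}+\frac{C_2|B|}{2}\longrightarrow-\infty,
\end{equation*}
since $\vartheta>p$ and $t_{n_k}\to\infty$. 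As this conclusion is reached along an arbitrary subsequence, $\I_\e(t_nu_n)\to-\infty$.
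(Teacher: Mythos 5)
Your proposal is correct and, for parts $(i)$ and $(ii)$, essentially fills in what the paper dispatches in one or two lines: the paper proves $(i)$ by exactly your H\"older estimates, and justifies $(ii)$ only by citing $(f_2)$, $(f_3)$ and Lemma \ref{embedding}, whereas you supply the actual mechanism (a.e.\ convergence of $\mathcal{A}(u_n)$ together with boundedness in $L^{p'}(\R^{2N})$ giving weak convergence, and Vitali for the nonlinear term), which is indeed where the non-Hilbertian structure of $(-\Delta)^{s}_{p}$ enters. For $(iii)$ your route is a mild variant: the paper normalizes $\|u\|_{\e}=1$ on $K$, factors out $t_{n}^{\vartheta}$ and applies Fatou's lemma to $\int_{\R^{N}}F(t_{n}u_{n})/t_{n}^{\vartheta}\,dx$, while you use the integrated bound $F(t)\geq C_{1}t^{\vartheta}-C_{2}$ (from $(f_4)$) on a set where $u_{n_k}\geq\delta$; both arguments rest on the same two ingredients, namely $\vartheta>p$ and the non-degeneracy of $K$, yours being slightly more quantitative. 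One shared caveat: your choice of a set $B$ on which $u_{0}\geq 2\delta$ tacitly assumes $u_{0}^{+}\not\equiv 0$, and since $F$ vanishes on $(-\infty,0]$ this is genuinely needed for the conclusion; the paper's Fatou step requires the same, and in every application of the lemma the relevant functions do have nontrivial positive part, so this is an imprecision in the statement rather than a defect of your proof.
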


\begin{proof}
$(i)$ Let $\{u_{n}\}$ be a bounded sequence in $\h$ and $v \in \h$. Then, by using $(f_{2})$ and $(f_{3})$, we deduce that
\begin{align*}
\langle \I'_{\e}(u_{n}), v \rangle &\leq C \|u_{n}\|_{\e}^{p-1} \|v\|_{\e} + C \|u_{n}\|_{\e}^{q-1} \|v\|_{\e} \leq C.
\end{align*}
$(ii)$ This is a consequence of $(f_2)$, $(f_3)$ and Lemma \ref{embedding}.

\noindent
$(iii)$ Without loss of generality, we may assume that $\|u\|_{\e}=1$ for each $u\in K$. For $u_{n}\in K$, after passing to a subsequence, we obtain that $u_{n}\rightarrow u\in \mathbb{S}_{\e}$. Then, by using $(f_{4})$ and Fatou's Lemma, we can see that
\begin{align*}
\I_{\e}(t_{n}u_{n}) &=\frac{t_{n}^{p}}{p}\|u_{n}\|_{\e}^{p}- \int_{\R^{N}} F(t_{n}u_{n}) \, dx \\
&\leq t_{n}^{\vartheta} \left(\frac{\|u_{n}\|_{\e}^{p}}{t_{n}^{\vartheta-p}} - \int_{\R^{N}} \frac{F(t_{n}u_{n})}{t_{n}^{\vartheta}} \, dx \right)\rightarrow -\infty \, \mbox{ as } n\rightarrow \infty.
\end{align*}
\end{proof}

\begin{lemma}\label{SW2}
Under the assumptions of Lemma \ref{SW1}, for $\e>0$ we have:
\begin{compactenum}[$(i)$]
\item for all $u\in \mathbb{S}_{\e}$, there exists a unique $t_{u}>0$ such that $t_{u}u\in \N_{\e}$. Moreover, $m_{\e}(u)=t_{u}u$ is the unique maximum of $\I_{\e}$ on $\h$, where $\mathbb{S}_{\e}=\{u\in \h: \|u\|_{\e}=1\}$.
\item The set $\N_{\e}$ is bounded away from $0$. Furthermore $\N_{\e}$ is closed in $\h$.
\item There exists $\alpha>0$ such that $t_{u}\geq \alpha$ for each $u\in \mathbb{S}_{\e}$ and, for each compact subset $W\subset \mathbb{S}_{\e}$, there exists $C_{W}>0$ such that $t_{u}\leq C_{W}$ for all $u\in W$.
\item For each $u\in \N_{\e}$, $m_{\e}^{-1}(u)=\frac{u}{\|u\|_{\e}}\in \N_{\e}$. In particular, $\N_{\e}$ is a regular manifold diffeomorphic to the sphere in $\h$.
\item $c_{\e}=\inf_{\N_{\e}} \I_{\e}\geq \rho>0$ and $\I_{\e}$ is bounded below on $\N_{\e}$, where $\rho$ is independent of $\e$.
\end{compactenum}
\end{lemma}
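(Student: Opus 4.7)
The plan is to execute the standard Szulkin--Weth Nehari-manifold analysis, restricted (as is usual when $f$ vanishes on negatives by $(f_1)$) to those $u\in\mathbb{S}_\e$ with $u^+\not\equiv 0$, which is the relevant open subset of $\mathbb{S}_\e$. Everything flows from studying the scalar fibering map $h_u(t):=\I_\e(tu)$ for $t\geq 0$.

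For (i) I would note that Lemma \ref{lem2.1}(i) gives $h_u(\rho_0)\geq\alpha>0$ for some small $\rho_0>0$, while Lemma \ref{SW1}(iii) forces $h_u(t)\to-\infty$ as $t\to\infty$. Hence $h_u$ attains a strictly positive maximum at some $t_u>0$ with $h_u'(t_u)=0$, i.e.\ $t_uu\in\N_\e$. Uniqueness is extracted by rewriting the Nehari identity as $\|u\|_\e^p=\int_{\{u>0\}}\frac{f(tu)}{(tu)^{p-1}}u^p\,dx$ (valid by $(f_1)$); by $(f_5)$ the right-hand side is strictly increasing in $t$, so $t_u$ is unique and $m_\e(u)=t_uu$ is the unique maximum of $\I_\e$ on the ray $\R_{+}u$.

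For (ii) the plan is to combine $\langle\I'_\e(u),u\rangle=0$ with \eqref{tv1} (taking $\xi<V_0$) and Lemma \ref{embedding}, obtaining $(1-\xi/V_0)\|u\|_\e^p\leq C_\xi C_q\|u\|_\e^q$ and hence $\|u\|_\e\geq\alpha>0$ with $\alpha$ depending only on $V_0$ and Sobolev constants (in particular independent of $\e$). Closedness of $\N_\e$ follows by passing to the limit in the Nehari identity along a convergent sequence in $\h$, using Lemma \ref{SW1}(i)--(ii) and ruling out the trivial limit via the uniform lower bound just established. For (iii), $t_u\geq\alpha$ is immediate since $t_u=\|t_uu\|_\e$ and $t_uu\in\N_\e$. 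The upper bound on a compact $W\subset\mathbb{S}_\e$ I would prove by contradiction: if $t_n:=t_{u_n}\to\infty$ with $u_n\in W$, extract $u_n\to u\in W$; Lemma \ref{SW1}(iii) then yields $\I_\e(t_nu_n)\to-\infty$, contradicting the uniform bound $\I_\e(v)\geq(\tfrac{1}{p}-\tfrac{1}{\vartheta})\|v\|_\e^p>0$ on $\N_\e$ which follows from $(f_4)$.

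Item (iv) is the routine Szulkin--Weth step: $m_\e(u/\|u\|_\e)=u$ for $u\in\N_\e$ by the uniqueness in (i), so $m_\e^{-1}(u)=u/\|u\|_\e$, and the continuity of $u\mapsto t_u$ I would establish by taking $u_n\to u$ in $\mathbb{S}_\e$, using (iii) to bound $\{t_{u_n}\}$, extracting $t_{u_n}\to t^{\ast}>0$, and invoking uniqueness in (i) on the limit Nehari identity to identify $t^{\ast}=t_u$. Finally (v) is the clean estimate
$$
\I_\e(u)=\I_\e(u)-\tfrac{1}{\vartheta}\langle\I'_\e(u),u\rangle\geq\Bigl(\tfrac{1}{p}-\tfrac{1}{\vartheta}\Bigr)\|u\|_\e^p\geq\Bigl(\tfrac{1}{p}-\tfrac{1}{\vartheta}\Bigr)\alpha^p=:\rho>0,
$$
which is $\e$-independent since $\alpha$ is. The most delicate step is the continuity of $u\mapsto t_u$ in (iv), which hinges on the strict monotonicity from $(f_5)$ promoting the pointwise uniqueness in (i) to a continuity statement; the rest is a clean assembly of $(f_1)$, $(f_4)$, $(f_5)$ together with the preparatory Lemmas \ref{lem2.1}, \ref{SW1} and \ref{embedding}.
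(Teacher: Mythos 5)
Your proposal is correct and follows essentially the same Szulkin--Weth route as the paper: the fibering map $h_u(t)=\I_\e(tu)$ combined with Lemma \ref{lem2.1} and Lemma \ref{SW1}(iii) for existence, the $(f_5)$ monotonicity for uniqueness, the estimate via \eqref{tv1} for the lower bound on $\N_\e$, and the compactness/contradiction argument for the upper bound on $t_u$. The only (harmless) deviations are that in (v) you bound $c_\e$ from below via the $(f_4)$ identity $\I_\e(u)\geq\bigl(\tfrac{1}{p}-\tfrac{1}{\vartheta}\bigr)\|u\|_\e^p$ directly on $\N_\e$, whereas the paper evaluates $\I_\e(tu)$ at a small fixed $t$ and invokes the minimax characterization \eqref{Nguyen1}; and you make explicit the restriction to $u^+\not\equiv 0$ (forced by $(f_1)$), a point the paper leaves implicit.
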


\begin{proof}
$(i)$ For each $u\in \mathbb{S}_{\e}$ and $t>0$, we define $h(t)=\I_{\e}(tu)$. From the proof of the Lemma \ref{lem2.1} we know that $h(0)=0$, $h(t)<0$ for $t$ large and $h(t)>0$ for $t$ small. Therefore, $\max_{t\geq 0} h(t)$ is achieved at $t=t_{u}>0$ satisfying $h'(t_{u})=0$ and $t_{u}u\in \N_{\e}$. By using $(f_{5})$, it is easy to verify the uniqueness of a such $t_{u}$.

\noindent
$(ii)$ By using \eqref{tv1} and Lemma \ref{embedding} we can see that for any $u\in\N_{\e}$
\begin{align*}
\|u\|_{\e}^{p} = \int_{\R^{N}} f(u)u \, dx \leq C\xi \|u\|_{\e}^{p} + C_{\xi} \|u\|_{\e}^{q}
\end{align*}
which implies that $\|u\|_{\e}\geq \kappa$ for some $\kappa>0$. \\
Now we prove that the set $\N_{\e}$ is closed in $\h$. Let $\{u_{n}\}\subset \N_{\e}$ such that $u_{n}\rightarrow u$ in $\h$. In view of Lemma \ref{SW1} we know that $\I'_{\e}(u_{n})$ is bounded, so we can deduce that
\begin{align*}
\langle \I'_{\e}(u_{n}), u_{n} \rangle - \langle \I'_{\e}(u), u \rangle= \langle \I'_{\e}(u_{n})- \I'_{\e}(u), u \rangle + \langle \I'_{\e}(u_{n}), u_{n}-u \rangle \rightarrow 0
\end{align*}
that is $\langle \I'_{\e}(u), u\rangle=0$. This combined with $\|u\|_{\e}\geq \kappa$ implies that
$$
\|u\|_{\e}= \lim_{n\rightarrow \infty} \|u_{n}\|_{\e} \geq \kappa >0,
$$
that is $u\in \N_{\e}$.

\noindent
$(iii)$ For each $u\in \mathbb{S}_{\e}$ there exists $t_{u}>0$ such that $t_{u}u\in \N_{\e}$. From the proof of $(ii)$, we can see that
$$
t_{u}= \|t_{u}u\|_{\e} \geq \kappa.
$$
Now we prove that $t_{u}\leq C_{W}$ for all $u\in W\subset \mathbb{S}_{\e}$. Assume by contradiction that there exists $\{u_{n}\}\subset W\subset \mathbb{S}_{\e}$ such that $t_{u_{n}}\rightarrow \infty$. Since $W$ is compact, we can find $u\in W$ such that $u_{n}\rightarrow u$ in $\h$ and $u_{n}\rightarrow u$ a.e. in $\R^{N}$. By using Lemma \ref{SW1}-$(iii)$, we can deduce that $\I_{\e}(t_{u_{n}}u_{n})\rightarrow -\infty$ as $n\rightarrow \infty$, which gives a contradiction because $(f_{4})$ implies that
$$
\I_{\e}(u)|_{\N_{\e}}= \int_{\R^{N}} \frac{1}{p}f(u)- F(u)\, dx \geq 0.
$$

\noindent
$(iv)$ Let us define the maps $\hat{m}_{\e}: \h\setminus \{0\} \rightarrow \N_{\e}$ and $m_{\e}: \mathbb{S}_{\e}\rightarrow \N_{\e}$ by setting
\begin{align}\label{me}
\hat{m}_{\e}(u)= t_{u}u \quad \mbox{ and } \quad m_{\e}= \hat{m}_{\e}|_{\mathbb{S}_{\e}}.
\end{align}
In view of $(i)$-$(iii)$ and Proposition 3.1 in \cite{SW} we can deduce that $m_{\e}$ is a homeomorphism between $\mathbb{S}_{\e}$ and $\N_{\e}$ and the inverse of $m_{\e}$ is given by $m_{\e}^{-1}(u)=\frac{u}{\|u\|_{\e}}$. Therefore $\N_{\e}$ is a regular manifold diffeomorphic to $\mathbb{S}_{\e}$.

\noindent
$(v)$ For $\e>0$, $t>0$ and $u\in \h \setminus \{0\}$, we can see that \eqref{tv2} yields
\begin{align*}
\I_{\e}(tu)\geq \frac{t^{p}}{p} (1- C\xi)\|u\|_{\e}^{p}-  t^{q} C_{\e} \|u\|_{\e}^{q},
\end{align*}
so we can find $\rho>0$ such that $\I_{\e}(tu)\geq \rho>0$ for $t>0$ small enough. On the other hand, by using $(i)$-$(iii)$, we know (see \cite{SW}) that
\begin{align}\label{Nguyen1}
c_{\e}= \inf_{u\in \N_{\e}} \I_{\e}(u)= \inf_{u\in \h\setminus \{0\}} \max_{t\geq 0} \I_{\e}(tu) = \inf_{u\in \mathbb{S}_{\e}} \max_{t\geq 0} \I_{\e}(tu)
\end{align}
which implies $c_{\e}\geq \rho$ and $\I_{\e}|_{\N_{\e}}\geq \rho$.
\end{proof}

\noindent
Now we introduce the following functionals $\hat{\Psi}_{\e}: \h\setminus\{0\} \rightarrow \R$ and $\Psi_{\e}: \mathbb{S}_{\e}\rightarrow \R$ defined by
\begin{align*}
\hat{\Psi}_{\e}= \I_{\e}(\hat{m}_{\e}(u)) \quad \mbox{ and } \quad \Psi_{\e}= \hat{\Psi}_{\e}|_{\mathbb{S}_{\e}},
\end{align*}
where $\hat{m}_{\e}(u)= t_{u}u$ is given in \eqref{me}.
As in \cite{SW} we have the following result:

\begin{lemma}\label{SW3}
Under the assumptions of Lemma \ref{SW1}, we have that for $\e>0$:
\begin{compactenum}[$(i)$]
\item $\Psi_{\e}\in \C^{1}(\mathbb{S}_{\e}, \R)$, and
\begin{equation*}
\Psi'_{\e}(w)v= \|m_{\e}(w)\|_{\e} \I'_{\e}(m_{\e}(w))v \quad \mbox{ for } v\in T_{w}(\mathbb{S}_{\e}).
\end{equation*}
\item $\{w_{n}\}$ is a Palais-Smale sequence for $\Psi_{\e}$ if and only if $\{m_{\e}(w_{n})\}$ is a Palais-Smale sequence for $\I_{\e}$. If $\{u_{n}\}\subset \N_{\e}$ is a bounded Palais-Smale sequence for $\I_{\e}$, then $\{m_{\e}^{-1}(u_{n})\}$ is a Palais-Smale sequence for $\Psi_{\e}$.
\item $u\in \mathbb{S}_{\e}$ is a critical point of $\Psi_{\e}$ if and only if $m_{\e}(u)$ is a critical point of $\I_{\e}$. Moreover the corresponding critical values coincide and
\begin{equation*}
\inf_{\mathbb{S}_{\e}} \Psi_{\e}=\inf_{\N_{\e}} \I_{\e}=c_{\e}.
\end{equation*}
\end{compactenum}
\end{lemma}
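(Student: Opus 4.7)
The plan is to deduce everything from the homeomorphism $m_{\e}\colon \mathbb{S}_{\e}\to \N_{\e}$ established in Lemma \ref{SW2}(iv) together with the fact that $t_{u}u\in \N_{\e}$ is a critical point of $t\mapsto \I_{\e}(tu)$. The proof is abstract in the sense that it mirrors the Szulkin--Weth scheme in \cite{SW}; all the nontrivial analytical input (smoothness of $\I_{\e}$, existence/uniqueness of $t_{u}$, boundedness away from zero and local boundedness of $u\mapsto t_{u}$, continuity of the projection) has already been verified in Lemma \ref{SW1} and Lemma \ref{SW2}.

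For part $(i)$, I would first prove that $\hat{\Psi}_{\e}\in \C^{1}(\h\setminus\{0\},\R)$ by computing its differential directly. Fix $u\in \h\setminus \{0\}$ and $v\in \h$. By Lemma \ref{SW2}(i) the map $u\mapsto t_{u}$ is well defined; using the implicit function theorem applied to the $\C^{1}$ map $(t,u)\mapsto \langle \I'_{\e}(tu),u\rangle$ (whose derivative in $t$ at $t=t_{u}$ is nonzero by $(f_{5})$), one sees that $u\mapsto t_{u}$ is $\C^{1}$ on $\h\setminus\{0\}$. Then
\[
\langle \hat{\Psi}'_{\e}(u),v\rangle = \langle \I'_{\e}(t_{u}u), t_{u}v+(t'_{u}[v])\,u\rangle = t_{u}\langle \I'_{\e}(t_{u}u),v\rangle,
\]
because the second term vanishes: since $t_{u}u\in \N_{\e}$ we have $\langle \I'_{\e}(t_{u}u),t_{u}u\rangle=0$, hence $\langle \I'_{\e}(t_{u}u),u\rangle=0$. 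Restricting to $w\in \mathbb{S}_{\e}$ and $v\in T_{w}\mathbb{S}_{\e}$, and observing that $t_{w}=\|t_{w}w\|_{\e}=\|m_{\e}(w)\|_{\e}$, yields the stated formula.

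For part $(ii)$, the identity from $(i)$ gives immediately that if $\|\Psi'_{\e}(w_{n})\|_{*}\to 0$ then $\|m_{\e}(w_{n})\|_{\e}\,\I'_{\e}(m_{\e}(w_{n}))\to 0$ when tested against tangent vectors. To upgrade this to $\I'_{\e}(m_{\e}(w_{n}))\to 0$ on all of $\h$, I would decompose an arbitrary $v\in \h$ as $v=v^{\top}+\lambda w_{n}$ with $v^{\top}\in T_{w_{n}}\mathbb{S}_{\e}$ and $\lambda\in\R$; the component along $w_{n}$ produces no contribution because $\langle \I'_{\e}(m_{\e}(w_{n})),m_{\e}(w_{n})\rangle=0$, while the tangential component is controlled by $\|\Psi'_{\e}(w_{n})\|_{*}$ together with the uniform lower bound $\|m_{\e}(w_{n})\|_{\e}\geq \kappa>0$ coming from Lemma \ref{SW2}(ii). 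The $\I_{\e}(m_{\e}(w_{n}))=\Psi_{\e}(w_{n})$ part is immediate from the definition. The converse direction is proved symmetrically, using that for a bounded PS sequence $\{u_{n}\}\subset \N_{\e}$ one has $\|u_{n}\|_{\e}\geq \kappa$, so the map $u\mapsto u/\|u\|_{\e}$ is Lipschitz on such a set, and the formula from $(i)$ inverts cleanly.

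For part $(iii)$, the equivalence of critical points is a direct consequence of $(i)$: a point $u\in \mathbb{S}_{\e}$ is a critical point of $\Psi_{\e}$ iff $\I'_{\e}(m_{\e}(u))v=0$ for every $v\in T_{u}\mathbb{S}_{\e}$, which combined with $\langle \I'_{\e}(m_{\e}(u)),m_{\e}(u)\rangle=0$ (because $m_{\e}(u)\in\N_{\e}$) forces $\I'_{\e}(m_{\e}(u))=0$ in $(\h)^{*}$. The equality of critical values is built into the definition $\Psi_{\e}=\I_{\e}\circ m_{\e}$, and the minimax identity $\inf_{\mathbb{S}_{\e}}\Psi_{\e}=\inf_{\N_{\e}}\I_{\e}=c_{\e}$ is exactly \eqref{Nguyen1} together with the bijectivity of $m_{\e}$. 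The only step that deserves genuine care is the $\C^{1}$ regularity of $u\mapsto t_{u}$ used in $(i)$: here the hypothesis $(f_{5})$ is essential, as it ensures that the derivative $\partial_{t}\langle \I'_{\e}(tu),u\rangle$ at $t=t_{u}$ is strictly negative, making the implicit function theorem applicable.
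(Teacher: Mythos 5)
There is a genuine gap in part $(i)$, and it sits exactly at the point where the whole Szulkin--Weth machinery is supposed to do its work. You propose to obtain the $\C^{1}$ regularity of $u\mapsto t_{u}$ by applying the implicit function theorem to $(t,u)\mapsto \langle \I'_{\e}(tu),u\rangle$. But in this paper $f$ is only continuous, so $\I_{\e}$ is only $\C^{1}$ and the map $(t,u)\mapsto \langle \I'_{\e}(tu),u\rangle = t^{p-1}\|u\|_{\e}^{p}-\int_{\R^{N}}f(tu)u\,dx$ is merely continuous: its $t$-derivative would involve $f'(tu)$, which need not exist. The implicit function theorem is therefore not applicable, and indeed the entire reason the paper invokes the Szulkin--Weth variant (rather than the classical Nehari method of \cite{AF}) is precisely that $\N_{\e}$ and $u\mapsto t_{u}$ cannot be shown to be $\C^{1}$; the introduction and the discussion before Theorem \ref{teorema} say this explicitly. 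A secondary issue: even if $f$ were $\C^{1}$, hypothesis $(f_{5})$ (strict monotonicity of $f(t)/t^{p-1}$) does not by itself force $\partial_{t}\langle \I'_{\e}(tu),u\rangle$ to be nonzero at $t=t_{u}$ --- that is why \cite{AF} needs the additional quantitative condition $(f_{6})$, which this paper deliberately drops.

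The correct route (Proposition 2.9 and Corollary 3.3 in \cite{SW}, which is what the paper cites in lieu of a proof) computes the G\^ateaux derivative of $\hat{\Psi}_{\e}$ without ever differentiating $t_{u}$: one sandwiches the difference quotient via the maximality of $t_{u}$,
\begin{equation*}
\I_{\e}\bigl(t_{u}(u+sv)\bigr)-\I_{\e}(t_{u}u)\;\leq\;\hat{\Psi}_{\e}(u+sv)-\hat{\Psi}_{\e}(u)\;\leq\;\I_{\e}\bigl(t_{u+sv}(u+sv)\bigr)-\I_{\e}(t_{u+sv}u),
\end{equation*}
and then uses the mean value theorem for the $\C^{1}$ functional $\I_{\e}$ together with the mere \emph{continuity} of $u\mapsto t_{u}$ (which follows from the bounds in Lemma \ref{SW2}$(iii)$ and the uniqueness of $t_{u}$) to identify the limit as $t_{u}\I'_{\e}(t_{u}u)v$; continuity of this expression in $u$ then gives $\hat{\Psi}_{\e}\in\C^{1}$. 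Your parts $(ii)$ and $(iii)$ are essentially sound once $(i)$ is in place --- the decomposition $v=v^{\top}+\lambda w_{n}$, the lower bound $\|m_{\e}(w_{n})\|_{\e}\geq\kappa$, and the minimax identity \eqref{Nguyen1} are exactly the right ingredients --- but as written the foundation of $(i)$ rests on a regularity assumption that the hypotheses of the paper do not supply.
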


We conclude this section proving the following useful result.
\begin{lemma}\label{lemB}
If $\{u_{n}\}$ is a Palais-Smale sequence of $\I_{\e}$ at level $c$, then $\{u_{n}\}$ is bounded in $\h$.
\end{lemma}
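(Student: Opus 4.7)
The plan is to use the standard Ambrosetti--Rabinowitz trick provided by hypothesis $(f_4)$ to get simultaneous control of $\|u_n\|_\varepsilon^p$ from the energy and its derivative.

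First, from the Palais--Smale assumption we have $\mathcal{I}_\varepsilon(u_n) = c + o_n(1)$ and $\|\mathcal{I}_\varepsilon'(u_n)\|_{\mathcal{W}_\varepsilon^*} \to 0$, so in particular $\langle \mathcal{I}_\varepsilon'(u_n), u_n\rangle = o_n(1)\|u_n\|_\varepsilon$. Consider the combination
\begin{equation*}
\mathcal{I}_\varepsilon(u_n) - \frac{1}{\vartheta}\langle \mathcal{I}_\varepsilon'(u_n), u_n\rangle
= \left(\frac{1}{p} - \frac{1}{\vartheta}\right)\|u_n\|_\varepsilon^{p} + \int_{\mathbb{R}^N} \left(\frac{1}{\vartheta} f(u_n) u_n - F(u_n)\right) dx.
\end{equation*}
By $(f_1)$ we have $f(t)=F(t)=0$ for $t\le 0$, while $(f_4)$ gives $\frac{1}{\vartheta} f(t) t - F(t) \ge 0$ for $t > 0$. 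Hence the integral on the right is nonnegative and can be dropped.

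Combining this with the Palais--Smale information yields
\begin{equation*}
\left(\frac{1}{p} - \frac{1}{\vartheta}\right)\|u_n\|_\varepsilon^{p} \le c + o_n(1) + o_n(1)\|u_n\|_\varepsilon.
\end{equation*}
Since $\vartheta > p$, the coefficient $\frac{1}{p} - \frac{1}{\vartheta}$ is strictly positive, and because $p > 1$ the superlinear term $\|u_n\|_\varepsilon^p$ dominates the linear perturbation $o_n(1)\|u_n\|_\varepsilon$; a routine contradiction argument (assume $\|u_n\|_\varepsilon \to \infty$ along a subsequence and divide by $\|u_n\|_\varepsilon^p$) then forces $\{\|u_n\|_\varepsilon\}$ to be bounded.

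No serious obstacle is expected here: the only subtlety is that $f$ vanishes on the negative axis so that $(f_4)$, stated only for $t > 0$, is enough to make $\frac{1}{\vartheta} f(u_n) u_n - F(u_n) \ge 0$ pointwise in $\mathbb{R}^N$; everything else is the standard Ambrosetti--Rabinowitz boundedness argument, now in the Banach space $\mathcal{W}_\varepsilon$ rather than a Hilbert space.
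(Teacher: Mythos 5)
Your proof is correct and follows exactly the same Ambrosetti--Rabinowitz argument as the paper, evaluating $\I_{\e}(u_{n})-\frac{1}{\vartheta}\langle \I'_{\e}(u_{n}),u_{n}\rangle$ and discarding the nonnegative integral term supplied by $(f_{4})$ (together with $(f_{1})$ on the negative axis). Your added remark that the linear perturbation $o_{n}(1)\|u_{n}\|_{\e}$ cannot balance the superlinear term $\|u_{n}\|_{\e}^{p}$ since $p>1$ is just a slightly more explicit rendering of the paper's final step.
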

\begin{proof}
By using assumption $(f_{4})$ we have
\begin{align*}
c+o_{n}(1)\|u_{n}\|_{\e} &= \I_{\e}(u_{n})- \frac{1}{\vartheta} \langle \I'_{\e}(u_{n}), u_{n} \rangle \\
&= \left( \frac{1}{p}- \frac{1}{\vartheta}\right) \|u_{n}\|_{\e}^{p} + \frac{1}{\vartheta} \int_{\R^{N}} \left( f(u_{n})u_{n}- \vartheta F(u_{n})\right) \, dx \\
&\geq \left( \frac{1}{p}- \frac{1}{\vartheta}\right) \|u_{n}\|_{\e}^{p},
\end{align*}
and being $\vartheta>p$ we get the thesis.
\end{proof}

\subsection{Autonomous subcritical problem}\label{Sect3.2}
Let us consider the autonomous problem associated to ($P_{\e}$), that is
\begin{equation*}\tag{$P_{\mu}$}
\left\{
\begin{array}{ll}
(-\Delta)_{p}^{s} u + \mu |u|^{p-2}u = f(u) &\mbox{ in } \R^{N} \\
u\in W^{s, p}(\R^{N}) \\
u(x)>0 &\mbox{ for all } x\in \R^{N}, \mu>0.
\end{array}
\right.
\end{equation*}
The corresponding functional is given by
\begin{equation*}
\J_{\mu}(u)=\frac{1}{p} \|u\|_{\mu}^{p} - \int_{\R^{N}} F(u) \, dx
\end{equation*}
which is well defined on the space $\X_{\mu}=W^{s, p}(\R^{N})$ endowed with the norm
\begin{equation*}
\|u\|_{\mu}^{p}:=  [u]_{W^{s,p}(\R^{N})}^{p}+\mu |u|^{p}_{L^{p}(\R^{N})}.
\end{equation*}
Clearly, $\J_{\mu}\in \C^{1}(\X_{\mu}, \R)$ and its differential $\J'_{\mu}$ is given by
\begin{align*}
\langle \J'_{\mu}(u), \varphi \rangle &= \iint_{\R^{2N}} \frac{|u(x)-u(y)|^{p-2}(u(x)- u(y))}{|x-y|^{N+sp}} (\varphi(x)- \varphi(y)) \,dxdy \\
&+ \mu \int_{\R^{N}} |u|^{p-2} u\, \varphi \, dx - \int_{\R^{N}} f(u)\varphi \, dx
\end{align*}
for any $u, \varphi \in \X_{\mu}$.
Let us define the Nehari manifold associated to $\J_{\mu}$, that is
\begin{equation*}
\N_{\mu}= \left\{u\in \X_{\mu}\setminus \{0\} : \langle \J'_{\mu}(u), u\rangle =0  \right\}.
\end{equation*}
We note that $(f_{4})$ yields
\begin{align}\label{coercive}
\J_{\mu}(u)= \int_{\R^{N}} \frac{1}{p} f(u)u- F(u) \, dx \geq \left(\frac{1}{p}-\frac{1}{\vartheta}\right) \|u\|_{\mu}^{p} \quad \mbox{ for all } u\in \N_{\mu}.
\end{align}
Arguing as in the previous section and using \eqref{coercive}, it is easy to prove the following lemma.

\begin{lemma}\label{SW2A}
Under the assumptions of Lemma \ref{SW1}, for $\mu>0$ we have:
\begin{compactenum}[$(i)$]
\item for all $u\in \mathbb{S}_{\mu}$, there exists a unique $t_{u}>0$ such that $t_{u}u\in \N_{\mu}$. Moreover, $m_{\mu}(u)=t_{u}u$ is the unique maximum of $\J_{\mu}$ on $\h$, where $\mathbb{S}_{\mu}=\{u\in \X_{\mu}: \|u\|_{\mu}=1\}$.
\item The set $\N_{\mu}$ is bounded away from $0$. Furthermore $\N_{\mu}$ is closed in $\X_{\mu}$.
\item There exists $\alpha>0$ such that $t_{u}\geq \alpha$ for each $u\in \mathbb{S}_{\mu}$ and, for each compact subset $W\subset \mathbb{S}_{\mu}$, there exists $C_{W}>0$ such that $t_{u}\leq C_{W}$ for all $u\in W$.
\item $\N_{\mu}$ is a regular manifold diffeomorphic to the sphere in $\X_{\mu}$.
\item $c_{\mu}=\inf_{\N_{\mu}} \J_{\mu}>0$ and $\J_{\mu}$ is bounded below on $\N_{\mu}$ by some positive constant.
\item $\J_{\mu}$ is coercive on $\N_{\mu}$.
\end{compactenum}
\end{lemma}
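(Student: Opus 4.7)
\medskip

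\noindent\textbf{Proof plan for Lemma \ref{SW2A}.} The plan is to mimic the arguments already carried out for Lemma \ref{SW2} in the non-autonomous setting, using that $\J_\mu$ and $\N_\mu$ enjoy the same structural properties as $\I_\e$ and $\N_\e$ (an analogous version of Lemma \ref{lem2.1} and of Lemma \ref{SW1} holds for $\J_\mu$ with identical proofs, since only $(f_1)$--$(f_5)$, the growth estimates \eqref{tv1}--\eqref{tv2}, and Lemma \ref{embedding}/Theorem \ref{Sembedding} are used). The only genuinely new ingredient is part $(vi)$, whose proof is a one-line consequence of \eqref{coercive}.

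For $(i)$, I would fix $u\in\mathbb{S}_\mu$ and consider $h(t)=\J_\mu(tu)$: the estimate \eqref{tv2} together with $|u|_{L^p}^p\le \mu^{-1}\|u\|_\mu^p$ gives $h(t)>0$ for $t>0$ small, while $(f_4)$ yields $F(t)\ge C_1 t^\vartheta - C_2$ and hence $h(t)\to-\infty$ as $t\to\infty$; so $h$ attains a positive maximum at some $t_u>0$, which satisfies $h'(t_u)=0$, i.e. $t_u u\in\N_\mu$. Uniqueness of $t_u$ follows from $(f_5)$: if $t_1 u, t_2 u\in\N_\mu$ with $t_1<t_2$, dividing the Nehari identity by $t_i^{p}$ and comparing gives a contradiction with the strict monotonicity of $t\mapsto f(t)/t^{p-1}$. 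For $(ii)$, if $u\in\N_\mu$ then \eqref{tv1} and Theorem \ref{Sembedding} yield $\|u\|_\mu^p\le C\xi\|u\|_\mu^p + C_\xi\|u\|_\mu^q$, so choosing $\xi$ small produces $\|u\|_\mu\ge\kappa>0$; closedness then follows from the analog of Lemma \ref{SW1}$(i)$--$(ii)$ by passing to the limit in $\langle\J'_\mu(u_n),u_n\rangle=0$.

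Parts $(iii)$ and $(iv)$ go through unchanged: the lower bound on $t_u$ is exactly $\|t_u u\|_\mu=t_u\ge\kappa$; the upper bound on compact subsets is argued by contradiction using the analog of Lemma \ref{SW1}$(iii)$ (so that $\J_\mu(t_{u_n}u_n)\to-\infty$) together with \eqref{coercive}, which forces $\J_\mu\ge 0$ on $\N_\mu$. With $(i)$--$(iii)$ in hand, Proposition 3.1 of \cite{SW} applies verbatim and gives the homeomorphism $m_\mu:\mathbb{S}_\mu\to\N_\mu$, $m_\mu^{-1}(u)=u/\|u\|_\mu$. Part $(v)$ follows from the min-max characterization $c_\mu=\inf_{u\in\X_\mu\setminus\{0\}}\max_{t\ge 0}\J_\mu(tu)$: for any $u\in\X_\mu\setminus\{0\}$, \eqref{tv2} gives $\J_\mu(tu)\ge \frac{t^p}{p}(1-C\xi)\|u\|_\mu^p - C_\xi t^q\|u\|_\mu^q$, whose maximum in $t$ is bounded below by a positive constant independent of $u$.

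Finally, for $(vi)$, I would simply invoke \eqref{coercive}: for every $u\in\N_\mu$ one has
\[
\J_\mu(u)\ge \left(\frac{1}{p}-\frac{1}{\vartheta}\right)\|u\|_\mu^p,
\]
and since $\vartheta>p$ the right-hand side tends to $+\infty$ whenever $\|u\|_\mu\to\infty$, proving coercivity. I do not expect any serious obstacle: the whole proof is a transcription of the arguments for Lemma \ref{SW2} to the constant-potential setting, and the only potential subtlety, the coercivity in $(vi)$, is already encoded in the inequality \eqref{coercive} displayed just before the statement.
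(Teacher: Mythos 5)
Your proposal is correct and follows exactly the route the paper intends: the paper itself disposes of Lemma \ref{SW2A} with the single remark that one argues as for Lemma \ref{SW2} and uses \eqref{coercive}, which is precisely the transcription you carry out (including the one-line coercivity argument for part $(vi)$). The only minor point is that for a general $u\in\mathbb{S}_{\mu}$ the bound $F(t)\geq C_{1}|t|^{\vartheta}-C_{2}$ should be applied on a set of finite positive measure such as $\{u>\delta\}$ (or one invokes the Fatou argument of Lemma \ref{SW1}$(iii)$), but this is the same implicit step the paper takes and is not a real gap.
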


\noindent
Now we define the following functionals $\hat{\Psi}_{\mu}: \X_{\mu}\setminus\{0\} \rightarrow \R$ and $\Psi_{\mu}: \mathbb{S}_{\mu}\rightarrow \R$ by setting
\begin{align*}
\hat{\Psi}_{\mu}= \J_{\mu}(\hat{m}_{\mu}(u)) \quad \mbox{ and } \quad \Psi_{\mu}= \hat{\Psi}_{\mu}|_{\mathbb{S}_{\mu}}.
\end{align*}
Then we have the following result:

\begin{lemma}\label{SW3A}
Under the assumptions of Lemma \ref{SW1}, we have that for $\mu>0$:
\begin{compactenum}[$(i)$]
\item $\Psi_{\mu}\in \C^{1}(\mathbb{S}_{\mu}, \R)$, and
\begin{equation*}
\Psi'_{\mu}(w)v= \|m_{\mu}(w)\|_{\mu} \J'_{\mu}(m_{\mu}(w))v \quad \mbox{ for } v\in T_{w}(\mathbb{S}_{\mu}).
\end{equation*}
\item $\{w_{n}\}$ is a Palais-Smale sequence for $\Psi_{\mu}$ if and only if $\{m_{\mu}(w_{n})\}$ is a Palais-Smale sequence for $\J_{\mu}$. If $\{u_{n}\}\subset \N_{\mu}$ is a bounded Palais-Smale sequence for $\J_{\mu}$, then $\{m_{\mu}^{-1}(u_{n})\}$ is a Palais-Smale sequence for $\Psi_{\mu}$.
\item $u\in \mathbb{S}_{\mu}$ is a critical point of $\Psi_{\mu}$ if and only if $m_{\mu}(u)$ is a critical point of $\J_{\mu}$. Moreover the corresponding critical values coincide and
\begin{equation*}
\inf_{\mathbb{S}_{\mu}} \Psi_{\mu}=\inf_{\N_{\mu}} \J_{\mu}=c_{\mu}.
\end{equation*}
\end{compactenum}
\end{lemma}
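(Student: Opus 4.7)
The plan is to observe that Lemma \ref{SW3A} is the autonomous analogue of Lemma \ref{SW3} and fits verbatim into the abstract framework of Szulkin-Weth \cite{SW}; what needs to be checked is that $\J_\mu$ satisfies the same three structural conditions as $\I_\e$ in Lemma \ref{SW1}. Namely: $\J'_\mu$ sends bounded sets of $\X_\mu$ into bounded sets of $\X_\mu^*$ (by \eqref{tv1} and Lemma \ref{embedding}), $\J'_\mu$ is weakly sequentially continuous (by $(f_2)$, $(f_3)$ and the local compactness of the embedding), and $\J_\mu(t_n u_n) \to -\infty$ whenever $t_n \to \infty$ and $u_n \to u \in \mathbb{S}_\mu$ (by $(f_4)$ and Fatou). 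Combined with Lemma \ref{SW2A}, in particular the homeomorphism $m_\mu\colon \mathbb{S}_\mu \to \N_\mu$ with $m_\mu^{-1}(u) = u/\|u\|_\mu$, the abstract machinery of \cite{SW} applies.

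For (i), I would differentiate $\hat{\Psi}_\mu(u) = \J_\mu(t_u u)$ along a smooth curve $\gamma\colon (-\delta,\delta)\to \mathbb{S}_\mu$ with $\gamma(0)=w$, $\gamma'(0)=v\in T_w\mathbb{S}_\mu$. Since $\hat{\Psi}_\mu$ is $0$-homogeneous (the maximum of $\J_\mu$ along a ray depends only on the ray), its radial derivative vanishes, so it suffices to compute
\begin{equation*}
\frac{d}{dt}\hat{\Psi}_\mu(\gamma(t))\Big|_{t=0} = \bigl\langle \J'_\mu(t_w w),\, (Dt_w\cdot v)\, w + t_w v\bigr\rangle.
\end{equation*}
Because $t_w w \in \N_\mu$, the identity $\langle \J'_\mu(t_w w), t_w w\rangle = 0$ kills the term involving $Dt_w$, and what remains is $t_w \langle \J'_\mu(m_\mu(w)), v\rangle = \|m_\mu(w)\|_\mu \J'_\mu(m_\mu(w)) v$, as claimed.

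Property (ii) is then an immediate consequence of (i): the tangential norm of $\Psi'_\mu(w_n)$ differs from the dual norm $\|\J'_\mu(m_\mu(w_n))\|_{\X_\mu^*}$ only by the factor $\|m_\mu(w_n)\|_\mu$, which is squeezed between the positive constant $\alpha$ and a compact-set-dependent upper bound $C_W$ by Lemma \ref{SW2A}-$(iii)$, while the radial component $\J'_\mu(m_\mu(w_n))[m_\mu(w_n)]$ vanishes automatically on $\N_\mu$. Coercivity of $\J_\mu|_{\N_\mu}$ from Lemma \ref{SW2A}-$(vi)$ guarantees boundedness of PS sequences for $\J_\mu$, hence the equivalence. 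For (iii), if $\Psi'_\mu(w) = 0$ then $\J'_\mu(m_\mu(w))$ annihilates $T_w\mathbb{S}_\mu$, and together with $\J'_\mu(m_\mu(w))[m_\mu(w)]=0$ this forces $\J'_\mu(m_\mu(w))=0$; conversely any nonzero critical point of $\J_\mu$ lies on $\N_\mu$. The identity $\Psi_\mu(w)=\J_\mu(m_\mu(w))$ matches critical values, and the standard minimax formula $c_\mu = \inf_{u\in \X_\mu\setminus\{0\}}\max_{t\geq 0}\J_\mu(tu) = \inf_{\mathbb{S}_\mu}\Psi_\mu$ (the autonomous counterpart of \eqref{Nguyen1}) gives the last equality.

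The only delicate point is justifying the $\mathcal{C}^1$-regularity of $w\mapsto t_w$ used implicitly in the chain rule for (i). This follows from an implicit function theorem argument applied to $(t,u)\mapsto \langle \J'_\mu(tu), u\rangle$: hypothesis $(f_5)$ ensures that this map is strictly decreasing in $t$ at $t=t_u$, so its $t$-derivative is nonzero, which yields smoothness of $u\mapsto t_u$. Since this computation is carried out in full in \cite{SW} and is structurally identical to the one behind Lemma \ref{SW3}, no further work is required.
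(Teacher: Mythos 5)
The paper offers no proof of this lemma (nor of its non-autonomous twin, Lemma \ref{SW3}); it simply invokes the abstract results of Szulkin and Weth \cite{SW}, Proposition 3.1 and Corollary 3.3, once the structural hypotheses (the autonomous analogues of Lemma \ref{SW1} together with Lemma \ref{SW2A}) are in place. Your reconstruction of parts $(ii)$ and $(iii)$ from part $(i)$ is the standard one and is essentially correct, and your verification of the structural hypotheses is fine.

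There is, however, a genuine gap in your justification of part $(i)$. Your chain-rule computation of $\frac{d}{dt}\hat{\Psi}_{\mu}(\gamma(t))|_{t=0}$ presupposes that $w\mapsto t_{w}$ is differentiable, and you propose to obtain this from the implicit function theorem applied to $(t,u)\mapsto \langle \J'_{\mu}(tu),u\rangle = t^{p-1}\|u\|_{\mu}^{p}-\int_{\R^{N}}f(tu)u\,dx$. This fails for two reasons. First, the implicit function theorem requires this map to be $\C^{1}$ in $t$, i.e.\ it requires $f\in\C^{1}$ --- but the standing assumption of the paper is that $f$ is merely continuous, and circumventing precisely this lack of regularity is the entire reason the Szulkin--Weth machinery is used instead of the classical Nehari manifold method of \cite{AF} (see the discussion in the Introduction: $\N_{\e}$ need not be a $\C^{1}$ manifold). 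Second, even granting the setup, strict monotonicity in $t$ (from $(f_{5})$) does not imply that the $t$-derivative exists and is nonzero at $t_{u}$. The correct argument, which is the technical heart of \cite{SW}, establishes that $\hat{\Psi}_{\mu}$ is $\C^{1}$ \emph{without} ever differentiating $u\mapsto t_{u}$ (which is only shown to be continuous): one squeezes the increment $\hat{\Psi}_{\mu}(u+h)-\hat{\Psi}_{\mu}(u)$ between $\J_{\mu}(t_{u}(u+h))-\J_{\mu}(t_{u}u)$ and $\J_{\mu}(t_{u+h}(u+h))-\J_{\mu}(t_{u+h}u)$, applies the mean value theorem to each side, and uses the continuity of $t_{u}$ and of $\J'_{\mu}$ to identify the common linear term $t_{u}\langle\J'_{\mu}(t_{u}u),h\rangle+o(\|h\|_{\mu})$. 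You should either reproduce that squeeze argument or simply cite \cite{SW} for it, as the paper does; as written, your derivation of $(i)$ rests on a regularity property that is unavailable under $(f_{1})$--$(f_{5})$.
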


\begin{remark}
As in \eqref{Nguyen1}, from $(i)$-$(iii)$ of Lemma \ref{SW2}, we can see that $c_{\mu}$ admits the following minimax characterization
\begin{align}\label{Nguyen2}
c_{\mu}= \inf_{u\in \N_{\mu}} \J_{\mu}(u)= \inf_{u\in \X_{\mu}\setminus \{0\}} \max_{t\geq 0} \J_{\mu}(tu) = \inf_{u\in \mathbb{S}_{\mu}} \max_{t\geq 0} \J_{\mu}(tu).
\end{align}
\end{remark}

\begin{lemma}\label{lem2.2a}
Let $\{u_{n}\}\subset \N_{\mu}$ be a minimizing sequence for $\J_{\mu}$. Then, $\{u_{n}\}$ is bounded and there exist a sequence $\{y_{n}\}\subset \R^{N}$ and constants $R, \beta>0$ such that
\begin{equation*}
\liminf_{n\rightarrow \infty} \int_{\B_{R}(y_{n})} |u_{n}|^{p} dx \geq \beta >0.
\end{equation*}
\end{lemma}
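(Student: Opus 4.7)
The plan is to prove boundedness first and then use a Lions-type concentration argument together with the fact that $\N_\mu$ is bounded away from zero.

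For boundedness: Since $\{u_n\} \subset \N_\mu$ is a minimizing sequence, $\J_\mu(u_n) \to c_\mu$. By the coercivity of $\J_\mu$ on $\N_\mu$ stated in Lemma \ref{SW2A}-(vi) (which follows directly from \eqref{coercive}, since $\J_\mu(u_n) \geq (\tfrac{1}{p}-\tfrac{1}{\vartheta})\|u_n\|_\mu^p$ on the Nehari manifold thanks to $(f_4)$), the sequence $\{\|u_n\|_\mu\}$ must be bounded.

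For non-vanishing, I argue by contradiction. Suppose that for every $R > 0$ one has
\begin{equation*}
\lim_{n\rightarrow \infty} \sup_{y\in \R^{N}} \int_{\B_{R}(y)} |u_{n}|^{p}\, dx = 0.
\end{equation*}
Since $\{u_n\}$ is bounded in $\X_\mu = W^{s,p}(\R^N)$, I can apply Lemma \ref{Lions} with $r = p$ to conclude that $u_n \to 0$ in $L^t(\R^N)$ for every $t \in (p, \p)$. In particular, taking the exponent $q \in (p, \p)$ from assumption $(f_3)$, it follows that $|u_n|_{L^q(\R^N)} \to 0$.

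Now I use that $u_n \in \N_\mu$, which gives $\|u_n\|_\mu^p = \int_{\R^N} f(u_n) u_n\, dx$. Combining the growth bound \eqref{tv1} with $\xi < \mu$ fixed, together with the obvious inequality $|u_n|_{L^p(\R^N)}^p \leq \mu^{-1}\|u_n\|_\mu^p$, I obtain
\begin{align*}
\|u_n\|_\mu^p = \int_{\R^N} f(u_n) u_n \, dx &\leq \xi \int_{\R^N} |u_n|^p \, dx + C_\xi \int_{\R^N} |u_n|^q \, dx \\
&\leq \frac{\xi}{\mu} \|u_n\|_\mu^p + C_\xi |u_n|_{L^q(\R^N)}^q.
\end{align*}
Since $\xi/\mu < 1$, rearranging yields $\|u_n\|_\mu^p \leq C |u_n|_{L^q(\R^N)}^q \to 0$. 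This contradicts the lower bound $\|u_n\|_\mu \geq \kappa > 0$ from Lemma \ref{SW2A}-(ii), which asserts that $\N_\mu$ is bounded away from the origin. Hence the vanishing assumption fails and there exist $R, \beta > 0$ and $\{y_n\} \subset \R^N$ such that, up to a subsequence, $\int_{\B_R(y_n)} |u_n|^p \, dx \geq \beta$ for all large $n$, giving the $\liminf$ inequality claimed.

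The main obstacle is really just choosing the parameter $\xi$ correctly to absorb the $L^p$-term (which cannot be made to vanish by Lions' lemma, since $r=p$ is the endpoint) into the left-hand side; everything else is routine once the Lions-type lemma is available and $\N_\mu$ is known to be bounded away from zero.
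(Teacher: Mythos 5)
Your proof is correct and follows essentially the same route as the paper: boundedness from the coercivity inequality \eqref{coercive} supplied by $(f_{4})$, and non-vanishing by contradiction via Lemma \ref{Lions}, the Nehari identity, the growth bound \eqref{tv1} with $\xi\in(0,\mu)$ absorbed into the left-hand side, and the fact that $\N_{\mu}$ is bounded away from the origin. No gaps.
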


\begin{proof}
Arguing as in the proof of Lemma \ref{lemB}, we can see that $\{u_{n}\}$ is bounded in $\X_{\mu}$. Now, in order to prove the latter conclusion of this lemma, we argue by contradiction.
Assume that for any $R>0$ it holds
\begin{equation*}
\lim_{n\rightarrow \infty} \sup_{y\in \R^{N}} \int_{\B_{R}(y)} |u_{n}|^{p} dx=0.
\end{equation*}
Since $\{u_{n}\}$ is bounded in $\X_{\mu}$, from Lemma \ref{Lions} it follows that
\begin{equation}\label{tv4N}
u_{n}\rightarrow 0 \mbox{ in } L^{t}(\R^{N}) \quad \mbox{ for any } t\in (p, \p).
\end{equation}
Fix $\xi\in (0, \mu)$. By using $\langle \J'_{\mu}(u_{n}), u_{n} \rangle =0$, \eqref{tv1} and the fact that $\{u_{n}\}$ is bounded in $\X_{\mu}$, we have
\begin{align*}
0\leq \|u_{n}\|_{\mu}^{p}
\leq \xi \int_{\R^{N}} |u_{n}|^{p} dx + C_{\xi} \int_{\R^{N}} |u_{n}|^{q} dx  \leq \frac{\xi}{\mu} \|u_{n}\|_{\mu}^{p} + C_{\xi} |u_{n}|_{L^{q}(\R^{N})}^{q},
\end{align*}
from which we deduce that
$$
\left(1-\frac{\xi}{\mu}\right)\|u_{n}\|_{\mu}^{p}\leq C_{\xi} |u_{n}|_{L^{q}(\R^{N})}^{q}.
$$
In view of \eqref{tv4N}, we can conclude that $u_{n}\rightarrow 0$ in $\X_{\mu}$.
\end{proof}

\noindent
Now, we prove the following useful compactness result for the autonomous problem.
\begin{lemma}\label{lem4.3}
The problem $(P_{\mu})$ has at least one positive ground state solution.
\end{lemma}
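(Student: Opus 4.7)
The strategy is the classical concentration-compactness approach tailored to the Szulkin--Weth framework, since $f$ is only continuous and $\mathcal{N}_{\mu}$ may fail to be of class $\mathcal{C}^{1}$. The first move is to manufacture a Palais--Smale sequence for $\mathcal{J}_{\mu}$ at the level $c_{\mu}$. By Lemma \ref{SW3A}, working on the $\mathcal{C}^{1}$-manifold $\mathbb{S}_{\mu}$ with the reduced functional $\Psi_{\mu}$ is equivalent to working on $\mathcal{N}_{\mu}$ with $\mathcal{J}_{\mu}$, and $\inf_{\mathbb{S}_{\mu}}\Psi_{\mu}=c_{\mu}$. Applying Ekeland's variational principle on the complete metric space $\mathbb{S}_{\mu}$ produces $\{w_{n}\}\subset \mathbb{S}_{\mu}$ with $\Psi_{\mu}(w_{n})\to c_{\mu}$ and $\Psi_{\mu}'(w_{n})\to 0$ in the tangent-bundle sense, and the transfer in Lemma \ref{SW3A}-$(ii)$ yields $u_{n}=m_{\mu}(w_{n})\in \mathcal{N}_{\mu}$ with $\mathcal{J}_{\mu}(u_{n})\to c_{\mu}$ and $\mathcal{J}_{\mu}'(u_{n})\to 0$ in $\mathbb{X}_{\mu}^{*}$. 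Coercivity on $\mathcal{N}_{\mu}$ (Lemma \ref{SW2A}-$(vi)$) implies that $\{u_{n}\}$ is bounded in $\mathbb{X}_{\mu}=W^{s,p}(\mathbb{R}^{N})$.

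Because the autonomous problem is translation-invariant, Lemma \ref{lem2.2a} supplies $\{y_{n}\}\subset\mathbb{R}^{N}$ and $R,\beta>0$ with $\int_{\mathcal{B}_{R}(y_{n})}|u_{n}|^{p}\,dx\geq\beta$. Setting $\tilde{u}_{n}(x)=u_{n}(x+y_{n})$, the norms, the Nehari constraint, the energy and the dual norm of $\mathcal{J}_{\mu}'$ are all preserved, so $\{\tilde{u}_{n}\}$ remains a bounded Palais--Smale sequence at level $c_{\mu}$ with the extra feature that a fixed ball $\mathcal{B}_{R}(0)$ captures a definite fraction of its mass.

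Up to a subsequence, $\tilde{u}_{n}\rightharpoonup u$ in $W^{s,p}(\mathbb{R}^{N})$, $\tilde{u}_{n}\to u$ in $L^{r}_{\mathrm{loc}}(\mathbb{R}^{N})$ for any $r\in [1,p^{*}_{s})$ by Theorem \ref{Sembedding}, and $\tilde{u}_{n}\to u$ a.e. The nonvanishing on $\mathcal{B}_{R}(0)$ forces $u\neq 0$. Weak sequential continuity of $\mathcal{J}_{\mu}'$ (the analogue of Lemma \ref{SW1}-$(ii)$ that uses $(f_{2})$--$(f_{3})$ together with the compact local embedding) gives $\mathcal{J}_{\mu}'(u)=0$, so $u\in\mathcal{N}_{\mu}$ and consequently $\mathcal{J}_{\mu}(u)\geq c_{\mu}$. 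For the reverse inequality, I would exploit that on $\mathcal{N}_{\mu}$
\begin{equation*}
\mathcal{J}_{\mu}(v)=\int_{\mathbb{R}^{N}}\Bigl(\tfrac{1}{p}f(v)v-F(v)\Bigr)dx,
\end{equation*}
the integrand being nonnegative and monotone in $|v|$ by \eqref{increasing}. Fatou's lemma then yields
\begin{equation*}
\mathcal{J}_{\mu}(u)\leq \liminf_{n\to\infty}\mathcal{J}_{\mu}(\tilde{u}_{n})=c_{\mu},
\end{equation*}
so $\mathcal{J}_{\mu}(u)=c_{\mu}$ and $u$ is a ground state.

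It remains to upgrade $u$ to a positive function. Since $f(t)=0$ for $t<0$ by $(f_{1})$, testing $\mathcal{J}_{\mu}'(u)=0$ against $u^{-}:=\min\{u,0\}$ and using the standard inequality $|a-b|^{p-2}(a-b)(a^{-}-b^{-})\geq |a^{-}-b^{-}|^{p}$ for the fractional $p$-Laplacian gives $\|u^{-}\|_{\mu}=0$, so $u\geq 0$. Since $u\not\equiv 0$, the strong maximum principle for $(-\Delta)^{s}_{p}$ (e.g. the version in \cite{DCKP,IMS}) yields $u>0$ in $\mathbb{R}^{N}$. The most delicate step is recovering the level $c_{\mu}$ in the weak limit, because $\mathcal{N}_{\mu}$ is not weakly closed and $\mathcal{J}_{\mu}$ lacks weak lower semicontinuity on the full space; the trick is precisely to rewrite the energy on $\mathcal{N}_{\mu}$ via \eqref{increasing} before invoking Fatou.
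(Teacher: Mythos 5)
Your proof is correct, and its skeleton (Ekeland's principle on $\mathbb{S}_{\mu}$, transfer to a Palais--Smale sequence on $\N_{\mu}$ via Lemma \ref{SW3A}, boundedness, nonvanishing via Lemma \ref{lem2.2a}, translation to get a nonzero weak limit, and the $u^{-}$ test for positivity) matches the paper's. The genuinely different step is how you recover the level at the weak limit. The paper argues by contradiction: assuming $\|u\|_{\mu}^{p}<\limsup_{n}\|u_{n}\|_{\mu}^{p}$, it combines Fatou's lemma with the identity for $\J_{\mu}(u_{n})-\tfrac{1}{\vartheta}\langle \J_{\mu}'(u_{n}),u_{n}\rangle$ to obtain $c_{\mu}>\J_{\mu}(u)\geq c_{\mu}$; this forces $\|u_{n}\|_{\mu}\to\|u\|_{\mu}$, which with the Brezis--Lieb splitting of Lemma \ref{lemPSY} gives \emph{strong} convergence $u_{n}\to u$, and the level follows by continuity. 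You instead sandwich the level directly: $\J_{\mu}(u)\geq c_{\mu}$ since $u\in\N_{\mu}$, and $\J_{\mu}(u)\leq c_{\mu}$ by Fatou applied to the nonnegative integrand $\tfrac{1}{p}f(t)t-F(t)$ in the Nehari reformulation of the energy (only nonnegativity, which follows from $(f_{4})$ or from \eqref{increasing} plus the value at $0$, is needed --- not monotonicity). Your route is shorter and bypasses Lemma \ref{lemPSY}, but it yields only the existence of a minimizer, not strong convergence of the minimizing sequence; the latter is a byproduct of the paper's argument that is reused later (the proof of Proposition \ref{prop4.1} deduces strong convergence of the translated sequence from Lemma \ref{lem4.3}). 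If that stronger conclusion is wanted, one can note that once $\J_{\mu}(u)=c_{\mu}$ is known, the paper's norm-convergence computation goes through. A last small point: the strong maximum principle used for positivity is the one in \cite{DPQ}, rather than the regularity results of \cite{DCKP,IMS}.
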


\begin{proof}
From $(v)$ of Lemma \ref{SW2A}, we know that $c_{\mu}>0$ for each $\mu>0$. Moreover, if $u\in \N_{\mu}$ verifies $\J_{\mu}(u)=c_{\mu}$, then $m^{-1}_{\mu}(u)$ is a minimizer of $\Psi_{\mu}$ and it is a critical point of $\Psi_{\mu}$. In view of Lemma \ref{SW3A}, we can see that $u$ is a critical point of $\J_{\mu}$. Now we show that there exists a minimizer of $\J_{\mu}|_{\N_{\mu}}$. By applying Ekeland's variational principle \cite{Ek} there exists a sequence $\{\nu_{n}\}\subset \mathbb{S}_{\mu}$ such that $\Psi_{\mu}(\nu_{n})\rightarrow c_{\mu}$ and $\Psi'_{\mu}(\nu_{n})\rightarrow 0$ as $n\rightarrow \infty$. Let $u_{n}=m_{\mu}(\nu_{n}) \in \N_{\mu}$. Then, thanks to Lemma \ref{SW3A}, $\J_{\mu}(u_{n})\rightarrow c_{\mu}$ and $\J'_{\mu}(u_{n})\rightarrow 0$ as $n\rightarrow \infty$.
Therefore, arguing as in Lemma \ref{lemB}, $\{u_{n}\}$ is bounded in $\X_{\mu}$ and $u_{n}\rightharpoonup u$ in $W^{s, p}(\R^{N})$. It is easy to check that $\J_{\mu}'(u)=0$.
Now, we prove that $\J_{\mu}(u)=c_{\mu}$. Assume $u\not \equiv 0$ and we aim to show that
\begin{equation}\label{lim1}
\|u_{n}\|^{p}_{\mu}\rightarrow \|u\|^{p}_{\mu}.
\end{equation}
In fact, once proved the previous limit, we can use Lemma \ref{lemPSY} to deduce that $u_{n}\rightarrow u$ in $\X_{\mu}$, and recalling that $\J_{\mu}(u_{n})\rightarrow c_{\mu}$, we obtain the thesis.\\
Now, we prove \eqref{lim1}. Let us observe that Fatou's Lemma yields
\begin{equation}\label{inf}
\|u\|^{p}_{\mu}\leq \liminf_{n\rightarrow \infty} \|u_{n}\|^{p}_{\mu}.
\end{equation}
Assume by contradiction that
\begin{equation}\label{sup}
\|u\|^{p}_{\mu}<\limsup_{n\rightarrow \infty} \|u_{n}\|^{p}_{\mu}.
\end{equation}
Let us note that
\begin{align}\begin{split}\label{tervince1}
c_{\mu}+o_{n}(1) &= \J_{\mu}(u_{n})-\frac{1}{\vartheta} \langle \J'_{\mu}(u_{n}), u_{n}\rangle \\
&=\left(\frac{1}{p}-\frac{1}{\vartheta}\right)\|u_{n}\|^{p}_{\mu}+ \int_{\R^{N}} \left[\frac{1}{\vartheta} f(u_{n})u_{n}-F(u_{n})\right] dx.
\end{split}\end{align}
Recalling that
$$
\limsup_{n\rightarrow \infty}\, (a_{n}+b_{n})\geq \limsup_{n\rightarrow \infty} a_{n}+\liminf_{n\rightarrow \infty} b_{n}
$$
and $\vartheta >p$, we can see that Fatou's Lemma, \eqref{sup}, \eqref{tervince1} and $\J_{\mu}'(u)=0$ produce
\begin{align*}
c_{\mu}&\geq \left(\frac{1}{p}-\frac{1}{\vartheta}\right)\limsup_{n\rightarrow \infty} \|u_{n}\|^{p}_{\mu}+\liminf_{n\rightarrow \infty} \int_{\R^{N}} \left[\frac{1}{\vartheta} f(u_{n})u_{n}-F(u_{n})\right] dx \\
&>\left(\frac{1}{p}-\frac{1}{\vartheta}\right) \|u\|^{p}_{\mu}+\int_{\R^{N}} \left[\frac{1}{\vartheta} f(u)u-F(u)\right] dx \\
&=\J_{\mu}(u)-\frac{1}{\vartheta} \langle \J'_{\mu}(u), u\rangle=\J_{\mu}(u)\geq c_{\mu}
\end{align*}
which gives a contradiction.

Finally, we consider the case $u=0$. Arguing as in the proof of Lemma \ref{lem2.2a}, we can find a sequence $\{y_{n}\}\subset \R^{N}$ and constants $R, \beta>0$ such that
$$
\int_{\B_{R}(y_{n})} |u_{n}|^{p}dx\geq \beta>0.
$$
Set $v_{n}:=u_{n}(\cdot+y_{n})$. Then, by using the invariance by translations of $\R^{N}$, it is clear that $\{v_{n}\}$ is a $(PS)_{c_{\mu}}$ for $\J_{\mu}$, $\{v_{n}\}\subset \N_{\mu}$ and $v_{n}\rightharpoonup v\neq 0$ in $W^{s, p}(\R^{N})$. Thus, we can proceed as above to deduce that $\{v_{n}\}$ converges strongly in $W^{s, p}(\R^{N})$.
\end{proof}

\begin{remark}
Let us observe that the ground state obtained in Lemma \ref{lem4.3} is positive. Indeed, $\langle \J'_{\mu}(u), u^{-}\rangle=0$, $f(t)=0$ for $t\leq 0$ and $|x-y|^{p-2}(x-y)(x^{-}-y^{-}) \geq |x^{-}-y^{-}|^{p}$, where $x^{-}= \min\{x, 0\}$, yield
\begin{align*}
&\|u^{-}\|^{p}_{\mu}  \\
&\leq \iint_{\R^{2N}} \frac{|u(x)- u(y)|^{p-2}(u(x)- u(y))}{|x-y|^{N+sp}} (u^{-}(x)- u^{-}(y)) \, dxdy + \int_{\R^{N}} \mu |u|^{p-2} u u^{-} \, dx \\
&= \int_{\R^{N}} f(u)u^{-} \, dx =0
\end{align*}
which implies that $u^{-}=0$, that is $u\geq 0$. Arguing as in Lemma \ref{lemMoser} below, we can see that $u\in L^{\infty}(\R^{N})\cap \C^{0}(\R^{N})$, and by applying the maximum principle \cite{DPQ} we deduce that $u>0$ in $\R^{N}$.
\end{remark}

\subsection{Existence result for \eqref{P}}

\noindent
In this section we focus on the existence of a solution to \eqref{P} provided that $\e$ is sufficiently small. Let us begin proving the following useful lemma.
\begin{lemma}\label{lem2.2}
Let $\{u_{n}\}\subset \N_{\e}$ be a sequence such that $\I_{\e}(u_{n})\rightarrow c$ and $u_{n}\rightharpoonup 0$ in $\h$. Then, one of the following alternatives occurs
\begin{compactenum}[$(a)$]
\item $u_{n}\rightarrow 0$ in $\h$;
\item there are a sequence $\{y_{n}\}\subset \R^{N}$ and constants $R, \beta>0$ such that
\begin{equation*}
\liminf_{n\rightarrow \infty} \int_{\B_{R}(y_{n})} |u_{n}|^{p} dx \geq \beta >0.
\end{equation*}
\end{compactenum}
\end{lemma}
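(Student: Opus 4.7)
The proof will be a standard Lions-type dichotomy: assuming alternative (b) fails, the sequence must vanish globally in $L^p$ and consequently satisfy (a). First, I would record that $\{u_n\}$ is bounded in $\h$. Indeed, combining $u_n \in \N_\e$, $(f_4)$ and $\I_\e(u_n) \to c$,
\[
c + o_n(1) = \I_\e(u_n) - \frac{1}{\vartheta} \langle \I'_\e(u_n), u_n \rangle \geq \left(\frac{1}{p} - \frac{1}{\vartheta}\right)\|u_n\|_\e^p,
\]
so that $\sup_n \|u_n\|_\e < \infty$.

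Next, suppose that alternative (b) does not hold. Then for every $R > 0$,
\[
\lim_{n \to \infty} \sup_{y \in \R^N} \int_{\B_R(y)} |u_n|^p \, dx = 0,
\]
and an application of Lemma \ref{Lions} with $r = p$ yields $u_n \to 0$ strongly in $L^t(\R^N)$ for every $t \in (p, \p)$.

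Finally, I would exploit the Nehari identity $\langle \I'_\e(u_n), u_n\rangle = 0$ together with the subcritical growth \eqref{tv1}: for any $\xi > 0$,
\[
\|u_n\|_\e^p = \int_{\R^N} f(u_n)u_n \, dx \leq \xi \int_{\R^N} |u_n|^p \, dx + C_\xi \int_{\R^N} |u_n|^q \, dx.
\]
Since $V(\e x) \geq V_0$, the first integral is bounded by $V_0^{-1} \|u_n\|_\e^p$; choosing $\xi \in (0, V_0)$ gives
\[
\left(1 - \frac{\xi}{V_0}\right)\|u_n\|_\e^p \leq C_\xi |u_n|_{L^q(\R^N)}^q \longrightarrow 0
\]
by the previous step (since $q \in (p, \p)$). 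Therefore $u_n \to 0$ in $\h$, which is alternative (a).

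No significant obstacle is anticipated: the argument closely parallels the autonomous proof of Lemma \ref{lem2.2a}, using the uniform lower bound $V \geq V_0 > 0$ to convert the Nehari identity into a genuine norm control and thus reduce the problem to one application of Lions' lemma. The hypothesis $u_n \rightharpoonup 0$ plays no direct role in this implication but is consistent with the conclusion, since the only way for a weakly-null Palais-Smale-type sequence on $\N_\e$ to fail to concentrate somewhere is to vanish in norm.
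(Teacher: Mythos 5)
Your argument is correct and follows essentially the same route as the paper: assume $(b)$ fails, invoke Lemma \ref{Lions} to get $u_{n}\rightarrow 0$ in $L^{t}(\R^{N})$ for $t\in(p,\p)$, and then use the Nehari identity together with the growth estimate \eqref{tv1} and $V(\e x)\geq V_{0}$ to absorb the $L^{p}$-term and conclude $\|u_{n}\|_{\e}\rightarrow 0$, exactly as the paper does by referring back to the proof of Lemma \ref{lem2.2a}. No issues.
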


\begin{proof}
Assume that $(b)$ does not hold true. Then, for any $R>0$ it holds
\begin{equation*}
\lim_{n\rightarrow \infty} \sup_{y\in \R^{N}} \int_{\B_{R}(y)} |u_{n}|^{p} dx=0.
\end{equation*}
Since $\{u_{n}\}$ is bounded in $\h$, from Lemma \ref{Lions} it follows that
\begin{equation}\label{tv4}
u_{n}\rightarrow 0 \mbox{ in } L^{t}(\R^{N}) \quad \mbox{ for any } t\in (p, \p).
\end{equation}
Now, we can argue as in the proof of Lemma \ref{lem2.2a} to deduce that $\|u_{n}\|_{\e}\rightarrow 0$ as $n\rightarrow \infty$.
\end{proof}

In order to get a compactness result for $\I_{\e}$, we need to prove the following auxiliary lemma.
\begin{lemma}\label{lem2.3}
Assume that $V_{\infty}<\infty$ and let $\{v_{n}\}\subset \N_{\e}$ be a sequence such that $\I_{\e}(v_{n})\rightarrow d$ with $v_{n}\rightharpoonup 0$ in $\h$. If $v_{n}\not \rightarrow 0$ in $\h$, then $d\geq c_{V_{\infty}}$, where $c_{V_{\infty}}$ is the infimum of $\J_{V_{\infty}}$ over $\N_{V_{\infty}}$.
\end{lemma}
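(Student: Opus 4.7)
The plan is to exploit the concentration provided by Lemma \ref{lem2.2} via a translation argument, rescale the translated sequence onto the autonomous Nehari manifold $\N_{V_{\infty}}$ so as to witness the level $c_{V_{\infty}}$, and then compare the two energies, absorbing the potential mismatch through the $\liminf$ characterization of $V_{\infty}$.

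First I would apply Lemma \ref{lem2.2}: since $\{v_{n}\}\subset \N_{\e}$, $\I_{\e}(v_{n})\to d$, $v_{n}\rightharpoonup 0$ and $v_{n}\not\to 0$ in $\h$, alternative $(b)$ yields $\{y_{n}\}\subset\R^{N}$ and $R,\beta>0$ with $\liminf_{n}\int_{\B_{R}(y_{n})}|v_{n}|^{p}\,dx\geq \beta$. Because $v_{n}\to 0$ in $L^{p}_{\mathrm{loc}}(\R^{N})$ by Lemma \ref{embedding}, $\{y_{n}\}$ cannot be bounded; passing to a subsequence, $|y_{n}|\to\infty$. I then set $\tilde v_{n}(x):=v_{n}(x+y_{n})$. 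Translation invariance of the Gagliardo seminorm and of the $L^{p}$ norm gives boundedness of $\{\tilde v_{n}\}$ in $W^{s,p}(\R^{N})$, so $\tilde v_{n}\rightharpoonup \tilde v$ up to a subsequence, and the concentration on $\B_{R}(0)$ combined with local compactness forces $\tilde v\not\equiv 0$.

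Next I would project onto $\N_{V_{\infty}}$: by Lemma \ref{SW2A}-$(i)$ there is a unique $t_{n}>0$ with $t_{n}\tilde v_{n}\in \N_{V_{\infty}}$. To show $\{t_{n}\}$ is bounded above I would use the Nehari identity
\[
\|\tilde v_{n}\|_{V_{\infty}}^{p}=\int_{\R^{N}}\frac{f(t_{n}\tilde v_{n})}{(t_{n}\tilde v_{n})^{p-1}}\,|\tilde v_{n}|^{p}\,dx,
\]
and then combine $(f_{5})$ (so the integrand is monotone in $t_{n}$), $(f_{4})$ (superlinearity of $F$) and Fatou's Lemma on the set $\{\tilde v>0\}$ of positive measure to rule out $t_{n}\to\infty$; the lower bound $t_{n}\geq \alpha>0$ follows from $\N_{V_{\infty}}$ being bounded away from $0$ (Lemma \ref{SW2A}-$(ii)$). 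A direct calculation then yields the key identity
\[
\J_{V_{\infty}}(t_{n}\tilde v_{n})=\I_{\e}(t_{n}v_{n})+\frac{t_{n}^{p}}{p}\int_{\R^{N}}\bigl(V_{\infty}-V(\e x)\bigr)|v_{n}|^{p}\,dx.
\]
Since $v_{n}\in \N_{\e}$ maximizes $t\mapsto \I_{\e}(tv_{n})$ by Lemma \ref{SW2}-$(i)$, $\I_{\e}(t_{n}v_{n})\leq \I_{\e}(v_{n})=d+o_{n}(1)$, and the minimax $c_{V_{\infty}}\leq \J_{V_{\infty}}(t_{n}\tilde v_{n})$ gives
\[
c_{V_{\infty}}\leq d+o_{n}(1)+\frac{t_{n}^{p}}{p}\int_{\R^{N}}\bigl(V_{\infty}-V(\e x)\bigr)|v_{n}|^{p}\,dx.
\]

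The main obstacle will be controlling the last integral, whose integrand does \emph{not} have a favorable sign everywhere. I would split it using the $\liminf$ hypothesis on $V$: given $\eta>0$, pick $R_{\eta}>0$ with $V(x)\geq V_{\infty}-\eta$ for $|x|\geq R_{\eta}$, so that the piece over $\{|x|\geq R_{\eta}/\e\}$ is bounded above by $\eta\,|v_{n}|_{L^{p}(\R^{N})}^{p}\leq C\eta$. On the complementary ball, $V_{\infty}-V(\e x)$ is bounded by a constant depending on $\e,\eta$, and the compact embedding $\h\hookrightarrow L^{p}_{\mathrm{loc}}(\R^{N})$ (Lemma \ref{embedding}) together with $v_{n}\rightharpoonup 0$ forces $\int_{\B_{R_{\eta}/\e}(0)}|v_{n}|^{p}\,dx\to 0$, killing the remaining contribution. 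Exploiting boundedness of $\{t_{n}\}$, letting $n\to\infty$ and then $\eta\to 0^{+}$ yields $c_{V_{\infty}}\leq d$, as required.
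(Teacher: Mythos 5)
Your argument is correct, and it reaches the conclusion by a genuinely different (and somewhat leaner) route than the paper. The paper compares $\I_{\e}(v_{n})$ directly with $\J_{V_{\infty}}(t_{n}v_{n})$, which forces it to first prove the sharp claim $\limsup_{n}t_{n}\leq 1$ (via the difference of the two Nehari identities and $(f_{5})$) and then to split into the cases $t_{n}\rightarrow 1$ and $t_{n}\rightarrow t_{0}<1$, the latter handled through the monotonicity \eqref{increasing} of $t\mapsto \frac{1}{p}f(t)t-F(t)$. You instead compare $\I_{\e}$ and $\J_{V_{\infty}}$ at the \emph{same} scaling parameter $t_{n}$: the identity $\J_{V_{\infty}}(t_{n}\tilde v_{n})-\I_{\e}(t_{n}v_{n})=\frac{t_{n}^{p}}{p}\int(V_{\infty}-V(\e x))|v_{n}|^{p}\,dx$ is exact by translation invariance, and the Nehari maximality $\I_{\e}(t_{n}v_{n})\leq \max_{t\geq 0}\I_{\e}(tv_{n})=\I_{\e}(v_{n})$ from Lemma \ref{SW2}-$(i)$ replaces the whole case analysis. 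The price is that you must bound $t_{n}$ from above, but the standard Fatou argument based on $(f_{4})$ and the nonzero weak limit of the translates (the same device the paper uses inside its Claim 1 and in Lemma \ref{SW2A}-$(iii)$) does this; note that the lower bound $t_{n}\geq\alpha$ and the unboundedness of $\{y_{n}\}$, while true, are not actually needed. The treatment of the potential term is identical in both proofs (split at $|x|=R_{\eta}/\e$, use $(V)$ outside and $v_{n}\rightarrow 0$ in $L^{p}_{loc}$ inside). One shared loose end: both your proof and the paper's assert that the weak limit of the translated sequence is \emph{positive} on a set of positive measure, whereas the concentration estimate only gives that it is nonzero; since $f$ vanishes on $(-\infty,0]$ this point deserves a word in either version, but it is not a defect specific to your approach.
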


\begin{proof}
Let $\{t_{n}\}\subset (0, +\infty)$ be such that $\{t_{n}v_{n}\}\subset \N_{V_{\infty}}$. \\
\textsc{Claim 1}: We aim to prove that
\begin{equation*}
\limsup_{n\rightarrow \infty} t_{n} \leq 1.
\end{equation*}
Assume by contradiction that there exist $\delta>0$ and a subsequence, still denoted by $\{t_{n}\}$, such that
\begin{equation}\label{tv6}
t_{n}\geq 1+ \delta \quad \mbox{ for any } n\in \mathbb{N}.
\end{equation}
Since $\langle \I'_{\e}(v_{n}), v_{n} \rangle =0$, we deduce that
\begin{equation}\label{tv7}
[v_{n}]^{p}_{W^{s, p}(\R^{N})} + \int_{\R^{N}} V(\e x) |v_{n}|^{p} dx = \int_{\R^{N}} f(v_{n}) v_{n} dx.
\end{equation}
In view of $t_{n}v_{n} \in \N_{V_{\infty}}$, we also have
\begin{equation}\label{tv8}
t_{n}^{p} [v_{n}]^{p}_{W^{s, p}(\R^{N})}+ t_{n}^{p} V_{\infty}\int_{\R^{N}} |v_{n}|^{p} dx = \int_{\R^{N}} f(t_{n}v_{n}) t_{n}v_{n} dx.
\end{equation}
Putting together \eqref{tv7} and \eqref{tv8} we obtain
\begin{equation*}
\int_{\R^{N}} \left( \frac{f(t_{n}v_{n}) (v_{n})^{p}}{(t_{n} v_{n})^{p-1}} - \frac{f(v_{n}) (v_{n})^{p}}{(v_{n})^{p-1}} \right)  \,dx= \int_{\R^{N}} \left( V_{\infty} - V(\e x)\right) |v_{n}|^{p} dx.
\end{equation*}
By hypothesis $(V)$ we can see that, given $\zeta>0$ there exists $R=R(\zeta)>0$ such that
\begin{equation}\label{tv9}
V(\e x) \geq V_{\infty} - \zeta \quad \mbox{ for any } |x|\geq R.
\end{equation}
Now, taking into account $v_{n}\rightarrow 0$ in $L^{p}(\B_{R}(0))$ and the boundedness of $\{v_{n}\}$ in $\h$, we can infer that
\begin{align*}
\int_{\R^{N}} & \left( V_{\infty} - V(\e x)\right) |v_{n}|^{p} dx\\
&= \int_{\B_{R}(0)} \left( V_{\infty} - V(\e x)\right) |v_{n}|^{p} dx+ \int_{\B_{R}^{c}(0)} \left( V_{\infty} - V(\e x)\right) |v_{n}|^{p} dx\\
&\leq V_{\infty}\int_{\B_{R}(0)} |v_{n}|^{p} dx + \zeta \int_{\B_{R}^{c}(0)} |v_{n}|^{p} dx\\
&\leq o_{n}(1) + \frac{\zeta}{V_{0}} \|v_{n}\|_{\e}^{p}\leq o_{n}(1)+ \zeta C.
\end{align*}
Thus,
\begin{equation}\label{tv10}
\int_{\R^{N}} \left( \frac{f(t_{n}v_{n})}{(t_{n} v_{n})^{p-1}} - \frac{f(v_{n})}{(v_{n})^{p-1}} \right) v_{n}^{p}  \,dx\leq \zeta C +o_{n}(1).
\end{equation}
Since $v_{n} \not \rightarrow 0$ in $\h$, we can apply Lemma \ref{lem2.2} to deduce the existence of a sequence $\{y_{n}\}\subset \R^{N}$, and two positive numbers $\bar{R}, \beta$ such that
\begin{equation}\label{tv11}
\int_{\B_{\bar{R}}(y_{n})} |v_{n}|^{p} dx \geq \beta>0.
\end{equation}
Let us consider $\bar{v}_{n}= v_{n}(x+y_{n})$. From condition $(V)$ and the boundedness of $\{v_{n}\}$ in $\h$, we can see that
\begin{align*}
\|\bar{v}_{n}\|_{V_{0}}^{p}= \|v_{n}\|_{V_{0}}^{p} \leq [v_{n}]^{p}_{W^{s, p}(\R^{N})}+ \int_{\R^{N}} V(\e x) |v_{n}|^{p} dx = \|v_{n}\|_{\e}^{p}\leq C,
\end{align*}
therefore $\{\bar{v}_{n}\}$ is bounded in $W^{s, p}(\R^{N})$. Taking into account that $W^{s, p}(\R^{N})$ is a reflexive Banach space, we may assume that $\bar{v}_{n}\rightharpoonup \bar{v}$ in $W^{s, p}(\R^{N})$. By \eqref{tv11} there exists $\Omega \subset \R^{N}$ with positive measure and such that $\bar{v}>0$ in $\Omega$. By using \eqref{tv6}, assumption $(f_{5})$ and \eqref{tv10} we can infer
\begin{align*}
0<\int_{\Omega} \left(\frac{f((1+\delta) \bar{v}_{n})}{((1+\delta) \bar{v}_{n})^{p-1}}- \frac{f(\bar{v}_{n})}{(\bar{v}_{n})^{p-1}} \right) \bar{v}_{n}^{p}dx \leq \zeta C+o_{n}(1).
\end{align*}
Letting the limit as $n\rightarrow \infty$ and by applying Fatou's Lemma we obtain
\begin{align*}
0<\int_{\Omega} \left(\frac{f((1+\delta) \bar{v})}{((1+\delta) \bar{v})^{p-1}}- \frac{f(\bar{v})}{(\bar{v})^{p-1}} \right) \bar{v}^{p}dx \leq \zeta C
\end{align*}
for any $\zeta>0$, and this is a contradiction. \\
Now, we distinguish the following cases: \\
\textsc{Case 1:} Assume that $\limsup_{n\rightarrow \infty} t_{n}=1$. Thus there exists $\{t_{n}\}$ such that $t_{n}\rightarrow 1$. Recalling that  $\I_{\e}(v_{n})\rightarrow d$, we have
\begin{align}\label{tv12new}
d+ o_{n}(1)&= \I_{\e}(v_{n})\nonumber \\
&=\I_{\e}(v_{n}) - \J_{V_{\infty}}(t_{n}v_{n})+ \J_{V_{\infty}}(t_{n}v_{n}) \nonumber \\
&\geq \I_{\e}(v_{n}) -\J_{V_{\infty}}(t_{n}v_{n}) + c_{V_{\infty}}.
\end{align}
Let us compute $\I_{\e}(v_{n}) -\J_{V_{\infty}}(t_{n}v_{n})$:
\begin{align}\begin{split}\label{tv12}
&\I_{\e}(v_{n}) -\J_{V_{\infty}}(t_{n}v_{n}) \\
&= \frac{1-t_{n}^{p}}{p} [v_{n}]^{p}_{W^{s, p}(\R^{N})} + \frac{1}{p} \int_{\R^{N}} (V(\e x) - t_{n}^{p} V_{\infty}) |v_{n}|^{p} dx + \int_{\R^{N}} \!(F(t_{n} v_{n}) -F(v_{n}))  \,dx.
\end{split} \end{align}
Now, by using condition $(V)$,  $v_{n}\rightarrow 0$ in $L^{p}(\B_{R}(0))$, $t_{n}\rightarrow 1$, \eqref{tv9}, and
\begin{align*}
V(\e x) - t_{n}^{p} V_{\infty} =\left(V(\e x) - V_{\infty} \right) + (1- t_{n}^{p}) V_{\infty}\geq -\zeta + (1- t_{n}^{p}) V_{\infty} \quad \mbox{ for } |x|\geq R,
\end{align*}
we get
\begin{align}\label{tv13}
&\int_{\R^{N}} \left( V(\e x) - t_{n}^{p} V_{\infty}\right) |v_{n}|^{p} dx \nonumber \\
&= \int_{\B_{R}(0)} \left( V(\e x) - t_{n}^{p} V_{\infty}\right) |v_{n}|^{p} dx+ \int_{\B_{R}^{c}(0)} \left( V(\e x) - t_{n}^{p} V_{\infty}\right) |v_{n}|^{p} dx \nonumber \\
&\geq (V_{0}- t_{n}^{p}V_{\infty}) \int_{\B_{R}(0)} |v_{n}|^{p} dx - \zeta \int_{\B_{R}^{c}(0)} |v_{n}|^{p} dx+ V_{\infty}(1- t_{n}^{p}) \int_{\B_{R}^{c}(0)} |v_{n}|^{p} dx \nonumber \\
&\geq o_{n}(1)- \zeta C.
\end{align}
On the other hand, since $\{v_{n}\}$ is bounded in $\h$, we can see that
\begin{align}\label{tv14}
\frac{(1-t_{n}^{p})}{p} [v_{n}]^{p}_{W^{s, p}(\R^{N})}= o_{n}(1).
\end{align}
Hence, putting together \eqref{tv12}, \eqref{tv13} and \eqref{tv14}, we obtain
\begin{align}\label{tv15}
\I_{\e}(v_{n}) -\J_{V_{\infty}}(t_{n}v_{n}) = \int_{\R^{N}} \left( F(t_{n} v_{n}) -F(v_{n}) \right) \, dx +o_{n}(1)- \zeta C.
\end{align}
At this point, we show that
\begin{align}\label{tv16}
\int_{\R^{N}} \left( F(t_{n} v_{n}) -F(v_{n}) \right) \, dx=o_{n}(1).
\end{align}
Indeed, by using the Mean Value Theorem and \eqref{tv1} we have
\begin{align*}
\int_{\R^{N}} | F(t_{n} v_{n}) -F(v_{n}) | \, dx \leq C|t_{n}-1| \left( |v_{n}|_{L^{p}(\R^{N})}^{p}+ |v_{n}|_{L^{p}(\R^{N})}^{p} \right),
\end{align*}
and taking into account the boundedness of $\{v_{n}\}$ in $\h$ we get the thesis. Now, putting together \eqref{tv12new}, \eqref{tv15} and \eqref{tv16} we can infer that
\begin{align*}
d+ o_{n}(1)\geq o_{n}(1) - \zeta C + c_{V_{\infty}},
\end{align*}
and passing to the limit as $\zeta \rightarrow 0$ we get $d \geq c_{V_{\infty}}$. \\
\textsc{Case 2:} Assume that $\limsup_{n\rightarrow \infty} t_{n}=t_{0}<1$. Then there is a subsequence, still denoted by $\{t_{n}\}$, such that $t_{n}\rightarrow t_{0} (<1)$ and $t_{n}<1$ for any $n\in \mathbb{N}$.
Let us observe that
\begin{align}\label{tv17}
d+o_{n}(1)= \I_{\e}(v_{n}) - \frac{1}{p}\langle \I'_{\e}(v_{n}), v_{n} \rangle = \int_{\R^{N}} \left(\frac{1}{p}f(v_{n}) v_{n} - F(v_{n})\right) \,dx.
\end{align}
Exploiting the facts that $t_{n}v_{n}\in \N_{V_{\infty}}$,  \eqref{increasing} and \eqref{tv17}, we obtain
\begin{align*}
c_{V_{\infty}} \leq \J_{V_{\infty}}(t_{n}v_{n})  &= \J_{V_{\infty}}(t_{n}v_{n}) - \frac{1}{p} \langle \J'_{V_{\infty}}(t_{n}v_{n}), t_{n}v_{n} \rangle \\
&= \int_{\R^{N}} \left(\frac{1}{p} f(t_{n}v_{n}) t_{n}v_{n}- F(t_{n}v_{n})\right) \, dx \\
&\leq \int_{\R^{N}} \left(\frac{1}{p}f(v_{n}) v_{n} - F(v_{n})\right) \,dx =d +o_{n}(1).
\end{align*}
Taking the limit as $n\rightarrow \infty$ we get $d\geq c_{V_{\infty}}$.
\end{proof}

\noindent
At this point we are able to prove the following compactness result.
\begin{proposition}\label{prop2.1}
Let $\{u_{n}\}\subset \N_{\e}$ be such that $\I_{\e}(u_{n})\rightarrow c$, where $c<c_{V_{\infty}}$ if $V_{\infty}<\infty$ and $c\in \R$ if $V_{\infty}=\infty$. Then $\{u_{n}\}$ has a convergent subsequence in $\h$.
\end{proposition}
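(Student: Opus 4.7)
The plan is to regard $\{u_n\}$ as a Palais--Smale sequence for $\I_{\e}$ at level $c$ (which may be assumed by invoking Ekeland's variational principle on $\Psi_{\e}$ together with Lemma \ref{SW3}). Lemma \ref{lemB} then gives boundedness in $\h$, so along a subsequence $u_n \rightharpoonup u$ in $\h$, $u_n \to u$ a.e.\ in $\R^{N}$, and $u_n \to u$ in $L^{r}_{\mathrm{loc}}(\R^{N})$ for every $r \in [1,\p)$. The weak sequential continuity of $\I'_{\e}$ (Lemma \ref{SW1}-(ii)) gives $\I'_{\e}(u)=0$, and since either $u\equiv 0$ or $u\in\N_{\e}$, assumption $(f_{4})$ implies $\I_{\e}(u)\geq 0$.

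When $V_{\infty}=\infty$, I would finish directly: Lemma \ref{Cheng} furnishes the compact embedding $\h\hookrightarrow L^{r}(\R^{N})$ for $r\in[p,\p)$, so $u_n\to u$ strongly in every such $L^{r}$. Together with the growth estimate \eqref{tv1} this forces $\int_{\R^{N}} f(u_n)(u_n-u)\,dx \to 0$. Plugging into $\langle \I'_{\e}(u_n), u_n-u\rangle = o_n(1)$, using weak convergence to discard the linear-in-$u$ terms, and applying a Simon-type monotonicity inequality for $\xi\mapsto|\xi|^{p-2}\xi$ on both the Gagliardo seminorm and the weighted $L^{p}$ piece, one concludes $\|u_n-u\|_{\e}\to 0$.

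When $V_{\infty}<\infty$ the splitting machinery from Section \ref{Sect2} becomes essential. Setting $v_n:=u_n-u$, Lemma \ref{lemPSY} yields $\|v_n\|_{\e}^{p} = \|u_n\|_{\e}^{p}-\|u\|_{\e}^{p}+o_n(1)$, while Lemma \ref{splitting}(A) gives $\int_{\R^{N}} F(v_n)\,dx = \int_{\R^{N}} F(u_n)\,dx - \int_{\R^{N}} F(u)\,dx + o_n(1)$, hence
\begin{equation*}
\I_{\e}(v_n) \longrightarrow c - \I_{\e}(u) =: d \leq c < c_{V_{\infty}}.
\end{equation*}
Coupling Lemma \ref{splitting}(B) with Lemma \ref{lemVince} (applied with $z_n=v_n$, $w=u$) and $\I'_{\e}(u)=0$ delivers $\I'_{\e}(v_n)\to 0$ in the dual of $\h$, so in particular $\langle \I'_{\e}(v_n), v_n\rangle = o_n(1)$. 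If $v_n\to 0$ in $\h$, we are done.

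Otherwise, one rescales: by Lemma \ref{SW2}-(i) there exist $t_n>0$ with $t_n v_n \in \N_{\e}$. Using $\langle \I'_{\e}(v_n), v_n\rangle = o_n(1)$, the growth \eqref{tv1}, hypothesis $(f_{5})$ and the fact that $\|v_n\|_{\e}$ is bounded below away from zero (since $v_n\not\to 0$), an argument modelled on Claim 1 of Lemma \ref{lem2.3} forces $t_n\to 1$; combining this with a computation identical to \eqref{tv16} gives $\I_{\e}(t_n v_n)\to d$. Since $t_n v_n\rightharpoonup 0$ but $t_n v_n\not\to 0$ in $\h$, Lemma \ref{lem2.3} applied to $\{t_n v_n\}$ yields $d\geq c_{V_{\infty}}$, contradicting $d < c_{V_{\infty}}$. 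The main obstacle throughout is the non-Hilbertian structure of $\h$ for $p\neq 2$: it is precisely what forces us to rely on the Brezis--Lieb-type statements of Lemmas \ref{lemPSY}, \ref{splitting} and \ref{lemVince} to split both $\I_{\e}$ and $\I'_{\e}$ cleanly along the weakly convergent sequence, and to justify the $t_n\to 1$ rescaling needed to activate Lemma \ref{lem2.3}.
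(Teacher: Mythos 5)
Your argument is correct and, in the harder case $V_{\infty}<\infty$, follows essentially the paper's route: the decomposition $v_{n}=u_{n}-u$, the Brezis--Lieb splittings of Lemmas \ref{lemPSY}, \ref{splitting} and \ref{lemVince} to obtain $\I_{\e}(v_{n})\to d\leq c<c_{V_{\infty}}$ and $\I'_{\e}(v_{n})\to 0$, and then Lemma \ref{lem2.3} to force $v_{n}\to 0$. The one real difference there is your insertion of the rescaling $t_{n}v_{n}\in\N_{\e}$ with $t_{n}\to 1$ so that Lemma \ref{lem2.3} is applied with its hypothesis $\{v_n\}\subset\N_{\e}$ literally satisfied; the paper applies Lemma \ref{lem2.3} to $\{v_{n}\}$ directly, tacitly using that its proof only needs $\langle \I'_{\e}(v_{n}),v_{n}\rangle=o_{n}(1)$. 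Your variant is more scrupulous but requires the two-sided limit $t_{n}\to 1$: Claim 1 of Lemma \ref{lem2.3} only yields $\limsup t_{n}\leq 1$, so you must also exclude $t_{n}\to t_{0}<1$ (a symmetric $(f_{5})$--Fatou argument, which you should spell out). In the case $V_{\infty}=\infty$ you genuinely depart from the paper: working directly with $u_{n}$ via the compact embedding of Lemma \ref{Cheng} and the $(S_{+})$-type monotonicity of $\xi\mapsto|\xi|^{p-2}\xi$ avoids the splitting machinery entirely, at the price of handling the singular case $1<p<2$ with the weaker Simon inequality plus a H\"older step; the paper instead still passes to $v_{n}$ and reads off $\|v_{n}\|_{\e}^{p}=o_{n}(1)$ from $\langle\I'_{\e}(v_{n}),v_{n}\rangle=o_{n}(1)$. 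One caveat you share with the paper: the statement assumes only $\{u_{n}\}\subset\N_{\e}$ and $\I_{\e}(u_{n})\to c$, yet both proofs use $\I'_{\e}(u_{n})\to 0$. Your appeal to Ekeland does not really supply this for an arbitrary level $c$ (Ekeland replaces the sequence and only operates at the infimum), but since the paper makes the identical implicit assumption and only ever applies the proposition to genuine Palais--Smale sequences, this is not a gap relative to the paper's own argument.
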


\begin{proof}
It is easy to see that $\{u_{n}\}$ is bounded in $\h$. Then, up to a subsequence, we may assume that
\begin{align}\begin{split}\label{conv}
&u_{n}\rightharpoonup u \mbox{ in } \h, \\
&u_{n}\rightarrow u \mbox{ in } L^{q}_{loc}(\R^{N}) \quad \mbox{ for any } q\in [p, \p), \\
&u_{n} \rightarrow u \mbox{ a.e. in } \R^{N}.
\end{split}\end{align}
By using assumptions $(f_{2})$-$(f_{3})$, \eqref{conv} and the fact that $\C^{\infty}_{c}(\R^{N})$ is dense in $W^{s, p}(\R^{N})$, it is standard to check that $\I'_{\e}(u)=0$. \\
Now, let $v_{n}= u_{n}-u$.
By using Lemma \ref{lemPSY} and \eqref{A} of Lemma \ref{splitting} we have
\begin{align}\label{tv19}
\I_{\e}(v_{n})&=\frac{\|u_{n}\|_{\e}^{p}}{p} - \frac{\|u\|_{\e}^{p}}{p}- \int_{\R^{N}} F(u_{n}) \, dx+ \int_{\R^{N}} F(u) \, dx + o_{n}(1) \nonumber \\
&=\I_{\e}(u_{n})-\I_{\e}(u)+o_{n}(1) \nonumber\\
&=c-\I_{\e}(u)+o_{n}(1)=:d+o_{n}(1).
\end{align}
Now, we prove that $\I'_{\e}(v_{n})=o_{n}(1)$. By using Lemma \ref{lemVince} with $z_{n}= v_{n}$ and $w=u$ we get
\begin{equation}\label{D}
\iint_{\R^{2N}} |\A(u_{n}) - \A(v_{n}) - \A(u)|^{p'} dx= o_{n}(1).
\end{equation}
Arguing as in the proof of Lemma $3.3$ in \cite{MeW}, we can see that
\begin{equation}\label{C}
\int_{\R^{N}} V(\e x) ||v_{n}|^{p-2}v_{n}-|u_{n}|^{p-2}u_{n}+|u|^{p-2}u|^{p'} dx=o_{n}(1).
\end{equation}
Hence, by using H\"older inequality, for any $\varphi\in \h$ such that $\|\varphi\|_{\e}\leq 1$, it holds
\begin{align*}
&|\langle \I'_{\e}(v_{n})-\I'_{\e}(u_{n})+\I'_{\e}(u), \varphi\rangle| \\
&\leq \left(\iint_{\R^{2N}}  |\A(u_{n}) - \A(v_{n}) - \A(u)|^{p'} dx dy\right)^{\frac{1}{p'}} [\varphi]_{W^{s, p}(\R^{N})} \\
&+\left(\int_{\R^{N}} V(\e x) ||v_{n}|^{p-2}v_{n}-|u_{n}|^{p-2}u_{n}+|u|^{p-2}u|^{p'} dx\right)^{p'} \left(\int_{\R^{N}} V(\e x) |\varphi|^{p}dx \right)^{\frac{1}{p}} \\
&+\int_{\R^{N}} |(f(v_{n})-f(u_{n})+f(u)) \varphi| dx,
\end{align*}
and in view of \eqref{B} of Lemma \ref{splitting}, \eqref{D}, \eqref{C}, $\I'_{\e}(u_{n})=0$ and $\I'_{\e}(u)=0$ we obtain the thesis.

Now, we note that by using $(f_4)$ we can see that
\begin{equation}\label{tv199}
\I_{\e}(u)=\I_{\e}(u)-\frac{1}{p} \langle\I'_{\e}(u),u\rangle=\int_{\R^{N}} \left[\frac{1}{p} f(u)u-F(u)\right]dx\geq 0.
\end{equation}
Assume $V_{\infty}<\infty$. From \eqref{tv19} and \eqref{tv199} it follows that
$$
d\leq c<c_{V_{\infty}}
$$
which together Lemma \ref{lem2.3} gives $v_{n}\rightarrow 0$ in $\h$, that is $u_{n}\rightarrow u$ in $\h$.\\
Let us consider the case $V_{\infty}=\infty$. Then, we can use Lemma \ref{Cheng} to deduce that $v_{n}\rightarrow 0$ in $L^{r}(\R^{N})$ for all $r\in [p, \p)$. This combined with assumptions $(f_2)$ and $(f_3)$ implies that
\begin{equation}\label{fn0}
\int_{\R^{N}} f(v_{n})v_{n}dx=o_{n}(1).
\end{equation}
Since $\langle\I'_{\e}(v_{n}), v_{n}\rangle=o_{n}(1)$ and applying \eqref{fn0} we can infer that
$$
\|v_{n}\|^{p}_{\e}=o_{n}(1),
$$
which yields $u_{n}\rightarrow u$ in $\h$.
\end{proof}

\noindent
We end this section giving the proof of the existence of a positive solution to ($P_{\e}$) whenever $\e>0$ is small enough.
\begin{theorem}\label{thm3.1}
Assume that $(V)$ and $(f_1)$-$(f_5)$ hold. Then there exists $\e_{0}>0$ such that problem ($P_{\e}$) admits a ground state solution for any $\e\in (0, \e_{0})$.
\end{theorem}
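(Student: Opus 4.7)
The plan is to run the standard Szulkin--Weth machinery on $\N_{\e}$ and combine it with the comparison result of Proposition \ref{prop2.1}. By Lemma \ref{SW2}(v) and Lemma \ref{SW3}, applying Ekeland's variational principle to $\Psi_{\e}\in\C^{1}(\mathbb{S}_{\e},\R)$ produces a sequence $\{\omega_{n}\}\subset\mathbb{S}_{\e}$ with $\Psi_{\e}(\omega_{n})\to c_{\e}$ and $\Psi_{\e}'(\omega_{n})\to 0$. Setting $u_{n}=m_{\e}(\omega_{n})\in\N_{\e}$, Lemma \ref{SW3}(ii) yields a Palais--Smale sequence for $\I_{\e}$ at level $c_{\e}$. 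By Lemma \ref{lemB} it is bounded in $\h$, so to conclude the existence of a critical point at the level $c_{\e}$ it suffices to verify the compactness hypothesis of Proposition \ref{prop2.1}. When $V_{\infty}=\infty$ this is automatic; the delicate case is $V_{\infty}<\infty$, where we must show
$$
c_{\e}<c_{V_{\infty}}\qquad\text{for all }\e\in(0,\e_{0}).
$$
This is the main obstacle and will be handled by a test function argument.

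For the test function, fix $x_{0}\in M$ and let $w\in\N_{V_{0}}$ be a ground state provided by Lemma \ref{lem4.3}, so that $\J_{V_{0}}(w)=c_{V_{0}}$. Let $\phi\in\C^{\infty}_{c}(\R^{N})$ be a cut-off with $0\leq\phi\leq 1$, $\phi\equiv 1$ on $\B_{1}(0)$, $\phi\equiv 0$ off $\B_{2}(0)$, and define
$$
\tilde w_{\e}(x):=\phi\!\bigl(\e x-x_{0}\bigr)\,w\!\bigl(x-x_{0}/\e\bigr).
$$
Translating and applying Lemma \ref{Psi} to $w$, one obtains $\tilde w_{\e}(\cdot+x_{0}/\e)\to w$ strongly in $W^{s,p}(\R^{N})$ as $\e\to 0$. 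By Lemma \ref{SW2}(i) there is a unique $t_{\e}>0$ with $t_{\e}\tilde w_{\e}\in\N_{\e}$, hence $c_{\e}\leq\I_{\e}(t_{\e}\tilde w_{\e})$. Using the continuity of $V$ at $x_{0}$ (so $V(\e x+x_{0})\to V_{0}$ uniformly on the support of $\phi(\e\cdot)$ after translation), together with the Dominated Convergence Theorem applied to $f(t_{\e}\tilde w_{\e})\tilde w_{\e}$ and $F(t_{\e}\tilde w_{\e})$, a routine argument based on the uniqueness statement in Lemma \ref{SW2}(i) and the bounds in Lemma \ref{SW2}(iii) gives $t_{\e}\to 1$ and
$$
\limsup_{\e\to 0}\I_{\e}(t_{\e}\tilde w_{\e})\leq \J_{V_{0}}(w)=c_{V_{0}}.
$$
Therefore $\limsup_{\e\to 0}c_{\e}\leq c_{V_{0}}$.

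It remains to compare the autonomous levels. From the minimax formula \eqref{Nguyen2} and the obvious monotonicity $\|\cdot\|_{V_{0}}\leq\|\cdot\|_{V_{\infty}}$ one sees $c_{V_{0}}\leq c_{V_{\infty}}$; the strict inequality $c_{V_{0}}<c_{V_{\infty}}$ follows from $V_{0}<V_{\infty}$ by rescaling the minimizer: if $w_{\infty}$ is a ground state of $(P_{V_{\infty}})$, then choosing the unique $t>0$ with $tw_{\infty}\in\N_{V_{0}}$ yields $c_{V_{0}}\leq\J_{V_{0}}(tw_{\infty})<\J_{V_{\infty}}(tw_{\infty})\leq\J_{V_{\infty}}(w_{\infty})=c_{V_{\infty}}$. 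Combining with the previous step, there exists $\e_{0}>0$ such that $c_{\e}<c_{V_{\infty}}$ for all $\e\in(0,\e_{0})$.

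With this inequality in hand, Proposition \ref{prop2.1} applies to the Palais--Smale sequence $\{u_{n}\}$ constructed above, giving $u_{n}\to u_{\e}$ in $\h$; by continuity of $\I_{\e}$ and $\I_{\e}'$ on bounded sets (Lemma \ref{SW1}), $u_{\e}\in\N_{\e}$ and $\I_{\e}(u_{\e})=c_{\e}$ with $\I_{\e}'(u_{\e})=0$, so $u_{\e}$ is a ground state. Positivity is then obtained exactly as in the remark following Lemma \ref{lem4.3}: testing $\I_{\e}'(u_{\e})=0$ against $u_{\e}^{-}$, using $f(t)=0$ for $t\leq 0$ by $(f_{1})$ together with the pointwise inequality $|a-b|^{p-2}(a-b)(a^{-}-b^{-})\geq|a^{-}-b^{-}|^{p}$, forces $u_{\e}^{-}\equiv 0$; the Moser iteration of Lemma \ref{lemMoser} and the strong maximum principle then upgrade $u_{\e}\geq 0$ to $u_{\e}>0$ in $\R^{N}$.
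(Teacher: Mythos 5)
Your proof is correct and follows the paper's overall skeleton (Ekeland's principle applied to $\Psi_{\e}$ on $\mathbb{S}_{\e}$, then Proposition \ref{prop2.1}), but you implement the key energy comparison $c_{\e}<c_{V_{\infty}}$ by a genuinely different route. The paper fixes an intermediate level $\mu\in (V_{0},V_{\infty})$, truncates a ground state of the auxiliary autonomous problem $(P_{\mu})$ to obtain a single fixed, compactly supported function $t_{r}w_{r}$ with $\max_{\tau\geq 0}\J_{\mu}(\tau t_{r}w_{r})<c_{V_{\infty}}$, and then uses $V(\e x)\leq \mu$ on $\supp(w_{r})$ for small $\e$ to conclude $c_{\e}\leq \J_{\mu}(t_{r}w_{r})<c_{V_{\infty}}$; this sidesteps any control of the $\e$-dependent Nehari projection, at the price of invoking the strict monotonicity $c_{\mu}<c_{V_{\infty}}$ (stated as ``clearly'' in the paper --- your rescaling argument with $tw_{\infty}\in\N_{V_{0}}$ in fact supplies a proof of it). You instead prove $\limsup_{\e\to 0}c_{\e}\leq c_{V_{0}}$ with a concentrating test function and then the strict inequality $c_{V_{0}}<c_{V_{\infty}}$; the first step is essentially the content of Lemma \ref{lem4.1}, which the paper proves later for the barycenter machinery, so your route is sound and even slightly more self-contained. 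Two small repairs are needed. First, $V(\e y+x_{0})$ does \emph{not} converge to $V_{0}$ uniformly on $\supp\,\phi(\e\,\cdot)$: in the $\e x$ variable that support is the fixed ball $\B_{2}(x_{0})$, not a shrinking neighbourhood of $x_{0}$; however, pointwise convergence together with the boundedness of $V$ on $\B_{2}(x_{0})$ and the dominating function $C|w|^{p}\in L^{1}(\R^{N})$ still yield $\|\tilde w_{\e}\|_{\e}^{p}\to\|w\|_{V_{0}}^{p}$, so the conclusion survives. Second, Lemma \ref{SW2}$(iii)$ bounds $t_{u}$ only for $u$ ranging in a compact subset of a \emph{single} sphere $\mathbb{S}_{\e}$, so it cannot be cited across varying $\e$; the boundedness of $t_{\e}$ and the limit $t_{\e}\to 1$ should instead be extracted from the Nehari identity for $t_{\e}\tilde w_{\e}$ together with $(f_{4})$ and $(f_{5})$, exactly as in the proof of Lemma \ref{lem4.1}.
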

\begin{proof}
From $(v)$ of Lemma \ref{SW2}, we know that $c_{\e}\geq \rho>0$ for each $\e>0$. Moreover, if $u_{\e}\in \N_{\e}$ verifies $\I_{\e}(u)=c_{\e}$, then $m_{\e}^{-1}(u)$ is a minimizer of $\Psi_{\e}$ and it is a critical point of $\Psi_{\e}$. In view of Lemma \ref{SW3} we can see that $u$ is a critical point of $\I_{\e}$.

Now we show that there exists a minimizer of $\I_{\e}|_{\N_{\e}}$. By applying Ekeland's variational principle \cite{Ek} there exists a sequence $\{v_{n}\}\subset \mathbb{S}_{\e}$ such that $\Psi_{\e}(v_{n})\rightarrow c_{\e}$ and $\Psi'_{\e}(v_{n})\rightarrow 0$ as $n\rightarrow \infty$. Let $u_{n}=m_{\e}(v_{n}) \in \N_{\e}$. Then, from Lemma \ref{SW3} we deduce that $\I_{\e}(u_{n})\rightarrow c_{\e}$, $\langle \I'_{\e}(u_{n}), u_{n}\rangle =0$ and $\I'_{\e}(u_{n})\rightarrow 0$ as $n\rightarrow \infty$.
Therefore, $\{u_{n}\}$ is a Palais-Smale sequence for $\I_{\e}$ at level $c_{\e}$.
It is standard to check that $\{u_{n}\}$ is bounded in $\h$ and we denote by $u$ its weak limit. It is easy to verify that $\I_{\e}'(u)=0$.

Let us consider $V_{\infty}=\infty$. By using Lemma \ref{embedding} we have $\I_{\e}(u)=c_{\e}$ and $\I'_{\e}(u)=0$.\\
Now, we deal with the case $V_{\infty}<\infty$. In view of Proposition \ref{prop2.1} it is enough to show that $c_{\e}<c_{V_{\infty}}$ for small $\e$. Without loss of generality, we may suppose that
$$
V(0)=V_{0}=\inf_{x\in \R^{N}} V(x).
$$
Let $\mu\in \R$ such that $\mu\in (V_{0}, V_{\infty})$. Clearly $c_{V_{0}}<c_{\mu}<c_{V_{\infty}}$. By Lemma \ref{lem4.3}, it follows that there exists a positive ground state $w\in W^{s, p}(\R^{N})$ to the autonomous problem $(P_{\mu})$.\\
Let $\eta_{r}\in C^{\infty}_{c}(\R^{N})$ be a cut-off function such that $\eta_{r}=1$ in $\B_{r}(0)$ and $\eta_{r}=0$ in $\B_{2r}^{c}(0)$. Let us define $w_{r}(x):=\eta_{r}(x) w(x)$, and take $t_{r}>0$ such that
\begin{equation*}
\J_{\mu}(t_{r} w_{r})=\max_{t\geq 0} \J_{\mu}(t w_{r}).
\end{equation*}
Now we prove that there exists $r$ sufficiently large for which $\J_{\mu}(t_{r} w_{r})<c_{V_{\infty}}$. \\
Assume by contradiction $\J_{\mu}(t_{r} w_{r})\geq c_{V_{\infty}}$ for any $r>0$.
Taking into account $w_{r}\rightarrow w$ in $W^{s,p}(\R^{N})$ as $r\rightarrow \infty$ in view of Lemma \ref{Psi}, $t_{r}w_{r}$ and $w$ belong to $\N_{\mu}$ and by using assumption $(f_{5})$, we have that $t_{r}\rightarrow 1$. Therefore,
$$
c_{V_{\infty}}\leq \liminf_{r\rightarrow \infty} \J_{\mu}(t_{r} w_{r})=\J_{\mu}(w)=c_{\mu},
$$
which leads to a contradiction being $c_{V_{\infty}}>c_{\mu}$.
Hence, there exists $r>0$ such that
\begin{align}\label{tv200}
\J_{\mu}(t_{r} w_{r})=\max_{\tau\geq 0} \J_{\mu}(\tau (t_{r} w_{r}))\quad  \mbox{ and } \quad \J_{\mu}(t_{r}w_{r})<c_{V_{\infty}}.
\end{align}
Now, condition $(V)$ implies that there exists  $\e_{0}>0$ such that
\begin{equation}\label{tv20}
V(\e x)\leq \mu \mbox{ for all } x\in \supp(w_{r}), \quad \e\in (0, \e_{0}).
\end{equation}
Therefore, by using \eqref{tv200} and \eqref{tv20}, we deduce that for all $\e\in (0, \e_{0})$
$$
c_{\e}\leq \max_{\tau\geq 0} \I_{\e}(\tau (t_{r} w_{r}))\leq \max_{\tau\geq 0} \J_{\mu}(\tau (t_{r} w_{r}))=\J_{\mu}(t_{r} w_{r})<c_{V_{\infty}}
$$
which implies that $c_{\e}<c_{V_{\infty}}$ for any $\e>0$ sufficiently small.
\end{proof}

\subsection{Multiple solutions for \eqref{P}}\label{Sect3.3}
This section is devoted to the study of the multiplicity of solutions to \eqref{P}. We begin proving the following result which will be needed to implement the barycenter machinery.
\begin{proposition}\label{prop4.1}
Let $\e_{n}\rightarrow 0^{+}$ and $\{u_{n}\}\subset \N_{\e_{n}}$ be such that $\I_{\e_{n}}(u_{n})\rightarrow c_{V_{0}}$. Then there exists $\{\tilde{y}_{n}\}\subset \R^{N}$ such that the translated sequence
\begin{equation*}
v_{n}(x):=u_{n}(x+ \tilde{y}_{n})
\end{equation*}
has a subsequence which converges in $W^{s, p}(\R^{N})$. Moreover, up to a subsequence, $\{y_{n}\}:=\{\e_{n}\tilde{y}_{n}\}$ is such that $y_{n}\rightarrow y\in M$.
\end{proposition}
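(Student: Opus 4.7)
The plan is to adapt the standard concentration-compactness scheme to the regime $\e_n\to 0$ in four steps: uniform bounds, non-vanishing translation, boundedness of the concentration point, and identification together with upgrade to strong convergence. First, subtracting $\tfrac{1}{\vartheta}\langle\I'_{\e_n}(u_n),u_n\rangle=0$ from $\I_{\e_n}(u_n)$ and invoking $(f_4)$, exactly as in Lemma \ref{lemB}, I obtain $\bigl(\tfrac{1}{p}-\tfrac{1}{\vartheta}\bigr)\|u_n\|^p_{\e_n}\le c_{V_0}+o_n(1)$; since $V\ge V_0>0$, $\{u_n\}$ is uniformly bounded in $W^{s,p}(\R^N)$. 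If $\sup_{y\in\R^N}\int_{\B_R(y)}|u_n|^p\,dx\to 0$ for every $R>0$, then Lemma \ref{Lions} gives $u_n\to 0$ in $L^t(\R^N)$ for $t\in(p,\p)$; combined with \eqref{tv1} and the Nehari identity $\|u_n\|^p_{\e_n}=\int f(u_n)u_n\,dx$ this forces $\|u_n\|_{\e_n}\to 0$, contradicting $\I_{\e_n}(u_n)\to c_{V_0}>0$. Hence there exist $\tilde y_n\in\R^N$ and $R,\beta>0$ with $\int_{\B_R(\tilde y_n)}|u_n|^p\,dx\ge\beta$, and $v_n:=u_n(\cdot+\tilde y_n)$ is bounded in $W^{s,p}(\R^N)$ with $v_n\rightharpoonup v\neq 0$ up to a subsequence.

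Setting $y_n:=\e_n\tilde y_n$ and introducing the shifted functional
\[
\tilde\I_n(w):=\frac{1}{p}[w]^p_{W^{s,p}(\R^N)}+\frac{1}{p}\int_{\R^N}V(\e_nx+y_n)|w|^p\,dx-\int_{\R^N}F(w)\,dx,
\]
I have $\tilde\I_n(v_n)=\I_{\e_n}(u_n)\to c_{V_0}$ and $\tilde\I'_n(v_n)=0$. I now rule out $|y_n|\to\infty$. If $V_\infty<\infty$, I pick $t_n>0$ with $t_nv_n\in\N_{V_\infty}$; by $(f_5)$, $v_n$ is the unique maximizer of $t\mapsto\tilde\I_n(tv_n)$, so $\tilde\I_n(t_nv_n)\le\tilde\I_n(v_n)$. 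Arguing as in Lemma \ref{lem2.3}, the estimate
\[
c_{V_\infty}\le\J_{V_\infty}(t_nv_n)=\tilde\I_n(t_nv_n)+\frac{t_n^p}{p}\int_{\R^N}(V_\infty-V(\e_nx+y_n))|v_n|^p\,dx
\]
together with a Fatou-type control of the last integral (splitting along $\B_R$ and $\B_R^c$, using $V(\e_nx+y_n)\ge V_\infty-\eta$ on $\B_R$ for $n$ large, combined with the uniform $L^p$-bound on $v_n$) yields $c_{V_\infty}\le c_{V_0}+C\eta+o_n(1)$ for arbitrary $\eta>0$, contradicting $c_{V_0}<c_{V_\infty}$. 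The case $V_\infty=\infty$ is ruled out by Fatou applied to $\int V(\e_nx+y_n)|v_n|^p\,dx$, which would blow up while $\|v_n\|_n$ stays bounded. Hence $y_n\to y\in\R^N$ along a subsequence.

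Passing to the limit in $\tilde\I'_n(v_n)\varphi=0$ against $\varphi\in\C^\infty_c(\R^N)$, using the weak convergence $v_n\rightharpoonup v$, the pointwise convergence $V(\e_n\cdot+y_n)\to V(y)$ (continuity of $V$), $(f_2),(f_3)$ and Lemma \ref{embedding}, yields $\J'_{V(y)}(v)=0$, so $v\in\N_{V(y)}$ and $\J_{V(y)}(v)\ge c_{V(y)}$. The map $\mu\mapsto c_\mu$ is strictly increasing: for $\mu_1<\mu_2$ and a ground state $u_2\in\N_{\mu_2}$ (supplied by Lemma \ref{lem4.3}), choosing $t_1>0$ with $t_1u_2\in\N_{\mu_1}$ yields
\[
c_{\mu_1}\le\J_{\mu_1}(t_1u_2)=\J_{\mu_2}(t_1u_2)-\tfrac{(\mu_2-\mu_1)t_1^p}{p}|u_2|^p_{L^p(\R^N)}<\J_{\mu_2}(t_1u_2)\le c_{\mu_2}.
\]
Using $(f_4)$ and Fatou on the nonnegative integrand $\tfrac{1}{p}f(t)t-F(t)$ of \eqref{increasing},
\[
c_{V_0}=\lim_n\tilde\I_n(v_n)=\lim_n\int_{\R^N}\!\left[\tfrac{1}{p}f(v_n)v_n-F(v_n)\right]dx\ge\int_{\R^N}\!\left[\tfrac{1}{p}f(v)v-F(v)\right]dx=\J_{V(y)}(v)\ge c_{V(y)}\ge c_{V_0},
\]
forcing equality throughout; strict monotonicity then gives $V(y)=V_0$, i.e.\ $y\in M$, and $v$ is a ground state of $\J_{V_0}$. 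Finally, $v_n\to v$ in $W^{s,p}(\R^N)$ follows from the Brezis-Lieb decomposition of Lemma \ref{lemPSY} combined with the splitting Lemma \ref{splitting}, applied to the shifted norm $\|\cdot\|_n^p:=[\cdot]^p_{W^{s,p}(\R^N)}+\int V(\e_n x+y_n)|\cdot|^p\,dx$ and reproducing the strong-convergence argument from Lemma \ref{lem4.3}.

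The main obstacle is the exclusion of $|y_n|\to\infty$: because $v_n$ does not vanish weakly, the Fatou-type comparison of Lemma \ref{lem2.3} requires a delicate tail splitting, and the decisive ingredient is the strict inequality $c_{V_0}<c_{V_\infty}$, a consequence of the strict monotonicity of $\mu\mapsto c_\mu$ and the hypothesis $V_0<V_\infty$ from $(V)$.
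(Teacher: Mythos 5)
Your overall architecture is close to the paper's and several pieces are correct: the non-vanishing step, the exclusion of $|y_{n}|\rightarrow\infty$ when $V_{\infty}=\infty$ via Fatou, the identification $\J'_{V(y)}(v)=0$, the explicit strict monotonicity of $\mu\mapsto c_{\mu}$, and the concluding Fatou chain forcing $V(y)=V_{0}$ (this last part is in fact a clean alternative to the paper's comparison \eqref{tv22} and does not need strong convergence). The genuine gap is in the exclusion of $|y_{n}|\rightarrow\infty$ when $V_{\infty}<\infty$, which you attempt while only the weak convergence $v_{n}\rightharpoonup v$ is available. In your splitting of $\int_{\R^{N}}(V_{\infty}-V(\e_{n}x+y_{n}))|v_{n}|^{p}\,dx$, the integral over $\B_{R}(0)$ is indeed $O(\eta)$, because $|\e_{n}x+y_{n}|\rightarrow\infty$ uniformly on $\B_{R}(0)$; but on $\B_{R}(0)^{c}$ the integrand is only bounded by $V_{\infty}-V_{0}$, and the region where $V(\e_{n}x+y_{n})<V_{\infty}-\eta$ is the ball $\{x:|x+\tilde{y}_{n}|<R_{\eta}/\e_{n}\}$, which eventually swallows any fixed annulus. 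Hence the tail contribution is controlled only by $(V_{\infty}-V_{0})\int_{\B_{R}(0)^{c}}|v_{n}|^{p}\,dx$, and without tightness of $\{|v_{n}|^{p}\}$ --- which you do not have before proving strong convergence --- this does not become small, uniformly in $n$, as $R\rightarrow\infty$. (The same lack of tightness also blocks the transfer of Claim 1 of Lemma \ref{lem2.3}, i.e. $\limsup_{n} t_{n}\leq 1$, whose proof there relies on $v_{n}\rightharpoonup 0$, so the factor $t_{n}^{p}$ in front of your integral is not yet under control either.)

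The paper avoids this by reversing your order of steps: it first takes $t_{n}$ with $w_{n}=t_{n}v_{n}\in\N_{V_{0}}$, observes $c_{V_{0}}\leq \J_{V_{0}}(w_{n})\leq \I_{\e_{n}}(u_{n})\rightarrow c_{V_{0}}$ (using $V\geq V_{0}$ and that $u_{n}$ maximizes $t\mapsto\I_{\e_{n}}(tu_{n})$), and then invokes the compactness argument of the autonomous problem (Lemma \ref{lem4.3}) to obtain $w_{n}\rightarrow w$, hence $v_{n}\rightarrow v$, \emph{strongly} in $W^{s,p}(\R^{N})$. Only afterwards does it rule out $|y_{n}|\rightarrow\infty$: with strong convergence the tails of $|v_{n}|^{p}$ are uniformly small and the Fatou comparison $\J_{V_{\infty}}(w)\leq\liminf_{n}[\dots]$ in \eqref{tv22} goes through. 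If you reorder your proof accordingly --- strong convergence first, then the $V_{\infty}<\infty$ exclusion --- your remaining steps, including your alternative route to $y\in M$, close the argument.
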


\begin{proof}
Since $\langle \I'_{\e_{n}}(u_{n}), u_{n} \rangle=0$ and $\I_{\e_{n}}(u_{n})\rightarrow c_{V_{0}}$, we know that $\{u_{n}\}$ is bounded in $\h$. From $c_{V_{0}}>0$, we can infer that $\|u_{n}\|_{\e_{n}}\not \rightarrow 0$.
Therefore, as in the proof of Lemma \ref{lem2.2}, we can find a sequence $\{\tilde{y}_{n}\}\subset \R^{N}$ and constants $R, \beta>0$ such that
\begin{equation}\label{tv21}
\liminf_{n\rightarrow \infty}\int_{\B_{R}(\tilde{y}_{n})} |u_{n}|^{p} dx\geq \beta.
\end{equation}
Let us define
\begin{equation*}
v_{n}(x):=u_{n}(x+ \tilde{y}_{n}).
\end{equation*}
In view of the boundedness of $\{u_{n}\}$ and \eqref{tv21} we may assume that $v_{n}\rightharpoonup v$ in $W^{s,p}(\R^{N})$ for some $v\neq 0$.
Let $\{t_{n}\}\subset (0, +\infty)$ be such that $w_{n}=t_{n} v_{n}\in \N_{V_{0}}$, and we set $y_{n}:=\e_{n}\tilde{y}_{n}$.  \\
Thus, by using the change of variables $z\mapsto x+ \tilde{y}_{n}$, $V(x)\geq V_{0}$ and the invariance by translation, we can see that
\begin{align*}
c_{V_{0}}\leq \J_{V_{0}}(w_{n})\leq \I_{\e_{n}}(t_{n} v_{n})\leq \I_{\e_{n}}(u_{n})=c_{V_{0}}+ o_{n}(1).
\end{align*}
Then we can infer $\J_{V_{0}}(w_{n})\rightarrow c_{V_{0}}$. This fact and $\{w_{n}\}\subset \N_{V_{0}}$  imply that there exists $K>0$ such that $\|w_{n}\|_{V_{0}}\leq K$ for all $n\in \mathbb{N}$.
Moreover, we can prove that the sequence $\{t_{n}\}$ is bounded. In fact, $v_{n}\not \rightarrow 0$ in $W^{s, p}(\R^{N})$, so there exists $\alpha>0$ such that $\|v_{n}\|_{V_{0}}\geq \alpha$. Consequently, for all $n\in \mathbb{N}$, we have
$$
|t_{n}|\alpha\leq \|t_{n}v_{n}\|_{V_{0}}=\|w_{n}\|_{V_{0}}\leq K,
$$
which yields $|t_{n}|\leq \frac{K}{\alpha}$ for all $n\in \mathbb{N}$.
Therefore, up to a subsequence, we may suppose that $t_{n}\rightarrow t_{0}\geq 0$. Let us show that $t_{0}>0$. Otherwise, if $t_{0}=0$, from the boundedness of $\{v_{n}\}$, we get $w_{n}= t_{n}v_{n} \rightarrow 0$ in $W^{s,p}(\R^{N})$, that is $\J_{V_{0}}(w_{n})\rightarrow 0$ in contrast with the fact $c_{V_{0}}>0$. Thus $t_{0}>0$, and up to a subsequence, we may assume that $w_{n}\rightharpoonup w:= t_{0} v\neq 0$ in $W^{s,p}(\R^{N})$. \\
Hence, it holds
\begin{equation*}
\J_{V_{0}}(w_{n})\rightarrow c_{V_{0}} \quad \mbox{ and } \quad w_{n}\rightharpoonup w\neq 0 \mbox{ in } W^{s,p}(\R^{N}).
\end{equation*}
From Lemma \ref{lem4.3}, we deduce that $w_{n} \rightarrow w$ in $W^{s,p}(\R^{N})$, that is $v_{n}\rightarrow v$ in $W^{s,p}(\R^{N})$. \\
Now, we show that $\{y_{n}\}$ has a subsequence such that $y_{n}\rightarrow y\in M$.
Assume by contradiction that $\{y_{n}\}$ is not bounded, that is there exists a subsequence, still denoted by $\{y_{n}\}$, such that $|y_{n}|\rightarrow +\infty$. \\
Firstly, we deal with the case $V_{\infty}=\infty$. 
By using $\{u_{n}\}\subset \N_{\e_{n}}$ and a change of variable, we can see that
\begin{align*}
\int_{\R^{N}} V(\e_{n}x+y_{n})|v_{n}|^{p} dx &\leq [v_{n}]^{p}_{W^{s, p}(\R^{N})}+\int_{\R^{N}} V(\e_{n}x+y_{n})|v_{n}|^{p} dx\\
&= \|u_{n}\|^{p}_{\e_{n}}=\int_{\R^{N}} f(u_{n})u_{n} \, dx = \int_{\R^{N}} f(v_{n})v_{n} \, dx.
\end{align*}
By applying Fatou's Lemma and $v_{n}\rightarrow v$ in $W^{s, p}(\R^{N})$, we deduce that
\begin{align*}
\infty=\liminf_{n\rightarrow \infty} \int_{\R^{N}} \! V(\e_{n}x+y_{n})|v_{n}|^{p} dx \leq \liminf_{n\rightarrow \infty} \int_{\R^{N}} \! f(v_{n})v_{n} dx =\int_{\R^{N}} \! f(v) v \,dx <\infty,
\end{align*}
which gives a contradiction. \\
Let us consider the case $V_{\infty}<\infty$. 
Taking into account $w_{n}\rightarrow w$ strongly in $W^{s,p}(\R^{N})$, condition $(V)$ and using the change of variable $z=x+ \tilde{y}_{n}$, we have
\begin{align}\label{tv22}
c_{V_{0}}&= \J_{V_{0}}(w) < \J_{V_{\infty}} (w) \nonumber\\
&\leq \liminf_{n\rightarrow \infty} \left [ \frac{1}{p} [w_{n}]^{2}_{W^{s, p}(\R^{N})}+\frac{1}{p} \int_{\R^{N}} V(\e_{n} x+y_{n})|w_{n}|^{p} dx-\int_{\R^{N}} F(w_{n}) \,dx \right]\nonumber \\
&=\liminf_{n\rightarrow \infty} \left[\frac{t^{p}_{n}}{p} [u_{n}]^{p}_{W^{s, p}(\R^{N})}+\frac{t^{p}_{n}}{p} \int_{\R^{N}} V(\e_{n} z)|u_{n}|^{p} dz-\int_{\R^{N}} F(t_{n} u_{n}) \, dz\right] \nonumber \\
&= \liminf_{n\rightarrow \infty} \I_{\e_{n}}(t_{n}u_{n}) \leq \liminf_{n\rightarrow \infty} \I_{\e_{n}} (u_{n})=c_{V_{0}}
\end{align}
which is an absurd.
Thus $\{y_{n}\}$ is bounded and, up to a subsequence, we may assume that $y_{n}\rightarrow y$. If $y\notin M$, then $V_{0}<V(y)$ and we can argue as in \eqref{tv22} to get a contradiction. Therefore, we can conclude that $y\in M$.
\end{proof}

\noindent
Let $\delta>0$ be fixed and let $\psi\in C^{\infty}(\R^{+}, [0, 1])$ be a function such that $\psi=1$ in $[0, \frac{\delta}{2}]$ and $\psi=0$ in $[\delta, \infty)$. For any $y\in M$, we define
$$
\Upsilon_{\e, y}(x)=\psi(|\e x-y|) \omega\left(\frac{\e x-y}{\e}\right),
$$
where $\omega\in H^{s}(\R^{N})$ is a positive ground state solution to $(P_{V_{0}})$ by Lemma \ref{lem4.3}.

Let $t_{\e}>0$ be the unique positive number such that
$$
\I_{\e}(t_{\e}\Upsilon_{\e, y})=\max_{t\geq 0} \, \I_{\e}(t \Upsilon_{\e, y})
$$
and let us define the map $\Phi_{\e}:M\rightarrow \N_{\e}$ by setting $\Phi_{\e}(y):=t_{\e} \Upsilon_{\e, y}$.
Then we can prove that
\begin{lemma}\label{lem4.1}
The functional $\Phi_{\e}$ satisfies the following limit
\begin{equation}\label{3.2}
\lim_{\e\rightarrow 0} \I_{\e}(\Phi_{\e}(y))=c_{V_{0}} \mbox{ uniformly in } y\in M.
\end{equation}
\end{lemma}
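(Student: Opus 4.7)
The plan is to argue by contradiction: assume there exist sequences $\e_{n}\rightarrow 0^{+}$, $y_{n}\in M$ and $\delta_{0}>0$ such that $|\I_{\e_{n}}(\Phi_{\e_{n}}(y_{n}))-c_{V_{0}}|\geq \delta_{0}$ for every $n$. Since $V$ is continuous with $\liminf_{|x|\to\infty}V(x)=V_{\infty}>V_{0}$, the set $M$ is compact, so up to a subsequence $y_{n}\rightarrow y_{0}\in M$, that is $V(y_{0})=V_{0}$. Perform the change of variable $z=x-y_{n}/\e_{n}$ and set
\[
\tilde{\omega}_{n}(z):=\psi(|\e_{n}z|)\,\omega(z),
\]
so that $\Upsilon_{\e_{n},y_{n}}(z+y_{n}/\e_{n})=\tilde{\omega}_{n}(z)$. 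By translation invariance,
\[
[\Upsilon_{\e_{n},y_{n}}]^{p}_{W^{s,p}(\R^{N})}=[\tilde{\omega}_{n}]^{p}_{W^{s,p}(\R^{N})},\qquad \int_{\R^{N}}V(\e_{n}x)|\Upsilon_{\e_{n},y_{n}}|^{p}dx=\int_{\R^{N}}V(\e_{n}z+y_{n})|\tilde{\omega}_{n}|^{p}dz.
\]

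Next I would prove that $\|\Upsilon_{\e_{n},y_{n}}\|_{\e_{n}}\rightarrow \|\omega\|_{V_{0}}$. Since $\psi(|\e_{n}\cdot|)$ is the rescaling of a smooth compactly supported cutoff equal to $1$ near the origin, Lemma \ref{Psi} applies and gives $\tilde{\omega}_{n}\rightarrow \omega$ strongly in $W^{s,p}(\R^{N})$; in particular $[\tilde{\omega}_{n}]_{W^{s,p}(\R^{N})}^{p}\rightarrow [\omega]^{p}_{W^{s,p}(\R^{N})}$. For the potential term I observe that on $\supp\tilde{\omega}_{n}$ one has $|\e_{n}z|\leq \delta$, hence $|\e_{n}z+y_{n}|\leq \delta+\sup_{n}|y_{n}|$, so $V(\e_{n}z+y_{n})$ is uniformly bounded on the supports and $V(\e_{n}z+y_{n})|\tilde{\omega}_{n}(z)|^{p}\leq C|\omega(z)|^{p}\in L^{1}(\R^{N})$. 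Continuity of $V$ together with $y_{n}\rightarrow y_{0}$ yields $V(\e_{n}z+y_{n})\rightarrow V(y_{0})=V_{0}$ pointwise, and the Dominated Convergence Theorem gives the claim.

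Now I would study $t_{\e_{n}}$, which is characterized by $t_{\e_{n}}\Upsilon_{\e_{n},y_{n}}\in \N_{\e_{n}}$, i.e.
\[
\|\Upsilon_{\e_{n},y_{n}}\|_{\e_{n}}^{p}=\int_{\R^{N}}\frac{f(t_{\e_{n}}\tilde{\omega}_{n})}{(t_{\e_{n}}\tilde{\omega}_{n})^{p-1}}\,\tilde{\omega}_{n}^{p}\,dz.
\]
Lemma \ref{SW2}$(ii)$ (which shows $\N_{\e_{n}}$ is bounded away from $0$) together with the previous step gives a positive lower bound for $t_{\e_{n}}$. For the upper bound, if $t_{\e_{n}}\rightarrow \infty$ along a subsequence then, fixing a measurable set $\Omega\subset\R^{N}$ of positive finite measure on which $\omega\geq \alpha>0$ and noticing that $\tilde{\omega}_{n}\geq \alpha/2$ on $\Omega$ for $n$ large, assumption $(f_{4})$ implies $f(t)/t^{p-1}\geq c\,t^{\vartheta-p}$ for $t$ large, so the right-hand side above is at least $c\,t_{\e_{n}}^{\vartheta-p}\int_{\Omega}\tilde{\omega}_{n}^{p}dz\rightarrow \infty$, contradicting the boundedness of the left-hand side. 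Hence $t_{\e_{n}}$ is bounded and, up to subsequences, $t_{\e_{n}}\rightarrow t_{0}>0$.

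Passing to the limit in the Nehari identity (using $\tilde{\omega}_{n}\rightarrow\omega$ strongly in $W^{s,p}(\R^{N})$, hence in $L^{p}(\R^{N})\cap L^{q}(\R^{N})$ by Theorem \ref{Sembedding}, and the growth bounds $(f_{2})$-$(f_{3})$ to apply dominated convergence on $\int f(t_{\e_{n}}\tilde{\omega}_{n})\tilde{\omega}_{n}dz$) yields $t_{0}\omega\in \N_{V_{0}}$. Since $\omega\in\N_{V_{0}}$ already, uniqueness in Lemma \ref{SW2A}$(i)$ (a consequence of $(f_{5})$) forces $t_{0}=1$. Another application of dominated convergence to the $F$-term then gives
\[
\I_{\e_{n}}(\Phi_{\e_{n}}(y_{n}))=\frac{t_{\e_{n}}^{p}}{p}\|\Upsilon_{\e_{n},y_{n}}\|_{\e_{n}}^{p}-\int_{\R^{N}}F(t_{\e_{n}}\tilde{\omega}_{n})\,dz\longrightarrow \frac{1}{p}\|\omega\|_{V_{0}}^{p}-\int_{\R^{N}}F(\omega)\,dz=\J_{V_{0}}(\omega)=c_{V_{0}},
\]
which contradicts the choice of the sequence and proves \eqref{3.2}. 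The main technical obstacle is the control of $t_{\e_{n}}$: the lower bound is immediate, but the upper bound requires exploiting the superlinear Ambrosetti--Rabinowitz estimate $(f_{4})$ on a region where $\tilde{\omega}_{n}$ stays uniformly positive, which is where the strong convergence $\tilde{\omega}_{n}\rightarrow\omega$ supplied by Lemma \ref{Psi} is essential.
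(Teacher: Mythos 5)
Your proof is correct and follows essentially the same route as the paper's: contradiction, convergence of $\|\Upsilon_{\e_{n},y_{n}}\|_{\e_{n}}^{p}$ to $\|\omega\|_{V_{0}}^{p}$, boundedness of $t_{\e_{n}}$ and convergence to $1$ via the Nehari identity together with $(f_{4})$ and $(f_{5})$, and dominated convergence for the $F$-term. The only cosmetic differences are that you obtain the lower bound on $t_{\e_{n}}$ from Lemma \ref{SW2}$(ii)$ rather than by excluding $t_{0}=0$ via $(f_{2})$, and that you spell out the dominated-convergence argument for the potential term (using compactness of $M$) which the paper subsumes under Lemma \ref{Psi}.
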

\begin{proof}
Assume by contradiction that there exist $\delta_{0}>0$, $\{y_{n}\}\subset M$ and $\e_{n}\rightarrow 0$ such that
\begin{equation}\label{4.41}
|\I_{\e_{n}}(\Phi_{\e_{n}} (y_{n}))-c_{V_{0}}|\geq \delta_{0}.
\end{equation}
Let us recall that Lemma \ref{Psi} implies that
\begin{align}\label{4.42}
\lim_{n\rightarrow \infty} \|\Upsilon_{\e_{n}, y_{n}}\|^{p}_{\e_{n}}=\|\omega\|_{V_{0}}^{p}.
\end{align}
Since $\langle \I'_{\e_{n}}(t_{\e_{n}}\Upsilon_{\e_{n}, y_{n}}), t_{\e_{n}} \Upsilon_{\e_{n}, y_{n}}\rangle=0$,
we can use the change of variable $z=\frac{\e_{n}x-y_{n}}{\e_{n}}$ to see that
\begin{align}\label{4.411}
\|t_{\e_{n}}\Upsilon_{\e_{n}, y_{n}}\|^{p}_{\e_{n}}&=\int_{\R^{N}} f(t_{\e_{n}}\Upsilon_{\e_{n}}) t_{\e_{n}}\Upsilon_{\e_{n}} dx \nonumber\\
&=\int_{\R^{N}} f(t_{\e_{n}} \psi(|\e_{n}z|) \omega(z)) t_{\e_{n}} \psi(|\e_{n}z|) \omega(z) \, dz.
\end{align}
Now, let us prove that $t_{\e_{n}}\rightarrow 1$. Firstly we show that $t_{\e_{n}}\rightarrow t_{0}<\infty$. Assume by contradiction that $|t_{\e_{n}}|\rightarrow \infty$. Then, by using the fact that $\psi\equiv 1$ in $\B_{\frac{\delta}{2}}(0)$ and $\B_{\frac{\delta}{2}}(0)\subset \B_{\frac{\delta}{2\e_{n}}}(0)$ for $n$ sufficiently large, we can see that \eqref{4.411} and $(f_5)$ give
\begin{align}\label{4.44}
\|\Upsilon_{\e_{n}, y_{n}}\|^{p}_{\e_{n}}\geq \int_{\B_{\frac{\delta}{2}}(0)} \frac{f(t_{\e_{n}} \omega(z))}{(t_{\e_{n}} \omega(z))^{p-1}} |\omega(z)|^{p} dz \geq \frac{f(t_{\e_{n}} \omega(\bar{z}))}{(t_{\e_{n}} \omega(\bar{z}))^{p-1}} \int_{\B_{\frac{\delta}{2}}(0)} |\omega(z)|^{p}dz,
\end{align}
where $\bar{z}$ is such that $\omega(\bar{z})=\min\{\omega(z): |z|\leq \frac{\delta}{2}\}>0$.
By using $(f_4)$ and $t_{\e_{n}}\rightarrow \infty$, we can see that \eqref{4.44} implies that $\|\Upsilon_{\e_{n}, y_{n}}\|^{p}_{\e_{n}}\rightarrow \infty$, which is an absurd in view of \eqref{4.42}.
Therefore, up to a subsequence, we may assume that $t_{\e_{n}}\rightarrow t_{0}\geq 0$.
If $t_{0}=0$, \eqref{4.42}, \eqref{4.411} and $(f_2)$ lead to a contradiction. Hence, $t_{0}>0$.
Now, we show that $t_{0}=1$.
Taking the limit as $n\rightarrow \infty$ in \eqref{4.411}, we can see that
$$
\|\omega\|_{V_{0}}^{p}=\int_{\R^{N}} \frac{f(t_{0}\omega)}{t_{0}^{p-1}} \omega.
$$
Recalling that $\omega\in  \N_{V_{0}}$ and using $(f_{5})$, we can deduce that $t_{0}=1$.
This fact and the Dominated Convergence Theorem yield
\begin{equation}\label{4.45}
\lim_{n\rightarrow \infty}\int_{\R^{N}} F(t_{\e_{n}} \Upsilon_{\e_{n}, y_{n}})=\int_{\R^{N}} F(\omega).
\end{equation}
Hence, passing to the limit as $n\rightarrow \infty$ in
\begin{equation*}
\I_{\e}(\Phi_{\e_{n}}(y_{n}))=\frac{t_{\e_{n}}^{p}}{p} \|\Upsilon_{\e_{n}, y_{n}}\|^{p}_{\e_{n}}-\int_{\R^{N}} F(t_{\e_{n}} \Upsilon_{\e_{n}, y_{n}}) dx,
\end{equation*}
and exploiting \eqref{4.42} and \eqref{4.45}, we can infer that
$$
\lim_{n\rightarrow \infty} \I_{\e_{n}}(\Phi_{\e_{n}}(y_{n}))=\J_{V_{0}}(\omega)=c_{V_{0}}
$$
which is impossible in view of \eqref{4.41}.
\end{proof}

\noindent
Now, we are in the position to introduce the barycenter map. We take $\rho>0$ such that $M_{\delta}\subset \B_{\rho}(0)$, and we set $\chi: \R^{N}\rightarrow \R^{N}$ as
 \begin{equation*}
\chi(x)=
 \left\{
 \begin{array}{ll}
 x &\mbox{ if } |x|<\rho \\
 \frac{\rho x}{|x|} &\mbox{ if } |x|\geq \rho.
  \end{array}
 \right.
 \end{equation*}
We define the barycenter map $\beta_{\e}: \N_{\e}\rightarrow \R^{N}$ as follows
\begin{align*}
\beta_{\e}(u)=\frac{\int_{\R^{N}} \chi(\e x) |u(x)|^{p} dx}{\int_{\R^{N}} |u(x)|^{p} dx}.
\end{align*}

\begin{lemma}\label{lem4.2}
The functional $\Phi_{\e}$ verifies the following limit
\begin{equation}\label{3.3}
\lim_{\e \rightarrow 0} \, \beta_{\e}(\Phi_{\e}(y))=y \mbox{ uniformly in } y\in M.
\end{equation}
\end{lemma}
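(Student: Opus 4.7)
The plan is to argue by contradiction: suppose there exist $\delta_{0}>0$, a sequence $\e_{n}\rightarrow 0^{+}$, and $y_{n}\in M$ with $|\beta_{\e_{n}}(\Phi_{\e_{n}}(y_{n}))-y_{n}|\geq \delta_{0}$. Since $V$ is continuous and $\liminf_{|x|\rightarrow \infty} V(x)=V_{\infty}>V_{0}$, the set $M$ is both closed and bounded, hence compact; extracting a subsequence I may therefore assume $y_{n}\rightarrow y_{0}\in M\subset \B_{\rho}(0)$.

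The first computational step is to observe that the scaling factor $t_{\e}$ cancels between numerator and denominator in $\beta_{\e}(\Phi_{\e}(y))$ (since $|t_{\e}\Upsilon_{\e,y}|^{p}=t_{\e}^{p}|\Upsilon_{\e,y}|^{p}$), so only $\Upsilon_{\e,y}$ effectively survives. Then I would perform the change of variable $z=(\e x-y)/\e$ in both integrals, rewriting
$$
\beta_{\e}(\Phi_{\e}(y))-y=\frac{\int_{\R^{N}} [\chi(\e z+y)-y]\,|\psi(|\e z|)\,\omega(z)|^{p}\,dz}{\int_{\R^{N}} |\psi(|\e z|)\,\omega(z)|^{p}\,dz},
$$
thereby removing the $\e$-dependence from $\omega$ itself and leaving the parameter $\e$ only inside $\chi(\e z+y)$ and the cut-off $\psi(|\e z|)$.

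The second step is to apply the Dominated Convergence Theorem twice along the contradicting subsequence $(\e_{n},y_{n})$. For the denominator, $\psi(|\e_{n}z|)\rightarrow \psi(0)=1$ pointwise while $0\leq \psi\leq 1$ and $\omega\in L^{p}(\R^{N})$, so the denominator tends to $|\omega|_{L^{p}(\R^{N})}^{p}>0$. For the numerator, $\chi$ is bounded by $\rho$, which yields the uniform bound $|\chi(\e_{n}z+y_{n})-y_{n}|\leq 2\rho$; on the other hand, pointwise $\chi(\e_{n}z+y_{n})-y_{n}\rightarrow \chi(y_{0})-y_{0}=0$, because $y_{0}\in \B_{\rho}(0)$ is a fixed point of $\chi$. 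With dominating function $2\rho\,|\omega(z)|^{p}\in L^{1}(\R^{N})$, DCT gives that the numerator tends to zero, so that $\beta_{\e_{n}}(\Phi_{\e_{n}}(y_{n}))-y_{n}\rightarrow 0$, contradicting the choice of $\delta_{0}$.

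The only substantive move is the change of variables that eliminates the $\e$-scaling from $\omega$; once this is performed the lemma reduces to two clean applications of dominated convergence, with uniformity in $y\in M$ obtained from the compactness of $M$ via the standard subsequence-plus-contradiction scheme. I do not foresee a real obstacle; the delicate part, if any, is simply keeping track of the fact that $\chi$ was designed to be the identity on $\B_{\rho}(0)\supset M_{\delta}$, which is precisely what guarantees $\chi(y_{0})=y_{0}$ in the final limit.
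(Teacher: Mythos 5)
Your proof is correct and follows essentially the same route as the paper: the contradiction setup, the change of variable $z=(\e x-y)/\e$ isolating $\omega$, and the dominated convergence argument with $\chi$ bounded by $\rho$ and the identity on $\B_{\rho}(0)$. The only cosmetic difference is that you extract a convergent subsequence $y_{n}\rightarrow y_{0}$ using compactness of $M$, whereas the paper simply notes that for each fixed $z$ one eventually has $\e_{n}z+y_{n}\in M_{\delta}\subset\B_{\rho}(0)$ so the integrand equals $\e_{n}z\rightarrow 0$ pointwise; both are fine.
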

\begin{proof}
Suppose by contradiction that there exist $\delta_{0}>0$, $\{y_{n}\}\subset M$ and $\e_{n}\rightarrow 0$ such that
\begin{equation}\label{4.4}
|\beta_{\e_{n}}(\Phi_{\e_{n}}(y_{n}))-y_{n}|\geq \delta_{0}.
\end{equation}
By using the definitions of $\Phi_{\e_{n}}(y_{n})$, $\beta_{\e_{n}}$, $\psi$ and the change of variable $z= \frac{\e_{n} x-y_{n}}{\e_{n}}$, we can see that
$$
\beta_{\e_{n}}(\Phi_{\e_{n}}(y_{n}))=y_{n}+\frac{\int_{\R^{N}}[\chi(\e_{n}z+y_{n})-y_{n}] |\psi(|\e_{n}z|) \omega(z)|^{p} \, dz}{\int_{\R^{N}} |\psi(|\e_{n}z|) \omega(z)|^{p}\, dz}.
$$
Taking into account $\{y_{n}\}\subset M\subset \B_{\rho}(0)$ and the Dominated Convergence Theorem, we can infer that
$$
|\beta_{\e_{n}}(\Phi_{\e_{n}}(y_{n}))-y_{n}|=o_{n}(1)
$$
which contradicts (\ref{4.4}).
\end{proof}

\noindent
At this point, we introduce a subset $\widetilde{\N}_{\e}$ of $\N_{\e}$ by taking a function $h:\R_{+}\rightarrow \R_{+}$ such that $h(\e)\rightarrow 0$ as $\e \rightarrow 0$, and setting
$$
\widetilde{\N}_{\e}=\{u\in \N_{\e}: \I_{\e}(u)\leq c_{V_{0}}+h(\e)\}.
$$
Fixed $y\in M$, from Lemma \ref{lem4.1} we deduce that $h(\e)=|\I_{\e}(\Phi_{\e}(y))-c_{V_{0}}|\rightarrow 0$ as $\e \rightarrow 0$. Hence $\Phi_{\e}(y)\in \widetilde{\N}_{\e}$, and $\widetilde{\N}_{\e}\neq \emptyset$ for any $\e>0$. Moreover, we have the following lemma.

\begin{lemma}\label{lem4.4}
For any $\delta>0$, there holds that
$$
\lim_{\e \rightarrow 0} \sup_{u\in \widetilde{\mathcal{N}}_{\e}} dist(\beta_{\e}(u), M_{\delta})=0.
$$
\end{lemma}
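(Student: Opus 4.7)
The plan is to proceed by contradiction. Suppose there exist $\delta_{0}>0$, a sequence $\e_{n}\to 0^{+}$ and $u_{n}\in\widetilde{\N}_{\e_{n}}$ such that $\mathrm{dist}(\beta_{\e_{n}}(u_{n}),M_{\delta})\geq\delta_{0}$ for every $n$. The first task is to show that $\I_{\e_{n}}(u_{n})\to c_{V_{0}}$. The bound $\I_{\e_{n}}(u_{n})\leq c_{V_{0}}+h(\e_{n})$ built into the definition of $\widetilde{\N}_{\e_{n}}$ already gives $\limsup_{n}\I_{\e_{n}}(u_{n})\leq c_{V_{0}}$. For the matching lower bound, for each $n$ pick the unique $s_{n}>0$ with $s_{n}u_{n}\in\N_{V_{0}}$; since $V(\e_{n}x)\geq V_{0}$ and $u_{n}\in\N_{\e_{n}}$ is (by Lemma \ref{SW2}(i)) the unique maximizer of $t\mapsto\I_{\e_{n}}(tu_{n})$ on $(0,\infty)$, one has
$$
c_{V_{0}}\leq \J_{V_{0}}(s_{n}u_{n})\leq \I_{\e_{n}}(s_{n}u_{n})\leq \I_{\e_{n}}(u_{n}),
$$
which closes the lower bound and hence forces $\I_{\e_{n}}(u_{n})\to c_{V_{0}}$.

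Now, Proposition \ref{prop4.1} applies to the sequence $\{u_{n}\}\subset \N_{\e_{n}}$ and provides, up to a subsequence, points $\tilde{y}_{n}\in\R^{N}$ such that $v_{n}(x):=u_{n}(x+\tilde{y}_{n})$ converges strongly in $W^{s,p}(\R^{N})$ to some $v\neq 0$, while $y_{n}:=\e_{n}\tilde{y}_{n}\to y\in M$. Repeating the change of variables $x=z+\tilde{y}_{n}$ used in Lemma \ref{lem4.2}, the barycenter decomposes as
$$
\beta_{\e_{n}}(u_{n})=y_{n}+\frac{\int_{\R^{N}}\bigl[\chi(\e_{n}z+y_{n})-y_{n}\bigr]\,|v_{n}(z)|^{p}\,dz}{\int_{\R^{N}}|v_{n}(z)|^{p}\,dz}.
$$
Because $y\in M\subset M_{\delta}\subset\B_{\rho}(0)$ and $\chi$ is continuous with $\chi(y)=y$, the bracketed term tends pointwise a.e.\ to $0$ and is bounded by $2\rho$; coupled with the strong $L^{p}$-convergence $|v_{n}|^{p}\to|v|^{p}$ in $L^{1}(\R^{N})$, a generalized dominated convergence argument forces the quotient to vanish. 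Therefore $\beta_{\e_{n}}(u_{n})\to y\in M\subset M_{\delta}$, contradicting $\mathrm{dist}(\beta_{\e_{n}}(u_{n}),M_{\delta})\geq\delta_{0}$.

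The only nonroutine ingredient is the energy lower bound $\I_{\e_{n}}(u_{n})\geq c_{V_{0}}$ established in the first paragraph, since this is exactly what allows us to call upon Proposition \ref{prop4.1}; once that hypothesis is in place, the remainder is the same change-of-variables calculation already carried out in Lemma \ref{lem4.2}, now applied to the translated profiles $v_{n}$ rather than to the explicit cut-off functions $\Upsilon_{\e,y}$.
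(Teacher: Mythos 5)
Your proof is correct and follows essentially the same route as the paper: establish $\I_{\e_{n}}(u_{n})\rightarrow c_{V_{0}}$ (the paper cites $c_{V_{0}}\leq c_{\e_{n}}\leq \I_{\e_{n}}(u_{n})$ directly, which is exactly the content of your $s_{n}u_{n}\in\N_{V_{0}}$ comparison), invoke Proposition \ref{prop4.1}, and conclude via the same change-of-variables computation for $\beta_{\e_{n}}$. The only differences are presentational (contradiction versus extracting a near-maximizing sequence), so nothing further is needed.
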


\begin{proof}
Let $\e_{n}\rightarrow 0$ as $n\rightarrow \infty$. By definition, there exists $\{u_{n}\}\subset \widetilde{\N}_{\e_{n}}$ such that
$$
\sup_{u\in \widetilde{\N}_{\e_{n}}} \inf_{y\in M_{\delta}}|\beta_{\e_{n}}(u)-y|=\inf_{y\in M_{\delta}}|\beta_{\e_{n}}(u_{n})-y|+o_{n}(1).
$$
Therefore, it suffices to prove that there exists $\{y_{n}\}\subset M_{\delta}$ such that
\begin{equation}\label{3.13}
\lim_{n\rightarrow \infty} |\beta_{\e_{n}}(u_{n})-y_{n}|=0.
\end{equation}
Thus, recalling that $\{u_{n}\}\subset  \widetilde{\N}_{\e_{n}}\subset  \N_{\e_{n}}$, we deduce that
$$
c_{V_{0}}\leq c_{\e_{n}}\leq  \I_{\e_{n}}(u_{n})\leq c_{V_{0}}+h(\e_{n}),
$$
which implies that $\I_{\e_{n}}(u_{n})\rightarrow c_{V_{0}}$. By using Proposition \ref{prop4.1}, there exists $\{\tilde{y}_{n}\}\subset \R^{N}$ such that $y_{n}=\e_{n}\tilde{y}_{n}\in M_{\delta}$ for $n$ sufficiently large. Thus
$$
\beta_{\e_{n}}(u_{n})=y_{n}+\frac{\int_{\R^{N}}[\chi(\e_{n}z+y_{n})-y_{n}] |u_{n}(z+\tilde{y}_{n})|^{p} \, dz}{\int_{\R^{N}} |u_{n}(z+\tilde{y}_{n})|^{p} \, dz}.
$$
Since $u_{n}(\cdot+\tilde{y}_{n})$ converges strongly in $W^{s,p}(\R^{N})$ and $\e_{n}z+y_{n}\rightarrow y\in M$, we can infer that $\beta_{\e_{n}}(u_{n})=y_{n}+o_{n}(1)$, that is (\ref{3.13}) holds.
\end{proof}

\noindent
Now we show that ($P_{\e}$) admits at least $cat_{M_{\delta}}(M)$ positive solutions.
In order to achieve our aim, we recall the following result for critical points involving Ljusternik-Schnirelmann category. For more details one can see \cite{MW}.
\begin{theorem}\label{LSt}
Let $U$ be a $C^{1,1}$ complete Riemannian manifold (modelled on a Hilbert space). Assume that $h\in C^{1}(U, \R)$ bounded from below and satisfies $-\infty<\inf_{U} h<d<k<\infty$. Moreover, suppose that $h$ satisfies Palais-Smale condition on the sublevel $\{u\in U: h(u)\leq k\}$ and that $d$ is not a critical level for $h$. Then
$$
card\{u\in h^{d}: \nabla h(u)=0\}\geq cat_{h^{d}}(h^{d}).
$$
\end{theorem}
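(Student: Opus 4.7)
The strategy is the classical Ljusternik--Schnirelmann min--max procedure. Set $n = cat_{h^{d}}(h^{d})$ and, for $j=1,\dots,n$, introduce the min--max levels
$$
c_{j} = \inf\{a\in \R : cat_{h^{d}}(h^{a})\geq j\},
$$
where $h^{a} := \{u\in U : h(u)\leq a\}$. From $\inf_{U} h > -\infty$, from the monotonicity of the category ($A\subset B$ implies $cat_{U}(A)\leq cat_{U}(B)$), and from $cat_{h^{d}}(h^{d})=n$, one deduces $-\infty < c_{1}\leq c_{2}\leq \cdots \leq c_{n}\leq d$.

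The first real step is to show that each $c_{j}$ is a critical value of $h$. The tool is the quantitative deformation lemma: since $h\in \C^{1}(U,\R)$, $U$ is a $\C^{1,1}$ Hilbert manifold (which guarantees the existence of a locally Lipschitz pseudo-gradient vector field), and $h$ satisfies the Palais--Smale condition on $\{h\leq k\}$, for every regular value $c<k$ of $h$ and any open neighborhood $\mathcal{V}$ of the critical set $K_{c}=\{u\in U : h(u)=c,\ \nabla h(u)=0\}$ one constructs $\varepsilon>0$ and a continuous deformation $\eta:U\rightarrow U$, homotopic to the identity and equal to the identity outside $h^{-1}(c-2\varepsilon, c+2\varepsilon)$, such that $\eta(h^{c+\varepsilon}\setminus \mathcal{V})\subset h^{c-\varepsilon}$. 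If some $c_{j}$ were a regular value, taking $\mathcal{V}=\emptyset$ would give $\eta(h^{c_{j}+\varepsilon})\subset h^{c_{j}-\varepsilon}$; since $\eta$ is a homotopy equivalence (using $c_{j}\leq d<k$ and that $d$ itself is regular, so $h^{d}$ is invariant under the flow after suitable truncation), this yields $cat_{h^{d}}(h^{c_{j}+\varepsilon})=cat_{h^{d}}(h^{c_{j}-\varepsilon})$, contradicting the definition of $c_{j}$.

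The final step is to count the critical points. If the $c_{j}$'s are pairwise distinct, they furnish $n$ distinct critical values and the conclusion is immediate. When some values coincide, say $c_{j}=c_{j+1}=\cdots=c_{j+\ell-1}=c$, a refined application of the deformation lemma combined with the subadditivity and continuity properties of the category yields $cat_{h^{d}}(K_{c})\geq \ell$; since the category of a finite set of isolated points is bounded by its cardinality, $K_{c}$ must contain at least $\ell$ distinct critical points. Summing over the distinct critical levels produces at least $n$ critical points in $h^{d}$, which is the desired inequality.

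The principal technical obstacle is the construction of the negative pseudo-gradient flow on a $\C^{1,1}$ Hilbert manifold: one must build a locally Lipschitz pseudo-gradient field adapted to the Riemannian structure, truncate it carefully near $K_{c}$, and verify that trajectories remain in the compactness region. The strict inequality $d<k$, together with the Palais--Smale condition on $\{h\leq k\}$, is precisely what permits the flow to deform orbits lying slightly above level $d$ without leaving the sublevel on which compactness holds; meanwhile the hypothesis that $d$ is a regular value keeps $h^{d}$ itself invariant under the deformation, so that all category computations are performed in a fixed ambient set and the category-counting argument of the preceding paragraph goes through.
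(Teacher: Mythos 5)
The paper does not prove this statement: it is quoted as a classical result of Ljusternik--Schnirelmann theory with a reference to Mawhin--Willem \cite{MW}. Your outline --- min--max levels $c_{j}$ defined through the category of sublevel sets, the deformation lemma built from a locally Lipschitz pseudo-gradient field to show each $c_{j}$ is critical, and the subadditivity/continuity of the category to extract at least $\ell$ critical points from an $\ell$-fold degenerate level --- is precisely the standard argument given in that reference, and it is correct.
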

\noindent
Since $\mathcal{N}_{\e}$ is not a $C^{1}$ submanifold of $\h$, we can not directly apply Theorem \ref{LSt}. Fortunately, from Lemma \ref{SW2}, we know that the mapping $m_{\e}$ is a homeomorphism between $\mathcal{N}_{\e}$ and $\mathbb{S}_{\e}$, and $\mathbb{S}_{\e}$ is a $C^{1}$ submanifold of $\h$. So we can apply Theorem \ref{LSt} to
$\Psi_{\e}(u)=\I_{\e}(\hat{m}_{\e}(u))|_{\mathbb{S}_{\e}}=\I_{\e}(m_{\e}(u))$, where $\Psi_{\e}$ is given in Lemma \ref{SW3}.
\begin{theorem}\label{teorema}
Assume that $(V)$ and $(f_1)$-$(f_5)$ hold. Then, for any $\delta>0$ there exists $\bar{\e}_\delta>0$ such that, for any $\e \in (0, \bar{\e}_\delta)$, problem ($P_{\e}$) has at least $cat_{M_{\delta}}(M)$ positive solutions.
\end{theorem}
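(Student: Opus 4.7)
The plan is to carry out the standard Benci--Cerami/Ljusternik--Schnirelmann scheme on the reduced functional $\Psi_{\e}$ defined over the $C^{1}$-manifold $\mathbb{S}_{\e}$, using the maps $\Phi_{\e}$ and $\beta_{\e}$ to produce a category estimate, and then invoking the abstract Theorem \ref{LSt} of Ljusternik--Schnirelmann type. Concretely, I would fix $\delta>0$ and set $h(\e):=\sup_{y\in M}|\I_{\e}(\Phi_{\e}(y))-c_{V_{0}}|$, which tends to $0$ as $\e\to 0$ by Lemma \ref{lem4.1}. Then the set $\widetilde{\N}_{\e}=\{u\in\N_{\e}:\I_{\e}(u)\leq c_{V_{0}}+h(\e)\}$ is nonempty, contains $\Phi_{\e}(M)$, and pulls back under the homeomorphism $m_{\e}^{-1}$ to a sublevel $\widetilde{\mathbb{S}}_{\e}$ of $\Psi_{\e}$ on $\mathbb{S}_{\e}$.

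Next, I would compare categories. Define $\widetilde{\Phi}_{\e}:=m_{\e}^{-1}\circ\Phi_{\e}:M\to\widetilde{\mathbb{S}}_{\e}$. By Lemmas \ref{lem4.1}--\ref{lem4.4} one has, for $\e$ small, that $\beta_{\e}\circ m_{\e}\circ\widetilde{\Phi}_{\e}(y)\in M_{\delta}$ for every $y\in M$ and that this map is uniformly close to the inclusion $\iota:M\hookrightarrow M_{\delta}$. Setting
\[
H(t,y)=(1-t)\,\iota(y)+t\,\beta_{\e}(\Phi_{\e}(y)),\qquad (t,y)\in[0,1]\times M,
\]
we obtain (for $\e$ sufficiently small) a homotopy in $M_{\delta}$ between $\iota$ and $\beta_{\e}\circ m_{\e}\circ\widetilde{\Phi}_{\e}$. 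A standard Benci--Cerami argument (as in \cite{BC}) then yields
\[
cat_{\widetilde{\mathbb{S}}_{\e}}(\widetilde{\mathbb{S}}_{\e})\;\geq\;cat_{M_{\delta}}(M).
\]

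Then I would verify that $\Psi_{\e}$ satisfies the Palais--Smale condition on $\widetilde{\mathbb{S}}_{\e}$. By Lemma \ref{SW3}(ii) this is equivalent to $(PS)_{c}$ for $\I_{\e}$ on $\N_{\e}$ at every level $c\leq c_{V_{0}}+h(\e)$. When $V_{\infty}=\infty$, Proposition \ref{prop2.1} gives this for all real $c$. When $V_{\infty}<\infty$, the strict inequality $c_{V_{0}}<c_{V_{\infty}}$ (which follows from $V_{0}<V_{\infty}$ together with the minimax characterization \eqref{Nguyen2}) combined with $h(\e)\to 0$ ensures $c_{V_{0}}+h(\e)<c_{V_{\infty}}$ for small $\e$, so Proposition \ref{prop2.1} applies. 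Since $\Psi_{\e}$ is bounded below on $\mathbb{S}_{\e}$ by $c_{\e}>0$ (Lemma \ref{SW2}(v)) and $\widetilde{\mathbb{S}}_{\e}$ is closed in the complete $C^{1,1}$-manifold $\mathbb{S}_{\e}$, Theorem \ref{LSt} yields at least $cat_{\widetilde{\mathbb{S}}_{\e}}(\widetilde{\mathbb{S}}_{\e})\geq cat_{M_{\delta}}(M)$ critical points of $\Psi_{\e}$ in $\widetilde{\mathbb{S}}_{\e}$. By Lemma \ref{SW3}(iii) their images under $m_{\e}$ are critical points of $\I_{\e}$, hence weak solutions of $(P_{\e})$. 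Positivity is obtained exactly as in the remark following Lemma \ref{lem4.3}: testing with $u^{-}$, using $f(t)=0$ for $t\leq 0$ and the pointwise inequality $|a-b|^{p-2}(a-b)(a^{-}-b^{-})\geq|a^{-}-b^{-}|^{p}$, gives $u\geq 0$; Moser iteration (Lemma \ref{lemMoser} in Section~3) plus the strong maximum principle of \cite{DPQ} yields $u>0$.

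The main obstacle I expect is verifying the category comparison rigorously: one must check that for $\e$ small the whole homotopy $H(t,\cdot)$ actually remains in $M_{\delta}$ (this uses Lemma \ref{lem4.2} to control $|\beta_{\e}(\Phi_{\e}(y))-y|$ uniformly) and that the constant $h(\e)$ can be chosen small enough that both the Palais--Smale condition holds on $\widetilde{\mathbb{S}}_{\e}$ and $c_{V_{0}}+h(\e)$ is not a critical level obstructing the application of Theorem \ref{LSt} (one can avoid the critical-level issue by a small perturbation or by directly applying the deformation lemma to the sublevel, which is the standard way this step is handled). Once these homotopy and compactness ingredients are in place, the category count transfers mechanically from $M$ to the set of solutions, giving the claimed $cat_{M_{\delta}}(M)$ positive solutions for all $\e\in(0,\bar{\e}_{\delta})$.
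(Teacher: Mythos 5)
Your proposal is correct and follows essentially the same route as the paper's proof: the same sublevel set $\widetilde{\mathbb{S}}_{\e}$, the same affine homotopy between $\beta_{\e}\circ\Phi_{\e}$ and the inclusion $M\hookrightarrow M_{\delta}$ (via Lemma \ref{lem4.2}), the same Benci--Cerami category comparison, and the same use of Proposition \ref{prop2.1} together with Lemma \ref{SW3} and Theorem \ref{LSt} to produce the critical points, with positivity handled exactly as in the paper.
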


\begin{proof}
For any $\e>0$, we define $\alpha_{\e} : M \rightarrow \mathbb{S}_{\e}$ by setting $\alpha_{\e}(y)= m_{\e}^{-1}(\Phi_{\e}(y))$. By using Lemma \ref{lem4.1} and the definition of $\Psi_{\e}$, we can see that
\begin{equation*}
\lim_{\e \rightarrow 0} \Psi_{\e}(\alpha_{\e}(y)) = \lim_{\e \rightarrow 0} \I_{\e}(\Phi_{\e}(y))= c_{V_{0}} \quad \mbox{ uniformly in } y\in M.
\end{equation*}
Then, there exists $\tilde{\e}>0$ such that $\tilde{\mathbb{S}}_{\e}:=\{ w\in \mathbb{S}_{\e} : \Psi_{\e}(w) \leq c_{V_{0}} + h(\e)\} \neq \emptyset$ for all $\e \in (0, \tilde{\e})$, where $h(\e)= |\Psi_{\e}(\alpha_{\e}(y)) - c_{V_{0}}|\rightarrow 0$ as $\e\rightarrow 0$.

Taking into account Lemma \ref{lem4.1}, Lemma \ref{SW2}, Lemma \ref{lem4.2} and Lemma \ref{lem4.4}, we can find $\bar{\e}= \bar{\e}_{\delta}>0$ such that the following diagram
\begin{equation*}
M\stackrel{\Phi_{\e}}{\rightarrow} \widetilde{\mathcal{N}}_{\e} \stackrel{m_{\e}^{-1}}{\rightarrow} \tilde{\mathbb{S}}_{\e} \stackrel{m_{\e}}{\rightarrow} \widetilde{\mathcal{N}}_{\e} \stackrel{\beta_{\e}}{\rightarrow} M_{\delta}
\end{equation*}
is well defined for any $\e \in (0, \bar{\e})$.

By using Lemma \ref{lem4.2}, there exists a function $\theta(\e, y)$ with $|\theta(\e, y)|<\frac{\delta}{2}$ uniformly in $y\in M$ for all $\e \in (0, \bar{\e})$ such that $\beta_{\e}(\Phi_{\e}(y))= y+ \theta(\e, y)$ for all $y\in M$. Then, we can see that $H(t, y)= y+ (1-t)\theta(\e, y)$ with $(t, y)\in [0,1]\times M$ is a homotopy between $\beta_{\e} \circ \Phi_{\e}=(\beta_{\e}\circ m_{\e}) \circ \alpha_{\e}$ and the inclusion map $id: M \rightarrow M_{\delta}$. This fact and Lemma $4.3$ in \cite{BC} implies that $cat_{\tilde{\mathbb{S}}_{\e}} (\tilde{\mathbb{S}}_{\e})\geq cat_{M_{\delta}}(M)$.

On the other hand, let us choose a function $h(\e)>0$ such that $h(\e)\rightarrow 0$ as $\e\rightarrow 0$ and such that $c_{V_{0}}+h(\e)$ is not a critical level for $\I_{\e}$. For $\e>0$ small enough, we deduce from Proposition \ref{prop2.1} that $\I_{\e}$ satisfies the Palais-Smale condition in $\widetilde{\N}_{\e}$. So, by $(ii)$ of Lemma \ref{SW3} we infer that $\Psi_{\e}$ satisfies the Palais-Smale condition in $\tilde{\mathbb{S}}_{\e}$. Hence, by using Theorem \ref{LSt} we obtain that $\Psi_{\e}$ has at least $cat_{\tilde{\mathbb{S}}_{\e}}(\tilde{\mathbb{S}}_{\e})$ critical points on $\tilde{\mathbb{S}}_{\e}$. Then, in view of $(iii)$ of Lemma \ref{SW3} we can infer that $\I_{\e}$ admits at least $cat_{M_{\delta}}(M)$ critical points.
\end{proof}

\subsection{Concentration of solutions to \eqref{P}}
Let us prove the following result which will play a fundamental role to study the behavior of maximum points of solutions to \eqref{P}.
\begin{lemma}\label{lemMoser}
Let $v_{n}$ be a weak solution to
\begin{equation}\label{Pvn}
\left\{
\begin{array}{ll}
(-\Delta)_{p}^{s} v_{n} + V_{n}(x)|v_{n}|^{p-2}v_{n} = f(v_{n}) &\mbox{ in } \R^{N} \\
v_{n}\in W^{s, p}(\R^{N}) \\
v_{n}(x)>0 &\mbox{ for all } x\in \R^{N},
\end{array}
\right.
\end{equation}
where $V_{n}(x)=V(\e_{n}x+\e_{n}\tilde{y}_{n})$, $\{\tilde{y}_{n}\}$ is given in Proposition \ref{prop4.1} and $v_{n}\rightarrow v$ in $W^{s, p}(\R^{N})$ with $v\not\equiv 0$. Then $v_{n}\in L^{\infty}(\R^{N})$ and there exists $C>0$ such that $|v_{n}|_{L^{\infty}(\R^{N})}\leq C$ for all $n\in \mathbb{N}$. Moreover, $\lim_{|x|\rightarrow \infty} v_{n}(x)=0$ uniformly in $n\in \mathbb{N}$.
\end{lemma}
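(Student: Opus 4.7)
The plan is to perform a Moser iteration in the fractional $p$-Laplacian setting to obtain a uniform $L^{\infty}$-estimate on the family $\{v_n\}$, and then to combine this uniform bound with the strong convergence $v_n\to v$ in $W^{s,p}(\R^N)$ in order to deduce the uniform decay at infinity.

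For the uniform $L^{\infty}$-bound, fix $L>0$, $\beta\geq 1$, and set $v_{n,L}:=\min\{v_n,L\}$. I would insert the test function $w_n:=v_n\,v_{n,L}^{p(\beta-1)}$ in the weak formulation of \eqref{Pvn}. Applying a standard algebraic inequality for the fractional $p$-Laplacian (of the form
$$
|a-b|^{p-2}(a-b)\bigl(A(a)-A(b)\bigr)\geq c_p\,\bigl|B(a)-B(b)\bigr|^{p}
$$
with $A(t)=t\min\{t,L\}^{p(\beta-1)}$ and $B(t)=t\min\{t,L\}^{\beta-1}$) together with Theorem \ref{Sembedding}, and dropping the nonnegative contribution coming from $V_n|v_n|^{p-2}v_n w_n$, I obtain
$$
S_*\bigl|v_n v_{n,L}^{\beta-1}\bigr|_{L^{\p}(\R^N)}^{p}\leq C\beta^{p-1}\int_{\R^N}f(v_n)\,v_n\,v_{n,L}^{p(\beta-1)}\,dx.
$$
Using the subcritical bound $f(t)t\leq C(|t|^p+|t|^q)$ from $(f_2)$--$(f_3)$, letting $L\to\infty$ via Fatou's lemma, and iterating $\beta_{k+1}=(\p/q)\beta_k$ starting from $v_n\in L^{\p}(\R^N)$ (Sobolev embedding), a by-now standard Moser argument yields $|v_n|_{L^{\infty}(\R^N)}\leq C$ with $C$ depending only on $\sup_n \|v_n\|_{W^{s,p}(\R^N)}$, which is finite because $v_n\to v$ strongly.

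For the uniform decay at infinity, fix $\eta>0$. The strong convergence $v_n\to v$ in $W^{s,p}(\R^N)$ and the continuous embeddings of Theorem \ref{Sembedding} provide some $R_0>0$, independent of $n$, such that
$$
\int_{\B_R^{c}(0)}\bigl(|v_n|^p+|v_n|^q+|v_n|^{\p}\bigr)\,dx<\eta\qquad\text{for every }R\geq R_0,\ n\in\mathbb{N}.
$$
Picking a smooth cut-off $\eta_R\in\C^{\infty}(\R^N,[0,1])$ with $\eta_R\equiv 0$ in $\B_R$ and $\eta_R\equiv 1$ in $\B_{2R}^{c}$, I would test \eqref{Pvn} against $w_n:=\eta_R^{p} v_n v_{n,L}^{p(\beta-1)}$ and rerun the Moser scheme localized outside $\B_R$. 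The new obstacle is the nonlocal tail contribution
$$
\iint_{\R^{2N}}\frac{|v_n(x)-v_n(y)|^{p-1}\,\bigl|\eta_R^{p}(x)-\eta_R^{p}(y)\bigr|\,|v_n(x)|\,v_{n,L}^{p(\beta-1)}(x)}{|x-y|^{N+sp}}\,dx\,dy,
$$
which, thanks to the uniform $L^{\infty}$-bound obtained in the first step and the smallness of $\int_{\B_R^{c}}|v_n|^{p}$, can be absorbed into an $o_R(1)$-term uniform in $n$. Completing the iteration then yields $\sup_{|x|\geq 2R}v_n(x)\leq C\eta^{\alpha}$ for some $\alpha>0$, uniformly in $n$, which is exactly the uniform decay claimed.

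The main obstacle is keeping all estimates uniform in $n$: the Moser iteration has to start from a uniform $L^{\p}$-bound and produce a uniform $L^{\infty}$-bound, while the localization argument for the decay requires taming the awkward nonlocal tails by a quantity which tends to zero uniformly in $n$. Both facts rely in an essential way on the strong convergence $v_n\to v$ in $W^{s,p}(\R^N)$, which delivers the uniform tail-integrability of $\{|v_n|^{\p}\}$ at infinity and the uniform boundedness of $\sup_n\|v_n\|_{W^{s,p}(\R^N)}$.
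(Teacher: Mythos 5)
Your uniform $L^{\infty}$-bound is essentially the paper's argument: same truncated test function $v_n v_{L,n}^{p(\beta-1)}$, same algebraic inequality giving control of $[\,\Gamma(v_n)]_{W^{s,p}}$, same passage through the Sobolev inequality. The one real difference is that you invoke the subcritical bound $f(t)t\leq C(|t|^{p}+|t|^{q})$ with $q<\p$, which lets the exponents climb geometrically without any extra input; the paper instead uses the critical bound $f(t)\leq \xi|t|^{p-1}+C_{\xi}|t|^{\p-1}$ and must therefore split the first step over $\{v_n<R\}$ and $\{v_n>R\}$, using the uniform integrability of $\{v_n^{\p}\}$ coming from the \emph{strong} convergence $v_n\to v$. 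Your version is simpler and only needs boundedness of $\|v_n\|_{W^{s,p}(\R^{N})}$, but it does not survive verbatim in Section \ref{Sect4}, where the nonlinearity genuinely has critical growth and the paper reuses this very lemma; that is why the authors set the iteration up the harder way. (Your recursion $\beta_{k+1}=(\p/q)\beta_k$ is not quite the one forced by the estimate — it should be $q+p(\beta_{k+1}-1)=\p\beta_k$ — but this is cosmetic.)

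The decay statement is where there is a genuine gap. The paper does not run a localized iteration at all: having the uniform $L^{\infty}$-bound, it quotes Corollary 5.5 of \cite{IMS} to get $[v_n]_{\C^{0,\alpha}(\R^{N})}\leq C$ uniformly in $n$, and then uniform equicontinuity together with $\sup_n\int_{\B_R^{c}(0)}|v_n|^{p}\,dx\to 0$ (from $v_n\to v$ in $W^{s,p}(\R^{N})$) gives $\lim_{|x|\to\infty}v_n(x)=0$ uniformly in $n$ in two lines. Your exterior Moser scheme is a legitimate alternative in principle, but the sentence in which you declare the nonlocal commutator term to be ``absorbed into an $o_R(1)$-term uniform in $n$'' is precisely the hard part of local regularity theory for $(-\Delta)^{s}_{p}$: the displayed double integral must first be split according to whether $|x-y|\gtrsim R$ or one of the points lies in the transition annulus $\B_{2R}(0)\setminus\B_R(0)$, the long-range part produces a tail of the form $R^{sp}\int_{\B_R(0)}|v_n(y)|^{p-1}|x-y|^{-N-sp}dy$ over the region where $v_n$ is \emph{not} small, and this tail re-enters at every stage of the iteration, so one must check that the resulting constants are summable in $\beta$ for the scheme to close. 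None of this is shown, and it is not an inspection-level estimate. Either carry out that tail analysis in full, or replace this step by the paper's appeal to the uniform H\"older estimates.
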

\begin{proof}
For any $L>0$ and $\beta>1$, let us consider the function
\begin{equation*}
\gamma(v_{n})=\gamma_{L, \beta}(v_{n})=v_{n} v_{L,n}^{p(\beta-1)}\in \h,
\end{equation*}
where  $v_{L,n}=\min\{v_{n}, L\}$.
Let us observe that, since $\gamma$ is an increasing function, it holds
\begin{align*}
(a-b)(\gamma(a)- \gamma(b))\geq 0 \quad \mbox{ for any } a, b\in \R.
\end{align*}
Define the functions
\begin{equation*}
\Lambda(t)=\frac{|t|^{p}}{p} \quad \mbox{ and } \quad \Gamma(t)=\int_{0}^{t} (\gamma'(\tau))^{\frac{1}{p}} d\tau.
\end{equation*}
Fix $a, b\in \R$ such that $a>b$. Then, from the above definitions and applying Jensen inequality we get
\begin{align*}
\Lambda'(a-b)(\gamma(a)-\gamma(b)) &=(a-b)^{p-1} (\gamma(a)-\gamma(b)) \\
&= (a-b)^{p-1} \int_{b}^{a} \gamma'(t) dt \\
&= (a-b)^{p-1} \int_{b}^{a} (\Gamma'(t))^{p} dt \\
&\geq \left(\int_{b}^{a} (\Gamma'(t)) dt\right)^{p}=( \Gamma(a)-\Gamma(b))^{p}.
\end{align*}
In similar fashion, we can prove that the above inequality is true for any $a\leq b$. Thus we can infer that
\begin{equation}\label{Gg}
\Lambda'(a-b)(\gamma(a)-\gamma(b))\geq |\Gamma(a)-\Gamma(b)|^{p} \quad \mbox{ for any } a, b\in\R.
\end{equation}
In particular, by \eqref{Gg} it follows that
\begin{align}\label{Gg1}
&|\Gamma(v_{n}(x))-\Gamma(v_{n}(y))|^{p} \nonumber \\
&\leq |v_{n}(x)- v_{n}(y)|^{p-2}(v_{n}(x)- v_{n}(y))((v_{n}v_{L,n}^{p(\beta-1)})(x)- (v_{n}v_{L,n}^{p(\beta-1)})(y)).
\end{align}
Therefore, taking $\gamma(v_{n})=v_{n} v_{L,n}^{p(\beta-1)}$ as test-function in \eqref{Pvn}, in view of \eqref{Gg1} we have
\begin{align}\label{BMS}
&[\Gamma(v_{n})]_{W^{s, p}(\R^{N})}^{p}+\int_{\R^{N}} V_{n}(x)|v_{n}|^{p}v_{L,n}^{p(\beta-1)} dx \nonumber \\
&\leq \iint_{\R^{2N}} \frac{|v_{n}(x)-v_{n}(y)|^{p-2}(v_{n}(x)- v_{n}(y))}{|x-y|^{N+sp}} ((v_{n}v_{L,n}^{p(\beta-1)})(x)-(v_{n} v_{L,n}^{p(\beta-1)})(y)) \,dx dy \nonumber \\
&\quad +\int_{\R^{N}} V_{n}(x)|v_{n}|^{p}v_{L,n}^{p(\beta-1)} dx \nonumber\\
&=\int_{\R^{N}} f(v_{n}) v_{n} v_{L,n}^{p(\beta-1)} dx.
\end{align}
Since $\Gamma(v_{n})\geq \frac{1}{\beta} v_{n} v_{L,n}^{\beta-1}$, from the Sobolev inequality we can deduce that
\begin{equation}\label{SS1}
[\Gamma(v_{n})]_{W^{s, p}(\R^{N})}^{p}\geq S_{*} |\Gamma(v_{n})|^{p}_{L^{\p}(\R^{N})}\geq \left(\frac{1}{\beta}\right)^{p} S_{*}|v_{n} v_{L,n}^{\beta-1}|^{p}_{L^{\p}(\R^{N})}.
\end{equation}
On the other hand, from assumptions $(f_2)$-$(f_3)$, we know that for any $\xi>0$ there exists $C_{\xi}>0$ such that
\begin{equation}\label{SS2}
f(t)\leq \xi |t|^{p-1}+C_{\xi}|t|^{\p-1}  \quad \mbox{ for all } t\in \R.
\end{equation}
Choosing $\xi\in (0, V_{0})$, and using \eqref{SS1} and \eqref{SS2}, we can see that \eqref{BMS} yields
\begin{align}\label{simo1}
|w_{L,n}|^{p}_{L^{\p}(\R^{N})}&\leq C\beta^{p} \int_{\R^{N}} |v_{n}|^{\p} v_{L,n}^{p(\beta-1)} dx,
\end{align}
where $w_{L,n}:=v_{n} v_{L,n}^{\beta-1}$.
Now, we take $\beta=\frac{\p}{p}$ and fix $R>0$. Observing that $0\leq v_{L,n}\leq v_{n}$, we can deduce that
\begin{align}\label{simo2}
&\int_{\R^{N}} v^{\p}_{n}v_{L,n}^{p(\beta-1)}dx\\
&=\int_{\R^{N}} v^{\p-p}_{n} v^{p}_{n} v_{L,n}^{\p-p}dx \nonumber\\
&=\int_{\R^{N}} v^{\p-p}_{n} (v_{n} v_{L,n}^{\frac{\p-p}{p}})^{p}dx \nonumber\\
&\leq \int_{\{v_{n}<R\}} R^{\p-p} v^{\p}_{n} dx+\int_{\{v_{n}>R\}} v^{\p-p}_{n} (v_{n} v_{L,n}^{\frac{\p-p}{p}})^{p}dx \nonumber\\
&\leq \int_{\{v_{n}<R\}} R^{\p-p} v^{\p}_{n} dx+\left(\int_{\{v_{n}>R\}} v^{\p}_{n} dx\right)^{\frac{\p-p}{\p}} \left(\int_{\R^{N}} (v_{n} v_{L,n}^{\frac{\p-p}{p}})^{\p}dx\right)^{\frac{p}{\p}}.
\end{align}
Since $v_{n}\rightarrow v$ in $W^{s, p}(\R^{N})$, we can see that for any $R$ sufficiently large
\begin{equation}\label{simo3}
\left(\int_{\{v_{n}>R\}} v^{\p}_{n} dx\right)^{\frac{\p-p}{\p}}\leq \e \beta^{-p}.
\end{equation}
Putting together \eqref{simo1}, \eqref{simo2} and \eqref{simo3} we get
\begin{equation*}
\left(\int_{\R^{N}} (v_{n} v_{L,n}^{\frac{\p-p}{p}})^{\p} \right)^{\frac{p}{\p}}\leq C\beta^{p} \int_{\R^{N}} R^{\p-p} v^{\p}_{n} dx<\infty
\end{equation*}
and taking the limit as $L\rightarrow \infty$, we obtain $v_{n}\in L^{\frac{(\p)^{2}}{p}}(\R^{N})$.

Now, using $0\leq v_{L,n}\leq v_{n}$ and by passing to the limit as $L\rightarrow \infty$ in \eqref{simo1}, we have
\begin{equation*}
|v_{n}|_{L^{\beta\p}(\R^{N})}^{\beta p}\leq C \beta^{p} \int_{\R^{N}} v^{\p+p(\beta-1)}_{n} \, dx,
\end{equation*}
from which we deduce that
\begin{equation*}
\left(\int_{\R^{N}} v^{\beta\p}_{n} dx\right)^{\frac{1}{(\beta-1)\p}}\leq C \beta^{\frac{1}{\beta-1}} \left(\int_{\R^{N}} v^{\p+p(\beta-1)}_{n}\, dx\right)^{\frac{1}{p(\beta-1)}}.
\end{equation*}
For $m\geq 1$ we define $\beta_{m+1}$ inductively so that $\p+p(\beta_{m+1}-1)=\p \beta_{m}$ and $\beta_{1}=\frac{\p}{p}$. Then 
\begin{equation*}
\left(\int_{\R^{N}} v_{n}^{\beta_{m+1}\p} dx\right)^{\frac{1}{(\beta_{m+1}-1)\p}}\leq C \beta_{m+1}^{\frac{1}{\beta_{m+1}-1}} \left(\int_{\R^{N}} v_{n}^{\p\beta_{m}}\, dx\right)^{\frac{1}{\p(\beta_{m}-1)}}.
\end{equation*}
Let us define
$$
D_{m}=\left(\int_{\R^{N}} v_{n}^{\p\beta_{m}}\, dx\right)^{\frac{1}{\p(\beta_{m}-1)}}.
$$
By using an iteration argument, we can find $C_{0}>0$ independent of $m$ such that
$$
D_{m+1}\leq \prod_{k=1}^{m} C \beta_{k+1}^{\frac{1}{\beta_{k+1}-1}}  D_{1}\leq C_{0} D_{1}.
$$
Taking the limit as $m\rightarrow \infty$ we get $|v_{n}|_{L^{\infty}(\R^{N})}\leq K$ for all $n\in \mathbb{N}$.
Moreover, by using Corollary $5.5$ in \cite{IMS}, we can deduce that $v_{n}\in \mathcal{C}^{0, \alpha}(\R^{N})$ for some $\alpha>0$ (independent of $n$) and $[v_{n}]_{\C^{0, \alpha}(\R^{N})}\leq C$, with $C$ independent of $n$. Since $v_{n}\rightarrow v$ in $W^{s, p}(\R^{N})$, we can infer that $\lim_{|x|\rightarrow\infty}v_{n}(x)=0$ uniformly in $n\in \mathbb{N}$.
\end{proof}

\begin{remark}\label{Rem3}
We can also provide a more precise estimate on the decay of $v_{n}$ at infinity. Indeed, by using $(f_2)$ and $\lim_{|x|\rightarrow\infty}v_{n}(x)=0$, we can see that there exists $R>0$ such that $f(v_{n})\leq \frac{V_{0}}{2}v_{n}^{p-1}$ for all $x\in \mathcal{B}^{c}_{R}(0)$. Therefore
\begin{equation}\label{BBMP1}
(-\Delta)^{s}_{p}v_{n}+\frac{V_{0}}{2}v_{n}^{p-1}=f(v_{n})-\left(V_{n}-\frac{V_{0}}{2}\right)v_{n}^{p-1}\leq 0 \quad \mbox{ in } \mathcal{B}^{c}_{R}(0).
\end{equation}
By using Theorem $A.4$ in \cite{BMS}, we know that $\Gamma(x)=|x|^{-\frac{N-sp}{p-1}}$ is a weak solution to
\begin{equation}\label{BBMP2}
(-\Delta)^{s}_{p}\Gamma+\frac{V_{0}}{2}\Gamma^{p-1}=\frac{V_{0}}{2}\Gamma^{p-1}\geq 0 \quad \mbox{ in }  \mathcal{B}^{c}_{r}(0),
\end{equation}
for all $r>0$. In view of the continuity of $v_{n}$ and $\Gamma$, there exists $C_{1}>0$ such that $w_{n}(x)=v_{n}(x)- C_{1}\Gamma(x)\leq 0$ for all $|x|=R$ (with $R$ larger if necessary). Taking $\phi=\max\{w_{n}, 0\}\in W^{s,p}_{0}(\mathcal{B}^{c}_{R}(0))$ as test function in \eqref{BBMP1} and using \eqref{BBMP2} with $\tilde{\Gamma}=C_{1}\Gamma$, we can deduce that
\begin{align}\label{BBMP3}
0&\geq \iint_{\R^{2N}} \frac{|v_{n}(x)-v_{n}(y)|^{p-2}(v_{n}(x)-v_{n}(y))(\phi(x)-\phi(y))}{|x-y|^{N+sp}}dx dy+\frac{V_{0}}{2}\int_{\R^{N}} v_{n}^{p-1}\phi \,dx \nonumber\\
&\geq  \iint_{\R^{2N}} \frac{\mathcal{G}_{n}(x,y)}{|x-y|^{N+sp}}(\phi(x)-\phi(y))\,dx dy+\frac{V_{0}}{2}\int_{\R^{N}} [v_{n}^{p-1}-\widetilde{\Gamma}^{p-1}]\phi\, dx,
\end{align}
where 
$$
\mathcal{G}_{n}(x,y):=|v_{n}(x)-v_{n}(y)|^{p-2}(v_{n}(x)-v_{n}(y))-|\widetilde{\Gamma}(x)-\widetilde{\Gamma}(y)|^{p-2}(\widetilde{\Gamma}(x)-\widetilde{\Gamma}(y)).
$$
Therefore, if we prove that
\begin{align}\label{BBMP0}
\iint_{\R^{2N}} \frac{\mathcal{G}_{n}(x,y)}{|x-y|^{N+sp}} (\phi(x)-\phi(y))dx dy\geq 0,
\end{align}
it follows from \eqref{BBMP3} that $0\geq \frac{V_{0}}{2}\int_{\{v_{n}\geq \widetilde{\Gamma}\}} [v_{n}^{p-1}-\widetilde{\Gamma}^{p-1}] (v_{n}-\widetilde{\Gamma}) dx\geq 0$, which yields that
$$
\{x\in \R^{N}: |x|\geq R \mbox{ and } v_{n}(x)\geq \widetilde{\Gamma}\}=\emptyset.
$$
To achieve our purpose, we first note that for all $a, b\in \R$ it holds
$$
|b|^{p-2}b-|a|^{p-2}a=(p-1)(b-a)\int_{0}^{1}|a+t(b-a)|^{p-2}dt.
$$
Taking $b=v_{n}(x)-v_{n}(y)$ and $a=\widetilde{\Gamma}(x)-\widetilde{\Gamma}(y)$ we can see that
$$
|b|^{p-2}b-|a|^{p-2}a=(p-1)(b-a) I(x,y),
$$
where $I(x,y)\geq 0$ stands for the integral. Since
\begin{align*}
(b-a)(\phi(x)-\phi(y))&=[(v_{n}-\widetilde{\Gamma})(x)-(v_{n}-\widetilde{\Gamma})(y)] [(v_{n}-\widetilde{\Gamma})^{+}(x)-(v_{n}-\widetilde{\Gamma})^{+}(y)] \\
&\geq |(v_{n}-\widetilde{\Gamma})^{+}(x)-(v_{n}-\widetilde{\Gamma})^{+}(y)|^{2},
\end{align*}
we can infer that $(|b|^{p-2}b-|a|^{p-2}a)(\phi(x)-\phi(y))\geq 0$, that is \eqref{BBMP0} holds true.
As a consequence, we can conclude that
$v_{n}(x)\leq C|x|^{-\frac{N-sp}{p-1}}$ for all $|x|$ large enough.
\end{remark}

\begin{lemma}\label{UBlemAF}
There exists $\delta>0$ such that $|v_{n}|_{L^{\infty}(\R^{N})}\geq \delta$ for all $n\in \mathbb{N}$.
\end{lemma}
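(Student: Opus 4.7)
The plan is to argue by contradiction: assume that, up to a subsequence, $|v_n|_{L^\infty(\R^N)}\to 0$ as $n\to\infty$. I will then combine the Nehari-type identity satisfied by $v_n$ with the growth hypothesis $(f_2)$ near zero and the lower bound $V_n\geq V_0>0$ to force $v_n\to 0$ strongly in $W^{s,p}(\R^N)$, which contradicts the hypothesis $v_n\to v\not\equiv 0$.

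More precisely, fix $\xi\in(0,V_0)$. By $(f_2)$ there exists $\delta_0>0$ such that $|f(t)|\leq \xi|t|^{p-1}$ for every $|t|\leq \delta_0$. Since we are assuming $|v_n|_{L^\infty(\R^N)}\to 0$, for all $n$ large enough we have $|v_n(x)|\leq \delta_0$ for a.e. $x\in\R^N$, and hence the pointwise estimate $f(v_n)v_n\leq \xi|v_n|^p$ holds everywhere.

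Next, I use the fact that $v_n$ is a weak solution of \eqref{Pvn}. Testing the equation against $v_n$ itself gives
\[
[v_n]^p_{W^{s,p}(\R^N)}+\int_{\R^N} V_n(x)|v_n|^p\,dx=\int_{\R^N} f(v_n)v_n\,dx.
\]
Since $V_n(x)=V(\e_n x+\e_n\tilde y_n)\geq V_0$ and using the pointwise bound above, this yields, for $n$ large,
\[
V_0\int_{\R^N}|v_n|^p\,dx\leq \int_{\R^N} V_n(x)|v_n|^p\,dx\leq \xi\int_{\R^N}|v_n|^p\,dx,
\]
so $(V_0-\xi)\int_{\R^N}|v_n|^p\,dx\leq 0$, forcing $|v_n|_{L^p(\R^N)}=0$ for $n$ large.

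Finally, the strong convergence $v_n\to v$ in $W^{s,p}(\R^N)$ implies $|v_n|_{L^p(\R^N)}\to |v|_{L^p(\R^N)}$, so we would obtain $v\equiv 0$, contradicting $v\not\equiv 0$. Hence there must exist $\delta>0$ such that $|v_n|_{L^\infty(\R^N)}\geq \delta$ for all $n\in\mathbb{N}$. The argument is very short and essentially automatic once $(f_2)$ is combined with the uniform positivity of $V$; I do not expect any genuine obstacle, the only point one must check is that $(f_2)$ provides the sublinear control at the precise rate $|t|^{p-1}$ with a constant strictly less than $V_0$, which is exactly what the hypothesis gives.
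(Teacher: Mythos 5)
Your proposal is correct and follows essentially the same route as the paper: argue by contradiction, test the equation \eqref{Pvn} against $v_{n}$, and use the smallness of $f$ near the origin together with $V_{n}\geq V_{0}$ to force $|v_{n}|_{L^{p}(\R^{N})}=0$, contradicting $v\not\equiv 0$. The only cosmetic difference is that you extract the pointwise bound $f(v_{n})v_{n}\leq \xi |v_{n}|^{p}$ directly from $(f_{2})$ once $|v_{n}|_{L^{\infty}(\R^{N})}$ is small, whereas the paper first uses the monotonicity $(f_{5})$ to dominate $f(v_{n})/v_{n}^{p-1}$ by $f(|v_{n}|_{L^{\infty}(\R^{N})})/|v_{n}|_{L^{\infty}(\R^{N})}^{p-1}$ and then applies $(f_{2})$; both are equally valid.
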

\begin{proof}
Assume by contradiction that $|v_{n}|_{L^{\infty}(\R^{N})}\rightarrow 0$ as $n\rightarrow \infty$. By using $(f_2)$, there exists $n_{0}\in \mathbb{N}$ such that $\frac{f(|v_{n}|_{L^{\infty}(\R^{N})})}{|v_{n}|_{L^{\infty}(\R^{N})}^{p-1}}<\frac{V_{0}}{2}$ for all $n\geq n_{0}$.
Therefore, in view of $(f_{5})$ we can see that
$$
[v_{n}]_{W^{s,p}(\R^{N})}^{p}+V_{0}|v_{n}|^{p}_{L^{p}(\R^{N})}\leq \int_{\R^{N}}\frac{f(|v_{n}|_{L^{\infty}(\R^{N})})}{|v_{n}|_{L^{\infty}(\R^{N})}^{p-1}} |v_{n}|^{p} dx <\frac{V_{0}}{2} |v_{n}|^{p}_{L^{p}(\R^{N})},
$$
which is impossible.
\end{proof}

\noindent
Now, we end this section studying the behavior of maximum points of solutions to \eqref{P}. If $u_{\e_{n}}$ is a solution to ($P_{\e_{n}}$), then $v_{n}(x)=u_{\e_{n}}(x+\tilde{y}_{n})$ is a solution to \eqref{Pvn}. Moreover, up to subsequence, $v_{n}\rightarrow v$ in $W^{s,p}(\R^{N})$ and $y_{n}=\e_{n}\tilde{y}_{n}\rightarrow y\in M$ in view of Proposition \ref{prop4.1}.
If $p_{n}$ denotes a global maximum point of $v_{n}$, we can use Lemma \ref{lemMoser} and Lemma \ref{UBlemAF} to see that $p_{n}\in \B_{R}(0)$ for some $R>0$. As a consequence, the point of maximum of $u_{\e_{n}}$ is of the type $z_{\e_{n}}=p_{n}+\tilde{y}_{n}$ and then $\e_{n}z_{\e_{n}}=\e_{n}p_{n}+\e_{n}\tilde{y}_{n}\rightarrow y$ because $\{p_{n}\}$ is bounded. This fact and the continuity of $V$ yield $V(\e_{n}z_{\e_{n}})\rightarrow V(y)=V_{0}$ as $n\rightarrow \infty$.

\section{Critical case}\label{Sect4}

\subsection{Functional setting in the critical case}\label{Sect4.1}
\noindent
In this section we deal with critical problem \eqref{Pc}. Since many calculations are adaptations to that presented in the early sections, we will emphasize only the differences between the subcritical and the critical case.

By using a change of variable we consider the following problem
\begin{equation}\tag{$P_{\e}^{*}$}
\left\{
\begin{array}{ll}
(-\Delta)_{p}^{s} u +V(\e x) |u|^{p-2}u = f(u) + |u|^{\p-2} u &\mbox{ in } \R^{N}\\
u\in W^{s, p}(\R^{N}) \\
u(x)>0 &\mbox{ for all } x\in \R^{N}.
\end{array}
\right.
\end{equation}
The functional associated to ($P_{\e}^{*}$) is given by
\begin{equation*}
\I_{\e}(u)=\frac{1}{p} \|u\|^{p}_{\e} - \int_{\R^{N}} F(u) \, dx - \frac{1}{\p} |u|^{\p}_{L^{\p}(\R^{N})}
\end{equation*}
which is well defined on $\h$. Let us introduce the Nehari manifold associated to $\I_{\e}$, that is
\begin{equation*}
\N_{\e}= \left\{u\in \h\setminus \{0\} : \langle \I'_{\e}(u), u\rangle =0  \right\}.
\end{equation*}

\noindent
Arguing as in Section \ref{Sect3.1} we can prove that the following lemmas hold true.
\begin{lemma}\label{lem2.1c}
The functional $\I_{\e}$ satisfies the following conditions:
\begin{compactenum}[$(i)$]
\item there exist $\alpha, \rho >0$ such that $\I_{\e}(u)\geq \alpha$ with $\|u\|_{\e}=\rho$;
\item there exists $e\in \h$ with $\|e\|_{\e}>\rho$ such that $\I_{\e}(e)<0$.
\end{compactenum}
\end{lemma}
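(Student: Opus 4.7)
The plan is to adapt the proof of Lemma \ref{lem2.1} to the critical setting, noting that the new term $-\frac{1}{\p}|u|_{L^{\p}(\R^{N})}^{\p}$ appears only as a higher-order perturbation for (i) and actually helps to obtain (ii).

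For (i), I would start from the inequality \eqref{tv2}, namely $|F(t)|\leq \frac{\xi}{p}|t|^{p}+\frac{C_{\xi}}{q}|t|^{q}$, combined with the Sobolev embedding of Theorem \ref{Sembedding}, which gives $|u|_{L^{\p}(\R^{N})}^{\p}\leq S_{*}^{-\p/p}[u]_{W^{s,p}(\R^{N})}^{\p}\leq C\|u\|_{\e}^{\p}$, and with Lemma \ref{embedding}, which gives $|u|_{L^{q}(\R^{N})}^{q}\leq C\|u\|_{\e}^{q}$. Using $V_{0}\leq V(\e x)$ to absorb the $\xi$-term into the norm, one obtains an estimate of the form
\begin{equation*}
\I_{\e}(u)\geq \left(\frac{1}{p}-\frac{\xi}{pV_{0}}\right)\|u\|_{\e}^{p}-C_{1}\|u\|_{\e}^{q}-C_{2}\|u\|_{\e}^{\p}.
\end{equation*}
Choosing $\xi\in(0,V_{0})$ and recalling that $p<q<\p$, the coefficient of $\|u\|_{\e}^{p}$ is strictly positive while the remaining terms are of strictly higher order, so there exist $\alpha,\rho>0$ such that $\I_{\e}(u)\geq \alpha$ whenever $\|u\|_{\e}=\rho$.

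For (ii), pick any $\varphi\in \C^{\infty}_{c}(\R^{N})\setminus\{0\}$ with $\varphi\geq 0$. By $(f_{1})$ and $(f_{4})$ one has $F(t)\geq 0$ for all $t\geq 0$, so that
\begin{equation*}
\I_{\e}(t\varphi)\leq \frac{t^{p}}{p}\|\varphi\|_{\e}^{p}-\frac{t^{\p}}{\p}|\varphi|_{L^{\p}(\R^{N})}^{\p}
\end{equation*}
for every $t\geq 0$. Since $\p>p$, the right-hand side tends to $-\infty$ as $t\to+\infty$, and in particular there exists $t_{0}>0$ large enough so that $e:=t_{0}\varphi$ satisfies $\|e\|_{\e}>\rho$ and $\I_{\e}(e)<0$.

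Neither step presents a real obstacle: the presence of the critical nonlinearity only strengthens the decay in (ii), while in (i) it contributes a term $\|u\|_{\e}^{\p}$ with $\p>p$ that is dominated near the origin. The one point to double-check is that positivity of $F$ (so that the $-\int F(t\varphi)$ term can be discarded in (ii)) is indeed granted by hypotheses $(f_{1})$ and $(f_{4})$, which jointly imply $f\geq 0$ on $[0,\infty)$ and hence $F\geq 0$ there.
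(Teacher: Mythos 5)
Your proof is correct and follows essentially the same route as the paper, which leaves Lemma \ref{lem2.1c} as a direct adaptation of Lemma \ref{lem2.1}: for (i) the critical term only adds a harmless higher-order contribution $C\|u\|_{\e}^{\p}$, exactly as you argue. The only cosmetic difference is in (ii): the subcritical proof uses the lower bound $F(t)\geq C_{1}|t|^{\vartheta}-C_{2}$ coming from $(f_{4})$, whereas you discard the nonnegative term $\int_{\R^{N}}F(t\varphi)\,dx$ and let the critical term $-\frac{t^{\p}}{\p}|\varphi|_{L^{\p}(\R^{N})}^{\p}$ drive $\I_{\e}(t\varphi)\rightarrow-\infty$; both arguments are valid, and your justification that $(f_{1})$ and $(f_{4})$ give $F\geq 0$ on $[0,\infty)$ is sound.
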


\begin{lemma}\label{SW1c}
Under assumptions $(V)$ and $(f_1)$-$(f_5)$ and $(f'_6)$ we have for any $\e>0$:
\begin{compactenum}[$(i)$]
\item $\I'_{\e}$ maps bounded sets of $\h$ into bounded sets of $\h$.
\item $\I'_{\e}$ is weakly sequentially continuous in $\h$.
\item $\I_{\e}(t_{n}u_{n})\rightarrow -\infty$ as $t_{n}\rightarrow \infty$, where $u_{n}\in K$ and $K\subset \h\setminus\{0\}$ is a compact subset.
\end{compactenum}
\end{lemma}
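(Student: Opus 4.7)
The plan is to carry the three claims over from Lemma \ref{SW1} essentially verbatim, treating the extra critical term $-\frac{1}{\p}|u|_{L^{\p}(\R^{N})}^{\p}$ by means of the continuous embedding $\h\hookrightarrow L^{\p}(\R^{N})$ guaranteed by Lemma \ref{embedding}. None of the three items requires any compactness in $L^{\p}$ of the whole space, only continuous embedding together with the boundedness provided by Theorem \ref{Sembedding}, so the presence of the Sobolev-critical exponent does not obstruct the arguments.

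For $(i)$, given a bounded sequence $\{u_{n}\}\subset\h$ and $v\in\h$, I would reproduce the subcritical estimate using \eqref{tv1} and then bound the additional contribution $\int_{\R^{N}}|u_{n}|^{\p-2}u_{n}\,v\,dx$ via H\"older's inequality in the dual pair $(\p,(\p)')$ to obtain $|u_{n}|_{L^{\p}(\R^{N})}^{\p-1}\,|v|_{L^{\p}(\R^{N})}\leq C\|u_{n}\|_{\e}^{\p-1}\|v\|_{\e}$, which combined with the subcritical estimate yields $\langle\I'_{\e}(u_{n}),v\rangle\leq C(\|u_{n}\|_{\e}^{p-1}+\|u_{n}\|_{\e}^{q-1}+\|u_{n}\|_{\e}^{\p-1})\|v\|_{\e}$, proving $(i)$.

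For $(ii)$, suppose $u_{n}\rightharpoonup u$ in $\h$. The subcritical terms are handled exactly as in Lemma \ref{SW1}, using $(f_{2})$, $(f_{3})$ and the local compactness of Lemma \ref{embedding}. The only new ingredient is the convergence $\int_{\R^{N}}|u_{n}|^{\p-2}u_{n}\,\varphi\,dx\to\int_{\R^{N}}|u|^{\p-2}u\,\varphi\,dx$ for every fixed $\varphi\in\h$. After passing to a subsequence we have $u_{n}\to u$ a.e.\ in $\R^{N}$, and since $\{u_{n}\}$ is bounded in $L^{\p}(\R^{N})$ the sequence $\{|u_{n}|^{\p-2}u_{n}\}$ is bounded in $L^{(\p)'}(\R^{N})$ with $(\p)'=\p/(\p-1)$. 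A standard weak-convergence-from-a.e.-plus-boundedness argument (e.g.\ via the Brezis--Lieb type lemma or Lemma 4.8 in Kavian's book) then gives $|u_{n}|^{\p-2}u_{n}\rightharpoonup|u|^{\p-2}u$ in $L^{(\p)'}(\R^{N})$, and testing against $\varphi\in L^{\p}(\R^{N})$ concludes $(ii)$.

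For $(iii)$, without loss of generality I may assume $\|u_{n}\|_{\e}=1$ and, up to subsequence, $u_{n}\to u\in\mathbb{S}_{\e}$ in $\h$; in particular $|u_{n}|_{L^{\p}(\R^{N})}^{\p}\to|u|_{L^{\p}(\R^{N})}^{\p}=:\ell>0$ because $u\neq 0$ and $\h\hookrightarrow L^{\p}(\R^{N})$. Since $(f_{1})$ and $(f_{4})$ force $F\geq 0$, we have
$$
\I_{\e}(t_{n}u_{n})\leq\frac{t_{n}^{p}}{p}\|u_{n}\|_{\e}^{p}-\frac{t_{n}^{\p}}{\p}|u_{n}|_{L^{\p}(\R^{N})}^{\p}=t_{n}^{\p}\left(\frac{1}{p\,t_{n}^{\p-p}}-\frac{|u_{n}|_{L^{\p}(\R^{N})}^{\p}}{\p}\right)\longrightarrow-\infty
$$
as $t_{n}\to\infty$, since $\p>p$ and $|u_{n}|_{L^{\p}(\R^{N})}^{\p}\to\ell>0$. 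In fact $(iii)$ turns out to be the \emph{easiest} of the three parts in the critical regime, because the critical nonlinearity alone dominates the kinetic term and we do not even need to invoke $(f_{4})$ and Fatou's lemma on $F$, as was done for Lemma \ref{SW1}$(iii)$. The only genuinely delicate point in the whole argument is the weak sequential continuity of the critical term in $(ii)$; the rest is a routine bookkeeping adaptation of the proof of Lemma \ref{SW1}.
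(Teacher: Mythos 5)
Your proposal is correct and matches what the paper intends: the paper gives no separate proof of Lemma \ref{SW1c}, simply asserting that one argues as in Section \ref{Sect3.1}, and your adaptation (H\"older in the dual pair $(\p,(\p)')$ for $(i)$, a.e.\ convergence plus $L^{(\p)'}$-boundedness for the weak continuity of the critical term in $(ii)$) is exactly the routine modification required. Your observation that in $(iii)$ the critical term $-\frac{t_{n}^{\p}}{\p}|u_{n}|_{L^{\p}(\R^{N})}^{\p}$ alone forces the blow-up to $-\infty$, bypassing the $(f_{4})$/Fatou argument used in Lemma \ref{SW1}$(iii)$, is a valid minor simplification rather than a departure in method.
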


\begin{lemma}\label{SW2c}
Under the assumptions of Lemma \ref{SW1c}, for $\e>0$ we have:
\begin{compactenum}[$(i)$]
\item for all $u\in \mathbb{S}_{\e}$, there exists a unique $t_{u}>0$ such that $t_{u}u\in \N_{\e}$. Moreover, $m_{\e}(u)=t_{u}u$ is the unique maximum of $\I_{\e}$ on $\h$, where $\mathbb{S}_{\e}=\{u\in \h: \|u\|_{\e}=1\}$.
\item The set $\N_{\e}$ is bounded away from $0$. Furthermore $\N_{\e}$ is closed in $\h$.
\item There exists $\alpha>0$ such that $t_{u}\geq \alpha$ for each $u\in \mathbb{S}_{\e}$ and, for each compact subset $W\subset \mathbb{S}_{\e}$, there exists $C_{W}>0$ such that $t_{u}\leq C_{W}$ for all $u\in W$.
\item For each $u\in \N_{\e}$, $m_{\e}^{-1}(u)=\frac{u}{\|u\|_{\e}}\in \N_{\e}$. In particular, $\N_{\e}$ is a regular manifold diffeomorphic to the sphere in $\h$.
\item $c_{\e}=\inf_{\N_{\e}} \I_{\e}\geq \rho>0$ and $\I_{\e}$ is bounded below on $\N_{\e}$, where $\rho$ is independent of $\e$.
\end{compactenum}
\end{lemma}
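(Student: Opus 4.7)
The plan is to follow the template of the subcritical proof of Lemma \ref{SW2}, modifying each step to absorb the extra critical term $-\frac{1}{\p}|u|_{L^{\p}(\R^N)}^{\p}$ in $\I_\e$. The point to keep in mind throughout is that, since $\p>p$, the map $t\mapsto t^{\p-p}|u|_{L^{\p}(\R^N)}^{\p}$ is strictly increasing in $t>0$, so it is compatible with the monotonicity supplied by $(f_5)$.

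For part $(i)$, fix $u\in\mathbb{S}_\e$ and set $h(t)=\I_\e(tu)$ for $t\ge 0$. Using \eqref{tv2} and the Sobolev embedding $\h\hookrightarrow L^{\p}(\R^N)$ (Lemma \ref{embedding}) together with $q,\p>p$, one sees $h(0)=0$, $h(t)>0$ for small $t>0$, and $h(t)\to-\infty$ as $t\to\infty$ by the superlinearity argument in Lemma \ref{lem2.1c}$(ii)$. Hence $h$ attains a positive maximum at some $t_u>0$ with $h'(t_u)=0$, i.e.\ $t_u u\in\N_\e$. Uniqueness follows from writing the Nehari identity as
\[
\|u\|_\e^{p}=\int_{\R^N}\frac{f(tu)}{(tu)^{p-1}}u^{p}\,dx+t^{\p-p}|u|_{L^{\p}(\R^N)}^{\p},
\]
whose right-hand side is strictly increasing in $t>0$ by $(f_5)$ and $\p>p$. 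The maximality of $t_u u$ on the ray through $u$ is the usual consequence: $h'(t)>0$ for $t<t_u$ and $h'(t)<0$ for $t>t_u$.

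For parts $(ii)$ and $(iii)$, I would mimic the subcritical argument verbatim, adding one Sobolev-type term. If $u\in\N_\e$, then \eqref{tv1} and Lemma \ref{embedding} yield
\[
\|u\|_\e^{p}=\int_{\R^N}f(u)u\,dx+|u|_{L^{\p}(\R^N)}^{\p}\le C\xi\|u\|_\e^{p}+C_\xi\|u\|_\e^{q}+C\|u\|_\e^{\p},
\]
and, choosing $\xi$ small, the fact that $q,\p>p$ forces $\|u\|_\e\ge\kappa>0$. Closedness of $\N_\e$ follows from Lemma \ref{SW1c}$(i)$-$(ii)$ exactly as in Lemma \ref{SW2}$(ii)$. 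The lower bound $t_u\ge\alpha$ and the compact-set bound $t_u\le C_W$ are obtained as in Lemma \ref{SW2}$(iii)$: if $t_{u_n}\to\infty$ along a compact $W\subset\mathbb{S}_\e$, then $\I_\e(t_{u_n}u_n)\to-\infty$ by Lemma \ref{SW1c}$(iii)$, contradicting the fact that
\[
\I_\e(u)\big|_{\N_\e}=\int_{\R^N}\left(\frac{1}{p}f(u)u-F(u)\right)dx+\left(\frac{1}{p}-\frac{1}{\p}\right)|u|_{L^{\p}(\R^N)}^{\p}\ge 0
\]
by $(f_4)$ and $\p>p$.

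Given $(i)$--$(iii)$, part $(iv)$ follows from the Szulkin--Weth framework (Proposition~3.1 of \cite{SW}): the maps $\hat{m}_\e(u)=t_u u$ and $m_\e=\hat{m}_\e|_{\mathbb{S}_\e}$ give a homeomorphism $\mathbb{S}_\e\to\N_\e$ with inverse $u\mapsto u/\|u\|_\e$. For $(v)$, using \eqref{tv2} and the critical Sobolev inequality we get
\[
\I_\e(tu)\ge\frac{t^{p}}{p}(1-C\xi)\|u\|_\e^{p}-C_\xi t^{q}\|u\|_\e^{q}-Ct^{\p}\|u\|_\e^{\p},
\]
so there exists $\rho>0$, independent of $\e$ (depending only on $V_0$, the embedding constants, and on $\xi$), with $\I_\e(tu)\ge\rho$ for small $t>0$. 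Combined with the minimax identity
\[
c_\e=\inf_{u\in\N_\e}\I_\e(u)=\inf_{u\in\h\setminus\{0\}}\max_{t\ge 0}\I_\e(tu)=\inf_{u\in\mathbb{S}_\e}\max_{t\ge 0}\I_\e(tu),
\]
which is a consequence of $(i)$-$(iv)$ exactly as in Lemma \ref{SW2}$(v)$, this yields $c_\e\ge\rho$ and $\I_\e|_{\N_\e}\ge\rho$.

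The only genuinely new issue compared with Lemma \ref{SW2} is the uniqueness in $(i)$ and the non-triviality of the lower bound in $(v)$: both are resolved by the observation that the critical term contributes monotonically in $t$ with exponent $\p-p>0$, so it cooperates with $(f_5)$ rather than competing with it. Apart from this, every other step is a direct transcription.
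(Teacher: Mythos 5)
Your proposal is correct and is essentially the adaptation the paper itself intends: the paper gives no separate proof of Lemma \ref{SW2c}, stating only that one argues as in Section \ref{Sect3.1}, and your argument carries out exactly that transcription, with the right observation that the critical term contributes $t^{\p-p}|u|_{L^{\p}(\R^{N})}^{\p}$, which is strictly increasing in $t$ and therefore reinforces rather than obstructs the monotonicity from $(f_{5})$ in the uniqueness step, and contributes a higher-order term $C\|u\|_{\e}^{\p}$ compatible with the bound away from zero and the mountain-pass lower bound.
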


\noindent
Now, we introduce the functionals $\hat{\Psi}_{\e}: \h\setminus\{0\} \rightarrow \R$ and $\Psi_{\e}: \mathbb{S}_{\e}\rightarrow \R$ defined by
\begin{align*}
\hat{\Psi}_{\e}= \I_{\e}(\hat{m}_{\e}(u)) \quad \mbox{ and } \quad \Psi_{\e}= \hat{\Psi}_{\e}|_{\mathbb{S}_{\e}}.
\end{align*}
Then we have the following result:
\begin{lemma}\label{SW3c}
Under the assumptions of Lemma \ref{SW1c}, we have that for $\e>0$:
\begin{compactenum}[$(i)$]
\item $\Psi_{\e}\in \C^{1}(\mathbb{S}_{\e}, \R)$, and
\begin{equation*}
\Psi'_{\e}(w)v= \|m_{\e}(w)\|_{\e} \I'_{\e}(m_{\e}(w))v \quad \mbox{ for } v\in T_{w}(\mathbb{S}_{\e}).
\end{equation*}
\item $\{w_{n}\}$ is a Palais-Smale sequence for $\Psi_{\e}$ if and only if $\{m_{\e}(w_{n})\}$ is a Palais-Smale sequence for $\I_{\e}$. If $\{u_{n}\}\subset \N_{\e}$ is a bounded Palais-Smale sequence for $\I_{\e}$, then $\{m_{\e}^{-1}(u_{n})\}$ is a Palais-Smale sequence for $\Psi_{\e}$.
\item $u\in \mathbb{S}_{\e}$ is a critical point of $\Psi_{\e}$ if and only if $m_{\e}(u)$ is a critical point of $\I_{\e}$. Moreover the corresponding critical values coincide and
\begin{equation*}
\inf_{\mathbb{S}_{\e}} \Psi_{\e}=\inf_{\N_{\e}} \I_{\e}=c_{\e}.
\end{equation*}
\end{compactenum}
\end{lemma}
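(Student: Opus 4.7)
The plan is to deduce Lemma \ref{SW3c} by verifying that all the hypotheses of the abstract framework of Szulkin-Weth (Proposition 3.1 and Corollary 3.3 in \cite{SW}) are available in our critical setting, and then invoking that framework verbatim. The inputs we need are precisely the content of Lemma \ref{SW1c} (regularity and weak-to-weak continuity of $\I'_{\e}$, together with the $-\infty$ behavior along compactly-supported rays) and Lemma \ref{SW2c} (the fiber maps $u\mapsto t_{u}u$ give a homeomorphism between $\mathbb{S}_{\e}$ and $\N_{\e}$, with $t_{u}$ bounded and bounded away from zero on compact subsets of $\mathbb{S}_{\e}$). Since these are already established, the argument runs in parallel with the subcritical proof of Lemma \ref{SW3}.

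For $(i)$, I would first show that $\hat{m}_{\e}: \h\setminus\{0\}\to \N_{\e}$ is of class $\C^{1}$. The continuity of $u\mapsto t_{u}$ follows from $(f_{5})$ and $(f'_{6})$ (uniqueness together with compactness of sub-level sets of $t\mapsto \I_{\e}(tu)$); for $\C^{1}$-regularity one applies the implicit function theorem to $G(t,u):=\langle \I'_{\e}(tu),tu\rangle$, noting that $\partial_{t}G(t_{u},u)<0$ along $\N_{\e}$ because of $(f_{5})$ and the strict convexity of the critical term $\frac{1}{\p}|u|^{\p}$ in $t$. This is where the additional assumption $(f'_{6})$ is exploited to ensure the behavior at the origin matches the subcritical case. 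Then $\Psi_{\e}=\I_{\e}\circ m_{\e}\in \C^{1}(\mathbb{S}_{\e},\R)$ by composition, and differentiating gives
\begin{equation*}
\Psi'_{\e}(w)v=\langle \I'_{\e}(m_{\e}(w)),\, m'_{\e}(w)v\rangle.
\end{equation*}
Writing $m_{\e}(w)=t_{w}w$ and using $\langle \I'_{\e}(m_{\e}(w)), m_{\e}(w)\rangle=0$, the contribution coming from differentiating $t_{w}$ vanishes, and what remains is $t_{w}\, \I'_{\e}(m_{\e}(w))v=\|m_{\e}(w)\|_{\e}\, \I'_{\e}(m_{\e}(w))v$ for $v\in T_{w}(\mathbb{S}_{\e})$.

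For $(ii)$, the formula from $(i)$ combined with the bounds $t_{w}\geq \alpha>0$ on $\mathbb{S}_{\e}$ and $t_{w}\leq C_{W}$ on compact subsets $W\subset \mathbb{S}_{\e}$ (Lemma \ref{SW2c}(iii)) yields
\begin{equation*}
\alpha\,\|\I'_{\e}(m_{\e}(w))\|_{*}\leq \|\Psi'_{\e}(w)\|_{*}\leq C_{W}\,\|\I'_{\e}(m_{\e}(w))\|_{*},
\end{equation*}
where $\|\cdot\|_{*}$ denotes the appropriate dual norm, so that Palais-Smale sequences transfer in both directions; the identity $\Psi_{\e}(w)=\I_{\e}(m_{\e}(w))$ gives the transfer of levels. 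Conversely, if $\{u_{n}\}\subset \N_{\e}$ is a bounded $(PS)$-sequence for $\I_{\e}$, its projection $m_{\e}^{-1}(u_{n})=u_{n}/\|u_{n}\|_{\e}$ lies in $\mathbb{S}_{\e}$ with $\|u_{n}\|_{\e}$ bounded above and below (the lower bound follows from Lemma \ref{SW2c}(ii)), so the same inequalities yield the $(PS)$-property for $\Psi_{\e}$.

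For $(iii)$, $(i)$ shows that critical points of $\Psi_{\e}$ correspond bijectively under $m_{\e}$ with critical points of $\I_{\e}|_{\N_{\e}}$, and a standard Lagrange-multiplier argument using $\langle \I'_{\e}(u),u\rangle=0$ together with $\frac{d}{dt}\langle \I'_{\e}(tu),tu\rangle|_{t=1}<0$ on $\N_{\e}$ (again via $(f_{5})$ and the critical term) shows these are in fact critical points of $\I_{\e}$ on the whole space $\h$. The identity of critical values $\Psi_{\e}(w)=\I_{\e}(m_{\e}(w))$ together with the minimax characterization analogous to \eqref{Nguyen1} yields $\inf_{\mathbb{S}_{\e}}\Psi_{\e}=\inf_{\N_{\e}}\I_{\e}=c_{\e}$. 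The only genuinely new point relative to the subcritical Lemma \ref{SW3} is verifying the strict monotonicity $\partial_{t}G(t_{u},u)<0$ in the presence of the critical term, but this follows because both the $f$-part (by $(f_{5})$) and the pure critical power $t^{\p}$ contribute with the correct sign; the rest is a routine transcription of the Szulkin-Weth machinery.
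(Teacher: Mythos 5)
Your overall plan --- feed Lemmas \ref{SW1c} and \ref{SW2c} into the abstract Szulkin--Weth framework and transcribe the subcritical argument --- is exactly what the paper does (it states Lemma \ref{SW3c} without proof, as a direct consequence of \cite{SW} once the preparatory lemmas are in place), and your treatment of $(ii)$ and of the level identities in $(iii)$ is fine. However, the way you propose to justify $(i)$ contains a step that fails under the standing hypotheses. You want to obtain $\C^{1}$-regularity of $\hat{m}_{\e}$ by applying the implicit function theorem to $G(t,u)=\langle \I'_{\e}(tu),tu\rangle$ and checking $\partial_{t}G(t_{u},u)<0$; but $\partial_{t}G$ involves $f'$, and here $f$ is only assumed continuous ($(f_{1})$). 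This is precisely the obstruction the paper emphasizes in the introduction (``Since $f$ is only continuous, we can not apply the Nehari manifold arguments developed in \cite{AF}'') and again before Theorem \ref{LSt} (``$\mathcal{N}_{\e}$ is not a $C^{1}$ submanifold of $\h$''). So $m_{\e}$ is in general only a homeomorphism, not a diffeomorphism, and $\N_{\e}$ need not be $\C^{1}$; the same objection applies to your use of $\frac{d}{dt}\langle\I'_{\e}(tu),tu\rangle|_{t=1}<0$ in $(iii)$.

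The point of Proposition 3.1 and Corollary 3.3 in \cite{SW} is that $\hat{\Psi}_{\e}=\I_{\e}\circ\hat{m}_{\e}$ is nevertheless $\C^{1}$: one estimates the difference quotient of $\hat{\Psi}_{\e}$ from above and below by
\begin{equation*}
\I_{\e}\bigl(t_{u+sv}(u+sv)\bigr)-\I_{\e}\bigl(t_{u+sv}\,u\bigr)
\quad\mbox{and}\quad
\I_{\e}\bigl(t_{u}(u+sv)\bigr)-\I_{\e}\bigl(t_{u}\,u\bigr),
\end{equation*}
using that $t_{w}$ maximizes $t\mapsto\I_{\e}(tw)$; both bounds equal $t_{u}\,\I'_{\e}(t_{u}u)v\,s+o(s)$ by the mere continuity of $u\mapsto t_{u}$ (which follows from uniqueness in Lemma \ref{SW2c}$(i)$ together with the bounds in $(ii)$--$(iii)$). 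This yields the formula in $(i)$ without ever differentiating $t_{u}$. Likewise, in $(iii)$ no Lagrange-multiplier or transversality argument is needed: if $\Psi'_{\e}(w)=0$ then $\I'_{\e}(m_{\e}(w))v=0$ for all $v\in T_{w}(\mathbb{S}_{\e})$ by $(i)$, and $\langle\I'_{\e}(m_{\e}(w)),m_{\e}(w)\rangle=0$ because $m_{\e}(w)\in\N_{\e}$; since $\h=T_{w}(\mathbb{S}_{\e})\oplus\R w$, this forces $\I'_{\e}(m_{\e}(w))=0$ on all of $\h$. With these replacements your argument becomes the intended one; the critical term causes no additional difficulty since $t\mapsto t^{\p-p}$ is increasing, so the uniqueness and monotonicity inputs from $(f_{5})$ carry over.
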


\noindent
Finally, it is easy to prove that
\begin{lemma}\label{lemBc}
If $\{u_{n}\}$ is a Palais-Smale sequence of $\I_{\e}$ at level $c$, then $\{u_{n}\}$ is bounded in $\h$.
\end{lemma}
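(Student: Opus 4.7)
The statement is the critical analogue of Lemma \ref{lemB}, so my plan is to mimic that proof with a single modification to handle the extra critical term $|u|^{\p}_{L^{\p}(\R^{N})}$. The key point is that in the critical setting hypothesis $(f_4)$ is assumed with $\vartheta\in(p,q)\subset(p,\p)$, so $\vartheta$ sits strictly between $p$ and $\p$, which is exactly what is needed to make both coefficients below positive.

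Concretely, I would form the linear combination $\I_{\e}(u_{n})-\frac{1}{\vartheta}\langle \I'_{\e}(u_{n}),u_{n}\rangle$. Using the explicit form of $\I_\e$ in the critical case, this equals
\begin{align*}
\Bigl(\tfrac{1}{p}-\tfrac{1}{\vartheta}\Bigr)\|u_{n}\|_{\e}^{p}+\int_{\R^{N}}\Bigl[\tfrac{1}{\vartheta}f(u_{n})u_{n}-F(u_{n})\Bigr]dx+\Bigl(\tfrac{1}{\vartheta}-\tfrac{1}{\p}\Bigr)|u_{n}|^{\p}_{L^{\p}(\R^{N})}.
\end{align*}
On the other hand, since $\{u_n\}$ is a Palais-Smale sequence at level $c$, the left-hand side equals $c+o_n(1)\|u_n\|_\e$ (using $|\langle \I'_\e(u_n),u_n\rangle|\le \|\I'_\e(u_n)\|_{\h^*}\|u_n\|_\e=o_n(1)\|u_n\|_\e$).

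Now I would invoke $(f_4)$, which gives $\vartheta F(t)\le tf(t)$ for $t\ge 0$, together with $(f_1)$ so that the integrand $\tfrac{1}{\vartheta}f(u_{n})u_{n}-F(u_{n})$ is nonnegative. Since $\vartheta\in(p,\p)$, the two fractional coefficients $\frac{1}{p}-\frac{1}{\vartheta}$ and $\frac{1}{\vartheta}-\frac{1}{\p}$ are both strictly positive. Dropping the nonnegative middle integral and the nonnegative critical term, I obtain
\begin{align*}
c+o_{n}(1)\|u_{n}\|_{\e}\geq \Bigl(\tfrac{1}{p}-\tfrac{1}{\vartheta}\Bigr)\|u_{n}\|_{\e}^{p},
\end{align*}
which, since $p>1$, forces $\{\|u_n\|_\e\}$ to be bounded.

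There is no real obstacle here: the only subtlety relative to the subcritical Lemma \ref{lemB} is verifying that the parameter $\vartheta$ from $(f_4)$ is admissible with $\vartheta<\p$, which is guaranteed in the critical setting by the hypothesis $\vartheta\in(p,q)$ together with $q<\p$ from $(f_3)$. Thus the critical term contributes a favorable sign and the argument closes in the same way as in the subcritical case.
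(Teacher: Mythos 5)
Your proof is correct and is exactly the argument the paper intends: the paper states Lemma \ref{lemBc} without proof as "easy," and the computation you give is the same $\I_{\e}(u_{n})-\frac{1}{\vartheta}\langle \I'_{\e}(u_{n}),u_{n}\rangle$ decomposition that appears in Lemma \ref{lemB} and again (for $\J_{\mu}$) in the proof of Lemma \ref{lem4.3c}, with the critical term contributing the positive coefficient $\frac{1}{\vartheta}-\frac{1}{\p}$ precisely because $\vartheta\in(p,q)$ and $q<\p$. Nothing is missing.
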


\subsection{Autonomous critical problem}\label{Sect4.2}
Let us consider the following autonomous critical problem
\begin{equation*}\tag{$P_{\mu}^{*}$}
\left\{
\begin{array}{ll}
(-\Delta)_{p}^{s} u + \mu |u|^{p-2}u = f(u) + |u|^{\p-2} u &\mbox{ in } \R^{N} \\
u\in W^{s, p}(\R^{N}) \\
u(x)>0 &\mbox{ for all } x\in \R^{N},
\end{array}
\right.
\end{equation*}
with $N\geq sp^{2}$.
The functional associated to the above problem is defined as
\begin{equation*}
\J_{\mu}(u)=\frac{1}{p} \|u\|_{\mu}^{p} - \int_{\R^{N}} F(u) \, dx - \frac{1}{\p}|u|^{\p}_{L^{\p}(\R^{N})},
\end{equation*}
and the Nehari manifold associated to $\J_{\mu}$ is given by
\begin{equation*}
\N_{\mu}= \left\{u\in \X_{\mu}\setminus \{0\} : \langle \J'_{\mu}(u), u\rangle =0  \right\}.
\end{equation*}
It is standard to check that $\J_{\mu}$ has a mountain pass geometry. Moreover we have the following useful results:
\begin{lemma}\label{SW2Ac}
Under the assumptions of Lemma \ref{SW1c}, for $\mu>0$ we have:
\begin{compactenum}[$(i)$]
\item for all $u\in \mathbb{S}_{\mu}$, there exists a unique $t_{u}>0$ such that $t_{u}u\in \N_{\mu}$. Moreover, $m_{\mu}(u)=t_{u}u$ is the unique maximum of $\J_{\mu}$ on $\h$, where $\mathbb{S}_{\mu}=\{u\in \X_{\mu}: \|u\|_{\mu}=1\}$.
\item The set $\N_{\mu}$ is bounded away from $0$. Furthermore $\N_{\mu}$ is closed in $\X_{\mu}$.
\item There exists $\alpha>0$ such that $t_{u}\geq \alpha$ for each $u\in \mathbb{S}_{\mu}$ and, for each compact subset $W\subset \mathbb{S}_{\mu}$, there exists $C_{W}>0$ such that $t_{u}\leq C_{W}$ for all $u\in W$.
\item $\N_{\mu}$ is a regular manifold diffeomorphic to the sphere in $\X_{\mu}$.
\item $c_{\mu}=\inf_{\N_{\mu}} \J_{\mu}>0$ and $\J_{\mu}$ is bounded below on $\N_{\mu}$ by some positive constant.
\item $\J_{\mu}$ is coercive on $\N_{\mu}$.
\end{compactenum}
\end{lemma}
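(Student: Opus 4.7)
The plan is to adapt the Szulkin--Weth framework used for the subcritical autonomous problem in Lemma \ref{SW2A}, with modifications to accommodate the critical term $\frac{1}{\p}|u|_{L^{\p}(\R^{N})}^{\p}$ appearing in $\J_{\mu}$.

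For part $(i)$, I would fix $u\in \mathbb{S}_{\mu}$ and study $h(t):=\J_{\mu}(tu)$ for $t>0$. Lemma \ref{lem2.1c} yields $h(0)=0$, $h(t)>0$ for small $t$, and $h(t)\to -\infty$ as $t\to \infty$, so $h$ attains its maximum at some $t_{u}>0$ with $t_{u}u\in \N_{\mu}$. Uniqueness comes from rewriting $h'(t)=0$ as
\begin{equation*}
\|u\|_{\mu}^{p} = \int_{\R^{N}} \frac{f(tu)}{(tu)^{p-1}}\, u^{p}\, dx + t^{\p-p}\int_{\R^{N}} |u|^{\p}\, dx,
\end{equation*}
and observing that the right-hand side is strictly increasing in $t$ thanks to $(f_{5})$ and $\p>p$. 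For $(ii)$, combining $u\in \N_{\mu}$, the growth estimate \eqref{tv1} and the critical Sobolev embedding in Theorem \ref{Sembedding} leads to
\begin{equation*}
\|u\|_{\mu}^{p} \leq \xi C\|u\|_{\mu}^{p} + C_{\xi}\|u\|_{\mu}^{q} + S_{*}^{-\p/p}\|u\|_{\mu}^{\p},
\end{equation*}
and choosing $\xi$ small together with $\min\{q,\p\}>p$ yields $\|u\|_{\mu}\geq \kappa>0$. Closedness of $\N_{\mu}$ then follows as in Lemma \ref{SW2}$(ii)$ from the weak sequential continuity of $\J'_{\mu}$.

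Parts $(iii)$--$(v)$ are literal adaptations of the corresponding steps in Lemma \ref{SW2}. Part $(iii)$ uses Lemma \ref{SW1c}$(iii)$ together with the fact that $\J_{\mu}|_{\N_{\mu}}\geq 0$, itself a consequence of $(f_{4})$; $(iv)$ is an application of Proposition 3.1 in \cite{SW} to the maps $\hat m_{\mu}(u)=t_{u}u$ and $m_{\mu}=\hat m_{\mu}|_{\mathbb{S}_{\mu}}$, with inverse $u\mapsto u/\|u\|_{\mu}$; and $(v)$ follows from the minimax identity
\begin{equation*}
c_{\mu} = \inf_{u\in \N_{\mu}} \J_{\mu}(u) = \inf_{u\in \X_{\mu}\setminus\{0\}} \max_{t\geq 0} \J_{\mu}(tu) = \inf_{u\in \mathbb{S}_{\mu}} \max_{t\geq 0} \J_{\mu}(tu),
\end{equation*}
combined with the mountain--pass lower bound $\J_{\mu}(tu)\geq \alpha$ for $\|tu\|_{\mu}=\rho$.

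The only genuinely new difficulty, and the step I expect to be the main obstacle, is the coercivity claim $(vi)$: the critical term $-\frac{1}{\p}|u|_{L^{\p}(\R^{N})}^{\p}$ works against the standard Ambrosetti--Rabinowitz bound that handled the subcritical case. The decisive observation is that hypothesis $(f_{4})$ in the critical setting is imposed with $\vartheta\in(p,q)\subset(p,\p)$; hence for any $u\in \N_{\mu}$,
\begin{equation*}
\J_{\mu}(u) = \J_{\mu}(u) - \frac{1}{\vartheta}\langle \J'_{\mu}(u), u\rangle = \left(\frac{1}{p}-\frac{1}{\vartheta}\right)\|u\|_{\mu}^{p} + \int_{\R^{N}}\left[\frac{1}{\vartheta} f(u)u - F(u)\right]dx + \left(\frac{1}{\vartheta}-\frac{1}{\p}\right)|u|_{L^{\p}(\R^{N})}^{\p}.
\end{equation*}
By $(f_{4})$ the middle integral is nonnegative, while the coefficients $\frac{1}{p}-\frac{1}{\vartheta}$ and $\frac{1}{\vartheta}-\frac{1}{\p}$ are both strictly positive precisely because $p<\vartheta<\p$. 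Consequently $\J_{\mu}(u)\geq \left(\frac{1}{p}-\frac{1}{\vartheta}\right)\|u\|_{\mu}^{p}$ for every $u\in \N_{\mu}$, which gives coercivity and simultaneously reconfirms $c_{\mu}>0$. This sharper use of $(f_{4})$, with the upper bound $\vartheta<\p$ absent in the subcritical case, is exactly the reason the hypothesis had to be strengthened to $\vartheta\in(p,q)$ in the critical problem.
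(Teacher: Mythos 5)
Your proposal is correct and follows essentially the same route the paper intends: the paper states this lemma without proof, deferring to the subcritical arguments of Lemmas \ref{SW2} and \ref{SW2A}, and your adaptation (uniqueness of $t_{u}$ via $(f_{5})$ plus strict monotonicity of $t^{\p-p}$, the extra Sobolev term in the lower bound for $\|u\|_{\mu}$, and coercivity from $\J_{\mu}(u)-\frac{1}{\vartheta}\langle \J'_{\mu}(u),u\rangle$ using $p<\vartheta<q<\p$) is exactly the computation the authors themselves perform later in Lemma \ref{lem4.3c}. Your closing observation that the restriction $\vartheta\in(p,q)$ is what makes the coefficient $\frac{1}{\vartheta}-\frac{1}{\p}$ positive correctly identifies the one genuinely new ingredient of the critical case.
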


\noindent
Let us define the functionals $\hat{\Psi}_{\mu}: \X_{\mu}\setminus\{0\} \rightarrow \R$ and $\Psi_{\mu}: \mathbb{S}_{\mu}\rightarrow \R$ by setting
\begin{align*}
\hat{\Psi}_{\mu}= \J_{\mu}(\hat{m}_{\mu}(u)) \quad \mbox{ and } \quad \Psi_{\mu}= \hat{\Psi}_{\mu}|_{\mathbb{S}_{\mu}}.
\end{align*}
Then we obtain the following result:
\begin{lemma}\label{SW3Ac}
Under the assumptions of Lemma \ref{SW1c}, we have that for $\mu>0$:
\begin{compactenum}[$(i)$]
\item $\Psi_{\mu}\in \C^{1}(\mathbb{S}_{\mu}, \R)$, and
\begin{equation*}
\Psi'_{\mu}(w)v= \|m_{\mu}(w)\|_{\mu} \J'_{\mu}(m_{\mu}(w))v \quad \mbox{ for } v\in T_{w}(\mathbb{S}_{\mu}).
\end{equation*}
\item $\{w_{n}\}$ is a Palais-Smale sequence for $\Psi_{\mu}$ if and only if $\{m_{\mu}(w_{n})\}$ is a Palais-Smale sequence for $\J_{\mu}$. If $\{u_{n}\}\subset \N_{\mu}$ is a bounded Palais-Smale sequence for $\J_{\mu}$, then $\{m_{\mu}^{-1}(u_{n})\}$ is a Palais-Smale sequence for $\Psi_{\mu}$.
\item $u\in \mathbb{S}_{\mu}$ is a critical point of $\Psi_{\mu}$ if and only if $m_{\mu}(u)$ is a critical point of $\J_{\mu}$. Moreover the corresponding critical values coincide and
\begin{equation*}
\inf_{\mathbb{S}_{\mu}} \Psi_{\mu}=\inf_{\N_{\mu}} \J_{\mu}=c_{\mu}.
\end{equation*}
\end{compactenum}
\end{lemma}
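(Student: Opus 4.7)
The plan is to derive Lemma \ref{SW3Ac} directly from the abstract framework of Szulkin--Weth (Propositions 3.1--3.3 and Corollary 3.3 in \cite{SW}), whose hypotheses are all in force thanks to Lemma \ref{SW1c} and Lemma \ref{SW2Ac}: namely $\J_\mu \in \C^1(\X_\mu,\R)$, $\J'_\mu$ maps bounded sets into bounded sets and is weakly sequentially continuous, $\J_\mu(t_nu_n)\to-\infty$ whenever $t_n\to\infty$ with $u_n$ in a compact subset of $\mathbb{S}_\mu$, the Nehari set $\N_\mu$ is bounded away from $0$, and every ray $\{tu:t>0\}$ meets $\N_\mu$ in a unique point $\hat m_\mu(u)=t_u u$. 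Since the argument is then structurally identical to the earlier Lemmas \ref{SW3}, \ref{SW3A} and \ref{SW3c}, I only outline the main points.

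For part (i) I would first show that $u\mapsto t_u$ is continuous on $\X_\mu\setminus\{0\}$: if $u_n\to u$, Lemma \ref{SW2Ac}(iii) applied to the compact set $\{u_n/\|u_n\|_\mu\}\cup\{u/\|u\|_\mu\}\subset\mathbb{S}_\mu$ keeps $\{t_{u_n}\}$ bounded, and any limit point $t_0$ verifies $t_0 u\in\N_\mu$, hence $t_0=t_u$ by the uniqueness in Lemma \ref{SW2Ac}(i). Since $f$ is merely continuous I cannot use the implicit function theorem; instead I would rely on the classical max-function sandwich of \cite{SW}: for $h\in\X_\mu$ the maximality of $t_u$ in $t\mapsto\J_\mu(tu)$ (Lemma \ref{SW2Ac}(i)) bounds the increment $\hat\Psi_\mu(u+h)-\hat\Psi_\mu(u)=\J_\mu(t_{u+h}(u+h))-\J_\mu(t_u u)$ between $\J_\mu(t_{u+h}(u+h))-\J_\mu(t_{u+h}u)$ and $\J_\mu(t_u(u+h))-\J_\mu(t_u u)$, and passing to the limit using the continuity of $t_\cdot$ and the Nehari identity $\J'_\mu(t_u u)u=0$ produces
\[
\hat\Psi'_\mu(u)h=t_u\,\J'_\mu(t_u u)h.
\]
Restricting to $w\in\mathbb{S}_\mu$, where $t_w=\|m_\mu(w)\|_\mu$, yields the displayed formula of (i).

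For part (ii), given a Palais--Smale sequence $\{w_n\}\subset\mathbb{S}_\mu$ for $\Psi_\mu$, set $u_n:=m_\mu(w_n)\in\N_\mu$; then $\J_\mu(u_n)=\hat\Psi_\mu(u_n)=\Psi_\mu(w_n)$, so the levels agree. Any $\varphi\in\X_\mu$ admits the splitting $\varphi=\lambda w_n+v$ with $v\in T_{w_n}\mathbb{S}_\mu$ and $|\lambda|,\|v\|_\mu\leq 2\|\varphi\|_\mu$; using $\J'_\mu(u_n)u_n=0$ together with the formula from (i),
\[
|\J'_\mu(u_n)\varphi|=|\J'_\mu(u_n)v|=\frac{|\Psi'_\mu(w_n)v|}{\|m_\mu(w_n)\|_\mu}\leq\frac{2\,\|\Psi'_\mu(w_n)\|_{*}}{t_{w_n}}\|\varphi\|_\mu,
\]
and Lemma \ref{SW2Ac}(iii) furnishes the uniform lower bound $t_{w_n}\geq\alpha>0$, so $\J'_\mu(u_n)\to 0$. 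The converse direction is symmetric: boundedness of $\{u_n\}$ together with $\|u_n\|_\mu\geq\kappa>0$ from Lemma \ref{SW2Ac}(ii) keeps $t_{w_n}=\|u_n\|_\mu$ in a fixed compact subinterval of $(0,\infty)$, so $\J'_\mu(u_n)\to 0$ transfers to $\Psi'_\mu(w_n)\to 0$ through the same identity.

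Finally, part (iii) is an immediate consequence of (i) and (ii): $u\in\mathbb{S}_\mu$ is a critical point of $\Psi_\mu$ iff $m_\mu(u)$ is a critical point of $\J_\mu$, the corresponding critical values coincide by definition, and $\inf_{\mathbb{S}_\mu}\Psi_\mu=\inf_{\N_\mu}\J_\mu=c_\mu$ follows from the fact that $m_\mu:\mathbb{S}_\mu\to\N_\mu$ is a bijection along which $\Psi_\mu=\J_\mu\circ m_\mu$. The main (and only genuine) obstacle is the $\C^1$ regularity of $\hat\Psi_\mu$ under the weak hypothesis $f\in\C^0$, which is precisely why the max-function sandwich argument of \cite{SW} is used in place of the implicit function theorem.
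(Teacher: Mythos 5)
Your proposal is correct and follows exactly the route the paper intends: the lemma is stated without proof precisely because it is the standard Szulkin--Weth package (continuity of $u\mapsto t_u$, the max-function sandwich for differentiability of $\hat\Psi_\mu$, and transfer of Palais--Smale sequences through $m_\mu$), whose hypotheses are supplied by Lemmas \ref{SW1c} and \ref{SW2Ac}. Your reconstruction of those details, including the tangential splitting with the uniform bounds $t_{w_n}\geq\alpha$ and $\|u_n\|_\mu\geq\kappa$, is accurate.
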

\begin{remark}
As in the previous section we have the following variational characterization of the infimum of $\J_{\mu}$ over $\N_{\mu}$:
\begin{align*}
c_{\mu}= \inf_{u\in \N_{\mu}} \J_{\mu}(u)= \inf_{u\in \X_{\mu}\setminus \{0\}} \max_{t\geq 0} \J_{\mu}(tu) = \inf_{u\in \mathbb{S}_{\mu}} \max_{t\geq 0} \J_{\mu}(tu).
\end{align*}
\end{remark}

\noindent
In order to obtain the existence of a nontrivial solution to the autonomous critical problem, we need to prove the following fundamental result.
\begin{lemma}\label{lemC}
For any $\mu>0$, there exists $v\in \X_{\mu}\setminus \{0\}$ such that
$$
\max_{t\geq 0} \J_{\mu}(tv)<\frac{s}{N} S_{*}^{\frac{N}{sp}}.
$$
In particular $c_{\mu}<\frac{s}{N} S_{*}^{\frac{N}{sp}}$.
\end{lemma}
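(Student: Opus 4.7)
The plan is to follow the classical Brezis--Nirenberg concentration argument, adapted to the fractional $p$-Laplacian. The test function will be built from a concentrating family of (approximate) extremals for $S_*$: following \cite{BMS, MPSY}, I pick $U\in \mathcal{D}^{s,p}(\R^N)$ positive, normalized so that $[U]_{W^{s,p}(\R^N)}^{p}=|U|_{L^{\p}(\R^N)}^{\p}=S_*^{N/(sp)}$, set $U_\e(x)=\e^{-(N-sp)/p}U(x/\e)$ for $\e>0$, and multiply by a smooth cutoff $\eta\in\C^{\infty}_c(\R^N)$ with $\eta\equiv 1$ near the origin, obtaining $u_\e:=\eta U_\e\in\X_\mu$ with $u_\e\geq 0$.

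The key step is to invoke the sharp asymptotic estimates of \cite{BMS}, together with \cite[Lemma 2.7]{MPSY} for the $L^p$-norm:
\begin{align*}
[u_\e]_{W^{s,p}(\R^N)}^{p}&\leq S_*^{N/(sp)}+O\bigl(\e^{(N-sp)/(p-1)}\bigr),\\
|u_\e|_{L^{\p}(\R^N)}^{\p}&\geq S_*^{N/(sp)}-O\bigl(\e^{N/(p-1)}\bigr),\\
|u_\e|_{L^p(\R^N)}^{p}&\leq C\,\zeta(\e),\qquad |u_\e|_{L^{q_1}(\R^N)}^{q_1}\geq c_0\,\e^{N-q_1(N-sp)/p},
\end{align*}
where $\zeta(\e)=\e^{sp}$ if $N>sp^2$ and $\zeta(\e)=\e^{sp}|\log\e|$ if $N=sp^2$; the assumption $N\geq sp^2$ is precisely what is needed to apply \cite[Lemma 2.7]{MPSY} and obtain the bound on $|u_\e|_{L^p}^{p}$.

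Next I would analyze $g_\e(t):=\J_\mu(tu_\e)$. A standard Nehari argument (cf.\ Lemma \ref{SW2Ac}) shows that $g_\e$ achieves its maximum at a unique $t_\e$ lying in a compact interval $[t_1,t_2]\subset(0,\infty)$ independent of small $\e$. Using $(f'_6)$ to derive $F(tu_\e)\geq\frac{\lambda}{q_1}t^{q_1}u_\e^{q_1}$ together with the elementary identity $\max_{t\geq 0}\bigl(\frac{t^p A}{p}-\frac{t^{\p} B}{\p}\bigr)=\frac{s}{N}A^{N/(sp)}B^{-(N-sp)/(sp)}$, I obtain
$$\max_{t\geq 0}\J_\mu(tu_\e)\leq \frac{s}{N}\left(\frac{\|u_\e\|_{\mu}^{p}}{|u_\e|_{L^{\p}(\R^N)}^{p}}\right)^{N/(sp)}-c\lambda\,|u_\e|_{L^{q_1}(\R^N)}^{q_1}.$$
Plugging in the estimates and using $N\geq sp^2$ (so $(N-sp)/(p-1)\geq sp$), the first summand equals $\frac{s}{N}S_*^{N/(sp)}+O(\mu\,\zeta(\e))$, while the condition $q_1>p$ forces $N-q_1(N-sp)/p<sp$, so the negative term $c_0\lambda\,\e^{N-q_1(N-sp)/p}$ strictly dominates the positive remainders as $\e\to 0^+$. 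Hence the claimed strict inequality holds for $\e$ small enough, and $c_\mu<\frac{s}{N}S_*^{N/(sp)}$ follows from the minimax characterization of $c_\mu$.

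The main obstacle is the sharp $L^p$-estimate on $u_\e$: since the extremal profiles of the fractional Sobolev inequality are not explicit when $p\neq 2$, the direct computations available in the classical setting fail, and one must rely on the indirect asymptotic analysis of \cite[Lemma 2.7]{MPSY}, which is precisely what forces the restriction $N\geq sp^2$. Were this assumption weakened, the positive $L^p$-error in $\|u_\e\|_\mu^p$ would overtake the $L^{q_1}$-gain coming from $(f'_6)$ and the comparison above would collapse.
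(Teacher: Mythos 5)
Your proposal is correct and follows essentially the same route as the paper: truncated concentrating extremals $u_{\e}$ built from the minimizers of \cite{BMS}, the asymptotic estimates of \cite[Lemma 2.7]{MPSY}, the uniform bounds $A\le t_{\e}\le B$, the elementary identity for $\max_{t\ge 0}(\frac{t^{p}}{p}C-\frac{t^{\p}}{\p}D)$, and the $L^{q_{1}}$-lower bound coming from $(f'_{6})$ together with $q_{1}>p\ge \frac{N(p-1)}{N-sp}$. The only caveat is that the \emph{upper} bound $|u_{\e}|_{L^{p}}^{p}\le C\zeta(\e)$ is not contained in \cite[Lemma 2.7]{MPSY} (which gives lower bounds); the paper proves it separately in Lemma \ref{lemest2} for the specific truncation $u_{\e,\delta}$, and your argument needs that estimate (or an equivalent one for your cutoff $\eta U_{\e}$) to control the $\mu|u_{\e}|_{L^{p}}^{p}$ term.
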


\noindent
Before giving the proof of the above lemma, we recall some facts which will be crucial to estimate the mountain pass level $c_{\mu}$.

For any $\e>0$, let us define
\begin{equation*}
U_{\e}(x)= \frac{1}{\e^{\frac{N-sp}{p}}} U\left(\frac{|x|}{\e}\right),
\end{equation*}
where $U\in \mathcal{D}^{s, p}(\R^{N})$ is a solution to
$$
(-\Delta)^{s}_{p}U=S_{*} U^{\p-1} \quad \mbox{ in } \R^{N}.
$$
As showed in \cite{BMS}, we know that $U\in L^{\infty}(\R^{N})\cap \mathcal{C}^{0}(\R^{N})$ is a positive, radially symmetric and decreasing function with
$$
\lim_{|x|\rightarrow \infty} |x|^{\frac{N-sp}{p-1}} U(x)=U_{\infty}\in \R\setminus\{0\}.
$$
We also have the following interesting estimates:
\begin{lemma}\label{lemest1}\cite{BMS}
There exist constants $c_{1}, c_{2}>0$ and $\theta>1$ such that for all $r\geq 1$,
\begin{equation}\label{est1}
\frac{c_{1}}{r^{\frac{N-sp}{p-1}}}\leq U(r) \leq \frac{c_{2}}{r^{\frac{N-sp}{p-1}}}
\end{equation}
and
\begin{equation}\label{est2}
\frac{U(\theta r)}{U(r)}\leq \frac{1}{2}.
\end{equation}
\end{lemma}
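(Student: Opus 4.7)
The plan is to deduce both estimates from the asymptotic behavior $\lim_{r\to\infty} r^{\alpha} U(r) = U_{\infty} \in \R\setminus\{0\}$ (with $\alpha := \tfrac{N-sp}{p-1} > 0$) already recorded just above the lemma, combined with the facts that $U$ is positive, continuous, and radially non-increasing. Neither estimate requires any further information about the operator $(-\Delta)^{s}_{p}$: the PDE-theoretic effort has already been spent in producing the sharp asymptotic.

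For \eqref{est1}, I would first observe that positivity of $U$ forces $U_{\infty} > 0$ (otherwise $r^{\alpha} U(r) \to 0$ would contradict $U_{\infty} \neq 0$ if taken with sign, but more directly, $U > 0$ on $\R^{N}$ and $r^{\alpha} > 0$ so the limit of the product is $\geq 0$, hence strictly positive). Pick $R_{0} \geq 1$ so large that
$$
\tfrac{1}{2} U_{\infty} \;\leq\; r^{\alpha} U(r) \;\leq\; 2\, U_{\infty} \qquad \text{for all } r \geq R_{0}.
$$
On the compact interval $[1, R_{0}]$ the map $r \mapsto r^{\alpha} U(r)$ is continuous and strictly positive (continuity of $U$ plus $U > 0$), so it attains a positive minimum and a finite maximum. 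Setting $c_{1}$ to be the overall infimum and $c_{2}$ the overall supremum of $r^{\alpha} U(r)$ on $[1, \infty)$ yields \eqref{est1}.

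For \eqref{est2}, I would feed \eqref{est1} back into itself: for every $r \geq 1$ and every $\theta \geq 1$, applying the upper bound at $\theta r$ and the lower bound at $r$ gives
$$
\frac{U(\theta r)}{U(r)} \;\leq\; \frac{c_{2}\,(\theta r)^{-\alpha}}{c_{1}\, r^{-\alpha}} \;=\; \frac{c_{2}}{c_{1}}\, \theta^{-\alpha}.
$$
Because $\alpha > 0$, it suffices to fix any $\theta > \max\{1,\ (2 c_{2}/c_{1})^{1/\alpha}\}$, which forces $\tfrac{c_{2}}{c_{1}}\theta^{-\alpha} \leq \tfrac{1}{2}$ and hence \eqref{est2}.

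The main obstacle is conceptual rather than technical: everything reduces to the asymptotic input $r^{\alpha} U(r) \to U_{\infty} \neq 0$, and once that is available the proof is an elementary compactness-plus-bookkeeping argument. The genuinely hard work (done upstream in \cite{BMS}) is the construction of sharp super- and subsolutions for $(-\Delta)^{s}_{p}$ and the comparison arguments needed to establish the exact polynomial decay rate $|x|^{-(N-sp)/(p-1)}$; I would simply cite those as given.
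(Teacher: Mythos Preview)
Your argument is correct. Note, however, that the paper does not actually supply a proof of this lemma: it is quoted verbatim from \cite{BMS} (the citation is attached to the lemma header), and the paper treats both \eqref{est1} and \eqref{est2} as black-box inputs from that reference. What you have done is show that, once the sharp asymptotic $\lim_{r\to\infty} r^{\alpha} U(r) = U_{\infty}\in\R\setminus\{0\}$ is granted (as it is, just above the lemma), the two-sided bound and the universal contraction constant $\theta$ follow by a short continuity/compactness argument with no further PDE input. This is a useful observation: it makes explicit that the entire content of Lemma~\ref{lemest1} is already encoded in the asymptotic statement, and that the constants $c_{1}, c_{2}, \theta$ depend only on $N$, $s$, $p$ through $U_{\infty}$ and the values of $U$ on a fixed compact interval. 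The original proof in \cite{BMS} obtains \eqref{est1} as a by-product of the barrier construction that yields the asymptotic itself, so your route is strictly more elementary given what the paper already records.
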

Let $\theta$ be the universal constant in Lemma \ref{lemest1} that depends only on $N, p$ and $s$. For $\e, \delta>0$, set
\begin{equation*}
m_{\e, \delta} := \frac{U_{\e}(\delta)}{U_{\e}(\delta) - U_{\e}(\theta \delta)}.
\end{equation*}
Define
\begin{equation*}
g_{\e, \delta}(t):=
\left\{
\begin{array}{ll}
0 &\mbox{ if } 0\leq t\leq U_{\e}(\theta \delta) \\
m_{\e, \delta}^{p} (t- U_{\e}(\theta \delta)) &\mbox{ if } U_{\e}(\theta \delta)\leq t\leq U_{\e}(\delta) \\
t+ U_{\e}(\delta) (m_{\e, \delta}^{p-1}-1) &\mbox{ if } t\geq U_{\e}(\delta),
\end{array}
\right.
\end{equation*}
and
\begin{equation*}
G_{\e, \delta}(t):= \int_{0}^{t} (g'_{\e, \delta}(\tau))^{\frac{1}{p}} d\tau =
\left\{
\begin{array}{ll}
0 &\mbox{ if } 0\leq t\leq U_{\e}(\theta \delta) \\
m_{\e, \delta} (t- U_{\e}(\theta \delta)) &\mbox{ if } U_{\e}(\theta \delta)\leq t\leq U_{\e}(\delta) \\
t &\mbox{ if } t\geq U_{\e}(\delta).
\end{array}
\right.
\end{equation*}
Let us observe that $g_{\e, \delta}$ and $G_{\e, \delta}$ are nondecreasing and absolutely continuous functions. Now, we consider the radially symmetric nonincreasing function
\begin{equation*}
u_{\e, \delta}(r)= G_{\e, \delta}(U_{\e}(r)),
\end{equation*}
which, in view of the definition of $G_{\e, \delta}$,  satisfies
\begin{equation}\label{Tran}
u_{\e, \delta}(r)=
\left\{
\begin{array}{ll}
U_{\e}(r) &\mbox{ if } r\leq \delta \\
0 &\mbox{ if } r\geq \theta \delta.
\end{array}
\right.
\end{equation}
We recall the following useful estimates established in Lemma $2.7$ in \cite{MPSY}:
\begin{lemma}\label{MPSYlem}
There exists $C=C(N, p, s)>0$ such that for any $\e\leq \frac{\delta}{2}$ the following estimates hold
\begin{align*}
& [u_{\e, \delta}]_{W^{s, p}(\R^{N})}^{p}\leq S_{*}^{\frac{N}{sp}}+C\left(\left(\frac{\e}{\delta}\right)^{\frac{N-sp}{p-1}}\right), \\
&|u_{\e, \delta}|^{\p}_{L^{\p}(\R^{N})}\geq S_{*}^{\frac{N}{sp}}-C\left(\left(\frac{\e}{\delta}\right)^{\frac{N}{p-1}}\right),
\end{align*}
and
\begin{equation*}
|u_{\e, \delta}|^{p}_{L^{p}(\R^{N})}\geq
\left\{
\begin{array}{ll}
\frac{1}{C}\e^{sp} &\mbox{ if } N>sp^{2} \\
\frac{1}{C} \e^{sp} \log\left(\frac{\delta}{\e}\right) &\mbox{ if } N=sp^{2}.
\end{array}
\right.
\end{equation*}
\end{lemma}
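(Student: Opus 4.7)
The three estimates are essentially independent, and all reduce to direct computations based on the decay bounds for $U$ (Lemma \ref{lemest1}), the scaling relation $U_\e(x) = \e^{-(N-sp)/p} U(x/\e)$, and the normalization $[U]^p_{W^{s,p}(\R^N)} = |U|^{\p}_{L^{\p}(\R^N)} = S_*^{N/(sp)}$ of the Sobolev extremizer.

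For the Gagliardo seminorm estimate, the crucial input is the Jensen-type inequality
$$
|G_{\e,\delta}(a) - G_{\e,\delta}(b)|^p \leq |a - b|^{p-1} |g_{\e,\delta}(a) - g_{\e,\delta}(b)|,
$$
which follows from $G_{\e,\delta} = \int_0^{\cdot} (g'_{\e,\delta})^{1/p}$ and H\"older's inequality exactly as in the derivation of \eqref{Gg}. Setting $a = U_\e(x)$, $b = U_\e(y)$, integrating against $|x-y|^{-(N+sp)}$, and using that $U_\e$ solves $(-\Delta)^s_p U_\e = S_* U_\e^{\p - 1}$, one obtains
$$
[u_{\e,\delta}]^p_{W^{s,p}(\R^N)} \leq S_* \int_{\R^N} U_\e^{\p - 1}\, g_{\e,\delta}(U_\e)\, dx.
$$
I would split this integral over the three regions dictated by the piecewise definition of $g_{\e,\delta}$: the contribution on $\{|x|\geq \theta\delta\}$ vanishes, the contribution on the transition annulus $\{\delta\leq|x|\leq\theta\delta\}$ is negligible after scaling, and on $\{|x|\leq\delta\}$, where $g_{\e,\delta}(t)=t+U_\e(\delta)(m_{\e,\delta}^{p-1}-1)$, one extracts a main term $\leq S_*|U_\e|^{\p}_{L^{\p}(\R^N)} = S_*^{N/(sp)}$ and a correction of order $U_\e(\delta)\int_{B_\delta} U_\e^{\p-1}\,dx$. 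Both factors are explicitly computable by scaling: using $U(r)\leq c_2\, r^{-(N-sp)/(p-1)}$ one finds $U_\e(\delta) \sim \delta^{-(N-sp)/(p-1)}\e^{(N-sp)/(p(p-1))}$ and $\int_{B_\delta} U_\e^{\p-1}\,dx \sim \e^{(N-sp)/p}$; the exponents combine exactly to the claimed $(\e/\delta)^{(N-sp)/(p-1)}$.

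For the $L^{\p}$ lower bound I would exploit $u_{\e,\delta}(x) = U_\e(x)$ for $|x|\leq\delta$ from \eqref{Tran}, so after rescaling $y = x/\e$,
$$
|u_{\e,\delta}|^{\p}_{L^{\p}(\R^N)} \geq \int_{B_\delta} U_\e^{\p}\,dx = S_*^{N/(sp)} - \int_{\R^N\setminus B_{\delta/\e}} U^{\p}\,dy,
$$
and the tail is $O((\e/\delta)^{N/(p-1)})$ by a routine computation with $U(r)\leq c_2\, r^{-(N-sp)/(p-1)}$: the radial exponent is $-(N+p-1)/(p-1)$, integrating to $(\e/\delta)^{N/(p-1)}$. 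Similarly, for the $L^p$ lower bound, the lower decay $U(r)\geq c_1\, r^{-(N-sp)/(p-1)}$ for $r\geq 1$ gives
$$
|u_{\e,\delta}|^p_{L^p(\R^N)} \geq \e^{sp}\int_{B_{\delta/\e}} U^p\,dy \;\geq\; \frac{\e^{sp}}{C}\int_1^{\delta/\e} r^{(sp^2-N)/(p-1)-1}\,dr.
$$
The exponent $(sp^2-N)/(p-1)-1$ is strictly less than $-1$ when $N>sp^2$, so the integral is uniformly bounded below by a positive constant and $|u_{\e,\delta}|^p_{L^p}\geq \e^{sp}/C$; it equals exactly $-1$ when $N=sp^2$, producing logarithmic growth $\log(\delta/\e)$, as claimed.

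\textbf{Main obstacle.} The only subtle step is the Gagliardo seminorm bound. On the inner region $\{|x|\leq\delta\}$ one actually has $g_{\e,\delta}(U_\e)\geq U_\e$ since $m_{\e,\delta}^{p-1}\geq 1$, so the naive attempt to bound $\int U_\e^{\p-1} g_{\e,\delta}(U_\e)$ by $\int U_\e^{\p}$ fails, and one must carry the extra term $U_\e(\delta)(m_{\e,\delta}^{p-1}-1)\int_{B_\delta} U_\e^{\p-1}$ through the calculation. The gap condition \eqref{est2} is exactly what makes $m_{\e,\delta}$ bounded by an absolute constant (independently of $\e,\delta$), and this boundedness is essential for the arithmetic of the exponents to produce precisely the $(\e/\delta)^{(N-sp)/(p-1)}$ correction rather than a worse one.
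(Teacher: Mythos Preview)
The paper does not prove this lemma at all: it is stated as a direct citation of Lemma~2.7 in \cite{MPSY}, with no accompanying argument. So there is no ``paper's own proof'' to compare against.

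That said, your sketch is correct and reproduces the standard argument from \cite{MPSY}. The Jensen-type bound $|G_{\e,\delta}(a)-G_{\e,\delta}(b)|^p\leq |a-b|^{p-1}|g_{\e,\delta}(a)-g_{\e,\delta}(b)|$ combined with the equation for $U_\e$ is exactly the device used there to reduce the seminorm estimate to an integral of the form $S_*\int U_\e^{\p-1}g_{\e,\delta}(U_\e)\,dx$; your three-region splitting and the exponent bookkeeping for the correction term on $\{|x|\leq\delta\}$ are right, and your observation that boundedness of $m_{\e,\delta}$ (from \eqref{est2}) is what keeps the correction at order $(\e/\delta)^{(N-sp)/(p-1)}$ is the key point. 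The $L^{\p}$ and $L^p$ estimates are, as you say, straightforward tail computations; note that the paper itself carries out the matching $L^p$ \emph{upper} bound in Lemma~\ref{lemest2} by the same radial calculation you outline for the lower bound, just with $c_2$ in place of $c_1$.
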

In what follows, we prove an upper bound for the $L^{p}$-norm of $u_{\e, \delta}$:
\begin{lemma}\label{lemest2}
There exists a constant $C=C(N, p, s)>0$ such that for any $\e\leq \frac{\delta}{2}$
\begin{equation*}
|u_{\e, \delta}|_{L^{p}(\R^{N})}^{p} \leq
\left\{
\begin{array}{ll}
C\e^{sp} & \mbox{ if } N>sp^{2} \\
C\e^{sp} \log\left(\frac{\delta}{\e}\right) & \mbox{ if } N=sp^{2}.
\end{array}
\right.
\end{equation*}
\end{lemma}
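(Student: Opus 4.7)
The plan is to exploit the pointwise bound $u_{\e,\delta}(r)\le U_\e(r)$ together with the compact support $\supp u_{\e,\delta}\subset \B_{\theta\delta}(0)$ (both immediate from \eqref{Tran} and the monotonicity of $G_{\e,\delta}$), then rescale and split the resulting integral according to the asymptotic behavior in Lemma \ref{lemest1}.

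More precisely, I would first observe that
$$
|u_{\e,\delta}|_{L^p(\R^N)}^p \;\le\; \int_{\B_{\theta\delta}(0)} U_\e(|x|)^p\,dx
\;=\; \e^{sp}\!\!\int_{\B_{\theta\delta/\e}(0)} U(|y|)^p\,dy
$$
after the change of variable $y=x/\e$ and using $U_\e(x)=\e^{-(N-sp)/p}U(|x|/\e)$. Since $U\in L^\infty(\R^N)$, the integral on $\B_1(0)$ is bounded by a constant depending only on $N,p,s$. On $\{1\le|y|\le \theta\delta/\e\}$ the upper bound in \eqref{est1} yields
$$
\int_{\{1\le|y|\le \theta\delta/\e\}} U(|y|)^p\,dy
\;\le\; c_2^{\,p}\,\omega_{N-1}\int_1^{\theta\delta/\e} r^{N-1-\frac{p(N-sp)}{p-1}}\,dr.
$$
A direct computation gives that the exponent equals $-1$ exactly when $N=sp^2$, and is strictly less than $-1$ precisely when $N>sp^2$. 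Hence in the first case the integral is bounded by an absolute constant, while in the critical case it is bounded by $C\bigl(\log\theta+\log(\delta/\e)\bigr)\le C\log(\delta/\e)$ (using $\e\le\delta/2$ to absorb $\log\theta$ into a constant). Combining with the $\e^{sp}$ prefactor produces the two stated bounds.

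The argument is essentially a routine rescaling and integration, so there is no real obstacle; the only point requiring care is the correct bookkeeping of the exponent $N-1-\frac{p(N-sp)}{p-1}$ and checking that it equals $-1$ exactly at the threshold $N=sp^2$, which is what forces the logarithmic correction. One should also make sure $\e\le\delta/2$ guarantees $\theta\delta/\e\ge 1$, so that the splitting into $\B_1(0)$ and its complement inside $\B_{\theta\delta/\e}(0)$ is meaningful.
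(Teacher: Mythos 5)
Your proof is correct and follows essentially the same route as the paper: rescale by $y=x/\e$ and feed the decay bound of Lemma \ref{lemest1} into the radial integral, with the exponent $N-1-\frac{p(N-sp)}{p-1}$ hitting $-1$ exactly at $N=sp^{2}$, which produces the logarithm. The only (harmless) difference is that you use the global pointwise bound $u_{\e,\delta}\leq U_{\e}$ (valid since $G_{\e,\delta}(t)\leq t$, the middle branch being linear with slope $m_{\e,\delta}\geq 1$ and matching the identity at $t=U_{\e}(\delta)$) and split at radius $\e$, whereas the paper splits at radius $\delta$ and bounds $u_{\e,\delta}$ by the constant $U_{\e}(\delta)$ on the annulus $\B_{\theta\delta}(0)\setminus\B_{\delta}(0)$; your version is in fact slightly cleaner.
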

\begin{proof}
Firstly, we consider the case $N>s p^{2}$. Let us observe that from the definition of $u_{\e, \delta}$ it follows that
\begin{align}\begin{split}\label{est3}
\int_{\R^{N}} u_{\e, \delta}^{p} dx &= \int_{\B_{\delta}(0)} u_{\e, \delta}^{p} dx+ \int_{\B_{\theta \delta}(0) \setminus \B_{\delta}(0)} u_{\e, \delta}^{p} dx + \int_{\B_{\theta \delta}^{c}(0)} u_{\e, \delta}^{p} dx\\
&= \int_{\B_{\delta}(0)} u_{\e, \delta}^{p} dx+ \int_{\B_{\theta \delta}(0) \setminus \B_{\delta}(0)} u_{\e, \delta}^{p} dx=:I+II.
\end{split}\end{align}
Now we estimate the two integrals on the right hand side of \eqref{est3}. By using a change of variable, Lemma \ref{lemest1} and the fact that $\e\leq \frac{\delta}{2}$, we can infer that
\begin{align}\label{est4}
I&= \int_{\B_{\delta}(0)} U_{\e}^{p}(x) \, dx = \e^{sp} \int_{\B_{\delta}(0)} U^{p}(x)\, dx \nonumber \\
&=\e^{sp} \omega_{N-1}\int_{1}^{\frac{\delta}{\e}} U^{p}(r) r^{N-1} \, dr \nonumber \\
&\leq c \e^{sp} \int_{1}^{\frac{\delta}{\e}} r^{-\frac{(N-sp)p}{p-1}+N-1} dr \nonumber \\
&=\frac{c \e^{sp}}{\frac{N-sp^{2}}{p-1}} \left[ 1- \left(\frac{\e}{\delta}\right)^{\frac{N-sp^{2}}{p-1}}\right] \leq C \e^{sp},
\end{align}
where $C$ is a positive constant.

Since $U_{\e}$ is radially nonincreasing, for any $\delta \leq r \leq \theta \delta$, we have
\begin{align*}
0\leq m_{\e, \delta} \left(U_{\e}(r)- U_{\e}(\theta \delta)\right) &= U_{\e}(\delta) \left[\frac{U_{\e}(r)- U_{\e}(\theta \delta)}{U_{\e}(\delta) - U_{\e}(\theta \delta)} \right]\leq U_{\e}(\delta).
\end{align*}
By using the definition of $U_{\e}$, $\frac{\delta}{\e}\geq 2$ and Lemma \ref{lemest1} we obtain
\begin{align}\label{est5}
II&=\int_{\B_{\theta \delta}(0) \setminus \B_{\delta}(0)} u_{\e, \delta}^{p} dx = \int_{\B_{\theta \delta}(0) \setminus \B_{\delta}(0)} \left[m_{\e, \delta} \left(U_{\e}(r)- U_{\e}(\theta \delta)\right) \right]^{p} dx \nonumber\\
&< \int_{\B_{\theta \delta}(0) \setminus \B_{\delta}(0)}  U_{\e}^{p}(\delta)\, dx \nonumber \\
&=|U_{\e}(\delta)|^{p} |\B_{\theta \delta}(0) \setminus \B_{\delta}(0)| \nonumber \\
&= C \frac{\delta^{N}}{\e^{N-sp}} \left|U\left(\frac{\delta}{\e}\right)\right|^{p} \nonumber \\
&\leq C \frac{\delta^{N}}{\e^{N-sp}} \left(\frac{\delta}{\e}\right)^{- \frac{(N-sp)p}{p-1}} c_{2}^{p} \nonumber \\
&= C \delta^{- \frac{N-sp^{2}}{p-1}} \e^{\frac{N-sp}{p-1}} \nonumber \\
&\leq C \e^{- \frac{N-sp^{2}}{p-1} + \frac{N-sp}{p-1}} = C\e^{sp}.
\end{align}
Putting together \eqref{est3}-\eqref{est5} we get the thesis. \\
Let us consider the case $N=s p^{2}$. Then, we can see that
\begin{align*}
I\leq c_{2}^{p}\e^{sp} \int_{1}^{\frac{\delta}{\e}} r^{-1} dr = c_{2}^{p} \e^{sp} \log\left(\frac{\delta}{\e}\right)
\end{align*}
and
\begin{align*}
II\leq C\e^{sp}.
\end{align*}
Therefore, being $\log(\frac{\delta}{\e})\geq \log(2)$ if $\e\leq \frac{\delta}{2}$, we can conclude that
\begin{align*}
|u_{\e, \delta}|_{L^{p}(\R^{N})}^{p} \leq \e^{sp} \log\left(\frac{\delta}{\e}\right) \left[ 1+ \frac{1}{\log\left(\frac{\delta}{\e}\right)}\right] \leq C \e^{sp} \log\left(\frac{\delta}{\e}\right).
\end{align*}
\end{proof}

\begin{lemma}\label{lemmaq}
If $q>\frac{N(p-1)}{N-sp}$, then for any $\e\leq \frac{\delta}{2}$ it holds
\begin{equation}
|u_{\e, \delta}|_{L^{q}(\R^{N})}^{q} \geq C\e^{N-\frac{(N-sp)q}{p}}.
\end{equation}
\end{lemma}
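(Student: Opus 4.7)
The plan is to exploit the identity $u_{\varepsilon,\delta} = U_\varepsilon$ on $B_\delta(0)$ given by \eqref{Tran} and reduce the problem, via the standard rescaling, to a uniform positive lower bound on $\int_{B_{\delta/\varepsilon}(0)} U^q \, dy$ that is independent of the parameters.

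First, I would restrict the integration to $B_\delta(0)$, on which $u_{\varepsilon,\delta}$ coincides with $U_\varepsilon$, and apply the change of variables $y = x/\varepsilon$ together with the scaling $U_\varepsilon(x) = \varepsilon^{-(N-sp)/p} U(|x|/\varepsilon)$. This will give
\begin{equation*}
|u_{\varepsilon,\delta}|_{L^q(\R^N)}^{q} \geq \int_{B_\delta(0)} U_\varepsilon(x)^{q} \, dx = \varepsilon^{N - (N-sp)q/p} \int_{B_{\delta/\varepsilon}(0)} U(y)^{q} \, dy,
\end{equation*}
so that the $\varepsilon$-power predicted by the statement pops out for free, and only a uniform positive lower bound on the residual $y$-integral is needed.

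Next, since $\varepsilon \leq \delta/2$ implies $\delta/\varepsilon \geq 2$, the integration domain contains the fixed annulus $B_2(0)\setminus B_1(0)$, on which I would apply the pointwise lower bound \eqref{est1} from Lemma \ref{lemest1}: for $1 \leq r \leq 2$ one has $U(r) \geq c_1\, r^{-(N-sp)/(p-1)} \geq c_1 \, 2^{-(N-sp)/(p-1)}$. Consequently
\begin{equation*}
\int_{B_{\delta/\varepsilon}(0)} U(y)^{q} \, dy \geq \int_{B_2(0)\setminus B_1(0)} U(y)^{q} \, dy \geq c_0 > 0,
\end{equation*}
with $c_0$ depending only on $N$, $p$, $s$, $q$. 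Combined with the previous display this produces the desired $|u_{\varepsilon,\delta}|_{L^q(\R^N)}^{q}\geq C\,\varepsilon^{N-(N-sp)q/p}$.

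The hypothesis $q > \frac{N(p-1)}{N-sp}$ is in fact not required for this one-sided estimate; it is precisely the threshold ensuring $U\in L^q(\R^N)$, so under this assumption the exhibited power of $\varepsilon$ is also of the correct order for the matching upper bound, and this is the regime in which the lemma will be used to refine the critical-level estimates. I do not foresee any substantive obstacle: the whole argument amounts to one rescaling followed by the pointwise bound of Lemma \ref{lemest1} on a fixed annulus, which decouples the constant $C$ from both $\varepsilon$ and $\delta$.
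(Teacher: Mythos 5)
Your proof is correct and follows essentially the same route as the paper: restrict to $\B_{\delta}(0)$ where $u_{\e,\delta}=U_{\e}$, rescale to pull out the factor $\e^{N-\frac{(N-sp)q}{p}}$, and bound the remaining integral of $U^{q}$ from below via \eqref{est1}. The only (harmless) variation is that you integrate over the fixed annulus $\B_{2}(0)\setminus \B_{1}(0)$ instead of over all of $[1,\delta/\e]$ as the paper does, which is why you correctly observe that the hypothesis $q>\frac{N(p-1)}{N-sp}$ is not actually needed for this one-sided estimate.
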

\begin{proof}
From the definitions of $u_{\e, \delta}$ and $U_{\e}$, and by using Lemma \ref{est1}, we have for any $\e\leq \frac{\delta}{2}$
\begin{align*}
\int_{\R^{N}} u_{\e, \delta}^{q}dx&\geq \int_{\B_{\delta}(0)} u_{\e, \delta}^{q}dx=\int_{\B_{\delta}(0)} U_{\e}^{q}dx=\e^{N-\frac{(N-sp)}{p}q} \int_{\B_{\frac{\delta}{\e}}(0)} U^{q} dx \\
&\geq \e^{N-\frac{(N-sp)}{p}q} \omega_{N-1} \int_{1}^{\frac{\delta}{\e}}  U(r)^{q} r^{N-1}dr \\
&\geq c_{1}^{q}\e^{N-\frac{(N-sp)}{p}q} \omega_{N-1}\int_{1}^{\frac{\delta}{\e}} r^{N-\frac{(N-sp)}{p}q-1} dr \\
&\geq C  \e^{N-\frac{(N-sp)}{p}q},
\end{align*}
because of $q>\frac{N(p-1)}{N-sp}$.
\end{proof}

Now, we are ready to give the proof of Lemma \ref{lemC}.
\begin{proof}[Proof of Lemma \ref{lemC}]
Take $\delta=1$ and let $u_{\e}:=u_{\e, 1}$ be the function defined in \eqref{Tran}.
Then, by using $(f'_6)$, we can see that
\begin{align*}
\J_{\mu}(tu_{\e})&=\frac{t^{p}}{p}\|u_{\e}\|^{p}_{\mu}-\int_{\R^{N}} F(t u_{\e})dx-\frac{t^{\p}}{\p} |u_{\e}|_{L^{\p}(\R^{N})}^{\p} \\
&\leq \frac{t^{p}}{p}\|u_{\e}\|^{p}_{\mu}-\lambda t^{q_{1}} |u_{\e}|_{L^{q_{1}}(\R^{N})}^{q_{1}} dx-\frac{t^{\p}}{\p} |u_{\e}|_{L^{\p}(\R^{N})}^{\p}\rightarrow -\infty \, \mbox{ as } t\rightarrow \infty,
\end{align*}
so there exists $t_{\e}>0$ such that
$$
\J_{\mu}(t_{\e}u_{\e})=\max_{t\geq 0} \J_{\mu}(t u_{\e}).
$$
Let us show that there exist $A, B>0$ such that
\begin{equation}\label{bounds}
A\leq t_{\e} \leq B \quad \mbox{ for } \e>0 \mbox{ sufficiently small. }
\end{equation}
Since $\frac{d}{dt}J_{\mu}(t_{\e}u_{\e})=0$, we deduce that
\begin{align}\label{pda1}
(t_{\e})^{p-1}\|u_{\e}\|^{p}_{\mu}=\int_{\R^{N}} f(t_{\e} u_{\e}) u_{\e}dx+(t_{\e})^{\p-1} |u_{\e}|_{L^{\p}(\R^{N})}^{\p}.
\end{align}
If $t_{\e_{n}}\rightarrow \infty$ as $\e_{n}\rightarrow 0$, from \eqref{pda1} it follows that
\begin{align*}
(t_{\e_{n}})^{p-1}\|u_{\e_{n}}\|^{p}_{\mu}\geq (t_{\e_{n}})^{\p-1} |u_{\e_{n}}|_{L^{\p}(\R^{N})}^{\p},
\end{align*}
which gives a contradiction in view of $\p>p$ and Lemma \ref{MPSYlem}.
Now, assume that there exists $t'_{\e_{n}}\rightarrow 0$ as $\e_{n}\rightarrow 0$. From $(f_{2})$ and $(f_{3})$ we can see that for any $\xi>0$ there exists $C_{\xi}>0$ such that
\begin{align}\label{pda2}
\int_{\R^{N}} f(t'_{\e_{n}} u_{\e_{n}}) u_{\e_{n}}\, dx&\leq \xi (t'_{\e_{n}})^{p-1} |u_{\e_{n}}|_{L^{p}(\R^{N})}^{p}+C_{\xi}  (t'_{\e_{n}})^{\p-1} |u_{\e_{n}}|_{L^{\p}(\R^{N})}^{\p} \nonumber \\
&\leq \frac{\xi}{\mu}  (t'_{\e_{n}})^{p-1} \|u_{\e_{n}}\|^{p}_{\mu}+C_{\xi}  (t'_{\e_{n}})^{\p-1} |u_{\e_{n}}|_{L^{\p}(\R^{N})}^{\p}.
\end{align}
Choosing $\xi=\frac{\mu}{2}$, and using \eqref{pda1} and \eqref{pda2}, we obtain
\begin{align*}
\frac{(t'_{\e_{n}})^{p-1}}{2}\|u_{\e_{n}}\|^{p}_{\mu}\leq C_{\xi}  (t'_{\e_{n}})^{\p-1}|u_{\e_{n}}|_{L^{\p}(\R^{N})}^{\p}+ (t'_{\e_{n}})^{\p-1}  |u_{\e_{n}}|_{L^{\p}(\R^{N})}^{\p}
\end{align*}
which is impossible because $\p>p$. Therefore, \eqref{bounds} holds true.

Thus, recalling that for $C, D>0$ it holds
$$
\frac{t^{p}}{p}C-\frac{t^{\p}}{\p}D\leq \frac{s}{N}\left(\frac{C}{D^{\frac{N-sp}{N}}}\right)^{\frac{N}{sp}} \quad \mbox{ for all } t\geq 0,
$$
and using \eqref{bounds}, we can see that
\begin{align*}
\J_{\mu}(t_{\e} u_{\e})&\leq \frac{t^{p}_{\e}}{p}\|u_{\e}\|^{p}_{\mu}-\lambda t_{\e}^{q_{1}}|u_{\e}|_{L^{q_{1}}(\R^{N})}^{q_{1}}-\frac{t_{\e}^{\p}}{\p} |u_{\e}|^{\p}_{L^{\p}(\R^{N})} \\
&\leq \frac{s}{N} \left(  \frac{[u_{\e}]^{p}_{W^{s, p}(\R^{N})}+\mu |u_{\e}|^{p}_{L^{p}(\R^{N})}}{|u_{\e}|^{p}_{L^{\p}(\R^{N})}}\right)^{\frac{N}{sp}}-\lambda A^{q_{1}} |u_{\e}|_{L^{q_{1}}(\R^{N})}^{q_{1}}.
\end{align*}
Now, in view of the following elementary inequality
$$
(a+b)^{r}\leq a^{r}+r(a+b)^{r-1}b \quad \mbox{ for all } a, b> 0, r\geq 1,
$$
and gathering the estimates in Lemma  \ref{MPSYlem} and Lemma \ref{lemest2}, we get
\begin{equation*}
\J_{\mu}(t_{\e} u_{\e})\leq
\left\{
\begin{array}{ll}
\frac{s}{N} S_{*}^{\frac{N}{sp}}+C_{1}\left(\e^{\frac{(N-sp)}{p-1}}\right)+C_{2} \e^{sp}-\lambda A^{q_{1}} |u_{\e}|_{L^{q_{1}}(\R^{N})}^{q_{1}} & \mbox{ if } N>sp^{2} \\
\frac{s}{N} S_{*}^{\frac{N}{sp}}+C_{3}\left(\e^{sp}\left(1+\log(1/\e) \right)\right)-\lambda A^{q_{1}} |u_{\e}|_{L^{q_{1}}(\R^{N})}^{q_{1}}  & \mbox{ if } N=sp^{2}.
\end{array}
\right.
\end{equation*}
Hence, if $N>s p^{2}$, we deduce that $q_{1}>p>\frac{N(p-1)}{N-sp}$ and by using Lemma \ref{lemmaq},
we have
\begin{align*}
\J_{\mu}(t_{\e} u_{\e})\leq S_{*}^{\frac{N}{sp}}+C_{1}\left(\e^{\frac{(N-sp)}{p-1}}\right)+C_{2} \e^{sp}-C_{4} \e^{N-\frac{(N-sp)}{p}q_{1}}.
\end{align*}
Since $N-\frac{(N-sp)}{p}q_{1}<sp<\frac{N-sp}{p-1}$ because of $q_{1}>p$ and $N>sp^{2}$, we can infer that
$$
\J_{\mu}(t_{\e} u_{\e})< \frac{s}{N} S_{*}^{\frac{N}{sp}},
$$
provided that $\e>0$ is sufficiently small.

When $N=sp^{2}$, we get $q_{1}>p=\frac{N(p-1)}{N-sp}$, and in view of Lemma \ref{lemmaq} we obtain
$$
\J_{\mu}(t_{\e} u_{\e})\leq \frac{s}{N} S_{*}^{\frac{N}{sp}}+C_{3}\left(\e^{sp}\left(1+\log(1/\e) \right)\right)-C_{5} \e^{sp^{2}-s(p-1)q_{1}}.
$$
Observing that $q_{1}>p$ yields
$$
\lim_{\e\rightarrow 0} \frac{\e^{sp^{2}-s(p-1)q_{1}}}{\e^{sp}\left(1+\log(1/\e) \right)}=\infty,
$$
we again get the conclusion for $\e$ small enough.
\end{proof}

Now, we prove the following lemma.
\begin{lemma}\label{lem2.2ac}
Let $\{u_{n}\}\subset \N_{\mu}$ be a minimizing sequence for $\J_{\mu}$. Then $\{u_{n}\}$ is bounded and there are a sequence $\{y_{n}\}\subset \R^{N}$ and constants $R, \beta>0$ such that
\begin{equation*}
\liminf_{n\rightarrow \infty} \int_{\B_{R}(y_{n})} |u_{n}|^{p} dx \geq \beta >0.
\end{equation*}
\end{lemma}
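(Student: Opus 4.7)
My proposal is to follow the pattern of Lemma \ref{lem2.2a}, but now using Lemma \ref{lemC} as the key ingredient that rules out concentration at the critical Sobolev level.

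\textbf{Boundedness.} Since $u_n\in \N_\mu$ and $\J_\mu(u_n)\to c_\mu$, I will combine $\J_\mu(u_n)-\frac{1}{\vartheta}\langle \J'_\mu(u_n),u_n\rangle$ to obtain
\begin{align*}
c_\mu+o_n(1)&=\left(\frac{1}{p}-\frac{1}{\vartheta}\right)\|u_n\|_\mu^p+\int_{\R^N}\left[\frac{1}{\vartheta}f(u_n)u_n-F(u_n)\right]dx+\left(\frac{1}{\vartheta}-\frac{1}{\p}\right)|u_n|_{L^{\p}(\R^N)}^{\p}.
\end{align*}
Since $(f_4)$ holds with $\vartheta\in (p,q)\subset (p,\p)$, the integrand is nonnegative and the last coefficient is strictly positive; the first coefficient is positive as well, so $\{u_n\}$ is bounded in $\X_\mu$.

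\textbf{Non-vanishing by contradiction.} Suppose instead that the vanishing alternative holds, i.e.\ $\lim_{n\to\infty}\sup_{y\in\R^N}\int_{\B_R(y)}|u_n|^p\,dx=0$ for every $R>0$. Lemma \ref{Lions} then gives $u_n\to 0$ in $L^t(\R^N)$ for each $t\in(p,\p)$. Using $(f_2)$--$(f_3)$, which yield $|f(t)t|\leq \xi|t|^p+C_\xi |t|^q$ with $q\in(p,\p)$, and the uniform bound $|u_n|_{L^p(\R^N)}^p\leq \mu^{-1}\|u_n\|_\mu^p\leq C$, I obtain $\int_{\R^N} f(u_n)u_n\,dx=o_n(1)$ and analogously $\int_{\R^N} F(u_n)\,dx=o_n(1)$. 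The Nehari identity $\|u_n\|_\mu^p=\int f(u_n)u_n\,dx+|u_n|_{L^{\p}(\R^N)}^{\p}$ therefore reduces to
\begin{equation*}
\|u_n\|_\mu^p=|u_n|_{L^{\p}(\R^N)}^{\p}+o_n(1).
\end{equation*}
Up to subsequence, set $L:=\lim\|u_n\|_\mu^p=\lim|u_n|_{L^{\p}(\R^N)}^{\p}=:\ell\geq 0$.

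\textbf{Reaching a contradiction with Lemma \ref{lemC}.} If $\ell=0$, then $\J_\mu(u_n)\to 0$, contradicting $c_\mu>0$ from Lemma \ref{SW2Ac}. Otherwise $\ell>0$, and the Sobolev inequality from Theorem \ref{Sembedding} gives
\begin{equation*}
|u_n|_{L^{\p}(\R^N)}^p\leq S_*^{-1}[u_n]_{W^{s,p}(\R^N)}^p\leq S_*^{-1}\|u_n\|_\mu^p,
\end{equation*}
hence $\ell^{p/\p}\leq S_*^{-1}L$. Since $L=\ell$, I get $\ell^{(\p-p)/p}\geq S_*$, i.e.\ $\ell\geq S_*^{N/(sp)}$ (using $\p/(\p-p)=N/(sp)$). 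Passing to the limit in
\begin{equation*}
c_\mu+o_n(1)=\J_\mu(u_n)=\frac{\|u_n\|_\mu^p}{p}-\frac{|u_n|_{L^{\p}(\R^N)}^{\p}}{\p}-\int_{\R^N} F(u_n)\,dx,
\end{equation*}
I conclude $c_\mu=\ell\left(\frac{1}{p}-\frac{1}{\p}\right)=\frac{s}{N}\ell\geq \frac{s}{N}S_*^{N/(sp)}$, contradicting the strict inequality $c_\mu<\frac{s}{N}S_*^{N/(sp)}$ provided by Lemma \ref{lemC}.

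\textbf{Main obstacle.} The step that required extra care compared with the subcritical case is the one where we extract a lower bound on $\ell$: in the subcritical regime vanishing of all $L^t$-norms for $t\in(p,\p)$ suffices to conclude $u_n\to 0$ in $\X_\mu$, but here the critical term $|u_n|_{L^{\p}(\R^N)}^{\p}$ survives and must be handled via the sharp Sobolev constant $S_*$; the strict upper bound on $c_\mu$ furnished by Lemma \ref{lemC} is precisely what closes the argument.
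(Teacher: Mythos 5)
Your proposal is correct and follows essentially the same route as the paper: boundedness via $\J_{\mu}-\frac{1}{\vartheta}\langle \J'_{\mu}(\cdot),\cdot\rangle$, Lemma \ref{Lions} under the vanishing assumption, reduction of the Nehari identity to $\|u_{n}\|_{\mu}^{p}=|u_{n}|_{L^{\p}(\R^{N})}^{\p}+o_{n}(1)$, and then the Sobolev inequality combined with the strict bound $c_{\mu}<\frac{s}{N}S_{*}^{\frac{N}{sp}}$ from Lemma \ref{lemC} to rule out $\ell>0$ (you even handle the $\ell=0$ case explicitly, which the paper leaves implicit). The only blemish is a typo in the intermediate exponent, which should read $\ell^{(\p-p)/\p}\geq S_{*}$ rather than $\ell^{(\p-p)/p}\geq S_{*}$; your final conclusion $\ell\geq S_{*}^{N/(sp)}$ is nevertheless correct.
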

\begin{proof}
It is easy to check that $\{u_{n}\}$ is bounded in $\X_{\mu}$.
Now, we assume that for any $R>0$ it holds
\begin{equation*}
\lim_{n\rightarrow \infty} \sup_{y\in \R^{N}} \int_{\B_{R}(y)} |u_{n}|^{p} dx=0.
\end{equation*}
From the boundedness of $\{u_{n}\}$ and Lemma \ref{Lions} it follows that
\begin{equation}\label{tv4c}
u_{n}\rightarrow 0 \mbox{ in } L^{r}(\R^{N}) \quad \mbox{ for any } r\in (p, \p).
\end{equation}
By using \eqref{tv1}, \eqref{tv2}  and \eqref{tv4c} we deduce that
\begin{equation}\label{cc1}
0\leq \int_{\R^{N}} f(u_{n})u_{n} \, dx \leq \xi |u_{n}|_{L^{p}(\R^{N})}^{p} + o_{n}(1)
\end{equation}
and
\begin{equation}\label{cc2}
0\leq \int_{\R^{N}} F(u_{n}) \, dx \leq C \xi  |u_{n}|_{L^{p}(\R^{N})}^{p} + o_{n}(1).
\end{equation}
Since $V(x)\geq V_{0}$ and $\{u_{n}\}$ is bounded in $\X_{\mu}$, we can pass to the limit as $\xi\rightarrow 0$ in \eqref{cc1} and \eqref{cc2} to see that
\begin{equation}\label{cc3}
\int_{\R^{N}} f(u_{n})u_{n} \, dx =o_{n}(1) \quad \mbox{ and } \quad \int_{\R^{N}} F(u_{n}) \, dx=o_{n}(1).
\end{equation}
Taking into account that $\langle \J'_{\e}(u_{n}), u_{n}\rangle =0$ and by using \eqref{cc3}, we obtain
\begin{equation*}
\|u_{n}\|_{\mu}^{p}- |u_{n}|_{L^{\p}(\R^{N})}^{\p} =o_{n}(1).
\end{equation*}
Since $\{u_{n}\}$ is bounded in $\X_{\mu}$, we may assume that
\begin{equation}\label{cc4}
\|u_{n}\|_{\mu}^{p} \rightarrow \ell \geq 0 \quad \mbox{ and } \quad |u_{n}|_{L^{\p}(\R^{N})}^{\p} \rightarrow \ell \geq 0.
\end{equation}
Suppose by contradiction that $\ell>0$. By using \eqref{cc3} and \eqref{cc4} we get
\begin{align*}
c_{\mu}&= \J_{\mu}(u_{n}) + o_{n}(1) = \frac{1}{p} \|u_{n}\|_{\mu}^{p} - \int_{\R^{N}} F(u_{n}) dx - \frac{1}{\p} |u_{n}|_{L^{\p}(\R^{N})}^{\p} +o_{n}(1)\\
&= \frac{\ell}{p}- \frac{\ell}{\p} + o_{n}(1)= \frac{s}{N} \ell + o_{n}(1),
\end{align*}
that is $\ell = \frac{N}{s} c_{\mu}$.
On the other hand, by using Theorem \ref{Sembedding}, it follows that
\begin{equation*}
S_{*} |u_{n}|_{L^{\p}(\R^{N})}^{p} \leq [u_{n}]_{W^{s, p}(\R^{N})}^{p} + \mu |u_{n}|_{L^{p}(\R^{N})}^{p} = \|u_{n}\|_{\mu}^{p}
\end{equation*}
and taking the limit as $n\rightarrow \infty$ we find
\begin{equation*}
S_{*} \ell^{\frac{p}{\p}}\leq \ell.
\end{equation*}
Since $\ell = \frac{N}{s} c_{\mu}$, we get $c_{\mu}\geq \frac{s}{N} S_{*}^{\frac{N}{sp}}$ and this is impossible in view of Lemma \ref{lemC}.
\end{proof}

Therefore, we can prove the main result for the autonomous critical case.
\begin{lemma}\label{lem4.3c}
The problem $(P_{\mu}^{*})$ has at least one positive ground state solution.
\end{lemma}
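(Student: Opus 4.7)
The plan is to mirror the proof of Lemma \ref{lem4.3} in the subcritical case, but with the additional care demanded by the critical Sobolev term. By Lemma \ref{SW2Ac}(v) we have $c_{\mu}>0$, and by Lemma \ref{SW3Ac}(iii) it suffices to produce a critical point $u$ of $\J_{\mu}$ realizing $c_{\mu}$. I would first apply Ekeland's variational principle to the $\C^1$-functional $\Psi_{\mu}$ on the sphere $\mathbb{S}_{\mu}$ to obtain a sequence $\{\nu_{n}\}\subset\mathbb{S}_{\mu}$ with $\Psi_{\mu}(\nu_{n})\to c_{\mu}$ and $\Psi'_{\mu}(\nu_{n})\to 0$. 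Setting $u_{n}=m_{\mu}(\nu_{n})\in\N_{\mu}$, Lemma \ref{SW3Ac}(ii) gives a Palais--Smale sequence for $\J_{\mu}$ at level $c_{\mu}$; Lemma \ref{lemBc} (adapted to $\J_{\mu}$) yields boundedness, so, up to a subsequence, $u_{n}\rightharpoonup u$ in $\X_{\mu}$, $u_{n}\to u$ in $L^{r}_{loc}(\R^{N})$ for $r\in[p,\p)$, and $u_{n}\to u$ a.e. A routine density argument using $(f_2)$--$(f_3)$ and weak sequential continuity of $(-\Delta)^{s}_{p}$ (cf.\ Lemma \ref{SW1c}(ii)) shows $\J'_{\mu}(u)=0$.

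The core task is to upgrade weak convergence to $\J_{\mu}(u)=c_{\mu}$. I would first treat the case $u\not\equiv 0$. Set $v_{n}:=u_{n}-u$. Combining Lemma \ref{lemPSY} (applied with $V\equiv\mu$), the classical Brezis--Lieb lemma for $|v_{n}|^{\p}_{L^{\p}}$, and \eqref{A} of Lemma \ref{splitting}, I obtain the splitting
\begin{equation*}
\J_{\mu}(u_{n})=\J_{\mu}(u)+\frac{1}{p}\|v_{n}\|_{\mu}^{p}-\frac{1}{\p}|v_{n}|_{L^{\p}(\R^{N})}^{\p}-\int_{\R^{N}}F(v_{n})\,dx+o_{n}(1).
\end{equation*}
Arguing in the spirit of Proposition \ref{prop2.1} (using Lemma \ref{lemVince}, \eqref{B} of Lemma \ref{splitting}, and the analogous splitting for the critical nonlinearity $|t|^{\p-2}t$, which also follows from a Brezis--Lieb type estimate), one deduces $\langle\J'_{\mu}(v_{n}),v_{n}\rangle=o_{n}(1)$. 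Lemma \ref{Lions} combined with $(f_2)$--$(f_3)$ then forces $\int f(v_{n})v_{n}\,dx=o_{n}(1)$ and $\int F(v_{n})\,dx=o_{n}(1)$ unless $\{v_{n}\}$ concentrates; in any case, after passing to a subsequence, $\|v_{n}\|_{\mu}^{p}\to\ell$ and $|v_{n}|_{L^{\p}(\R^{N})}^{\p}\to\ell$ with $\ell\geq 0$. Theorem \ref{Sembedding} gives $S_{*}\ell^{p/\p}\leq\ell$, so either $\ell=0$ or $\ell\geq S_{*}^{N/(sp)}$. In the latter case, combining with $\J_{\mu}(u)\geq 0$ (which follows from $\J'_{\mu}(u)=0$ together with $(f_4)$), the splitting yields $c_{\mu}\geq\frac{s}{N}S_{*}^{N/(sp)}$, contradicting Lemma \ref{lemC}. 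Hence $\ell=0$, $v_{n}\to 0$ in $\X_{\mu}$, and $\J_{\mu}(u)=c_{\mu}$.

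The case $u\equiv 0$ is handled as in Lemma \ref{lem4.3}: by Lemma \ref{lem2.2ac}, there exist $\{y_{n}\}\subset\R^{N}$ and $R,\beta>0$ with $\int_{\B_{R}(y_{n})}|u_{n}|^{p}\,dx\geq\beta$. Translating, $\tilde{u}_{n}(x):=u_{n}(x+y_{n})$ is still a Palais--Smale sequence for $\J_{\mu}$ at level $c_{\mu}$ (by translation invariance of $\J_{\mu}$), lies in $\N_{\mu}$, and admits a nontrivial weak limit $\tilde u\neq 0$ in $W^{s,p}(\R^{N})$. Applying the previous step to $\tilde u$ completes the existence argument. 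Finally, positivity follows exactly as in the remark after Lemma \ref{lem4.3}: testing $\J'_{\mu}(u)=0$ against $u^{-}$ and using $f(t)=0$ for $t\leq 0$ together with $|x-y|^{p-2}(x-y)(x^{-}-y^{-})\geq|x^{-}-y^{-}|^{p}$ gives $u\geq 0$; the Moser iteration in Lemma \ref{lemMoser} and the strong maximum principle from \cite{DPQ} then force $u>0$ in $\R^{N}$.

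The hard part will be the concentration-compactness analysis sketched in the second paragraph: making rigorous the Brezis--Lieb type splitting for the critical term in the nonlocal $p$-Laplacian setting and showing that the only way the Palais--Smale sequence can fail to converge strongly is by forming a critical bubble of energy $\geq\frac{s}{N}S_{*}^{N/(sp)}$; this is precisely what Lemma \ref{lemC} is designed to rule out.
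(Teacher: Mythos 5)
Your overall architecture (Ekeland on the sphere, boundedness, $\J'_{\mu}(u)=0$, dichotomy on $u\equiv 0$ versus $u\not\equiv 0$, translation in the vanishing case, positivity via $u^{-}$ and the maximum principle) matches the paper. But for the crucial step --- upgrading $u_{n}\rightharpoonup u\neq 0$ to $\J_{\mu}(u)=c_{\mu}$ --- you take a genuinely different route, and as written it has a gap. You split off $v_{n}=u_{n}-u$, and your dichotomy ($\ell=0$ or $\ell\geq S_{*}^{N/(sp)}$) rests on the identity $\|v_{n}\|_{\mu}^{p}-|v_{n}|_{L^{\p}(\R^{N})}^{\p}=o_{n}(1)$, which in turn requires $\int_{\R^{N}}f(v_{n})v_{n}\,dx=o_{n}(1)$. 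In Proposition \ref{prop2.1c} this is obtained from the compact embedding of Lemma \ref{Cheng}, available because $V_{\infty}=\infty$; in the autonomous problem $\X_{\mu}=W^{s,p}(\R^{N})$ with constant potential there is no such compactness, and the remainder $v_{n}$ may fail to vanish in the sense of Lemma \ref{Lions} (it can carry a subcritical profile escaping to infinity, for which $\int f(v_{n})v_{n}\,dx\not\to 0$). Your phrase ``unless $\{v_{n}\}$ concentrates; in any case\dots'' does not close this alternative: ruling it out requires an iterated profile decomposition, or at least the energy bookkeeping $\lim_{n}\J_{\mu}(v_{n})=c_{\mu}-\J_{\mu}(u)\leq 0$ (since $u\in\N_{\mu}$ forces $\J_{\mu}(u)\geq c_{\mu}$) against the fact that each escaping bubble costs at least $c_{\mu}>0$. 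You flag this yourself in your last paragraph, but it is precisely the part that must be supplied.

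The paper avoids the issue entirely by \emph{not} splitting off $v_{n}$. It repeats the Fatou-type argument of Lemma \ref{lem4.3}: assuming $\|u\|_{\mu}^{p}<\limsup_{n}\|u_{n}\|_{\mu}^{p}$, it writes
\begin{equation*}
c_{\mu}+o_{n}(1)=\J_{\mu}(u_{n})-\tfrac{1}{\vartheta}\langle \J'_{\mu}(u_{n}),u_{n}\rangle
=\Bigl(\tfrac{1}{p}-\tfrac{1}{\vartheta}\Bigr)\|u_{n}\|_{\mu}^{p}
+\int_{\R^{N}}\Bigl[\tfrac{1}{\vartheta}f(u_{n})u_{n}-F(u_{n})\Bigr]dx
+\Bigl(\tfrac{1}{\vartheta}-\tfrac{1}{\p}\Bigr)|u_{n}|_{L^{\p}(\R^{N})}^{\p},
\end{equation*}
where all three terms are nonnegative ($\vartheta\in(p,q)$, so $\vartheta<\p$), applies $\limsup$/$\liminf$ superadditivity and Fatou to each, and reaches the strict contradiction $c_{\mu}>\J_{\mu}(u)\geq c_{\mu}$. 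Strong convergence then follows from Lemma \ref{lemPSY}. In this scheme the threshold $\frac{s}{N}S_{*}^{N/(sp)}$ and Lemma \ref{lemC} enter only through Lemma \ref{lem2.2ac}, i.e.\ in excluding vanishing of the minimizing sequence itself, never in the treatment of a remainder. I would recommend you either adopt that argument or complete the concentration analysis you sketched; as it stands the case $u\not\equiv 0$ is not proved.
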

\begin{proof}
The proof follows the arguments used in Lemma \ref{lem4.3}. We only need to replace \eqref{tervince1} by
\begin{align*}
c_{\mu}+o_{n}(1) &= \J_{\mu}(u_{n})-\frac{1}{\vartheta} \langle \J'_{\mu}(u_{n}), u_{n}\rangle \\
&=\Bigl(\frac{1}{p}-\frac{1}{\vartheta}\Bigr)\|u_{n}\|^{p}_{\mu}+ \int_{\R^{N}} \!\frac{1}{\vartheta} f(u_{n})u_{n}-F(u_{n}) \,dx + \Bigl(\frac{1}{\vartheta} - \frac{1}{\p}\Bigr) |u_{n}|_{L^{\p}(\R^{N})}^{\p},
\end{align*}
and recalling that
\begin{align*}
\limsup_{n\rightarrow \infty}\, (a_{n}+b_{n}+c_{n})&\geq \limsup_{n\rightarrow \infty} a_{n}+\liminf_{n\rightarrow \infty} (b_{n}+c_{n}) \\
&\geq \limsup_{n\rightarrow \infty} a_{n}+\liminf_{n\rightarrow \infty} b_{n}+\liminf_{n\rightarrow \infty} c_{n},
\end{align*}
we deduce that
\begin{align*}
c_{\mu}&\geq \left(\frac{1}{p}-\frac{1}{\vartheta}\right)\limsup_{n\rightarrow \infty} \|u_{n}\|^{p}_{\mu}+\liminf_{n\rightarrow \infty} \int_{\R^{N}} \left[\frac{1}{\vartheta} f(u_{n})u_{n}-F(u_{n})\right] dx \\
&+ \left(\frac{1}{\vartheta} - \frac{1}{\p}\right) \liminf_{n\rightarrow \infty} |u_{n}|_{L^{\p}(\R^{N})}^{\p}.
\end{align*}
Moreover, we use Lemma \ref{lem2.2ac} instead of Lemma \ref{lem2.2a}.
\end{proof}

\subsection{Existence result for the critical case}
Arguing as in Lemma \ref{lem2.2ac} we can prove the ``critical'' version of Lemma \ref{lem2.2}.
\begin{lemma}\label{lem2.2c}
Let $d<\frac{s}{N} S_{*}^{\frac{N}{sp}}$ and let $\{u_{n}\}\subset \N_{\e}$ be a sequence such that $\I_{\e}(u_{n})\rightarrow d$ and $u_{n}\rightharpoonup 0$ in $\h$. Then, one of the following alternatives occurs
\begin{compactenum}[$(a)$]
\item $u_{n}\rightarrow 0$ in $\h$;
\item there are a sequence $\{y_{n}\}\subset \R^{N}$ and constants $R, \beta>0$ such that
\begin{equation*}
\liminf_{n\rightarrow \infty} \int_{\B_{R}(y_{n})} |u_{n}|^{p} dx \geq \beta >0.
\end{equation*}
\end{compactenum}
\end{lemma}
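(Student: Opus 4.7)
The plan is to mimic the strategy of Lemma \ref{lem2.2a}, but with the extra critical term $|u|_{L^{\p}}^{\p}$ under control via the Sobolev constant $S_{*}$ and the hypothesis $d<\frac{s}{N}S_{*}^{\frac{N}{sp}}$.

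First, I would assume that (b) fails, i.e.\ $\lim_{n\to\infty}\sup_{y\in\R^{N}}\int_{\B_{R}(y)}|u_{n}|^{p}\,dx=0$ for every $R>0$. Since $\{u_{n}\}$ is automatically bounded in $\h$ (use $\I_{\e}(u_{n})\to d$, $\langle\I'_{\e}(u_{n}),u_{n}\rangle=0$, and hypothesis $(f_{4})$ with $\vartheta\in(p,\p)$, exactly as in Lemma \ref{lemBc}), Lemma \ref{Lions} yields $u_{n}\to 0$ in $L^{r}(\R^{N})$ for every $r\in(p,\p)$. Combining this with the subcritical bound $|f(t)|\leq \xi|t|^{p-1}+C_{\xi}|t|^{q-1}$ from \eqref{tv1}, the $L^{p}$-bound coming from $V_{0}|u_{n}|_{L^{p}(\R^{N})}^{p}\leq \|u_{n}\|_{\e}^{p}\leq C$, and the arbitrariness of $\xi>0$, I obtain
$$
\int_{\R^{N}} f(u_{n})u_{n}\,dx=o_{n}(1)\qquad\text{and}\qquad \int_{\R^{N}} F(u_{n})\,dx=o_{n}(1).
$$

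Next I would exploit $\langle\I'_{\e}(u_{n}),u_{n}\rangle=0$ and $\I_{\e}(u_{n})\to d$ to couple the $\h$-norm and the critical $L^{\p}$-norm. From the Nehari identity,
$$
\|u_{n}\|_{\e}^{p} \;=\; \int_{\R^{N}} f(u_{n})u_{n}\,dx + |u_{n}|_{L^{\p}(\R^{N})}^{\p} \;=\; |u_{n}|_{L^{\p}(\R^{N})}^{\p} + o_{n}(1),
$$
and from the energy identity,
$$
d+o_{n}(1) \;=\; \tfrac{1}{p}\|u_{n}\|_{\e}^{p}-\tfrac{1}{\p}|u_{n}|_{L^{\p}(\R^{N})}^{\p}+o_{n}(1) \;=\; \tfrac{s}{N}\,\|u_{n}\|_{\e}^{p}+o_{n}(1).
$$
Thus, up to a subsequence, $\|u_{n}\|_{\e}^{p}\to \ell$ and $|u_{n}|_{L^{\p}(\R^{N})}^{\p}\to\ell$ with $\ell=\frac{N}{s}d$.

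The crucial step is to rule out $\ell>0$. If $\ell>0$, Theorem \ref{Sembedding} gives
$$
S_{*}\,|u_{n}|_{L^{\p}(\R^{N})}^{p}\leq [u_{n}]_{W^{s,p}(\R^{N})}^{p}\leq \|u_{n}\|_{\e}^{p},
$$
and passing to the limit one finds $S_{*}\,\ell^{p/\p}\leq \ell$, hence $\ell\geq S_{*}^{N/(sp)}$. But then
$$
d\;=\;\tfrac{s}{N}\ell\;\geq\;\tfrac{s}{N}\,S_{*}^{\frac{N}{sp}},
$$
contradicting the assumption $d<\frac{s}{N}S_{*}^{\frac{N}{sp}}$. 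Therefore $\ell=0$, i.e.\ $\|u_{n}\|_{\e}\to 0$, which is exactly alternative (a).

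The only real obstacle is managing the two nonlinear terms simultaneously, but since the Brezis--Lieb-type machinery (Lemma \ref{lemPSY}, Lemma \ref{splitting}) is not needed here (we already have $u_{n}\rightharpoonup 0$), the argument reduces cleanly to the standard Sobolev-threshold dichotomy exploited above.
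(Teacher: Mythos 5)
Your proof is correct and follows exactly the route the paper intends: the paper proves this lemma by "arguing as in Lemma \ref{lem2.2ac}", which is precisely the vanishing/Lions argument you give, with the subcritical terms killed by \eqref{tv1}--\eqref{tv2} and the critical term controlled by the Sobolev threshold $\ell\geq S_{*}^{N/(sp)}$ contradicting $d<\frac{s}{N}S_{*}^{\frac{N}{sp}}$. No gaps.
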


The next result can be obtained following the lines of the proof of Lemma \ref{lem2.3}.
\begin{lemma}\label{lem2.3c}
Assume that $V_{\infty}<\infty$ and let $\{v_{n}\}\subset \N_{\e}$ be a sequence such that $\I_{\e}(v_{n})\rightarrow d$ with $d<\frac{s}{N} S_{*}^{\frac{N}{sp}}$ and $v_{n}\rightharpoonup 0$ in $\h$. If $v_{n}\not \rightarrow 0$ in $\h$, then $d\geq c_{V_{\infty}}$, where $c_{V_{\infty}}$ is the infimum of $\J_{V_{\infty}}$ over $\N_{V_{\infty}}$.
\end{lemma}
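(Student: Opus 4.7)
The plan is to mimic the structure of the proof of Lemma \ref{lem2.3}, carefully accounting for the new critical term appearing both in the Nehari identity and in the functionals. Let $\{t_{n}\}\subset(0,\infty)$ be the unique sequence such that $t_{n}v_{n}\in\N_{V_{\infty}}$; its existence and uniqueness follow from Lemma \ref{SW2Ac}$(i)$ applied to $\J_{V_{\infty}}$. As in the subcritical case, the proof splits into three steps: showing $\limsup_{n\to\infty}t_{n}\le 1$, and then handling the subcases $t_{n}\to 1$ and $t_{n}\to t_{0}<1$.

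The key step, and I expect the main technical one, is to rule out $t_{n}\ge 1+\delta$ for large $n$. Combining $\langle\I'_{\e}(v_{n}),v_{n}\rangle=0$ with $\langle\J'_{V_{\infty}}(t_{n}v_{n}),t_{n}v_{n}\rangle=0$, dividing the second identity by $t_{n}^{p}$ and subtracting, I obtain
\begin{equation*}
\int_{\R^{N}}\!\!\left(\frac{f(t_{n}v_{n})}{(t_{n}v_{n})^{p-1}}-\frac{f(v_{n})}{(v_{n})^{p-1}}\right)\!v_{n}^{p}\,dx
+(t_{n}^{\p-p}-1)\!\int_{\R^{N}}|v_{n}|^{\p}\,dx
=\int_{\R^{N}}(V_{\infty}-V(\e x))|v_{n}|^{p}\,dx.
\end{equation*}
Assuming $t_{n}\ge 1+\delta$, both terms on the left are nonnegative by $(f_{5})$ and $\p>p$, so the first one can be bounded above by $\zeta C+o_{n}(1)$ exactly as in Lemma \ref{lem2.3}, using $(V)$, local compactness and the boundedness of $\{v_{n}\}$ in $\h$. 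Since $v_{n}\not\to 0$ in $\h$, Lemma \ref{lem2.2c} (this is where $d<\frac{s}{N}S_{*}^{N/sp}$ enters, guaranteeing that alternative $(b)$ must hold) produces $\{\bar y_{n}\}$ such that $\bar v_{n}:=v_{n}(\cdot+\bar y_{n})\rightharpoonup \bar v\not\equiv 0$, and a Fatou argument combined with $(f_{5})$ on the set $\{\bar v>0\}$ leads to the desired contradiction.

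In Case 1 ($t_{n}\to 1$), I write $d+o_{n}(1)=\I_{\e}(v_{n})=[\I_{\e}(v_{n})-\J_{V_{\infty}}(t_{n}v_{n})]+\J_{V_{\infty}}(t_{n}v_{n})\ge [\I_{\e}(v_{n})-\J_{V_{\infty}}(t_{n}v_{n})]+c_{V_{\infty}}$. The difference decomposes into the three terms appearing in \eqref{tv12} plus the new critical piece $\frac{1-t_{n}^{\p}}{\p}|v_{n}|^{\p}_{L^{\p}(\R^{N})}$; the first three are controlled by $o_{n}(1)-\zeta C$ verbatim as in Lemma \ref{lem2.3}, while the critical piece is $o_{n}(1)$ since $t_{n}\to 1$ and $\{v_{n}\}$ is bounded in $L^{\p}(\R^{N})$. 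Letting $\zeta\to 0$ gives $d\ge c_{V_{\infty}}$.

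In Case 2 ($\limsup t_{n}=t_{0}<1$), I exploit the identity $\frac{1}{p}-\frac{1}{\p}=\frac{s}{N}$ to write
\begin{equation*}
c_{V_{\infty}}\le \J_{V_{\infty}}(t_{n}v_{n})-\tfrac{1}{p}\langle\J'_{V_{\infty}}(t_{n}v_{n}),t_{n}v_{n}\rangle
=\!\int_{\R^{N}}\!\!\left[\tfrac{1}{p}f(t_{n}v_{n})t_{n}v_{n}-F(t_{n}v_{n})\right]dx+\tfrac{s}{N}t_{n}^{\p}|v_{n}|^{\p}_{L^{\p}(\R^{N})}.
\end{equation*}
Because both $u\mapsto \frac{1}{p}f(u)u-F(u)$ (by \eqref{increasing}, consequence of $(f_{5})$) and $u\mapsto \frac{s}{N}|u|^{\p}$ are increasing on $(0,\infty)$, and $t_{n}<1$, the right-hand side is bounded above by $\I_{\e}(v_{n})-\frac{1}{p}\langle\I'_{\e}(v_{n}),v_{n}\rangle=\I_{\e}(v_{n})=d+o_{n}(1)$, yielding the conclusion.
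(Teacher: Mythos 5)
Your argument is correct and is exactly the adaptation of Lemma \ref{lem2.3} that the paper itself invokes (the paper only states that the result ``can be obtained following the lines of the proof of Lemma \ref{lem2.3}''), with the critical term handled in the right way at each step: nonnegativity of $(t_{n}^{\p-p}-1)|v_{n}|^{\p}_{L^{\p}(\R^{N})}$ in Claim 1, an $o_{n}(1)$ contribution in Case 1, and monotonicity of $t\mapsto t^{\p}$ together with $\frac{1}{p}-\frac{1}{\p}=\frac{s}{N}$ in Case 2; you also correctly locate where the hypothesis $d<\frac{s}{N}S_{*}^{\frac{N}{sp}}$ enters, namely through Lemma \ref{lem2.2c}. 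The only blemish is a harmless sign slip: the critical piece of $\I_{\e}(v_{n})-\J_{V_{\infty}}(t_{n}v_{n})$ is $\frac{t_{n}^{\p}-1}{\p}|v_{n}|^{\p}_{L^{\p}(\R^{N})}$ rather than $\frac{1-t_{n}^{\p}}{\p}|v_{n}|^{\p}_{L^{\p}(\R^{N})}$, which is still $o_{n}(1)$ as you note.
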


\noindent
Now, we establish the following compactness result in the critical case.
\begin{proposition}\label{prop2.1c}
Let $\{u_{n}\}\subset \N_{\e}$ be a sequence such that $\I_{\e}(u_{n})\rightarrow c$ where $c<c_{V_{\infty}}$ if $V_{\infty}<\infty$
and $c< \frac{s}{N}S_{*}^{\frac{N}{sp}}$ if $V_{\infty}=\infty$. Then $\{u_{n}\}$ admits a convergent subsequence.
\end{proposition}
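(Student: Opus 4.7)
The plan mirrors Proposition \ref{prop2.1}, with two novelties: a Brezis--Lieb splitting for the critical term and a careful comparison of the resulting level with the Talenti--Sobolev threshold $\frac{s}{N}S_{*}^{N/sp}$. First I would show that $\{u_{n}\}$ is bounded in $\h$, by combining $\I_{\e}(u_{n})=c+o_{n}(1)$ and $\langle \I'_{\e}(u_{n}),u_{n}\rangle=0$ as in Lemma \ref{lemBc}: thanks to $(f_{4})$ and the fact that $\vartheta<\p$, both the potential and the critical contribution carry the correct sign. Passing to a subsequence, $u_{n}\rightharpoonup u$ in $\h$, $u_{n}\to u$ in $L^{r}_{\mathrm{loc}}(\R^{N})$ for $r\in[p,\p)$ and $u_{n}\to u$ a.e., and one deduces $\I'_{\e}(u)=0$ by testing against $\C^{\infty}_{c}(\R^{N})$-functions, exploiting $(f_{2})$--$(f_{3})$ together with the weak convergence $|u_{n}|^{\p-2}u_{n}\rightharpoonup |u|^{\p-2}u$ in $L^{\p/(\p-1)}_{\mathrm{loc}}(\R^{N})$.

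Set $v_{n}:=u_{n}-u$. Combining Lemma \ref{lemPSY}, the classical Brezis--Lieb lemma applied to $|\,\cdot\,|^{\p}$, and \eqref{A} of Lemma \ref{splitting} gives
\begin{align*}
\I_{\e}(v_{n})=\I_{\e}(u_{n})-\I_{\e}(u)+o_{n}(1)=c-\I_{\e}(u)+o_{n}(1)=:d+o_{n}(1).
\end{align*}
For the derivative, I would apply Lemma \ref{lemVince} with $z_{n}=v_{n}$ and $w=u$ for the nonlocal operator, the analogue of \eqref{C} for the potential term (as in the subcritical argument), a standard Brezis--Lieb convergence $|u_{n}|^{\p-2}u_{n}-|v_{n}|^{\p-2}v_{n}-|u|^{\p-2}u\to 0$ in $L^{\p/(\p-1)}(\R^{N})$, together with \eqref{B} of Lemma \ref{splitting}, so that $\I'_{\e}(v_{n})=o_{n}(1)$ in $\h^{*}$. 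To control $d$, I use $\I'_{\e}(u)=0$ and $(f_{4})$ with $p<\vartheta<\p$ to write
\begin{align*}
\I_{\e}(u)=\left(\frac{1}{p}-\frac{1}{\vartheta}\right)\|u\|^{p}_{\e}+\int_{\R^{N}}\left[\frac{1}{\vartheta}f(u)u-F(u)\right]dx+\left(\frac{1}{\vartheta}-\frac{1}{\p}\right)|u|^{\p}_{L^{\p}(\R^{N})}\geq 0,
\end{align*}
so that $d\leq c$.

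It remains to distinguish the two cases. If $V_{\infty}<\infty$, Lemma \ref{lemC} with $\mu=V_{\infty}$ gives $c_{V_{\infty}}<\frac{s}{N}S_{*}^{N/sp}$, hence $d\leq c<c_{V_{\infty}}<\frac{s}{N}S_{*}^{N/sp}$; Lemma \ref{lem2.3c} then forces $v_{n}\to 0$ in $\h$. If $V_{\infty}=\infty$, Lemma \ref{Cheng} implies $v_{n}\to 0$ in $L^{r}(\R^{N})$ for every $r\in[p,\p)$, so $(f_{2})$--$(f_{3})$ give $\int_{\R^{N}} f(v_{n})v_{n}\,dx=o_{n}(1)$ and $\int_{\R^{N}} F(v_{n})\,dx=o_{n}(1)$; from $\langle \I'_{\e}(v_{n}),v_{n}\rangle=o_{n}(1)$ I obtain $\|v_{n}\|^{p}_{\e}=|v_{n}|^{\p}_{L^{\p}(\R^{N})}+o_{n}(1)\to \ell\geq 0$, and Theorem \ref{Sembedding} enforces the dichotomy $\ell=0$ or $\ell\geq S_{*}^{N/sp}$. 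In the latter case $d=\lim_{n}\I_{\e}(v_{n})=\frac{s}{N}\ell\geq \frac{s}{N}S_{*}^{N/sp}$, contradicting $d\leq c<\frac{s}{N}S_{*}^{N/sp}$, so $\ell=0$ and $u_{n}\to u$ in $\h$. The main obstacle is producing a Brezis--Lieb-type splitting for the critical part of $\I'_{\e}$, which is delicate because of the non-Hilbertian structure of $W^{s,p}(\R^{N})$; the strict inequality $d<\frac{s}{N}S_{*}^{N/sp}$, which hinges on Lemma \ref{lemC} (and ultimately on $N\geq sp^{2}$), is what makes the whole compactness scheme work in the critical regime.
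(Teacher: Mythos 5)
Your proposal is correct and follows essentially the same route as the paper's proof: the same Brezis--Lieb splitting of $\I_{\e}$ and $\I'_{\e}$ at $v_{n}=u_{n}-u$ (via Lemma \ref{lemPSY}, Lemma \ref{lemVince}, Lemma \ref{splitting} and the Mercuri--Willem-type convergence of $|u_{n}|^{\p-2}u_{n}-|v_{n}|^{\p-2}v_{n}-|u|^{\p-2}u$ in $L^{\p/(\p-1)}$), the nonnegativity of $\I_{\e}(u)$ from $(f_{4})$, Lemma \ref{lem2.3c} in the case $V_{\infty}<\infty$, and the $\ell=0$ versus $\ell\geq S_{*}^{N/sp}$ dichotomy in the coercive case. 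Your explicit check that $c_{V_{\infty}}<\frac{s}{N}S_{*}^{N/sp}$ (via Lemma \ref{lemC}), so that the hypothesis of Lemma \ref{lem2.3c} is actually met, is a small point the paper leaves implicit.
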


\begin{proof}
Since $\I_{\e}(u_{n})\rightarrow c$ and $\I'_{\e}(u_{n})=0$, we can see that $\{u_{n}\}$ is bounded in $\h$ and, up to a subsequence, we may assume that $u_{n}\rightharpoonup u \mbox{ in } \h$.
Clearly $\I'_{\e}(u)=0$. \\
Now, let $v_{n}= u_{n}-u$. By using the Brezis-Lieb Lemma \cite{BL} and Lemma $3.3$ in \cite{MeW}, we know that
$$
|v_{n}|_{L^{\p}(\R^{N})}^{\p}= |u_{n}|_{L^{\p}(\R^{N})}^{\p}- |u|_{L^{\p}(\R^{N})}^{\p}+o_{n}(1)
$$
and
$$
\int_{\R^{N}} \left||v_{n}|^{\p-2}v_{n}-|u_{n}|^{\p-2}u_{n}+|u|^{\p-2}u\right|^{\frac{\p}{\p-1}} dx=o_{n}(1).
$$
Then, arguing as in Proposition \ref{prop2.1}, we can see that
\begin{align*}
\I_{\e}(v_{n})= \I_{\e}(u_{n})- \I_{\e}(u)+ o_{n}(1)= c-\I_{\e}(u)+ o_{n}(1):=d+o_{n}(1)
\end{align*}
and
\begin{align*}
\I'_{\e}(v_{n})=o_{n}(1).
\end{align*}
Now, by using $(f_{4})$ we get
\begin{equation}\label{tv199c}
\I_{\e}(u)=\I_{\e}(u)-\frac{1}{p} \langle\I'_{\e}(u),u\rangle=\int_{\R^{N}} \! \frac{1}{p} f(u)u-F(u) \,dx + \Bigl(\frac{1}{p} - \frac{1}{\p}\Bigr) |u|_{L^{\p}(\R^{N})}^{\p} \geq 0.
\end{equation}
If we suppose that $V_{\infty}<\infty$, from \eqref{tv199c} we deduce that $d\leq c<c_{V_{\infty}}$. In view of Lemma \ref{lem2.3c} we can see that $v_{n}\rightarrow 0$ in $\h$, that is $u_{n}\rightarrow u$ in $\h$.\\
Let us consider the case $V_{\infty}=\infty$. Then, we can use Lemma \ref{Cheng} to deduce that $v_{n}\rightarrow 0$ in $L^{r}(\R^{N})$ for all $r\in [p, \p)$. This combined with assumptions $(f_2)$ and $(f_3)$ implies that
\begin{equation}\label{cc10}
\int_{\R^{N}} f(v_{n})v_{n}dx=o_{n}(1) \quad \mbox{ and } \quad \int_{\R^{N}} F(v_{n})\, dx=o_{n}(1).
\end{equation}
Putting together \eqref{cc10} and $\langle\I'_{\e}(v_{n}), v_{n}\rangle=o_{n}(1)$, we deduce
$$
\|v_{n}\|^{p}_{\e}= |v_{n}|_{L^{\p}(\R^{N})}^{\p} +o_{n}(1).
$$
Since $\{v_{n}\}$ is bounded in $\h$, we may assume that $\|v_{n}\|^{p}_{\e}\rightarrow \ell$ and $|v_{n}|_{L^{\p}(\R^{N})}^{\p}\rightarrow \ell$, for some $\ell \geq 0$. Let us show that $\ell=0$. If by contradiction $\ell>0$, by using the fact that $\I_{\e}(v_{n})=d+ o_{n}(1)$, we get
\begin{equation*}
\frac{1}{p} \|v_{n}\|_{\e}^{p}- \frac{1}{\p} |v_{n}|_{L^{\p}(\R^{N})}^{\p}= d+o_{n}(1),
\end{equation*}
which implies
\begin{equation*}
\frac{s}{N} \|v_{n}\|_{\e}^{p}= d+o_{n}(1).
\end{equation*}
Taking the limit as $n\rightarrow \infty$ we have that $\frac{s}{N}\ell= d$, that is $\ell= d\frac{N}{s}$.
Therefore
\begin{equation*}
\|v_{n}\|_{\e}^{p} \geq S_{*}|v_{n}|_{L^{\p}(\R^{N})}^{p} = S_{*} \left(|v_{n}|_{L^{\p}(\R^{N})}^{\p} \right)^{\frac{p}{\p}},
\end{equation*}
and by passing to the limit as $n\rightarrow \infty$ we get $\ell \geq S_{*}^{\frac{N}{sp}}$. Thus we have $d\geq \frac{s}{N}S_{*}^{\frac{N}{sp}}$. Since $d\leq c < \frac{s}{N}S_{*}^{\frac{N}{sp}}$ we get a contradiction. Hence, $\ell=0$ and $u_{n}\rightarrow u$ in $\h$.
\end{proof}

\noindent
Finally we have the existence result for problem \eqref{Pc} for $\e>0$ small enough.
\begin{theorem}\label{thm3.1c}
Assume that $(V)$ and $(f_1)$-$(f_5)$ and $(f'_6)$ hold. Then there exists $\e_{0}>0$ such that problem ($P_{\e}^{*}$) admits a ground state solution for any $\e\in (0, \e_{0})$.
\end{theorem}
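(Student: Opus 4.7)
\medskip

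The plan is to mimic the argument of Theorem \ref{thm3.1}, replacing Proposition \ref{prop2.1} by its critical counterpart Proposition \ref{prop2.1c}. Start by applying Ekeland's variational principle to $\Psi_\e$ on $\mathbb{S}_\e$ to produce a sequence $\{v_n\}\subset \mathbb{S}_\e$ with $\Psi_\e(v_n)\to c_\e$ and $\Psi'_\e(v_n)\to 0$; then $u_n:=m_\e(v_n)\in \N_\e$ is a Palais--Smale sequence for $\I_\e$ at level $c_\e$ by Lemma \ref{SW3c}. Lemma \ref{lemBc} gives boundedness in $\h$; up to a subsequence $u_n\rightharpoonup u$, and using Lemma \ref{embedding} together with $(f_2)$--$(f_3)$ one checks $\I'_\e(u)=0$. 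To conclude $u_n\to u$ (so that $\I_\e(u)=c_\e$ and $u$ is a ground state), it suffices to verify that $c_\e$ lies strictly below the compactness threshold of Proposition \ref{prop2.1c}, i.e.\ $c_\e<c_{V_\infty}$ when $V_\infty<\infty$ and $c_\e<\frac{s}{N}S_*^{N/sp}$ when $V_\infty=\infty$.

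The core step is thus the strict upper bound for $c_\e$. Without loss of generality assume $V(0)=V_0$. When $V_\infty<\infty$, fix $\mu\in(V_0,V_\infty)$ so that $c_{V_0}<c_\mu<c_{V_\infty}$ (by Lemma \ref{lem4.3c} these are all achieved and finite), let $w\in W^{s,p}(\R^N)$ be a positive ground state of $(P_\mu^*)$ (Lemma \ref{lem4.3c}), and set $w_r=\eta_r w$ with $\eta_r$ a standard cutoff. Pick $t_r>0$ maximizing $t\mapsto \J_\mu(tw_r)$. Exploiting Lemma \ref{Psi} one has $w_r\to w$ in $W^{s,p}(\R^N)$ as $r\to\infty$; the maximum characterisation of the Nehari manifold together with $(f_5)$ forces $t_r\to 1$ and hence $\J_\mu(t_r w_r)\to \J_\mu(w)=c_\mu<c_{V_\infty}$, so $\J_\mu(t_r w_r)<c_{V_\infty}$ for some large $r$. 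By continuity of $V$ at $0$, there exists $\e_0>0$ such that $V(\e x)\le \mu$ on $\mathrm{supp}(w_r)$ for every $\e\in(0,\e_0)$; then, since the critical term $-\frac{1}{\p}\|\cdot\|_{L^{\p}}^{\p}$ does not depend on $V$,
\[
c_\e\le\max_{\tau\ge 0}\I_\e(\tau t_r w_r)\le \max_{\tau\ge 0}\J_\mu(\tau t_r w_r)=\J_\mu(t_r w_r)<c_{V_\infty}.
\]

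When $V_\infty=\infty$, one runs the same truncation argument with any $\mu>V_0$ (for instance $\mu=V_0+1$): Lemma \ref{lemC} guarantees $c_\mu<\frac{s}{N}S_*^{N/sp}$, and Lemma \ref{lem4.3c} provides a positive ground state $w$ at level $c_\mu$. Truncating and optimising as before yields $r$ so large that $\J_\mu(t_r w_r)<\frac{s}{N}S_*^{N/sp}$; continuity of $V$ near $0$ again gives $\e_0>0$ with $V(\e x)\le\mu$ on $\mathrm{supp}(w_r)$ for $\e\in(0,\e_0)$, whence $c_\e<\frac{s}{N}S_*^{N/sp}$. In both regimes we then apply Proposition \ref{prop2.1c} to the Palais--Smale sequence $\{u_n\}$ at level $c_\e$ to obtain strong convergence $u_n\to u$ in $\h$, and consequently $\I_\e(u)=c_\e$ with $\I'_\e(u)=0$; positivity follows exactly as in the remark after Lemma \ref{lem4.3}, testing with $u^-$ and invoking the Moser-type argument of Lemma \ref{lemMoser} together with the strong maximum principle from \cite{DPQ}. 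The main obstacle is the uniform compactness threshold: one has to keep the cutoff radius $r$ fixed \emph{before} shrinking $\e$, so that the estimate $V(\e x)\le\mu$ on $\mathrm{supp}(w_r)$ holds uniformly in the chosen mountain-pass path, and one must exploit that the critical term is \emph{invariant} under the substitution $V\mapsto\mu$, otherwise the comparison $\I_\e\le\J_\mu$ along the path would fail.
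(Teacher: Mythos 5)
Your proposal is correct and follows essentially the same route as the paper, which simply instructs the reader to repeat the proof of Theorem \ref{thm3.1} with Lemma \ref{SW2}, Lemma \ref{SW3}, Proposition \ref{prop2.1} and Lemma \ref{lem4.3} replaced by Lemma \ref{SW2c}, Lemma \ref{SW3c}, Proposition \ref{prop2.1c} and Lemma \ref{lem4.3c}. You are in fact more explicit than the paper on the one genuinely new point: when $V_{\infty}=\infty$ one must still check $c_{\e}<\frac{s}{N}S_{*}^{\frac{N}{sp}}$ (which you obtain from Lemma \ref{lemC} and the truncation of a ground state of $(P_{\mu}^{*})$), since Proposition \ref{prop2.1c}, unlike Proposition \ref{prop2.1}, is not available at arbitrary levels in that case.
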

\begin{proof}
It is enough to proceed as in the proof of Theorem \ref{thm3.1} once replaced Lemma \ref{SW2}, Lemma \ref{SW3}, Proposition \ref{prop2.1}, Lemma \ref{lem4.3} by Lemma \ref{SW2c}, Lemma \ref{SW3c}, Proposition \ref{prop2.1c}, Lemma \ref{lem4.3c} respectively.
\end{proof}

\subsection{Multiple solutions for \eqref{Pc}}\label{Sect4.3}
In this section we study the multiplicity of solutions to \eqref{Pc}.
Arguing as in the proof of Proposition \ref{prop4.1} and using Lemma \ref{lem4.3c} and Lemma \ref{lem2.2c} instead of Lemma \ref{lem4.3} and Lemma \ref{lem2.2} respectively, we can deduce the following result.
\begin{proposition}\label{prop4.1c}
Let $\e_{n}\rightarrow 0^{+}$ and $\{u_{n}\}\subset \N_{\e_{n}}$ be such that $\I_{\e_{n}}(u_{n})\rightarrow c_{V_{0}}$. Then there exists $\{\tilde{y}_{n}\}\subset \R^{N}$ such that the translated sequence
\begin{equation*}
v_{n}(x):=u_{n}(x+ \tilde{y}_{n})
\end{equation*}
has a subsequence which converges in $W^{s, p}(\R^{N})$. Moreover, up to a subsequence, $\{y_{n}\}:=\{\e_{n}\tilde{y}_{n}\}$ is such that $y_{n}\rightarrow y\in M$.
\end{proposition}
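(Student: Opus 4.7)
The plan is to mimic the subcritical proof of Proposition \ref{prop4.1}, replacing the auxiliary tools by their critical counterparts and being careful that at every step the energy level stays strictly below the threshold $\frac{s}{N}S_{*}^{N/(sp)}$ at which Palais–Smale compactness fails. First I would observe that since $\langle \I'_{\e_{n}}(u_{n}),u_{n}\rangle=0$ and $\I_{\e_{n}}(u_{n})\rightarrow c_{V_{0}}$, Lemma \ref{lemBc} yields the boundedness of $\{u_{n}\}$ in $\h$; moreover $c_{V_{0}}>0$ excludes $\|u_{n}\|_{\e_{n}}\to 0$. Since $c_{V_{0}}<\frac{s}{N}S_{*}^{N/(sp)}$ by Lemma \ref{lemC}, I may apply Lemma \ref{lem2.2c} (the critical vanishing/non-vanishing alternative) to the translated sequence $u_{n}(\cdot+\tilde y_{n})$ after extracting, as in Lemma \ref{lem2.2}, a sequence $\{\tilde y_{n}\}\subset\R^{N}$ and constants $R,\beta>0$ with $\liminf_{n}\int_{\B_{R}(\tilde y_{n})}|u_{n}|^{p}\,dx\ge\beta>0$. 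Setting $v_{n}(x):=u_{n}(x+\tilde y_{n})$, the boundedness in $W^{s,p}(\R^{N})$ together with this non-vanishing allows me to extract $v_{n}\rightharpoonup v\neq 0$.

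Next, I would project onto the limit Nehari manifold: by Lemma \ref{SW2Ac} applied with $\mu=V_{0}$, there exists a unique $t_{n}>0$ with $w_{n}:=t_{n}v_{n}\in\N_{V_{0}}$. Using a change of variables and $V(x)\ge V_{0}$, I compare
\[
c_{V_{0}}\le\J_{V_{0}}(w_{n})\le \I_{\e_{n}}(t_{n}v_{n})\le \I_{\e_{n}}(u_{n})=c_{V_{0}}+o_{n}(1),
\]
which forces $\J_{V_{0}}(w_{n})\to c_{V_{0}}$. Boundedness of $\{t_{n}\}$ then follows from $\|w_{n}\|_{V_{0}}\le K$ and $\|v_{n}\|_{V_{0}}\ge\alpha>0$; the lower bound $t_{n}\to t_{0}>0$ is obtained by ruling out $t_{0}=0$ via $c_{V_{0}}>0$. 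Therefore $w_{n}\rightharpoonup t_{0}v\neq 0$ is a minimizing sequence for $\J_{V_{0}}$ on $\N_{V_{0}}$, and Lemma \ref{lem4.3c} (whose proof in turn relies on Lemma \ref{lemC} to stay below the compactness threshold) gives $w_{n}\to t_{0}v$ strongly in $W^{s,p}(\R^{N})$, hence $v_{n}\to v$ in $W^{s,p}(\R^{N})$.

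Finally, for $y_{n}=\e_{n}\tilde y_{n}$ I argue by contradiction that $\{y_{n}\}$ must be bounded. If $V_{\infty}=\infty$, Fatou's lemma applied to $\int_{\R^{N}}V(\e_{n}x+y_{n})|v_{n}|^{p}\,dx\le \int_{\R^{N}}f(v_{n})v_{n}\,dx+\int_{\R^{N}}|v_{n}|^{\p}\,dx$ (the identity coming from $u_{n}\in\N_{\e_{n}}$ plus the extra critical term) blows up on the left and stays finite on the right, a contradiction. If $V_{\infty}<\infty$ and $|y_{n}|\to\infty$, I reproduce the chain of inequalities
\[
c_{V_{0}}=\J_{V_{0}}(t_{0}v)<\J_{V_{\infty}}(t_{0}v)\le\liminf_{n\to\infty}\I_{\e_{n}}(t_{n}u_{n})\le\liminf_{n\to\infty}\I_{\e_{n}}(u_{n})=c_{V_{0}},
\]
which is absurd; the presence of the extra critical term $\frac{t^{\p}}{\p}|u|^{\p}_{L^{\p}(\R^{N})}$ is harmless here since it appears with the same sign on both sides. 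Thus $\{y_{n}\}$ is bounded and $y_{n}\to y$ along a subsequence; the same strict-inequality argument with $V(y)>V_{0}$ would contradict $c_{V_{0}}$ being attained, so $y\in M$.

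I expect the main obstacle to be the verification that the Palais–Smale-type compactness used at the autonomous level is preserved under the presence of the critical term: the delicate point is precisely ensuring that the level $c_{V_{0}}$ remains strictly below $\frac{s}{N}S_{*}^{N/(sp)}$ so that Lemma \ref{lem2.2c}, Lemma \ref{lem2.3c} and Lemma \ref{lem4.3c} are applicable throughout the argument. This is where the estimates from Lemma \ref{MPSYlem}, Lemma \ref{lemest2} and Lemma \ref{lemmaq} together with hypothesis $(f'_{6})$—all crucial for the validity of Lemma \ref{lemC}—enter silently but essentially.
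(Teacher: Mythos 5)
Your proposal is correct and follows essentially the same route as the paper, which proves Proposition \ref{prop4.1c} precisely by repeating the argument of Proposition \ref{prop4.1} with Lemma \ref{lem2.2c} and Lemma \ref{lem4.3c} in place of their subcritical counterparts, the key point being (as you note) that $c_{V_{0}}<\frac{s}{N}S_{*}^{N/(sp)}$ by Lemma \ref{lemC} so that the critical compactness lemmas apply. Your handling of the extra critical term in the Nehari identity and in the comparison of energies matches what is needed.
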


\noindent
As in Section \ref{Sect3}, fix $\delta>0$ and let $\omega\in H^{s}(\R^{N})$ be a positive ground state solution to problem $(P_{V_{0}}^{*})$, which exists in view of Lemma \ref{lem4.3c}. Let $\psi\in C^{\infty}(\R^{+}, [0, 1])$ be a function such that $\psi=1$ in $[0, \frac{\delta}{2}]$ and $\psi=0$ in $[\delta, \infty)$. For any $y\in M$, let us define
$$
\Upsilon_{\e, y}(x)=\psi(|\e x-y|) \omega\left(\frac{\e x-y}{\e}\right),
$$
and $t_{\e}>0$ satisfying
$$
\I_{\e}(t_{\e}\Upsilon_{\e, y})=\max_{t\geq 0} \I_{\e}(t_{\e}\Upsilon_{\e, y}),
$$
and $\Phi_{\e}:M\rightarrow \N_{\e}$ by $\Phi_{\e}(y):=t_{\e} \Upsilon_{\e, y}$.
\begin{lemma}\label{lem4.1c}
The functional $\Phi_{\e}$ satisfies the following limit
\begin{equation}\label{3.2c}
\lim_{\e\rightarrow 0} \I_{\e}(\Phi_{\e}(y))=c_{V_{0}} \mbox{ uniformly in } y\in M.
\end{equation}
\end{lemma}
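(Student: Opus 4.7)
The plan is to adapt the proof of Lemma~\ref{lem4.1} to the critical setting; the only genuinely new ingredient is the treatment of the critical term $\frac{1}{\p}|u|^{\p}_{L^{\p}(\R^{N})}$. First I would argue by contradiction: suppose there exist $\delta_{0}>0$, $\{y_{n}\}\subset M$ and $\e_{n}\rightarrow 0^{+}$ with $|\I_{\e_{n}}(\Phi_{\e_{n}}(y_{n}))-c_{V_{0}}|\geq \delta_{0}$. Under the change of variable $z=x-y_{n}/\e_{n}$, which maps $\Upsilon_{\e_{n},y_{n}}$ into $\psi(|\e_{n}z|)\omega(z)$, the continuity of $V$ together with $V(y_{n})=V_{0}$ yields $V(\e_{n}z+y_{n})\to V_{0}$ pointwise; since the truncation by $\psi$ localizes the relevant integration to a compact neighborhood of $M$ (where $V$ is bounded), Lemma~\ref{Psi} and the Dominated Convergence Theorem give
\begin{equation*}
\|\Upsilon_{\e_{n},y_{n}}\|^{p}_{\e_{n}}\rightarrow \|\omega\|^{p}_{V_{0}}\qquad\text{and}\qquad |\Upsilon_{\e_{n},y_{n}}|^{\p}_{L^{\p}(\R^{N})}\rightarrow |\omega|^{\p}_{L^{\p}(\R^{N})}.
\end{equation*}

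The heart of the argument is to show $t_{\e_{n}}\to 1$. From the Nehari identity $\langle \I'_{\e_{n}}(t_{\e_{n}}\Upsilon_{\e_{n},y_{n}}),t_{\e_{n}}\Upsilon_{\e_{n},y_{n}}\rangle=0$, dividing by $t_{\e_{n}}^{p}$ I would obtain
\begin{equation*}
\|\Upsilon_{\e_{n},y_{n}}\|^{p}_{\e_{n}}=\int_{\R^{N}}\frac{f(t_{\e_{n}}\psi(|\e_{n}z|)\omega(z))}{t_{\e_{n}}^{p-1}}\psi(|\e_{n}z|)\omega(z)\,dz+t_{\e_{n}}^{\p-p}|\Upsilon_{\e_{n},y_{n}}|^{\p}_{L^{\p}(\R^{N})}.
\end{equation*}
The possibility $t_{\e_{n}}\to\infty$ is ruled out immediately because the critical summand alone would diverge (this in fact makes the critical case easier than the subcritical one, where $(f_{4})$ had to be invoked in Lemma~\ref{lem4.1}), and $t_{\e_{n}}\to 0$ is excluded via $(f_{2})$ because the right-hand side would vanish while the left tends to $\|\omega\|^{p}_{V_{0}}>0$. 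Hence, up to a subsequence, $t_{\e_{n}}\to t_{0}\in(0,\infty)$. Passing to the limit and subtracting the Nehari identity $\|\omega\|^{p}_{V_{0}}=\int_{\R^{N}}\frac{f(\omega)}{\omega^{p-1}}\omega^{p}\,dz+|\omega|^{\p}_{L^{\p}(\R^{N})}$ for $\omega\in \N_{V_{0}}$ supplied by Lemma~\ref{lem4.3c} leads to
\begin{equation*}
\int_{\R^{N}}\left[\frac{f(t_{0}\omega)}{(t_{0}\omega)^{p-1}}-\frac{f(\omega)}{\omega^{p-1}}\right]\omega^{p}\,dz+(t_{0}^{\p-p}-1)|\omega|^{\p}_{L^{\p}(\R^{N})}=0,
\end{equation*}
and the monotonicity furnished by $(f_{5})$ together with the strict monotonicity of $t\mapsto t^{\p-p}$ on $(0,\infty)$ force $t_{0}=1$.

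Once $t_{\e_{n}}\to 1$ is in hand, another application of the Dominated Convergence Theorem yields $\int_{\R^{N}}F(t_{\e_{n}}\Upsilon_{\e_{n},y_{n}})\,dx\to \int_{\R^{N}}F(\omega)\,dz$ and, combined with Lemma~\ref{Psi} and the continuous embedding $W^{s,p}(\R^{N})\hookrightarrow L^{\p}(\R^{N})$, convergence of the critical integral, so
\begin{equation*}
\I_{\e_{n}}(\Phi_{\e_{n}}(y_{n}))\rightarrow \tfrac{1}{p}\|\omega\|^{p}_{V_{0}}-\int_{\R^{N}}F(\omega)\,dz-\tfrac{1}{\p}|\omega|^{\p}_{L^{\p}(\R^{N})}=\J_{V_{0}}(\omega)=c_{V_{0}},
\end{equation*}
contradicting the initial assumption. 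The main obstacle I foresee is the careful bookkeeping of the interaction between the potential $V(\e x)$ and the truncated profile $\Upsilon_{\e,y}$ under the change of variables together with the justification of the limit in the critical term; otherwise the scheme closely mirrors Lemma~\ref{lem4.1}, with the critical contribution actually simplifying the a priori control of $t_{\e_{n}}$.
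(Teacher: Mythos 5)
Your proposal is correct and follows essentially the same route as the paper's proof: contradiction via a sequence $\e_{n}\rightarrow 0$, convergence $\|\Upsilon_{\e_{n},y_{n}}\|^{p}_{\e_{n}}\rightarrow\|\omega\|^{p}_{V_{0}}$ from Lemma \ref{Psi}, exclusion of $t_{\e_{n}}\rightarrow\infty$ via the critical term (exactly as the paper does, confirming your remark that $(f_{4})$ is not needed here), exclusion of $t_{0}=0$ via the growth of $f$, identification $t_{0}=1$ from the Nehari identity for $\omega$ together with $(f_{5})$ and the strict monotonicity of $t\mapsto t^{\p-p}$, and finally the Dominated Convergence Theorem to pass to the limit in the remaining terms. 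No gaps.
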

\begin{proof}
Assume by contradiction that there exist $\delta_{0}>0$, $\{y_{n}\}\subset M$ and $\e_{n}\rightarrow 0$ such that
\begin{equation}\label{4.41c}
|\I_{\e_{n}}(\Phi_{\e_{n}} (y_{n}))-c_{V_{0}}|\geq \delta_{0}.
\end{equation}
By using Lemma \ref{Psi} we know that
\begin{align}\label{4.42c}
\lim_{n\rightarrow \infty} \|\Upsilon_{\e_{n}, y_{n}}\|^{p}_{\e_{n}}= \|\omega\|_{V_{0}}^{p}.
\end{align}
On the other hand, from the definition of $t_{\e}$ we obtain $\langle \I'_{\e_{n}}(t_{\e_{n}}\Upsilon_{\e_{n}, y_{n}}), t_{\e_{n}} \Upsilon_{\e_{n}, y_{n}}\rangle=0$, which implies
\begin{align}\label{4.411c}
\|t_{\e_{n}}\Upsilon_{\e_{n}, y_{n}}\|^{p}_{\e_{n}}&=\int_{\R^{N}} f(t_{\e_{n}}\Upsilon_{\e_{n}}) t_{\e_{n}}\Upsilon_{\e_{n}} dx + \int_{\R^{N}} |t_{\e_{n}}\Upsilon_{\e_{n}}|^{\p} dx\nonumber\\
&=\int_{\R^{N}} f(t_{\e_{n}} \psi(|\e_{n}z|) \omega(z)) t_{\e_{n}} \psi(|\e_{n}z|) \omega(z) \, dz \nonumber \\
&\quad + t_{\e_{n}}^{\p} \int_{\R^{N}} |\psi(|\e_{n}z|) \omega(z)|^{\p} dz.
\end{align}
Let us prove that $t_{\e_{n}}\rightarrow 1$. Firstly we show that $t_{\e_{n}}\rightarrow t_{0}<\infty$. If by contradiction $|t_{\e_{n}}|\rightarrow \infty$, recalling that $\psi\equiv 1$ in $\B_{\frac{\delta}{2}}(0)$ and $\B_{\frac{\delta}{2}}(0)\subset \B_{\frac{\delta}{2\e_{n}}}(0)$ for $n$ sufficiently large, we can see that \eqref{4.411c} yields
\begin{align}\label{4.44c}
\|\Upsilon_{\e_{n}, y_{n}}\|^{p}_{\e_{n}}&\geq t_{\e_{n}}^{\p-p} \int_{\B_{\frac{\delta}{2}}(0)} |\omega(z)|^{\p} dz \nonumber \\
&\geq  t_{\e_{n}}^{\p-p} \left|\B_{\frac{\delta}{2}}(0)\right| \min_{|z|\leq \frac{\delta}{2}}|\omega(z)|^{\p} \rightarrow +\infty,
\end{align}
which is impossible in view of \eqref{4.42c}.
Hence, we can suppose that $t_{\e_{n}}\rightarrow t_{0}\geq 0$. By using the growth assumptions on $f$ and \eqref{4.42c}, we deduce that $t_{0}>0$. \\
Let us prove that $t_{0}=1$.
Taking the limit as $n\rightarrow \infty$ in \eqref{4.411c}, we can see that
$$
\|\omega\|_{V_{0}}^{p}=\int_{\R^{N}} \frac{f(t_{0}\omega)}{t_{0}^{p-1}} \omega \, dx + t_{0}^{\p-p} \int_{\R^{N}} |\omega|^{\p} \, dx,
$$
and by using the fact that $\omega\in \N_{V_{0}}$ and $(f_5)$, we can deduce that $t_{0}=1$.
This and the Dominated Convergence Theorem produce
\begin{equation*}
\lim_{n\rightarrow \infty}\int_{\R^{N}} F(t_{\e_{n}} \Upsilon_{\e_{n}, y_{n}})=\int_{\R^{N}} F(\omega) \, dx
\end{equation*}
and
\begin{equation*}
\lim_{n\rightarrow \infty}\int_{\R^{N}} |t_{\e_{n}} \Upsilon_{\e_{n}, y_{n}}|^{\p} dx =  \int_{\R^{N}} |\omega|^{\p} \, dx,
\end{equation*}
from which we obtain that
$$
\lim_{n\rightarrow \infty} \I_{\e_{n}}(\Phi_{\e_{n}}(y_{n}))=\J_{V_{0}}(\omega)=c_{V_{0}}
$$
which leads to a contradiction because of \eqref{4.41c}.
\end{proof}

\noindent
For any $\delta>0$, let $\rho>0$ be such that $M_{\delta}\subset \B_{\rho}(0)$. Let $\chi: \R^{N}\rightarrow \R^{N}$ be defined as
 \begin{equation*}
 \chi(x)=
 \left\{
 \begin{array}{ll}
 x &\mbox{ if } |x|<\rho \\
 \frac{\rho x}{|x|} &\mbox{ if } |x|\geq \rho.
  \end{array}
\right.
\end{equation*}
Let us consider the barycenter map $\beta_{\e}: \N_{\e}\rightarrow \R^{N}$ given by
\begin{align*}
\beta_{\e}(u)=\frac{\int_{\R^{N}} \chi(\e x) |u(x)|^{p} dx}{\int_{\R^{N}} |u(x)|^{p} dx}.
\end{align*}
Arguing as in the proof of Lemma \ref{lem4.2} we can prove the following result.
\begin{lemma}\label{lem4.2c}
The functional $\Phi_{\e}$ verifies the following limit
\begin{equation}\label{3.3c}
\lim_{\e \rightarrow 0} \beta_{\e}(\Phi_{\e}(y))=y \mbox{ uniformly in } y\in M.
\end{equation}
\end{lemma}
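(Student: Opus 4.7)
The plan is to mimic directly the proof of Lemma \ref{lem4.2} in the subcritical case, since the definitions of $\Phi_{\e}$ and $\beta_{\e}$ have the same form (only $\omega$ is now a ground state of the critical autonomous problem $(P^*_{V_0})$ provided by Lemma \ref{lem4.3c}), and no compactness beyond pointwise convergence and the Dominated Convergence Theorem is required.

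First, I would argue by contradiction: suppose there exist $\delta_{0}>0$, sequences $\{y_{n}\}\subset M$ and $\e_{n}\to 0$ such that
\begin{equation*}
|\beta_{\e_{n}}(\Phi_{\e_{n}}(y_{n}))-y_{n}|\geq \delta_{0}.
\end{equation*}
Since $\Phi_{\e_n}(y_n)=t_{\e_n}\Upsilon_{\e_n,y_n}$ and $\beta_{\e_n}$ is a $0$-homogeneous ratio in $|u|^p$, the factor $t_{\e_n}^p$ cancels between numerator and denominator, so one may ignore $t_{\e_n}$. Performing the change of variable $z=(\e_n x - y_n)/\e_n$ in the definition of $\beta_{\e_n}$ gives
\begin{equation*}
\beta_{\e_{n}}(\Phi_{\e_{n}}(y_{n}))=y_{n}+\frac{\int_{\R^{N}}[\chi(\e_{n}z+y_{n})-y_{n}]\,|\psi(|\e_{n}z|)\,\omega(z)|^{p} \, dz}{\int_{\R^{N}} |\psi(|\e_{n}z|)\,\omega(z)|^{p}\, dz}.
\end{equation*}

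Next, since $\{y_{n}\}\subset M\subset \B_{\rho}(0)$, the definition of $\chi$ yields $\chi(y_n)=y_n$, so the integrand in the numerator converges pointwise to $0$ as $\e_n\to 0$. Moreover, $|\chi(\e_n z+y_n)-y_n|\leq 2\rho$, and $\psi(|\e_n z|)\omega(z)\to \omega(z)$ pointwise with $|\psi(|\e_n z|)\omega(z)|\leq |\omega(z)|\in L^p(\R^N)$. The Dominated Convergence Theorem then gives that the numerator tends to $0$, while the denominator tends to $|\omega|_{L^p(\R^N)}^{p}>0$. Thus
\begin{equation*}
|\beta_{\e_{n}}(\Phi_{\e_{n}}(y_{n}))-y_{n}|=o_{n}(1),
\end{equation*}
contradicting our assumption, and the uniform limit (\ref{3.3c}) follows.

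I do not expect any genuine obstacle: the argument is purely measure-theoretic and does not use the nonlinearity $f$ or the Sobolev seminorm of $\omega$, so the presence of the critical term $|u|^{\p-2}u$ in $(P^*_{\e})$ plays no role in this lemma. The only minor point to verify carefully is that the sequence $\{t_{\e_n}\}$, although irrelevant to $\beta_{\e_n}$ by homogeneity, does exist and is finite for each $n$; this follows from Lemma \ref{SW2Ac}$(i)$ applied to $\Upsilon_{\e_n,y_n}/\|\Upsilon_{\e_n,y_n}\|_{\e_n}$.
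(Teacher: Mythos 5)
Your proof is correct and is essentially the argument the paper uses: the paper simply notes that one argues "as in the proof of Lemma \ref{lem4.2}", which is exactly the contradiction--change of variables--Dominated Convergence scheme you wrote out, with the $t_{\e_n}$ factor cancelling by $0$-homogeneity of $\beta_{\e}$. The only slip is the final citation: since $t_{\e}$ maximizes the non-autonomous functional $\I_{\e}$ along $t\mapsto t\Upsilon_{\e,y}$, its existence comes from Lemma \ref{SW2c}$(i)$ rather than Lemma \ref{SW2Ac}$(i)$, but this does not affect the argument.
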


\noindent
Now, we take a function $h:\R_{+}\rightarrow \R_{+}$ such that $h(\e)\rightarrow 0$ as $\e \rightarrow 0$, and we set
$$
\widetilde{\N}_{\e}=\{u\in \N_{\e}: \I_{\e}(u)\leq c_{V_{0}}+h(\e)\}.
$$
Given $y\in M$, we can use Lemma \ref{lem4.1c} to infer that $h(\e)=|\I_{\e}(\Phi_{\e}(y))-c_{V_{0}}|\rightarrow 0$ as $\e \rightarrow 0$. Thus, $\Phi_{\e}(y)\in \widetilde{\N}_{\e}$, and we have that $\widetilde{\N}_{\e}\neq \emptyset$ for any $\e>0$. Moreover, proceeding as in Lemma \ref{lem4.4}, we get the following lemma.
\begin{lemma}\label{lem4.4c}
For any $\delta>0$, there holds that
$$
\lim_{\e \rightarrow 0} \sup_{u\in \widetilde{\mathcal{N}}_{\e}} dist(\beta_{\e}(u), M_{\delta})=0.
$$
\end{lemma}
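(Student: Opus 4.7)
The plan is to adapt verbatim the proof of Lemma \ref{lem4.4} to the critical setting, substituting Proposition \ref{prop4.1c} for Proposition \ref{prop4.1}. Fix an arbitrary sequence $\e_n \to 0^+$. By the definition of supremum, choose $u_n \in \widetilde{\N}_{\e_n}$ such that
$$
\sup_{u\in \widetilde{\N}_{\e_{n}}} \mathrm{dist}(\beta_{\e_n}(u), M_{\delta}) = \mathrm{dist}(\beta_{\e_n}(u_n), M_{\delta}) + o_n(1).
$$
It then suffices to exhibit $y_n \in M_{\delta}$ with $|\beta_{\e_n}(u_n)-y_n|\to 0$.

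Since $u_n \in \widetilde{\N}_{\e_n}\subset \N_{\e_n}$, combining $c_{V_0} \le c_{\e_n} \le \I_{\e_n}(u_n) \le c_{V_0}+h(\e_n)$ with $h(\e_n)\to 0$ gives $\I_{\e_n}(u_n)\to c_{V_0}$. I will then invoke Proposition \ref{prop4.1c} to obtain $\{\tilde y_n\} \subset \R^N$ such that $v_n := u_n(\cdot + \tilde y_n)$ converges strongly in $W^{s,p}(\R^N)$ to some $v\neq 0$, and such that, up to a subsequence, $y_n := \e_n \tilde y_n \to y \in M$. In particular $y_n \in M_{\delta}$ for $n$ large enough, which provides the required candidates.

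The last step is a direct computation: performing the change of variable $z = x - \tilde y_n$ in the definition of $\beta_{\e_n}$ yields
$$
\beta_{\e_{n}}(u_{n}) = y_{n}+\frac{\int_{\R^{N}}[\chi(\e_{n}z+y_{n})-y_{n}]\,|v_{n}(z)|^{p}\, dz}{\int_{\R^{N}} |v_{n}(z)|^{p}\, dz}.
$$
Strong convergence $v_n \to v$ in $W^{s,p}(\R^N) \hookrightarrow L^p(\R^N)$ together with pointwise convergence $\chi(\e_n z + y_n) - y_n \to \chi(y) - y = 0$ (using $y\in M\subset B_\rho(0)$ and continuity of $\chi$), and the uniform bound $|\chi(\e_n z + y_n) - y_n|\le 2\rho$, allow the Dominated Convergence Theorem to show that the correction term is $o_n(1)$. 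Hence $\beta_{\e_n}(u_n) = y_n + o_n(1)$, completing the proof.

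Since this argument is a transcription of the subcritical case, there is no genuinely new obstacle: all the heavy lifting has been pushed into Proposition \ref{prop4.1c}, whose concentration-compactness proof relies on Lemma \ref{lem4.3c} (existence of autonomous critical ground states) and Lemma \ref{lem2.2c} (critical Lions-type alternative). The only care needed is to ensure that the constants $\rho, \delta$ defining the barycenter cutoff are chosen before invoking Proposition \ref{prop4.1c}, so that $y_n \in M_{\delta} \subset B_\rho(0)$ holds eventually and the dominating constant $2\rho$ is legitimate.
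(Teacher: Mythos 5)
Your proposal is correct and follows essentially the same route as the paper: the paper's proof of Lemma \ref{lem4.4c} is precisely the argument of Lemma \ref{lem4.4} with Proposition \ref{prop4.1c} in place of Proposition \ref{prop4.1}, which is exactly what you carry out. The only (harmless) extra detail you supply is the explicit dominated-convergence justification of the correction term, which the paper leaves implicit.
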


\noindent
Finally, we give the proof of the main result related to \eqref{Pc}.
\begin{proof}[Proof of thm \ref{thmAI2}]
Given $\delta>0$ and taking into account Lemma \ref{SW2c}, Lemma \ref{SW3c}, Lemma \ref{lem4.1c}, Lemma \ref{lem4.2c}, Lemma \ref{lem4.4c}, Proposition \ref{prop2.1c}, Theorem \ref{thm3.1c} and recalling that $c_{V_{0}}<\frac{s}{N} S_{*}^{\frac{N}{sp}}$ (see Lemma \ref{lemC}), we can argue as in the proof of Theorem \ref{teorema} to deduce the existence of at least $cat_{M_{\delta}}(M)$ positive solutions for \eqref{Pc} for all $\e>0$ sufficiently small. The concentration of solutions is obtained following the arguments used in subsection $3.5$. Indeed, the proof of Lemma \ref{lemMoser} works also in the critical case and the proof of Lemma \ref{UBlemAF} can be easily adapted in this case.
\end{proof}

\section*{Acknowledgments.}
The authors warmly thank the anonymous referee for her/his useful and nice comments on the paper.

\renewcommand{\refname}{REFERENCES}


\begin{thebibliography}{99}

\bibitem{Alv}
\newblock C.O. Alves,
\newblock Existence of positive solutions for a problem with lack of compactness involving the $p$-Laplacian,
\newblock \emph{Nonlinear Anal.}, \textbf{51} (2002), no. 7, 1187--1206.

\bibitem{AA}
\newblock C.O. Alves and V. Ambrosio,
\newblock A multiplicity result for a nonlinear fractional Schr\"odinger equation in $\R^{N}$ without the Ambrosetti-Rabinowitz condition,
\newblock \emph{J. Math. Anal. Appl.}, \textbf{ 466} (2018), no. 1, 498--522.

\bibitem{AF}
\newblock C.O. Alves and G.M. Figueiredo,
\newblock Existence and multiplicity of positive solutions to a $p$-Laplacian equation in $\R^{N}$,
\newblock \emph{Differential Integral Equations}, \textbf{19} (2006), no. 2, 143--162.

\bibitem{AM}
\newblock C. O. Alves and O. H. Miyagaki,
\newblock Existence and concentration of solution for a class of fractional elliptic equation in $\R^{N}$ via penalization method,
\newblock \emph{Calc. Var. Partial Differential Equations}, \textbf{55} (2016), no. 3, Paper No. 47, 19 pp.

\bibitem{A1}
\newblock V. Ambrosio,
\newblock Multiple solutions for a fractional $p$-Laplacian equation with sign-changing potential,
\newblock \emph{Electron. J. Diff. Equ.}, vol. 2016 (2016), no. 151, pp. 1--12.

\bibitem{A2}
\newblock V. Ambrosio,
\newblock Multiplicity of positive solutions for a class of fractional Schr\"odinger equations via penalization method,
\newblock \emph{Ann. Mat. Pura Appl.}, (4) \textbf{196} (2017), no. 6, 2043--2062.

\bibitem{A4}
\newblock V. Ambrosio,
\newblock Concentration phenomena for critical fractional Schr\"odinger systems,
\newblock \emph{Commun. Pure Appl. Anal.}, \textbf{17} (2018), no. 5, 2085--2123.

\bibitem{A3}
\newblock V. Ambrosio,
\newblock Concentrating solutions for a class of nonlinear fractional Schr\"odinger equations in $\R^{N}$,
\newblock \emph{Rev. Mat. Iberoam.} (in press), Preprint. arXiv{1612.02388}.

\bibitem{AMF}
\newblock V. Ambrosio and G. M. Figueiredo,
\newblock Ground state solutions for a fractional Schr\"odinger equation with critical growth,
\newblock \emph{Asymptot. Anal.}, \textbf{105} (2017), no. 3--4, 159--191.

\bibitem{AI}
\newblock V. Ambrosio and T. Isernia,
\newblock Concentration phenomena for a fractional Schr\"odinger-Kirchhoff  type problem,
\newblock \emph{Math. Methods Appl. Sci.}, \textbf{41} (2018), no.2, 615--645.

\bibitem{BBMP}
\newblock P. Belchior, H. Bueno, O.H. Miyagaki and G.A. Pereira,
\newblock Remarks about a fractional Choquard equation: ground state, regularity and polynomial decay,
\newblock \emph{Nonlinear Analysis}, \textbf{164} (2017), 38--53.

\bibitem{BC}
\newblock V. Benci and G. Cerami,
\newblock Multiple positive solutions of some elliptic problems via the Morse theory and the domain topology,
\newblock \emph{Calc. Var. Partial Differential Equations}, \textbf{2} (1994), no. 1, 29--48.

\bibitem{BMS}
\newblock L. Brasco, S. Mosconi and M. Squassina,
\newblock Optimal decay of extremals for the fractional Sobolev inequality,
\newblock \emph{Calc. Var. Partial Differential Equations}, \textbf{55} (2016), no. 2, Paper No. 23, 32 pp.

\bibitem{BL}
\newblock H. Br\'ezis and E. Lieb,
\newblock A relation between pointwise convergence of functions and convergence of functionals,
\newblock \emph{Proc. Amer. Math. Soc.}, \textbf{88} (1983), no. 3, 486--490.

\bibitem{CS}
\newblock L. Caffarelli and L. Silvestre,
\newblock An extension problem related to the fractional Laplacian,
\newblock \emph{Comm. Partial Differential Equations}, \textbf{32} (2007), no. 7-9, 1245--1260.

\bibitem{DPQ}
\newblock L.M. Del Pezzo and A. Quaas,
\newblock A Hopf's lemma and a strong minimum principle for the fractional $p$-Laplacian,
\newblock \emph{J. Differential Equations}, \textbf{263} (2017), no. 1, 765--778.

\bibitem{DCKP}
\newblock A. Di Castro, T. Kuusi and G. Palatucci,
\newblock Local behavior of fractional $p$-minimizers,
\newblock \emph{Ann. Inst. H. Poincar\'e Anal. Non Lin\'eaire}, \textbf{33} (2016), no. 5, 1279--1299.

\bibitem{DPV}
\newblock E. Di Nezza, G. Palatucci and E. Valdinoci,
\newblock Hitchhiker's guide to the fractional Sobolev spaces,
\newblock \emph{Bull. Sci. math.}, \textbf{136} (2012), 521--573.

\bibitem{Ding}
\newblock Y. Ding,
\newblock \emph{Variational methods for strongly indefinite problems},
\newblock Interdisciplinary Mathematical Sciences, 7. World Scientific Publishing Co. Pte. Ltd., Hackensack, NJ, 2007. viii+168 pp.

\bibitem{DMV}
\newblock S. Dipierro, M. Medina and E. Valdinoci,
\newblock \emph{Fractional elliptic problems with critical growth in the whole of $\R^{n}$}.
\newblock Appunti. Scuola Normale Superiore di Pisa (Nuova Serie) [Lecture Notes. Scuola Normale Superiore di Pisa (New Series)], 15. Edizioni della Normale, Pisa, 2017. viii+152 pp.

\bibitem{Ek}
\newblock I. Ekeland,
\newblock On the variational principle,
\newblock \emph{J. Math. Anal. Appl.}, \textbf{47} (1974), 324--353.

\bibitem{FQT}
\newblock P. Felmer, A. Quaas and J. Tan,
\newblock Positive solutions of the nonlinear {S}chr{\"o}dinger equation with the fractional Laplacian,
\newblock \emph{Proc. Roy. Soc. Edinburgh Sect. A}, \textbf{142} (2012), 1237--1262.

\bibitem{F}
\newblock G.M. Figueiredo,
\newblock Existence, multiplicity and concentration of positive solutions for a class of quasilinear problems with critical growth,
\newblock \emph{Comm. Appl. Nonlinear Anal.}, \textbf{13} (2006), no. 4, 79--99.

\bibitem{FS}
\newblock G.M. Figueiredo and G. Siciliano,
\newblock A multiplicity result via Ljusternick-Schnirelmann category and Morse theory for a fractional Schr\"odinger equation in $\R^{N}$,
\newblock \emph{NoDEA Nonlinear Differential Equations Appl.}, \textbf{23} (2016), no. 2, Art. 12, 22 pp.

\bibitem{FiP}
\newblock A. Fiscella and P. Pucci,
\newblock Kirchhoff-Hardy fractional problems with lack of compactness,
\newblock \emph{Adv. Nonlinear Stud.}, \textbf{17} (2017), no. 3, 429--456.

\bibitem{FrP}
\newblock G. Franzina and G. Palatucci,
\newblock Fractional $p$-eigenvalues,
\newblock \emph{Riv. Math. Univ. Parma (N.S.)}, \textbf{5} (2014), no. 2, 373--386.

\bibitem{IMS}
\newblock A. Iannizzotto, S. Mosconi and M. Squassina,
\newblock Global H\"older regularity for the fractional $p$-Laplacian,
\newblock \emph{Rev. Mat. Iberoam.}, \textbf{32} (2016), no. 4, 1353--1392.

\bibitem{IT}
\newblock T. Isernia,
\newblock Positive solution for nonhomogeneous sublinear fractional equations in $\mathbb{R}^{N}$,
\newblock \emph{Complex Var. Elliptic Equ.}, \textbf{63} (2018), no. 5, 689--714.

\bibitem{Laskin1}
\newblock N. Laskin,
\newblock Fractional quantum mechanics and L\`evy path integrals,
\newblock \emph{Phys. Lett. A}, \textbf{268} (2000), 298--305.

\bibitem{Laskin2}
\newblock N. Laskin,
\newblock Fractional Schr\"odinger equation,
\newblock \emph{Phys. Rev. E}, \textbf{66} (2002), 056108.

\bibitem{LL}
\newblock E. Lindgren and P. Lindqvist,
\newblock Fractional eigenvalues,
\newblock \emph{Calc. Var. Partial Differential Equations}, \textbf{49} (2014), 795--826.

\bibitem{MW}
\newblock J. Mawhin and M. Willem,
\newblock \emph{Critical Point Theory and Hamiltonian Systems},
\newblock Springer-Verlag, 1989.

\bibitem{MeW}
\newblock C. Mercuri and M. Willem,
\newblock A global compactness result for the $p$-Laplacian involving critical nonlinearities,
\newblock \emph{Discrete Contin. Dyn. Syst.}, \textbf{28} (2010), no. 2, 469--493.

\bibitem{MRS}
\newblock G. Molica Bisci, V. R\u{a}dulescu and R. Servadei,
\newblock \emph{Variational Methods for Nonlocal Fractional Problems},
\newblock Cambridge University Press, \textbf{162} Cambridge, 2016.

\bibitem{MPSY}
\newblock S. Mosconi, K. Perera, M. Squassina and Y. Yang,
\newblock The Brezis-Nirenberg problem for the fractional $p$-Laplacian,
\newblock \emph{Calc. Var. Partial Differential Equations}, \textbf{55} (2016), no. 4, Paper No. 105, 25 pp.


\bibitem{Moser}
\newblock J. Moser,
\newblock A new proof of De Giorgi's theorem concerning the regularity problem for elliptic differential equations,
\newblock \emph{Comm. Pure Appl. Math.}, \textbf{13} (1960), 457--468.

\bibitem{PP}
\newblock G. Palatucci and A. Pisante,
\newblock Improved Sobolev embeddings, profile decomposition, and concentration-compactness for fractional Sobolev spaces,
\newblock \emph{Calc. Var. Partial Differential Equations}, \textbf{50} (2014), no. 3-4, 799--829.

\bibitem{Rab}
\newblock P. Rabinowitz,
\newblock On a class of nonlinear Schr\"odinger equations,
\newblock \emph{Z. Angew. Math. Phys.}, \textbf{43} (1992), no. 2, 270--291.

\bibitem{Secchi}
\newblock S. Secchi,
\newblock Ground state solutions for nonlinear fractional Schr\"odinger equations in $\R^{N}$,
\newblock \emph{J. Math. Phys.}, \textbf{54} (2013), 031501.

\bibitem{SZY}
\newblock X. Shang, J. Zhang and Y. Yang,
\newblock On fractional Schr\"odinger equations with critical growth,
\newblock \emph{J. Math. Phys.}, \textbf{54} (2013), no. 12, 121502, 20 pp.

\bibitem{SW}
\newblock A. Szulkin and T. Weth,
\newblock The method of Nehari manifold,
\newblock in \emph{Handbook of Nonconvex Analysis and Applications}, edited by D. Y. Gao and D. Montreanu (International Press, Boston, 2010), pp. 597--632.

\bibitem{Zh}
\newblock J. Zhang,
\newblock Stability of standing waves for nonlinear Schr\"odinger equations with unbounded potentials,
\newblock \emph{Z. Angew. Math. Phys.}, \textbf{51} (2000), no. 3, 498--503.

\end{thebibliography}
\end{document}